\definecolor{allrefcolors}{rgb}{0,0.2,0.5}
\newcommand{\D}{\mathbf{D}}
\newcommand{\PSS}{\operatorname{PSS}}
\newcommand{\PSSlog}{\operatorname{PSS}_{log}}
\newcommand{\PD}{\operatorname{PD}}
\newcommand{\hatX}{\hat{\bar{X}}}
\newcommand{\DIo}{\mathring{D}_I}
\newcommand{\NIo}{\mathring{N}_I}
\newcommand{\SIo}{\mathring{S}_I}
\newcommand{\Etop}{E_{\operatorname{top}}}
\newcommand{\Egeo}{E_{\operatorname{geo}}}
\newcommand{\Evzo}{\operatorname{Ev}^{\vec{\v}}_{z_{0}}}
\newcommand {\Evo}{\operatorname{Ev}^{\vec{\v}}_{0}}
\newcommand{\GWv}{GW_{\vec{\v}_I}(\alpha)}
\newcommand{\GWvc}{\underline{GW_{\vec{\v}_I}(\alpha_c)}}
\newcommand{\vi}{v_i}
\def\bd{\partial}
\def\ra{\rightarrow}
\def\lra{\longrightarrow}
\def\Z{{\mathbb Z}}
\def\R{{\mathbb R}}
\def\C{{\mathbb C}}
\def\P{{\mathbb P}}
\def\K{{\mathbf{k}}}
\def\e{\epsilon}
\newcommand{\QH}{H_{log}}
\def\o{\omega}
\def\e{\epsilon}
\def\a{\alpha}
\def\b{\beta}
\def\r#1{\mathrm{#1}}
\def\rb#1{\mathrm{\mathbf{#1}}}
\def\c#1{\mathcal{#1}}
\def\mc#1{\mathcal{#1}}
\def\ol#1{\overline{#1}}
\def\k{\kappa}
\def\d{\Delta}
\def\M{\mathcal{M}}
\def\xo{(X,\o)}
\def\ainf{A_\infty}
\def\f{\c{F}}
\def\sh{SH^*(M)}
\def\bd{\partial}
\def\z2{\Z / 2\Z}
\def\y{\mc{Y}}
\def\endo{\mathrm{End}}
\def\id{\mathrm{id}}
\def\mf#1{\mathfrak{#1}}
\def\a{\mc{A}}
\def\o{\mc{O}}
\def\f{\mc{F}}
\def\z{\mc{Z}}
\def\b{\mc{B}}
\def\v{\mathbf{v}}
\newtheorem{lem}{Lemma}[section]
\newtheorem{prop}[lem]{Proposition}
\newtheorem{thm}[lem]{Theorem}
\newtheorem{cor}[lem]{Corollary}
\newtheorem{defn}[lem]{Definition}
\newtheorem{ques}[lem]{Question}
\newtheorem{rem}[lem]{Remark}
\def\e{\epsilon}
\theoremstyle{remark}
\newtheorem{example}{Example}[section]
\numberwithin{equation}{section}
\begin{document}
\begin{abstract}

    Let $M$ be a smooth projective variety and $\mathbf{D}$ an ample normal crossings divisor. From topological data associated to the pair $(M, \mathbf{D})$, we construct, under assumptions on Gromov-Witten invariants, a series of distinguished classes in symplectic cohomology of the complement $X = M \backslash \mathbf{D}$. Under further ``topological'' assumptions on the pair, these classes can be organized into a {\em Log(arithmic) PSS morphism}, from a vector space which we term the {\em logarithmic cohomology} of $(M, \mathbf{D})$ to symplectic cohomology.  Turning to applications, we show that these methods and some knowledge of Gromov-Witten invariants can be used to produce {\em dilations} and {\em quasi-dilations} (in the sense of Seidel-Solomon \cite{Seidel:2010uq}) in examples such as conic bundles. In turn, the existence of such elements imposes strong restrictions on exact Lagrangian embeddings, especially in dimension 3. For instance, we prove that any exact Lagrangian in any complex 3-dimensional conic bundle must be diffeomorphic to a product $S^1 \times \Sigma_g$ or a connect sum $\#^n S^1 \times S^2$.
\end{abstract}
\title{A Log PSS morphism with applications to Lagrangian embeddings}
\author{Sheel Ganatra and Daniel Pomerleano}
\thanks{S.~G.~  was partially supported by the National Science Foundation through a postdoctoral fellowship --- grant number DMS-1204393 --- and agreement number DMS-1128155. Any opinions, findings and conclusions or recommendations expressed in this material are those of the author(s) and do not necessarily reflect the views of the National Science Foundation.\\
    \mbox{ }\mbox{ }\mbox{ }\mbox{ }\mbox{ }D.~P.~was supported by Kavli IPMU, EPSRC,
and Imperial College.}
\maketitle

\section{Introduction}

Let $M$ be a smooth projective variety, $\mathbf{D}$ an ample strict (or simple) normal
crossings divisor and $X=M \setminus \mathbf{D}$ its complement. The affine
variety $X$ has the structure of an exact finite-type convex symplectic manifold and
hence one can associate to $X$ its {\em symplectic cohomology}, $SH^*(X)$
\cites{Floer:1994uq, Cieliebak:1995fk, Viterbo:1999fk}, a version of classical
Hamiltonian Floer cohomology defined for such manifolds.  Symplectic cohomology
has a rich TQFT structure and plays a central role in understanding the Fukaya
categories of such $X$
\cite{Seidel:2002ys, Abouzaid:2010kx} as well as various approaches to mirror
symmetry \cite{MR3415066}. 

While there are very few complete computations of symplectic cohomology in the
literature, Seidel \cite{Seidel:2002ys, Seidel:2010fk} has suggested that
symplectic cohomology for $X$ as above should be computable in terms of
topological data associated to the compactification $(M,\mathbf{D})$ and
(relative) Gromov-Witten invariants of $(M,\mathbf{D})$. In the case when
$\mathbf{D}$ is smooth, this proposal has been explored 
in some depth by Diogo and Diogo-Lisi \cite{diogothesis, diogolisi}.

The first part of this work introduces an approach to studying the symplectic
cohomology $SH^*(X)$ in terms of the relative geometry of the compactification
$(M,\mathbf{D})$ (when $\mathbf{D}$ is normal crossings). In \S
\ref{subsec:logcoh} we introduce an abelian group which captures the
combinatorics and topology of the normal crossings compactification, called the
{\em log(arithmic) cohomology of $(M, \mathbf{D})$}
\begin{equation}
    \QH^*(M,\mathbf{D}).
\end{equation} 
In the special case that $\mathbf{D} = D$ is a smooth divisor, the
log cohomology has a simple form:
\begin{equation} 
    \QH^*(M, \mathbf{D}) = H^*(M  \setminus \mathbf{D}) \oplus t H^*(S_D)[t]
\end{equation}
where $S_D$ denotes the unit normal bundle to $D$. In general, $\QH^*(M,
\mathbf{D})$ is generated by classes of the form $\alpha t^{\vec{\v}}$, where $\alpha$ lies in the cohomology of certain torus bundles over open strata of $\mathbf{D}$ and $\vec{\v}$ is a multiplicity vector. 

Our approach is inspired by a map introduced by Piunikhin, Salamon, and Schwarz
\cite{Piunikhin:1996aa} relating the (quantum) cohomology of $M$ and the
Hamiltonian Floer cohomology of a non-degenerate Hamiltonian. Namely, we define
a linear subspace of \emph{admissible classes} $H^*_{log}(M,\D)^{ad} \subset
\QH^*(M,\D)$ together with a map 
\begin{align} \label{eq: PSSminus}
    \PSSlog^{+}: \QH^*(M,\D)^{ad} \to SH_{+}^*(X) 
\end{align} where
$SH_{+}^*$ is the {\em positive} or {\em high energy part} of symplectic
cohomology, a canonically defined quotient of the complex defining $SH^*$ by
``low energy (cohomological) generators.''  

There is a wide class  of \emph{topological pairs} $(M, \D)$ for which the map
\eqref{eq: PSSminus} is particularly well behaved (e.g., take $M$ any
projective variety and $\mathbf{D}$ the union of $\geq \dim_{\C} M +1$ generic hyperplane
sections).  For a topological pair, $\QH^*(M,\D)^{ad}=\QH^*(M,\D)$, and there is
a canonical lifting of \eqref{eq: PSSminus} to a map we term the {\em Log PSS
morphism}
\begin{equation}
    \label{eq: PSStopintro} 
    \operatorname{PSS}_{log}: \QH^*(M,\mathbf{D}) \to SH^*(X)
\end{equation}
The key feature of topological pairs is that relevant moduli spaces of {\em
relative} pseudo-holomorphic curves are all generically empty; for example,
there should be no holomorphic spheres in $M$ which intersect a single
component of $\mathbf{D}$ in one point. See \S
\ref{subsec:topological} for the precise definition of a topological pair.

\begin{rem}
 In a sequel to this paper \cite{DPSGII}, we define a natural
ring structure on $H^*_{log}(M, \D)$ and show that the Log PSS morphism \eqref{eq: PSStopintro} is a
ring isomorphism for all topological pairs $(M, \D)$ (rather those pairs satisfying a
strengthening of the topological condition called being {\em multiplicatively
topological}), leading to many explicit new computations of symplectic
cohomology rings. 
\end{rem}

When the pair $(M,\D)$ is not topological and $\sigma \in \QH^*(M, \D)^{ad}$ is
an admissible class, we formulate the obstruction to lifting
$\PSSlog^{+}(\sigma)$ from $SH^*_+(X)$ to $SH^*(X)$ in terms of a Gromov-Witten
invariant associated to $\sigma$. 
When this obstruction vanishes, this provides a way to produce
\emph{distinguished classes} in $SH^*(X)$ which we will show may be applied to
study the symplectic topology of $X$. 

It should be noted that the construction of the Log PSS morphism \eqref{eq:
PSStopintro} and the proof of the chain equation are considerably more involved
than for its classical analogue.  We conjecture that 
for general $(M, \mathbf{D})$, a further analysis of the moduli spaces
appearing in \eqref{eq: PSStopintro} could produce an isomorphism
between
$SH^*(X)$ and the cohomology of a cochain model of log cohomology $C_{log}^*(M, \D)$
equipped with an extra differential accounting for non-vanishing relative
Gromov-Witten moduli spaces. Such analysis would require stronger analytic
results to deal with the subtle compactness and transversality questions which
arise (related issues already arise in the study of relative Gromov-Witten
invariants relative a normal crossings divisor). 
Our obstruction analysis is
indeed inspired by this conjectural theory, and can be viewed as an elementary
special case.   

The rest of this paper explores applications of our construction to Lagrangian
embeddings. 
An interesting class of examples can be constructed as follows. Let $X^o$ be an affine variety with trivial canonical bundle and consider a regular function $f: X^o \to \mathbb{C}$ whose zero set $Z^o$ is smooth. We define the \emph{affine conic bundle} 
to be the affine variety $X$ defined by the equation: \begin{align} 
    \label{eq: conicbundle} X=\lbrace (u,v, \bar{x}) \in \mathbb{C}^2 \times X^o |\; uv=f(\bar{x}) \rbrace 
\end{align}  
The symplectic topology of these varieties 
is very rich and can be approached from different perspectives, see for
instance \cite{AAK, catdyn, KS} (these references generally take the base to be either $(\mathbb{C}^*)^{n-1}$ or $\mathbb{C}^{n-1}$). For example, there is a standard construction of Lagrangian spheres in $X$ given by
taking a suitable Lagrangian disc $j: D^{n-1} \to (\mathbb{C}^*)^{n-1}$ with boundary
on the discriminant locus and ``suspending" it to a Lagrangian $S^n \hookrightarrow X$ \cite{HIV, GM, SeidelSus}. 

It is natural to ask: \emph{what are the possible topologies of exact
Lagrangians in these conic bundles}? The suspension construction typically
provides a large collection of  Lagrangian spheres and Seidel
\cite{catdyn}*{Chapter 11}  has provided
constructions of exact Lagrangian tori in certain examples. Our first
application is the following classification result concerning the diffeomorphism types
of exact Lagrangian submanifolds in 3-dimensional conic bundles over
affine surfaces $X^o$ with trivial canonical bundle. 

\begin{thm} 
    \label{thm: app2gen}(See Theorem \ref{thm: app2sec6gen}) Let $X$ be a
    3-dimensional affine conic bundle of the form \eqref{eq: conicbundle} over
    an affine surface $X^o$ with trivial canonical bundle. Then, any closed,
    oriented, exact Lagrangian submanifold $Q \hookrightarrow X$ must be diffeomorphic to one of the
    following 3-manifolds:
 \begin{itemize}
     \item  a product $S^1 \times B$ for $B$ a Riemann surface of genus $\geq 1$; or
     \item a connected sum $\#_n S^1 \times S^2$ for some $n \geq 0$.
 \end{itemize}
 (by convention, the case $n=0$ corresponds to $S^3$).  
\end{thm}

Theorem \ref{thm: app2sec6gen} represents a relatively sharp classification: following Proposition \ref{thm: app2sec6},  we provide examples of exact Lagrangian embeddings  $\#_n S^1 \times S^2 \to X$ for $n\geq 2$ (these arise by essentially  replacing the disc in the suspension construction by a suitable exact embedding of $C_n$, a sphere minus $n+1$-discs). It also suggests a close connection between exact Lagrangian surfaces in $X^o$ (with boundary on $Z^o$) and exact Lagrangian 3-folds in the affine conic bundle $X$. To illustrate this, it is well-known that there are no exact Lagrangian embeddings of closed surfaces of genus $g \geq 2$ in $(\mathbb{C}^*)^2$. Motivated by this, we prove that when $X^o=(\mathbb{C}^*)^2$, one can strengthen Theorem \ref{thm: app2gen} by excluding exact Lagrangian products of surfaces of genus $g \geq 2$ with $S^1$: 

\begin{prop} \label{thm: app2} (Proposition \ref{thm: app2sec6}) 
    Let $X$ be a 3-dimensional conic bundle
    over $(\mathbb{C}^*)^2$ of the form \eqref{eq: conicbundle} and such that the discriminant locus $Z^o$ is connected. Let $j: Q
    \hookrightarrow X$ be a closed, oriented, exact Lagrangian submanifold of
    $X$. Then $Q$ is diffeomorphic to either $T^3$ or $\#_n S^1 \times S^2$. 
\end{prop}

By combining our methods with those from \cite{Seidel:2014aa}, we also prove
the following result concerning disjoinability of Lagrangian spheres:

\begin{thm} 
    \label{thm: app1}
    (See Theorem \ref{thm: app1sec6}) 
    Let $\mathbf{k}$ be a field and $n \geq 3$ be an odd integer. Suppose that
    $X$ is a conic bundle of the form \eqref{eq: conicbundle} of total
    dimension $n$ over an affine variety $X^o$ with trivial canonical bundle and that $Q_1,\cdots, Q_r$ is a
    collection of embedded Lagrangian spheres which are pairwise disjoinable.
    Then the classes $[Q_1],\cdots,[Q_r]$ span a subspace of
    $H_n(X,\mathbf{k})$ which has rank at least $r/2$.  
\end{thm}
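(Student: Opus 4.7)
The plan is to reduce the theorem to Seidel's disjoinability criterion from \cite{Seidel:2014aa}: if $SH^*(X;\mathbf{k})$ admits a \emph{dilation}, i.e.\ an element $B \in SH^1(X;\mathbf{k})$ satisfying $\Delta(B) = 1$ (where $\Delta$ is the BV operator), then in a Liouville manifold of odd dimension $n \geq 3$ any collection of pairwise disjoinable exact Lagrangian spheres $Q_1, \ldots, Q_r$ must span a subspace of $H_n(X;\mathbf{k})$ of rank at least $r/2$. Granting this, it suffices to construct a dilation in $SH^*(X;\mathbf{k})$ for every conic bundle $X$ of the form \eqref{eq: conicbundle}.

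To produce the dilation I would apply the Log PSS machinery to a well-chosen compactification of $X$. Namely, compactify the base $(\mathbb{C}^*)^{n-1}$ by a smooth projective toric variety $T$ arranged so that the closure $\overline{Z^o} \subset T$ meets the toric boundary transversally, and simultaneously compactify each conic fiber by projectivizing in the $(u,v)$-direction. The resulting $(M,\D)$ is a normal crossings compactification whose components are the toric boundary divisors of $T$, a horizontal divisor at infinity coming from the fiber direction, and components supported over $\overline{Z^o}$. In $\QH^*(M,\D)$ I would identify a degree one class $\sigma$ built from a monomial $t^{\vec{\v}}$ with $\vec{\v}$ concentrated on the horizontal divisor; this is the log-cohomological analogue of the class used by Seidel--Solomon in \cite{Seidel:2010uq} and by Diogo--Lisi in the smooth-divisor setting to produce dilations. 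The candidate dilation is then $\PSSlog(\sigma)$, or rather its lift from $SH^*_+$.

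The main technical step is to control the relative Gromov--Witten invariants that both obstruct admissibility of $\sigma$ and could prevent the lift of $\PSSlog^+(\sigma)$ to $SH^*(X;\mathbf{k})$, and then to verify $\Delta(\PSSlog(\sigma)) = 1$. I would argue that the only stable maps contributing to the relevant moduli spaces are rational curves contained in conic fibers of the projection $M \to T$, and that these contribute precisely the factor required to identify $\Delta(\PSSlog(\sigma))$ with the unit in $SH^0(X;\mathbf{k})$ --- any nonzero scalar discrepancy can be absorbed into a rescaling of $\sigma$. The hardest part is this enumerative step, particularly ruling out stray rational curves over the discriminant $\overline{Z^o}$; I would handle this using the toric description of the base together with the explicit structure of holomorphic curves in the $\mathbb{P}^1$-conic bundle. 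Once the dilation is produced, Theorem \ref{thm: app1} follows immediately from \cite{Seidel:2014aa}.
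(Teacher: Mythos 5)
There is a genuine gap, and it is at the heart of your plan: you propose to produce a \emph{dilation} $\Delta B = 1$ in $SH^*(X;\mathbf{k})$ for every conic bundle of the form \eqref{eq: conicbundle}, but such a dilation does not exist in general. These conic bundles typically contain exact Lagrangian tori (Seidel's constructions, cited in the introduction; note also that $T^3$ appears in the list of allowed exact Lagrangians in Theorem \ref{thm: app2sec6}), and by Viterbo restriction to the loop homology of an aspherical Lagrangian, where the BV operator vanishes on the contractible component, a dilation cannot exist over any field. This is exactly why the paper only proves (Theorem \ref{thm: generalquasi}) that $SH^*(X)$ admits a \emph{quasi-dilation}: the Log PSS class $\Psi = \PSSlog(\beta_{1,c}t^{\vec{\v}_1},Z_b)$ satisfies $\Delta \Psi = \PSSlog(\alpha_{1,c}t^{\vec{\v}_1}) =: h$, and $h$ is shown to be invertible, not equal to $1$, via the product identity $h \cdot \PSSlog(\alpha_{J,c}t^{\vec{\v}_J}) = \PSS(GW(\vec{\v}_1,\vec{\v}_J)) = 1$ of Lemma \ref{lem: multiply}. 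Your remark that ``any nonzero scalar discrepancy can be absorbed into a rescaling of $\sigma$'' misses this: the discrepancy is not a scalar but a nontrivial unit in the (large) ring $SH^0(X)$, and no rescaling converts the quasi-dilation into a dilation.

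Because of this, the reduction ``dilation $+$ Seidel's Theorem 1.7 $\Rightarrow$ done'' is not available, and two further ingredients you omit become essential. First, Seidel's disjoinability theorem must be adapted to quasi-dilations; this is the content of Lemma \ref{lem: genquasi}, which reruns the equivariant-structure and Lefschetz-trace argument (using that the quasi-dilation is defined over $\mathbb{Z}$, at the cost of signs $\pm 1$ in the trace computation). Second, Seidel's rank bound requires the degree-one class to be realized at a finite slope with the low-energy Floer group equal to $H^*(X)$ (``property (H)''): in the paper this is arranged geometrically by shrinking the tubular neighborhoods so that the Reeb orbit winding once around $D_1$ has the smallest period $\lambda_0$, choosing $\lambda < \lambda_0 < 2\lambda$, and noting that $\Psi$ already lives in $HF^{1}(X \subset M; H_m^{2\lambda})$ while $H^*(X) \cong HF^{*}(X \subset M; H_m^{\lambda})$. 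Merely having a class in the colimit $SH^1(X)$, as in your proposal, is not enough to invoke the rank $\geq r/2$ statement. Your compactification sketch (toric base plus fiberwise projectivization) is in the right spirit — the paper blows up $\bar{M}\times \mathbb{P}^1$ along $Z \times H_1$ precisely to make the relevant normal bundle trivial and the obstruction analysis manageable — but the enumerative step you flag should be aimed at verifying the quasi-dilation identities (conditions (B1)--(B4) and $GW(\vec{\v}_1,\vec{\v}_J)=1$), not at forcing $\Delta(\PSSlog(\sigma))=1$, which is false.
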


An important special case is when $r=1$, in which case the theorem immediately implies that for any embedded Lagrangian sphere $Q \hookrightarrow X$, the
class $[Q] \in H_n(X,\mathbb{Z})$ is non-zero and primitive  (Corollary
\ref{cor: primitiveclass}). 

For our final application, we study a question posed by Smith concerning the
``persistence (or rigidity) of unknottedness of Lagrangian
intersections.''
\begin{ques}[Smith]\label{ques:smith}
In a symplectic manifold $X^6$, if a pair of Lagrangian 3-spheres $S_1$
and $S_2$ meet cleanly along a circle, can the isotopy class of this knot (in
$S_1$ and $S_2$) change under (nearby) Hamiltonian isotopy?  
\end{ques}
Note that there is no smooth obstruction to changing the knot types. 
In \cite{evanssmithwemyss}, Evans-Smith-Wemyss study the case where $S^1=S_1
\cap S_2$ is unknotted in both factors under an additional assumption on the
identification of normal bundles $$ \eta :\nu_{S^1}S_1 \cong
\nu_{S^1}S_2 $$ induced by the symplectic form on $X$. They prove in this
setting
that if there is a nearby Hamiltonian isotopy $\tilde{S}_1$ of
$S_1$ and $\tilde{S}_2$ of $S_2$ so that $\tilde{S}_1$ continues to meet
$\tilde{S}_2$ cleanly along a knot, then this knot must be the unknot in one
component and the unknot or trefoil in the other; see Proposition
\ref{prop:esw} for a precise statement. In \S \ref{sec:knottedness}, under the
same assumptions on the intersection of $S_1$ and $S_2$, we rule out the
remaining trefoil case, while also giving an alternative proof of Proposition
\ref{prop:esw}, thereby answering Question \ref{ques:smith} negatively in this
case. Our main result, Proposition \ref{unknottedness}, is somewhat
stronger and rules out any pair of Lagrangian 3-spheres meeting cleanly in a
non-trivial knot in a standard ``plumbing'' neighborhood of
$S_1 \cup S_2$.

To explain how the results about conic bundles and knottedness above connect to
our discussion of Log PSS, note that the $\operatorname{PSS}_{log}$ morphism
gives a method for producing distinguished classes in $SH^*(X)$, and
\emph{algebraic relations between
them}\footnote{Much like the classical PSS morphism,
$\operatorname{PSS}_{log}$ in many cases intertwines topological/GW-type
relations in $\QH^*(M, \mathbf{D})$ with TQFT-structure relations in
$SH^*(X)$. This is explored further in the sequel \cite{DPSGII}.}. It is well-understood, going back to ideas of Viterbo, that the
existence of solutions in $SH^*(X)$ satisfying certain relations often
produce strong restrictions on Lagrangian embeddings \cite{Viterbo:1996kx,
Seidel:2010uq, Seidel:2014aa}.
Concretely, let $Q \hookrightarrow X$ denote an exact Lagrangian embedding of a
closed Spin manifold. The well-known {\em Viterbo transfer map} \cite{Viterbo:1996kx, Ritter}  gives a
unital BV morphism from $SH^*(X)$ to the symplectic cohomology of a
neighborhood of $Q$, $SH^*(T^{*}Q)$, which in turn can be identified \cite{Viterbo:1996kx, Salamon:2006ys, AbSch1, AbSch2,Abouzaid:2015ad}
with the {\em string topology} BV algebra of $Q$, $H_{n-*}(\mc{L} Q)$, a
topological invariant \cite{Chas:aa}. Thus, one can transfer distinguished
elements in $SH^*(X)$ to distinguished elements in $H_{n-*}(\mc{L} Q)$, the
existence of which in turn restricts
the topology of $Q$, especially in dimension 3. The broad idea of producing
classes/relations in loop homology in order to constrain the 3-manifold
topology of a Lagrangian embedding first appears in Fukaya's pioneering
work \cite{FukayaLoopSpace1, FukayaLoopSpace2} 
on Lagrangian embeddings in
$\C^3$; in contrast the types of distinguished classes we produce are
different, as are both the means by which we obtain them and our method of
deducing topological restrictions from them; see Remark \ref{fukayacontrast}.

As an example,  a {\em dilation} \cite{Seidel:2010uq}  is
an element $B \in SH^1(X)$ satisfying $\Delta B = 1$, where $\Delta$ denotes
the BV operator.  A straightforward application of Viterbo's principle (along
with a string topology computation) implies that
if $SH^*(X)$ admits a dilation, $X$ does not contain an exact Lagrangian $K(\pi, 1)$ \cite[Cor. 6.3]{Seidel:2010uq}. It should be noted that the existence of dilations has other implications for Lagrangian embeddings as well, for example  \cite{Seidel:2014aa} has used dilations to study disjoinability questions for Lagrangian spheres. Weakening the above condition, a {\em quasi-dilation} (\cite{catdyn}*{page 194}, where the definition is attributed to Seidel-Solomon) is a pair $(\Psi_S, \alpha) \in SH^1(X) \times SH^0(X)$ where $\alpha$ is invertible and $\Delta (\alpha\Psi_S) = \alpha$. 
The existence of quasi-dilations imposes slightly weaker, but nevertheless quite strong
constraints on the topology of $Q$. We make these constraints very explicit 
when $\dim_{\R} Q = 3$, using techniques from 3-manifold topology:
\begin{prop}\label{prop:3manifoldquasidilation} (Corollary \ref{cor: Vitdil})
      Let $Q^3 \hookrightarrow X$ be an exact Lagrangian embedding of a closed
      oriented 3-manifold. If $SH^*(X,\mathbb{Z})$ admits a quasi-dilation,
      then $Q$ is either diffeomorphic to $S^1 \times B$ with $B$ a Riemann
      surface or diffeomorphic to a connected sum  $\#_n S^1 \times S^2$.  
  \end{prop}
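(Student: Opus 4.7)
The plan is to push the quasi-dilation from $SH^*(X)$ into the string topology BV algebra of $Q$ via Viterbo transfer, extract a restriction on the prime pieces of $Q$, and then conclude by invoking the classification of closed oriented 3-manifolds.

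First, by \cite{Viterbo:1996kx, Ritter}, the Viterbo transfer is a unital BV-algebra morphism $SH^*(X;\mathbb{Q}) \to SH^*(T^*Q;\mathbb{Q})$. Combined with the identification $SH^*(T^*Q;\mathbb{Q}) \cong H_{3-*}(\mathcal{L}Q;\mathbb{Q})$ \cite{Viterbo:1996kx, Salamon:2006ys, AbSch1, AbSch2, Abouzaid:2015ad}, the quasi-dilation transfers to a pair $\bar{B} \in H_2(\mathcal{L}Q;\mathbb{Q})$, $\bar{h} \in H_3(\mathcal{L}Q;\mathbb{Q})$ satisfying $\Delta(\bar{h}\bar{B}) = \bar{h}$, with $\bar{h}$ invertible for the Chas--Sullivan loop product. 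In particular, string topology contains a $\Delta$-exact unit.

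Next, I would use this to restrict the prime pieces of $Q$. When a prime piece $Q_i$ is aspherical, the loop space splits as $\mathcal{L}Q_i \simeq \bigsqcup_{[g]} BC_g$ over conjugacy classes of $\pi_1(Q_i)$, and $\Delta$ is given by the Connes operator on $HH_*(\mathbb{Q}[\pi_1(Q_i)])$, vanishing on the identity (constant loop) component $H_*(Q_i)$ where the unit $[Q_i]$ lives. A careful analysis of the loop product and of $\Delta$-exactness (using the free-product structure of $\pi_1(Q)$ coming from Kneser--Milnor via van Kampen) then forces any aspherical prime summand of $Q$ to be virtually of the form $S^1 \times \Sigma$ for some closed oriented surface $\Sigma$, and moreover forbids such a summand from coexisting with additional non-trivial prime summands. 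Finally, by Kneser--Milnor and geometrization, $Q$ is a connect sum of prime 3-manifolds, each being either $S^1 \times S^2$, a spherical space form (virtually $S^3 = \#_0 S^1 \times S^2$), or aspherical. The constraint above eliminates all aspherical pieces except those virtually $S^1 \times \Sigma$ and forbids mixing with others, so either $Q$ is virtually $S^1 \times B$ for some closed oriented surface $B$, or all prime pieces are spherical space forms or copies of $S^1 \times S^2$ and $Q$ is virtually $\#_n S^1 \times S^2$ as claimed.

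The hardest step is the algebraic analysis in the preceding paragraph: leveraging $\Delta$-exactness of the invertible class $\bar{h}$ in string topology to control $\pi_1$ of aspherical pieces finely enough to identify them as virtually product Seifert fibered spaces. This requires detailed computation with the Connes $B$-operator on Hochschild homology of $\mathbb{Q}[\pi]$, especially for free products of groups, and likely a refined invariant such as the rational Euler class of the Seifert fibration to distinguish the ``virtually $S^1 \times \Sigma$'' case from twisted Seifert structures such as $\widetilde{\mathrm{SL}}_2$ or Nil manifolds (whose $\pi_1$'s have non-trivial center but are not virtually products).
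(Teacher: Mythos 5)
Your first step (Viterbo transfer of the quasi-dilation to the loop homology BV algebra of $Q$) is exactly what the paper does, and it is fine. The gap is in the middle step, which is asserted rather than proved: ``a careful analysis of the loop product and of $\Delta$-exactness \ldots forces any aspherical prime summand of $Q$ to be virtually of the form $S^1\times\Sigma$, and moreover forbids such a summand from coexisting with additional non-trivial prime summands.'' That sentence is the entire mathematical content of the proposition beyond the transfer, and no argument is given for it. Concretely, you would have to rule out, purely from the Connes/BV operator on $HH_*(\mathbb{Q}[\pi_1])$, aspherical pieces modelled on Nil or $\widetilde{\mathrm{SL}}_2$: these are Seifert fibered with infinite cyclic center, so their group rings have the same center $\mathbb{Q}[h,h^{-1}]$ (up to the orientability of the base) as $\pi_1(S^1\times\Sigma_g)$, and the splitting $\mathcal{L}Q\simeq\bigsqcup_{[g]}BC_g$ together with vanishing of $\Delta$ on the constant component does not by itself distinguish them; you acknowledge this yourself in the last paragraph (``likely a refined invariant such as the rational Euler class \ldots''), which is an admission that the key step is open in your write-up. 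Likewise nothing in the proposal excludes a mixed connected sum such as $(S^1\times\Sigma)\#(S^1\times S^2)$, which the statement does exclude. Since the invertibility of $\bar h$ lives only in $\mathbb{H}^0(\mathcal{L}Q)$, i.e.\ in the center of the group algebra of each aspherical piece, it is not clear that any amount of elementary Hochschild bookkeeping sees the Euler number of the Seifert fibration, so the proposed route is not just incomplete but of uncertain feasibility.

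For contrast, the paper avoids this algebra entirely: from the transferred pair it extracts a \emph{pseudo-dilation}, represents the degree-$2$ class $\Psi\in H_2(\mathcal{L}Q;\mathbb{Q})$ by a map $\Sigma\colon\hat B\to\mathcal{L}Q$ from a closed oriented surface, and observes that because $\operatorname{ev}_*\Delta\Psi=\operatorname{ev}_*\alpha$ is a unit in the intersection ring, the composite $\operatorname{ev}\circ\Gamma\circ(\operatorname{id}\times\Sigma)\colon S^1\times\hat B\to Q$ has non-zero degree (Lemma \ref{lem: dominate}); it then cites the classification of closed oriented $3$-manifolds dominated by products, Theorem \ref{thm: productdom} (\cite{MR3054316}), which is where Nil, $\widetilde{\mathrm{SL}}_2$, Sol and hyperbolic pieces, and mixed connected sums, are excluded. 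If you want to salvage your approach, the cleanest fix is to replace your unproved algebraic step by this domination argument; otherwise you must actually carry out the BV computation for the relevant Seifert group rings and for free products, which is a substantial project not contained in your proposal.
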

 There are variants of Proposition \ref{prop:3manifoldquasidilation} for quasi-dilations in $SH^*(X,\mathbb{Q})$; see Corollary \ref{cor: Vitdil}. 
As usual, the chief difficulty in applying Proposition
\ref{prop:3manifoldquasidilation} and its variants to restrict exact
Lagrangian embeddings is performing partial computations of the BV algebra
structure in symplectic cohomology in order to produce quasi-dilations. 
The Log PSS morphism developed in this paper gives a method of performing such
computations, which we use to prove the following result,
suggested by Seidel using mirror symmetric
considerations \cite{catdyn}*{first paragraph of Lecture 19}:
\begin{thm} 
    \label{thm:intro3} (Theorem \ref{thm: generalquasi}) Let $X$ be the conic
    bundle appearing in \eqref{eq: conicbundle}. Then $SH^*(X, \mathbb{Z})$ admits a
    quasi-dilation.   
\end{thm}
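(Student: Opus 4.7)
The plan is to construct the desired quasi-dilation pair $(B, h) \in SH^1(X) \times SH^0(X)$ by applying Log PSS to two judiciously chosen classes in $\QH^*(M, \D)$ for a suitable compactification $(M, \D)$ of $X$, and then to verify directly both the invertibility of $h$ and the BV relation $\Delta(hB) = h$.

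For the compactification, I would take a smooth projective toric compactification $\ol B$ of $(\mathbb{C}^*)^{n-1}$ whose fan refines the Newton polytope of $h(\mathbf w)$, and form the fiberwise projective closure of $\{uv = h(\mathbf w)\}$ inside $\ol B \times \mathbb{P}^2$, blowing up as needed to reach a strict normal crossings pair $(M, \D)$ with $\D = D^{\mathrm{tor}} \cup D^{\infty}$. Here $D^{\mathrm{tor}}$ is the pullback of the toric boundary of $\ol B$ and $D^{\infty}$ is the closure of the conic at infinity of the fibers. In $\QH^*(M, \D)$ I would single out an invertible degree $0$ monomial class $\eta = t^{\vec v(\mathbf a)} \cdot 1$, corresponding to a base monomial $\mathbf w^{\mathbf a}$ whose vanishing vector $\vec v(\mathbf a)$ along the components of $D^{\mathrm{tor}}$ admits a formal inverse $t^{-\vec v(\mathbf a)} \cdot 1$, together with a degree $1$ ``angular'' class $\beta$ supported on the same stratum and pulled back from $d\log \mathbf w^{\mathbf a}$ on the base. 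In the toric baseline $X = (\mathbb{C}^*)^{n-1}$, these are precisely the generators of the textbook quasi-dilation in $SH^*((\mathbb{C}^*)^{n-1})$.

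Next I would verify admissibility of $\eta, \beta, \eta\beta$ and vanishing of the Gromov-Witten-type obstructions to lifting their Log PSS images from $SH^*_+(X)$ to $SH^*(X)$. The relevant obstructions count relative pseudoholomorphic spheres in $(M, \D)$ with tangency profile $\vec v(\mathbf a)$; in a conic bundle such spheres either lie within a single conic fiber (a nodal rational curve whose intersection with $D^{\mathrm{tor}}$ is trivial) or project non-trivially to $\ol B$ (in which case positivity of intersection with $D^{\mathrm{tor}}$ forces a homology class incompatible with $\vec v(\mathbf a)$). For generic $h$ this should rule out all contributions and yield well-defined classes $h := \PSSlog(\eta)$ and $B := \PSSlog(\beta)$ in $SH^*(X)$.

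The main obstacle is the final step: since both the invertibility of $h$ and the BV identity $\Delta(hB) = h$ lie beyond the linear Log PSS construction of the body of the paper, and full multiplicative/BV compatibility of Log PSS is deferred to the sequel \cite{DPSGII}, I would reduce each statement to a local computation. The Floer generators underlying $h$, $B$, and $hB$ are all localized near a single smooth stratum of $D^{\mathrm{tor}}$, where the symplectic geometry of $X$ splits as a product of $\mathbb{C}^*$ with a finite-type Liouville model. In this local picture, $\eta \cdot \eta^{-1} = 1$ and $\Delta(hB) = h$ reduce to the classical identities $x \cdot x^{-1} = 1$ and $\Delta(x \theta) = x$ in $SH^*(\mathbb{C}^*)$. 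The obstruction vanishing from the preceding step then also rules out global curve corrections to these local identities, promoting them to the required statements in $SH^*(X)$. The most delicate point is establishing just enough multiplicative/BV compatibility of Log PSS on this restricted set of classes to rigorously justify the local-to-global promotion without invoking the full theory of \cite{DPSGII}.
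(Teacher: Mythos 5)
There is a genuine gap, and it sits exactly where the content of the theorem lies. Your construction of classes and your admissibility/obstruction discussion are plausible, but the two statements that make the pair a quasi-dilation --- invertibility of $h$ and the BV relation --- are not proven by your final step. ``Reduce to a local computation near a stratum of $D^{\mathrm{tor}}$ and promote local-to-global'' has no mechanism behind it: Viterbo functoriality maps $SH^*(X)$ \emph{to} the symplectic cohomology of a neighborhood, not back, so invertibility in a local model does not imply invertibility in $SH^*(X)$; the pair-of-pants product and the unit of $SH^*(X)$ are not localized near the divisor, so ``ruling out global curve corrections to local identities'' is not a well-defined argument; and the ``formal inverse $t^{-\vec{v}(\mathbf{a})}$'' does not exist in $\QH^*(M,\D)$, whose multiplicities are non-negative by definition, so nothing in the framework hands you an inverse for free. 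You also start from a false premise: compatibility of $\PSSlog$ with the BV operator is \emph{not} deferred to the sequel --- it is proven in this paper (Lemmas \ref{lem: BVandPSS} and \ref{lem:S1eq}), and that is how the relation $\Delta(B)=h$ is obtained. What genuinely goes beyond the linear theory is a single product identity needed for invertibility, and your proposal supplies no proof of it.

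The paper's route is instructive by contrast. It uses classes attached to the \emph{fiberwise} divisors at infinity, not the pulled-back toric boundary of the base: one compactifies by blowing up $\bar{M}\times\mathbb{P}^1$ along $Z\times H_1$, takes $D_1$ to be a section at infinity of the conic fibration (condition (B1), triviality of $\mathring{S}_1$, gives the angular class $\beta_1 t^{\vec{\v}_1}$ and the fundamental class $\alpha_1 t^{\vec{\v}_1}$), and proves invertibility of $h=\PSSlog(\alpha_{1,c}t^{\vec{\v}_1})$ by the explicit product computation of Lemma \ref{lem: multiply}: a degeneration of a two-marked-point domain shows $\PSSlog(\alpha_{1,c}t^{\vec{\v}_1})\cdot\PSSlog(\alpha_{J,c}t^{\vec{\v}_J})=\operatorname{PSS}(GW(\vec{\v}_1,\vec{\v}_J))$, and the conic-bundle geometry (conditions (B2)--(B4), the $\epsilon_Z$-polarization excluding bubbling, and the fact that through each point of $X$ there is a unique fiber conic, broken as $B_0+B_1$ over the discriminant) gives $GW(\vec{\v}_1,\vec{\v}_J)=1$. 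If you want to salvage your version with base-monomial classes, you would still have to produce an analogous explicit moduli-theoretic proof that some product of your classes is a unit --- there is no shortcut through a local model, and no such unit-producing curve count is identified in your proposal.
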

When combined with Proposition \ref{prop:3manifoldquasidilation}, this
immediately implies Theorem \ref{thm: app2gen}. Proposition \ref{thm: app2} is then
proven by combining Theorem \ref{thm:intro3} with some additional algebraic
relations which exist in the zeroth symplectic cohomology group $SH^0(X)$ when
$X^o = (\mathbb{C}^*)^2$,
which enable us to further rule out Lagrangians in $X$ of the form $S^1 \times
B$ for $g(B)>1$.

  Seidel \cite{catdyn}*{ Lecture 19} has shown that quasi-dilations have similar implications for disjoinability  questions (in the sense of \cite{Seidel:2014aa}) as dilations. Theorem \ref{thm: app1} therefore follows by combining Seidel's results from {\em loc. cit.} with Theorem \ref{thm:intro3}. An essential ingredient in Theorem \ref{thm: app1} is the specific geometry of the construction of
the quasi-dilation, which is what enables us to interpret the constraints
imposed by disjoinability in terms of the topology of $X$.

We expect that our techniques apply in many other geometric situations as well. For example, we also study the case where $M$ is a Fano variety of dimension $n \geq 3$  and $D$ is a smooth very ample divisor satisfying $H^2(M,\mathbf{k})= \mathbf{k} \langle \PD(D) \rangle$ and $K_M^{-1}=\mathcal{O}(mD)$ with $m \geq \frac{n}{2}$. Under
hypotheses on the cohomology and Gromov-Witten invariants of such $(M, D)$, we
show using $\PSS_{log}$ that $SH^*(M \backslash D)$ admits a dilation, see
Theorem \ref{thm: dilationcrit} below. The theorem is closely related to a question of Seidel
\cite{catdyn}*{Conjecture 18.6}, see Remark \ref{rem:Seidelconj}. 

\subsection*{Related work}
We note that there are several independent recent and/or ongoing works
which have non-trivial conceptual overlap with the present paper. We have
already mentioned the work 
Diogo-Lisi \cite{diogothesis, diogolisi}, who use techniques from symplectic
field theory to give a formula for the abelian group $SH^*(X)$ in terms of
topological data and relative Gromov-Witten invariants for a large class of
pairs $(M,\D)$ with $\D=D$ smooth. Closer in spirit to the present work, a
forthcoming paper of Borman and Sheridan \cite{bormansheridan} constructs a
particular distinguished class in
 $SH^*(M \backslash \D)$ for suitable pairs $(M,\D)$ (see
Remark \ref{rem: BormanSheridan} for more details about how their class fits
into our framework) and uses this to develop a relationship between $SH^*(X)$
and $QH^*(M)$, the quantum cohomology of the compactification $M$. A modified
version of their construction in the setting of Lefschetz pencils also appears
in a recent preprint of Seidel \cite{SLef}. \vskip 5 pt 

\subsection*{Acknowledgments} 
Paul Seidel's work has decisively influenced many aspects of this paper; he
also answered the author's questions concerning \cite{catdyn} and gave comments
on an earlier draft. The authors had some inspiring conversations with Strom
Borman, Luis Diogo, Yakov Eliashberg, Eleny Ionel, Mark McLean, Sam Lisi, and
Nick Sheridan about symplectic cohomology and normal crossings
compactifications at an early stage of this project. The examples following
Theorem \ref{thm: app2sec6} are based upon a suggestion of Jonny Evans. We
thank Jonny Evans, Ivan Smith, and Michael Wemyss for sharing with us a preliminary
copy of their work \cite{evanssmithwemyss}, and Ivan Smith for conversations about how to apply 
our results to study Lagrangians intersecting cleanly along a circle.  Alessio
Corti suggested the $IG(2,2n+2)$ examples which were studied in the first version of this paper and also patiently answered questions
about conic bundles. We thank them all for their assistance. 
Finally, we would also like to thank an anonymous referee for detailed remarks and
comments which greatly improved the exposition of this article.
\vskip 5 pt

\subsection*{Conventions}\label{conventions} Throughout this paper the ring $\mathbf{k}$ will be
either $\Z$, $\mathbb{Q}$, or $\C$ unless explicitly stated otherwise (see Remark \ref{rem:groundring}).
All grading conventions in this paper are cohomological,
and our conventions for grading
symplectic cohomology follows \cite{Abouzaid:2010kx}.

\section{Symplectic cohomology}

\subsection{The definition}

Let $(\bar{X}, \theta)$ be a {\em Liouville domain}; that is, a $2n$ dimensional manifold $\bar{X}$ with boundary $\partial \bar{X}$ with one form
$\theta$ such that $\omega:= d\theta$ is symplectic and the Liouville vector
field $Z$ (defined as the $\omega$ dual of $\theta$) is outward pointing along
$\partial \bar{X}$. More precisely, flowing along $-Z$ is defined for all times, giving rise to a canonical embedding of the
negative half of the symplectization 
\begin{align} 
    j: (0,1]_R \times \partial \bar{X} &\to \bar{X}\\
    j^*(\omega) &= d(R\theta|_{\partial \bar{X}})
\end{align} The {\em (Liouville) completion} of $\bar{X}$ is formed by
attaching (using the aforementioned identification) a cylindrical end to the
boundary of $\bar{X}$:
\begin{equation}\label{eq:conicalstructure}
    \hatX:=\bar{X} \cup_{\partial\bar{X}} [1,\infty)_R \times \partial \bar{X}.
\end{equation}
The Liouville completion $\hatX$ is also exact symplectic, equipped with one form
$\hat{\theta}$ with $\hat{\omega} = d\hat{\theta}$ symplectic. Explicitly,
$\hat{\theta}$ is equal to $\theta$ on $\bar{X}$ and  $R \theta$ on the
cylindrical end, where $R$ denotes the canonical $[1,\infty)$ coordinate.
Recall that in the above situation, $(\partial \bar{X}, \alpha:= \theta|_{\partial
\bar{X}})$ is a {\em contact manifold}; it possesses a canonical {\em Reeb
vector field} $X_{Reeb}$ determined by $\alpha(X_{Reeb}) = 1$,
$d\alpha(X_{Reeb}, -) = 0$. 
The {\em spectrum of $\partial \bar{X}$}, denoted $\operatorname{Spec}(\partial \bar{X},\theta)$ 
is the set of real numbers which are lengths of Reeb orbits in $\partial
\bar{X}$. We will assume throughout that this spectrum is discrete (which
follows from choosing $\alpha$ sufficiently generic).

Observe that after choosing a compatible almost-complex structure then
(negative) the first Chern class $-c_1(\hatX)$ is represented by the complex line
bundle $\mathcal{K}=\det_{\C}(T\hatX)^{\vee}$.  For simplicity (and in order to set up
$\mathbb{Z}$-gradings), we will assume that $c_1(\hatX) = 0$ and moreover that we
have fixed a choice of trivialization of $\mathcal{K}$.  

\begin{defn} \label{def:admissibleham}
     A (time-dependent) Hamiltonian $H: S^1 \times \hatX \to \mathbb{R}$ is {\em admissible with
     slope $\lambda$} if $H(t,x )=\lambda R+C_0$ near infinity, where $R$ is the
     cylindrical coordinate near $\infty$ (note $R$ only depends on $x \in \hatX$) and $C_0$ is an arbitrary constant.
\end{defn} 

 For each positive real number $\lambda$ which is not in the spectrum of
 $\partial \bar{X}$, we choose an admissible Hamiltonian of slope $\lambda$,
 denoted $H^{\lambda}$. Each Hamiltonian $H$ determines an {\em action functional} on the free loop
 space of $\hatX$, whose value on a free loop $x:  S^1
 \to \hatX$ is given by 
 \[ 
 \mathcal{A}_{H}(x):= -\int_{S^1} x^*\hat{\theta} + \int_{0}^1 H(t, x(t)) dt  
 \]
By definition (and partly convention), the {\em Hamiltonian vector-field} of
$H$ with respect to $\hat{\omega}$ is the unique ($S^1$-dependent) vector-field
$X_H$ on $\hatX$ such that $\hat{\omega}(-, X_H)=dH$. Critical points of
$\mathcal{A}_H$ are time-1 orbits of the Hamiltonian vector-field $X_H$ with
respect to the symplectic form $\hat{\omega}=d\hat{\theta}$. It is well-known
that we may perturb the functions $H^\lambda$ so that all 1-periodic orbits are non-degenerate, while keeping the functions admissible (for a nice reference
directly applicable to our setting see \cite{McLean:2012ab}*{Lemma 2.2}).  

Let $\mathcal{X}(H^{\lambda})$ denote the set of time-1 orbits of
$X_{H^{\lambda}}$. To each orbit $x \in \mathcal{X}(H^\lambda)$, there is an
associated
one-dimensional real vector space $\mathfrak{o}_x$ called the {\em orientation
line}, defined via index theory as the {\em determinant line} of a local
operator $D_x$ associated to $x$ (this goes back to \cite{Floer:1995fk} but our
treatment follows e.g., \cite{Abouzaid:2010kx}*{\S C.6}; note particularly that
the choice of local operator is fixed by the choice of trivialization of
$\mathcal{K}$ chosen above).  Over a ring $\K$,
the {\em Floer co-chain complex} is, as a vector space
\begin{equation} \label{eq:Floercomplexvec} 
    CF^*(\hatX,H^\lambda):= \bigoplus_{x \in \mathcal{X}(H^\lambda)}|\mathfrak{o}_x|_{\mathbf{k}} \end{equation}
where for any real one-dimensional vector space $V$, its {\em $\K$-normalization}
\begin{equation}\label{eq:normalization}
    |V|_{\K}
\end{equation}
is the rank one free $\K$-module generated by possible orientation of $V$,
modulo the relation that the sum of opposite orientations vanishes. To define a
differential on \eqref{eq:Floercomplexvec}, pick a compatible (potentially
$t\in S^1$-dependent) almost complex structure $J_t$ which is 
{\em of contact type}, meaning on the conical end, it satisfies
\begin{align}
    \label{eq: contacttype}  
    \hat{\theta} \circ J_t= dR. 
\end{align}
Given a pair of orbits $x_1$ and $x_0$, a {\em Floer trajectory} between $x_1$ and $x_0$ is formally a gradient flowline for $\mc{A}_H$ between $x_1$ and $x_0$ using the metric on $\mc{L} \hatX$ induced by $J_t$, or equivalently a map $u: \mathbb{R} \times
S^1 \to \hatX$, asymptotic to $x_\pm$ at $\pm \infty$
satisfying a PDE called {\em Floer's equation}:
\begin{align} \label{eq:FloerRinv}
\left\{
\begin{aligned}
 & u \colon \mathbb{R} \times S^1 \to \hatX, \\
& \lim_{s \to -\infty} u(s, -) = x_0\\
& \lim_{s \to +\infty} u(s, -) = x_1 \\
 &  \partial_s u + J_{t}(\partial_tu-X_{H^{\lambda}_{t}})=0.
\end{aligned}
\right.
\end{align}
Denote by 
\begin{equation}\label{eq:floertraj}
    \widetilde{\mathcal{M}}(x_0,x_1)
\end{equation}
the moduli space of Floer trajectories between $x_1$ and $x_0$, or solutions to
\eqref{eq:FloerRinv}.
For generic $J_t$, \eqref{eq:floertraj} is a manifold of dimension
\[ 
    |x_0| - |x_1| 
\]
where $|x|$ (or equivalently $\deg(x)$), the index of the local operator $D_x$ associated to $x$, is
equal to $n - CZ(x)$, where $CZ(x)$ is the Conley-Zehnder index of
$x$ (with respect to the symplectic trivialization of $x^* T\hatX$ induced by the
trivialization of $\mathcal{K}$) \cite{Cieliebak:1995fk, Floer:1995fk}. 
There is an induced $\mathbb{R}$-action on the
moduli space $\widetilde{\mathcal{M}}(x_0,x_1)$ given by translation in the $s$-direction,
which is free for non-constant solutions. Whenever $|x_0| - |x_1| \geq 1$, 
for a generic $J_t$ the quotient space 
\begin{equation}\label{eq:modulispacetraj}
    \mathcal{M}(x_0, x_1) := \widetilde{\mathcal{M}}(x_0,x_1)/ \mathbb{R}
\end{equation}
is a manifold of dimension $|x_0| - |x_1| - 1$.

The most delicate part of setting up such a theory for exact (non-compact)
symplectic manifolds, where much of the analysis is otherwise simplified, is an
{\em a priori compactness result} ensuring that solutions to the PDE cannot escape to
$\infty$ in the target (this is vital input into the usual Gromov compactness
theorems for moduli spaces, which apply to maps into a compact target).
Technically, one can ensure a priori compactness by 
carefully choosing the behavior of $J_t$ and $H$ near $\infty$ so that a sort
of {\em maximum principle} holds for solutions to Floer's equation
(at least outside a compact set); see e.g., \cite{Seidel:2010fk}. The maximum principle
ensures that any solution $u$ to Floer's equation remains in some compact
subset of the target $Z \subset \hatX$, which depends only on the asymptotics of
$u$ (and possibly $\hatX$, $H$, $J_t$).

Given that input, a  version of Gromov compactness ensures that for
generic choices, whenever $|x_0| - |x_1| = 1$,
the moduli space
\eqref{eq:modulispacetraj} is compact of dimension 0. Moreover,
orientation theory associates, to every rigid element 
$u \in \mathcal{M}(x_0,x_1)$
an isomorphism of orientation lines $\mu_u:
\mathfrak{o}_{x_1} \stackrel{\cong}{\ra} \mathfrak{o}_{x_0}$ and hence an induced map $\mu_u:
|\mathfrak{o}_{x_1}|_{\K} \ra |\mathfrak{o}_{x_0}|_{\K}$. 
Using this, one defines the $|\mathfrak{o}_{x_1}|_{\K}\!-\! |\mathfrak{o}_{x_0}|_{\K}$
component of the differential
\begin{equation} 
    (\partial_{CF})_{x_1, x_0}  = \sum_{u \in \mathcal{M}(x_0,x_1)} \mu_u
\end{equation} 
whenever $|x_0| = |x_1| + 1$ (and 0 otherwise).
When applied to 2 dimensional moduli spaces (which are 1 dimensional after
quotienting by $\R$), Gromov compactness and gluing analysis imply that:
\begin{lem} \label{lem:Floercomp} 
    Given $x_0,x_1 \in \mathcal{X}(H^{\lambda})$ such that $|x_0| = |x_1| + 2$
    for generic $J_t$,
    $\mathcal{M}(x_0,x_1)$
    admits a compactification $\overline{\mc{M}}(x_0,x_1)$ with
    \[
        \partial \overline{\mc{M}}(x_0,x_1) := \bigsqcup_{y \in \mathcal{X}(H^{\lambda}), |y| = |x_1| + 1}
 \mathcal{M}(x_0,y)  \times \mathcal{M}(y,x_1).\]
\end{lem}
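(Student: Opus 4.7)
The plan is to establish the compactification via the standard Gromov--Floer compactness package, combined with transversality and gluing, following the template that works in Floer cohomology on closed symplectic manifolds but with an additional \emph{a priori} $C^0$ bound to handle non-compactness of $X$. First I would establish that any sequence $u_n \in \mathcal{M}(x_0, x_1)$ is uniformly bounded in the target. Because $J_t$ is of contact type and $H^\lambda$ is admissible of slope $\lambda \notin \operatorname{Spec}(\partial \bar{X})$, the function $R \circ u_n$ on the cylindrical end satisfies a weak subharmonicity property, and the usual maximum principle (see e.g.~\cite{Seidel:2010fk}) forces the image of each $u_n$ to lie in a fixed compact set $Z \subset X$ determined only by the asymptotics $x_0, x_1$. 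I expect this step to be the principal analytic obstacle, though it is now well-understood in the exact convex setting.

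Given the $C^0$ bound, I would apply Gromov--Floer compactness inside $Z$. Since $\hat{\omega} = d\hat{\theta}$ is exact and all trajectories lie in a compact subset of an exact symplectic manifold, no holomorphic sphere or disc bubbling can occur (a nonconstant $J_t$-holomorphic sphere would have strictly positive symplectic area, contradicting exactness). Therefore the only possible degenerations are breakings of the cylinder into a chain $(v_1, \ldots, v_k)$ of Floer trajectories joining a sequence of periodic orbits $x_0 = y_0, y_1, \ldots, y_{k-1}, y_k = x_1$, with each $v_i$ descending to an element of $\mathcal{M}(y_{i-1}, y_i)$.

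Next I would carry out the dimension count to restrict the breakings that can occur on the boundary of a $1$-dimensional moduli space. For generic $J_t$ each $\mathcal{M}(y_{i-1}, y_i)$ is a manifold of dimension $\deg(y_{i-1}) - \deg(y_i) - 1 \geq 0$, and summing yields
\[
\sum_{i=1}^k \bigl(\deg(y_{i-1}) - \deg(y_i) - 1\bigr) \;=\; \deg(x_0) - \deg(x_1) - k \;=\; 2 - k \;\geq\; 0,
\]
so $k \leq 2$. The case $k = 1$ is the interior $\mathcal{M}(x_0, x_1)$, so genuine boundary points correspond to $k = 2$, forcing $\deg(y_1) = \deg(x_1) + 1$ and both $\mathcal{M}(x_0, y_1)$ and $\mathcal{M}(y_1, x_1)$ to be rigid. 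This identifies the boundary set-theoretically with the disjoint union in the statement.

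Finally I would invoke the standard Floer gluing theorem: each pair $(u_1, u_2) \in \mathcal{M}(x_0, y) \times \mathcal{M}(y, x_1)$ with $\deg(y) = \deg(x_1) + 1$ is the limit of a unique smooth $1$-parameter family of elements of $\mathcal{M}(x_0, x_1)$, so a collar neighborhood of $(u_1, u_2)$ in $\overline{\mathcal{M}}(x_0, x_1)$ is homeomorphic to $[0, \epsilon)$. Combined with compactness and the dimension count this promotes $\overline{\mathcal{M}}(x_0, x_1)$ to a compact topological $1$-manifold with boundary equal to the asserted disjoint union. Beyond the $C^0$ bound, the only genuinely new input over closed-manifold Floer theory is ensuring that transversality can be achieved by an almost complex structure that remains of contact type on the end; this is done by perturbing $J_t$ only on a compact subset of $X$ large enough to contain all relevant trajectories, which does not affect the maximum principle.
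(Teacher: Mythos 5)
Your proposal is correct and follows exactly the route the paper has in mind: the a priori $C^0$ bound via the maximum principle for contact-type $J_t$ and admissible $H^\lambda$, Gromov--Floer compactness with bubbling excluded by exactness, the index count limiting breaking to one intermediate orbit, and standard gluing to produce the collar structure. The paper simply cites this as standard (``Gromov compactness and gluing analysis''), so there is nothing to add.
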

By a standard argument this implies that $\partial_{CF}^2=0$ and hence that the cohomology of $\partial_{CF}$ is well-defined.
We will denote by 
\begin{equation}
    HF^*(\hatX,H^\lambda):=H^*(CF^*(\hatX,H^\lambda),\partial_{CF});
\end{equation}  
standard techniques involving continuation maps shows that this group only
depends on $\lambda \in \R$. 
 Furthermore, whenever $\lambda_2 \geq \lambda_1$ there are continuation maps \cite{Seidel:2010fk}
 \begin{equation}\label{eq:continuation}
 \mathfrak{c}_{\lambda_1,\lambda_2}: HF^*(\hatX, H^{\lambda_1})  \to HF^*(\hatX,H^{\lambda_2}) 
 \end{equation}
 defined as follows: Let $H_{s,t}$ be a map from $\R \times S^1 \ra C^{\infty}(\hatX;
 \R)$ which agrees with $H^{\lambda_1}$ near $+\infty$ and $H^{\lambda_2}$ near
 $-\infty$ and which is monotonic, meaning (at least away from a compact set in
 $X$) $\partial_s H \leq 0$. Let $J_{s,t}$ be a compatible $\R \times S^1$ dependent almost
 complex structure agreeing with a choice of $J_{\lambda_1}$ used to define
 $CF^*(\hatX, H^{\lambda_1})$ near $+\infty$ and a choice of $J_{\lambda_2}$ used
 to define $CF^*(\hatX, H^{\lambda_2})$ near $-\infty$. 
Then, if $x_1$ is an orbit of $H^{\lambda_1}$ and $x_2$ is an orbit of
$H^{\lambda_2}$, the  $\mathfrak{o}_{x_1}\! -\! \mathfrak{o}_{x_2}$ component
of the map \eqref{eq:continuation} is defined on the chain level
by counting maps $u: \R \times S^1 \ra \hatX$ satisfying Floer's equation for 
$H_{s,t}$ and $J_{s,t}$:
\[
    \partial_s u + J_{s,t} (\partial_t u -  X_{H_{s,t}}) = 0
\]
which in addition satisfy requisite asymptotics:
\begin{align} \label{eq: continuationmaps}
\left\{
\begin{aligned}
 & \lim_{s \to -\infty} u(s, -) = x_2\\
 & \lim_{s \to +\infty} u(s, -) = x_1.
\end{aligned}
\right.
\end{align}  
(A standard transversality, compactness, and gluing argument ensures that the
chain level map is in fact a chain map, as long as a maximum principle holds
for elements of the moduli space. This maximum principle is where one requires $\lambda_2 \geq
\lambda_1$.)

Define {\em symplectic cohomology} to be the colimit of this directed system 
\begin{equation} 
    SH^*(\hatX) :=\varinjlim_{\lambda} HF^*(\hatX, H^{\lambda}).
\end{equation}
The symplectic cohomology of a Liouville domain is by definition the symplectic cohomology of its completion
\begin{equation}
    SH^*(\bar{X}):= SH^*(\hatX).
\end{equation}
It is not hard to prove that these definitions are independent of the various
choices made along the way \cite{Seidel:2010fk}. By construction there are
canonical morphisms, for each $\lambda$
\begin{align} 
    \mathfrak{c}_{\lambda,\infty}: HF^*(\hatX, H^{\lambda}) \to SH^*(\hatX).
\end{align} 
In particular, given that 
there is a canonical isomorphism $HF^*(\hatX, H^{\lambda}) \cong H^*(\hatX)$
when $0 < \lambda \ll 1$ is smaller than the period of
any Reeb orbit on $\partial \bar{X}$ (see e.g., \cite{Ritter}*{\S 5}), 
there is a canonical map $H^*(\bar{X}) \to SH^*(\bar{X})$ \cite{Viterbo:1999fk}. 
The cone of (a chain-level version of) this map is often called {\em high
energy} (or {\em positive}) {\em symplectic cohomology} and denoted
$SH^*_+(\bar{X})$  (Recall from the introduction that the most canonical formulation
of the Log PSS map in the presence of holomorphic spheres is in terms of
$SH^*_+(\bar{X})$).

Let us review a convenient alternate construction of high energy symplectic
cohomology $SH^*_+(\bar{X})$, which makes use of a slightly more restrictive choice
of Hamiltonians (but manages to avoid using chain-level colimit constructions).
Consider Hamiltonians $H^{\lambda}$ on $\hatX$ for which: 
\begin{itemize}
    \item $H^{\lambda}=0$ on $\bar{X}$;
    \item Over the collar region satisfy $H^{\lambda}=h_{\lambda}(r)$, with $h_{\lambda}'(r) \geq 0$ and $h_{\lambda}''(r) \geq 0$;
\item For some $R_H$ near 1, we have that $h_{\lambda}(r)=\lambda (r-1)$ for $r \geq R_H$. 
\end{itemize}
After taking
a suitable small perturbation, which for simplicity we may assume is time
independent in the interior of $\bar{X}$, action considerations show that the
(orientation lines associated to) orbits in the interior generate a subcomplex 
$CF_{-}^*(\hatX, H^{\lambda}) \subset CF^*(\hatX,H^{\lambda})$.  Set \begin{align} \label{eq:posySH} CF^*_{+}(\hatX,
    H^{\lambda}):=\frac{CF^*(\hatX,H^{\lambda})} {CF_{-}^*(\hatX,H^{\lambda})}
\end{align} This construction passes to direct limits, giving rise to 
\begin{align} 
    SH^*_{+}(\bar{X}):= \varinjlim_\lambda HF_{+}^*(\hatX,H^{\lambda}) 
\end{align} 
It is not difficult to see that there is a long exact sequence 
\begin{align} 
    \cdots \to H^*(\bar{X}) \to SH^*(\bar{X}) \to SH^*_{+}(\bar{X}) \to H^{\ast+1}(\bar{X}) \to \cdots 
\end{align}
Finally, we observe that for such Hamiltonians $H^{\lambda}$, the integrated maximum principle of
\cite{Abouzaid:2010ly}*{Lemma 7.2} implies that all Floer trajectories and continuation maps (for $\lambda \leq \lambda'$) betweeen orbits actually lie in the compact region $\bar{X} \cup \{R \leq R_H\}$ (seeing as all orbits of $H^{\lambda}$ and $H^{\lambda'}$ themselves lie in this region).

\subsection{Algebraic structures}\label{subsec:algstructures}
Among its many other TQFT structures, symplectic cohomology is a {\em BV-algebra}; in particular it has a {\em pair of pants product} and a  {\em BV operator} which we now define. 

Recall that a {\em negative cylindrical end}, resp. a {\em positive cylindrical end} near a puncture $z$ of a Riemann surface $\Sigma$ consists of a proper holomorphic embedding
\begin{equation}
  \begin{split}    
   \epsilon_-: (-\infty,0] \times S^1 &\rightarrow \Sigma  \label{eq:negstrip} \\
   \end{split}
 \end{equation}
 resp. a proper holomorphic embedding
 \begin{equation} \label{eq:positivecylinder}
    \begin{split} \epsilon_+: [0,\infty) \times S^1 &\rightarrow
            \Sigma\\ \end{split} 
\end{equation} 
asymptotic to $z$. We use standard product coordinates $(s,t)$ on both of these
semi-infinite cylinders inherited from their embedding in $\R_s \times S^1_t =
\R \times (\R_t / \Z)$, with associated standard complex structure $j
\partial_s = \partial_t$.  Let $\Sigma$ be a Riemann surface equipped with
suitable cylindrical ends $\epsilon_i$.  Suppose to each cylindrical end we
have associated a time-dependent Hamiltonian $H_i$. Let $K \in
\Omega^1(\Sigma,C^{\infty}(\hatX))$ be a 1-form on $\Sigma$ with values in smooth
functions on $\hatX$ which, along the cylindrical ends, satisfies:
$$\epsilon_i^*(K)=H_idt \textrm{ for $|s| \gg 0$}$$ for some functions $H_i$.  To such a $K$, which we call a {\em perturbation 1-form}, we may
associate a Hamiltonian vector field-valued  1-form $X_K \in
\Omega^1(\Sigma,C^{\infty}(T\hatX))$, characterized by the property that for any
tangent vector $\vec{r}_z \in T_z \Sigma$, $X_K(\vec{r}_z)$ is the
Hamiltonian vector-field of the Hamiltonian function $K(\vec{r}_z)$ (on the
ends, $\epsilon_i^*(X_K) = X_{H_i} \otimes dt$).  
    
In order to define Floer-theoretic operations, we fix the following additional data on $\Sigma$: 
\begin{itemize}
    \item a (surface-dependent) family of {\em admissible} $J$, meaning $J$ should be of {\em contact type}.  Further, when restricted to cylindrical ends $J$ should depend only on $t$. 
    \item a subclosed 1-form $\beta$ (meaning $d \beta \leq 0$ pointwise, where positivity resp. negativity is detected by comparison with the standard complex orientation), which restricts to  $d_i \cdot dt$, for $d_i \in \mathbb{Z}^{+}$, when restricted to the cylindrical ends. 
    \item A perturbation 1-form $K$ (as defined above) restricting to $H^{d_i \lambda} dt$ on the cylindrical ends (in other words, in terms of the discussion above, we are now associating the Hamiltonian $H_i:= H^{d_i \lambda}$ to the $i$th cylindrical end). We further require that outside of a compact set on $\hatX$, 
    \begin{equation} \label{eq: simplepert} 
        K = H^{\lambda} \beta,
    \end{equation}
    where $H^{\lambda}$ is the choice of admissible Hamiltonian (in the sense of Definition \ref{def:admissibleham}) used to define $HF^*(\hatX; H^{\lambda})$.
\end{itemize}

The most general form of Floer's equation that we will be studying in this paper is: 
 \begin{align} \label{eq:generalFloer}
\left\{
\begin{aligned}
 & u \colon \Sigma \to \hatX, \\
 & (d u - X_{K})^{0,1} = 0.
\end{aligned}
\right.
\end{align} 

To such $u$ we can associate the {\em geometric energy}
\begin{equation}
    \Egeo(u) :=  \frac{1}{2}\int_{\Sigma}|| du - X_{K}||^2
 \end{equation} 
as well as the {\em topological energy}
\begin{equation}\label{eq:Etop}
    \Etop(u) = \int_{\Sigma} u^* \omega - d(u^*K).
\end{equation}

For solutions $u$ of \eqref{eq:generalFloer}, we have a relationship
\begin{equation}\label{generalenergyrelationship}
    \Egeo(u)=\Etop(u)+ \int_{\Sigma} u^*\Omega_K, 
\end{equation} 
where the curvature $\Omega_K$ of a perturbation 1-form $K$ is the exterior derivative of $K$ in the $\Sigma$  direction; this is a 2-form on $\Sigma$ with values in $C^{\infty}(\hatX)$ or equivalently a section of $\pi^* (\Lambda^2 T^*\Sigma) \to \Sigma \times \hatX$, where $\pi: \Sigma \times \hatX \to \Sigma$ is the projection (note $u^*\Omega_K$ above denotes pullback by the graph of $u$). 
If we momentarily assume that $K$ is of the form \eqref{eq: simplepert} and $H^{\lambda}\geq 0$ on all of $\hatX$, then $\Omega_K=H^{\lambda}d\beta$. The non-negativity of $H^{\lambda}$ and subclosedness of $\beta$ would therefore imply that $\int_{\Sigma} \Omega_K \leq 0$, giving an inequality (under these assumptions)
\begin{equation}\label{eq:monotonicidentityonend}
        \Egeo(u) \leq \Etop(u).
\end{equation}
In particular, if $\Etop(u) \leq 0$, it would follow that $du = X_K$ everywhere.
Now under our assumptions on $K$ and $H^{\lambda}$ (see third bullet point above), \eqref{eq: simplepert} and $H^{\lambda}>0$ are only required to hold outside
of some compact set $R$ of $\hatX$. This nevertheless implies one can a priori control geometric energy in terms of topological energy (which in turn can be computed in terms of the actions of asymptotics via Stokes' theorem), as we now recall (compare e.g., \cite[\S 5c]{Seidel:2009fk}).
First for any solution $u$ of \eqref{eq:generalFloer}, if  $\hat{\Sigma}$ is the subset of the domain of $u$ mapping outside $R$, then $\int_{\hat{\Sigma}} u^*\Omega_K \leq 0$, hence $u|_{\hat{\Sigma}}$ satisfies \eqref{eq:monotonicidentityonend}. For $u$ itself, we note first that $\Omega_K = 0$ near the cylindrical ends (where $K$ is closed in the $\Sigma$ direction),  hence its support lies in $\bar{\Sigma} \times \hatX$ where $\bar{\Sigma} \subset \Sigma$ is some compact subset independent of $u$. Also, the restriction of $\Omega_K$ to 2-forms on $\bar{\Sigma}$ with values in $C^{\infty}(R)$ is (by compactness of $R$ and $\bar{\Sigma}$) necessarily bounded above (by say $A d\mathrm{vol}_{\Sigma}$ for some $A > 0$ and a nowhere vanishing volume form $d\mathrm{vol}_{\Sigma}$). It follows that, using the notation $\hat{\Sigma}^c:= \Sigma \setminus \hat{\Sigma}$ for the region mapping to $R$, the curvature satisfies
\begin{equation*}
    \begin{split}
        \int_{\Sigma} u^*\Omega_K &= \int_{\hat{\Sigma}} u^*\Omega_K +  \int_{\hat{\Sigma}^c} u^*\Omega_K \leq \int_{\hat{\Sigma}^c} u^*\Omega_K\\
        &= \int_{\hat{\Sigma}^c \cap \bar{\Sigma}} u^*\Omega_K  \ \ \textrm{as $\Omega_K = 0$ outside $\bar{\Sigma}$}\\
        & \leq \int_{\hat{\Sigma}^c \cap \bar{\Sigma}} A d \mathrm{vol}_{\Sigma} \ \ \textrm{as $(\Omega_K)_{(p, u(p))} \in \Omega^2(\hat{\Sigma}^c \cap \bar{\Sigma}; C^{\infty}(R))$ for $p \in \hat{\Sigma}^c \cap \bar{\Sigma}$}
    \end{split}
\end{equation*}
is bounded above independently of $u$ by $C:= A \cdot \mathrm{vol}(\bar{\Sigma})$ (note that while $\hat{\Sigma}^c$ depends on $u$, $\bar{\Sigma}$ is independent of $u$ and hence so is this bound).
Thus we obtain a bound on the geometric energy of a solution to
Floer's equation in terms of the topological energy and a constant $C$ depending
only on the perturbation 1-form $K$:
\begin{equation}\label{eq:monotonicidentity}
        \Egeo(u) \leq \Etop(u) + C
\end{equation}
 
We begin by defining the pair of pants product as an operation
\begin{align} \label{eq: prod} (-\cdot -):HF^*(H^{\lambda})\otimes HF^*(H^{\lambda}) \to HF^*(H^{2\lambda}); \end{align}
 to do so we specialize to the case where $\Sigma$ is the pair of pants, viewed as a sphere minus three points. 
Labeling the punctures of $\Sigma$ by $z_1$, $z_2$ and $z_{0}$, we equip
$\Sigma$ with positive cylindrical ends $\epsilon_+^1$, $\epsilon_+^2$ around $z_1$ and $z_2$ and a negative
end $\epsilon_-^0$ around $z_{0}$. We count solutions to Floer's equation \eqref{eq:generalFloer} such that
\begin{align} \label{eq:FloerS}
\left\{
\begin{aligned}
 & u \colon \Sigma \to \hatX, \\
 & \lim_{s \to -\infty} u(\epsilon_-^0(s, -)) = x_0\\
 & \lim_{s \to \infty} u(\epsilon_+^1(s, -)) = x_1\\
& \lim_{s \to \infty}u(\epsilon_+^2(s, -)) = x_2 \\
\end{aligned}
\right.
\end{align} 
The operation \eqref{eq: prod} is compatible with the continuation
maps $\mathfrak{c}_{\lambda_1,\lambda_2}$, and hence induces a product
$SH^*(\bar{X}) \otimes SH^*(\bar{X}) \to SH^{*}(\bar{X})$ which we call the pair of pants product.

We will need to consider certain parameterized operations on symplectic cohomology as well. In the simplest case, consider the cylinder $\mathbb{R}_s\times S^1_t$. We will consider operations parameterized by a value $r \in S^1$. Set $$ H^{(r)}_{t}(x)=H(t-r,x), \quad J^{(r)}_{t,x}= J_{t-r} $$
Choose a pair $(J_{s,t,r},H_{s,t,r})$ such that along the cylindrical ends 
\begin{align} \label{eq:FloerS1}
\left\{
\begin{aligned}
    & (J_{s,t,r},H_{s,t,r})=(J^{(r)},H^{(r)}) & \textrm{ for all }s \ll 0\\
& (J_{s,t,r},H_{s,t,r})=(J,H) & \textrm{ for all }s \gg 0.
\end{aligned}
\right.
\end{align}

We will denote the rotated Hamiltonian orbits of $H^{(r)}_{t}$ by $x^{(r)}$.
Note that these orbits are bijection with those of $H_{t}$. Under this
correspondence, Floer trajectories of $(H^{(r)}_t, J^{(r)}_{t,x})$ with
asymptotics $x_0^{(r)}$, $x_1^{(r)}$ are naturally in bijection with
trajectories of $(H_t, J_t)$ with asymptotics $x_0$, $x_1$; 
i.e., there is a canonical isomorphism between the Floer complexes of
$H^{(r)}_{t}$ and $H_{t}$. For later use, we denote this former moduli
space, the solutions to Floer's equation for $(H^{(r)}_t, J^{(r)}_{t,x})$ with
asymptotics $x_0^{(r)}$, $x_1^{(r)}$, by $\mathcal{M}_r(x_0,x_1)$.
For generic
$(J_{s,t,r},H_{s,t,r})$, the BV operator defines an operation 
$$ \Delta: HF^{*}(\hatX, H^{\lambda}) \to HF^{*-1}(\hatX,H^{\lambda})$$
by counting solutions to the parameterized Floer equation: 
\begin{align} \label{eq:FloerSBV}
\left\{
\begin{aligned}
    & r \in S^{1},\\
 & u \colon \Sigma \to \hatX, \\
 & \lim_{s \to -\infty} u(s, -) = x^{(r)}_0\\
 & \lim_{s \to \infty} u(s, -) = x_1\\
&  \partial_s u + J_{s,t,r}(\partial_tu-X_{H_{s,t,r}})=0 
\end{aligned}
\right.
\end{align}  Furthermore this operation is compatible with the continuation
maps $\mathfrak{c}_{\lambda_1,\lambda_2}$, and hence induces an operator
$\Delta: SH^*(\bar{X}) \to SH^{*-1}(\bar{X})$ which we also call the BV operator. The BV
operator and the product together give $SH^*(\bar{X})$ the structure of a unital
\emph{BV-algebra} (meaning that there is a unit for the algebra structure,
$\Delta^2 = 0$,  and lastly there is a suitable compatibility relation between $\Delta$
and the product which we omit; see e.g., \cite{getzler1994} for a precise formula, 
which is not directly relevant to this paper).

The final general property of symplectic cohomology that we need concerns its
functoriality. Namely, let $j: \bar{W} \subset \bar{X}$ be a sub-Liouville domain. We
have a Viterbo functoriality map 
\begin{align} \label{eq:Viterbo} j^{!}:SH^*(\bar{X})
    \to SH^*(\bar{W})  
\end{align} 
which respects the BV structures on both sides:
\begin{lem} \label{lem:Viterboalgebraic}
    The Viterbo functoriality map $j^{!}$ is a morphism of unital BV algebras (and in particular preserves the BV operator and the product).
\end{lem}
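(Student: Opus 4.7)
My plan is to reduce the statement to an integrated maximum principle applied to the Riemann surfaces defining the BV-algebra operations. Following the standard setup for Viterbo functoriality, I would choose a cofinal family of ``stepwise'' Hamiltonians $H^{\lambda,\mu}$ on $X$ which are essentially zero on $W$, linear with a large slope $\mu$ on the collar of $\partial W$, essentially constant on $\bar{X} \setminus W$ away from the two collars, and linear with slope $\lambda$ on the collar of $\partial \bar{X}$. After a small generic time-dependent perturbation, the one-periodic orbits split by action into ``inner'' orbits (contained in $W$ together with a small neighborhood of $\partial W$, generating a subcomplex after passing to the quotient) and ``outer'' orbits (lying over the outer collar). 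The quotient by the outer orbits is identified, by a standard rescaling/cofinality argument in the $\mu$ direction, with $CF^*(W, H^\lambda_W)$, and the projection to this quotient is the chain-level transfer defining $j^!$.

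The key geometric input is an integrated maximum principle in the style of \cite{Abouzaid:2010ly}*{Lemma 7.2} (the same one invoked just above for the definition of $SH^*_+$). Concretely, fix a Riemann surface $\Sigma$ equipped with cylindrical ends and a perturbation datum $(\beta, K, J)$ as in \S 2.2, with $\beta$ subclosed, $X_K = X_{H^\lambda} \otimes \beta$ outside a compact set, and $J$ of contact type near the collar of $\partial W$ as well as near $\partial \bar{X}$. If all output (negative-end) orbits are inner orbits, then any solution to \eqref{eq:generalFloer} cannot exit the region $\{R_{\partial W} \leq R_H\}$: in the collar of $\partial W$, the positivity of $H^{\lambda}$ and subclosedness of $\beta$ force $u$ to take its maximum of $R$ on the boundary, contradicting positive-slope Hamiltonian orbits at the outer end. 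Applied to the pair of pants, this shows \eqref{eq: prod} descends to the quotient and matches the product on $CF^*(W, H^\lambda_W)$; applied to the $S^1$-parameterized cylinder it shows $\Delta$ descends and matches the BV operator on $W$; applied to the cap it identifies the image of the unit with the unit in $SH^*(W)$.

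The main obstacle will be arranging the maximum principle uniformly across all the TQFT surfaces simultaneously, in particular for the $S^1$-family used to define $\Delta$. One must check that the contact-type condition \eqref{eq: contacttype} for $J_{s,t,r}$ and the required structure of the one-form $\beta$ can be taken consistently in $r \in S^1$ near the collar of $\partial W$, so that the inequality \eqref{eq:monotonicidentity} holds uniformly in the family; the same consistency question must be addressed for the homotopies of data used to prove independence of auxiliary choices and compatibility with continuation maps. Once these geometric choices are in place, the compatibility of $j^!$ with the product, BV operator, and unit follows formally from the chain-level descent arguments above.
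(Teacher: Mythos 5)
There is nothing to compare your argument against: the paper states this lemma without proof, treating the fact that the Viterbo transfer is a unital BV morphism as known and pointing (in the introduction) to \cite{Viterbo:1996kx, Ritter}. Your outline is, in substance, the argument those references give: step-shaped Hamiltonians $H^{\lambda,\mu}$, an action filtration separating orbits in (a neighborhood of) $W$ from those living over $\bar{X}\setminus W$, and an integrated maximum principle in the style of \cite{Abouzaid:2010ly}*{Lemma 7.2} applied to each TQFT surface to show that the operations \eqref{eq: prod}, $\Delta$, and the unit respect the filtration and induce, on the quotient, the corresponding operations for $W$. So your route is correct and is the expected one; it differs from ``the paper's proof'' only in the trivial sense that the paper supplies no argument at all. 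Two points to tighten if you write this out: (i) with the paper's action conventions (where the interior, low-energy orbits span the subcomplex $CF_{-}^*$), you should determine explicitly which of your inner/outer orbit groups spans a subcomplex and which a quotient --- your phrase ``inner orbits \ldots generating a subcomplex after passing to the quotient'' conflates the two, and getting the direction right is precisely what makes $j^!$ a chain-level projection rather than an inclusion; (ii) since the product has the form $HF^*(H^{\lambda,\mu})\otimes HF^*(H^{\lambda,\mu})\to HF^*(H^{2\lambda,2\mu})$, the rescaling/cofinality argument must be run in both slopes simultaneously, and the weights $d_i$ of $\beta$ at the ends must be matched with the slopes on \emph{both} collars so that the inequality \eqref{eq:monotonicidentity} and the maximum principle apply at a slice inside the collar of $\partial W$. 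Your worry about uniformity over the $S^1$-family defining $\Delta$ (and over homotopies of Floer data) is the right one to raise, but it is resolved as for a single surface: the contact-type condition \eqref{eq: contacttype} and the linearity of the Hamiltonian are conditions imposed on the target near the collars, hence can be arranged independently of $r\in S^1$ and of the homotopy parameter.
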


\def\o{\omega}
\def\e{\epsilon}
\def\D{ {\mathbb D}}
\def\a{\alpha}
\def\b{\beta}
\def\r#1{\mathrm{#1}}
\def\rb#1{\mathrm{\mathbf{#1}}}
\def\c#1{\mathcal{#1}}
\def\mc#1{\mathcal{#1}}
\def\ol#1{\overline{#1}}
\def\k{\kappa}
\def\d{\Delta}
\def\M{\mathcal{M}}
\def\xo{(X,\o)}
\def\ainf{A_\infty}
\def\f{\c{F}}
\def\sh{SH^*(M)}
\def\bd{\partial}
\def\y{\mc{Y}}
\def\endo{\mathrm{End}}
\def\id{\mathrm{id}}
\def\mf#1{\mathfrak{#1}}
\def\D{\mathbf{D}}

\section{Complements of normal crossings divisors}\label{sec:ncgeometry}
\subsection{Nice symplectic and almost-complex structures}\label{subsec:nicesymplectic}

\begin{defn}
    \label{def:logsmooth} 
    A {\em log-smooth compactification} of a smooth complex $n$-dimensional
    affine variety $X$ is a pair $(M,\mathbf{D})$ with $M$ a smooth, projective
    $n$-dimensional variety and $\mathbf{D} \subset M$ a divisor satisfying
\begin{align}
 &X = M \backslash \mathbf{D};\\
 &\textrm{The divisor $\mathbf{D}$ is normal crossings in the strict sense, e.g.,}\\
 &\nonumber\mathbf{D} := D_1 \cup \cdots \cup D_i \cup \cdots \cup D_k\ \textrm{where $D_i$ are smooth components of $\mathbf{D}$; and}\\
 &\label{eq:kappai}\textrm{There is an ample line bundle $\mathcal{L}$ on $M$ together with a section $s \in H^0(\mathcal{L})$ whose }\\
 &\nonumber\textrm{divisor of zeroes is $\sum_i \kappa_i D_i$ with $\kappa_i>0$}.
\end{align}
\end{defn}

It follows from Hironaka's embedded resolution of singularities--- see \cite[Main Theorem I]{MR0199184}, Theorem 1.6 of \cite{MR1440306} for a constructive proof and \cite{resofsing} for 
an expository reference--- that any smooth affine variety admits a log-smooth compactification. While this is well-known (see \cite{Seidel:2010fk}*{Lemma 4.4}), we briefly recall the argument as we will need similar arguments in the proof of Lemma \ref{defn: goodcomp}. The precise version of Hironaka's result that we will use is as follows: 

\begin{thm}(Embedded resolution of singularities)  \label{thm:Hironaka} Let $Y \to W$ be a closed embedding of a reduced variety (= finite-type scheme over $\mathbb{C}$) into a smooth variety $W.$ Then there exists a birational morphism $\pi_r: W' \to W$ given by a sequence of blow ups at smooth centers:
\begin{align} \label{eq:stages} W' := W_r \to \cdots \to W_j \to \cdots  \to W_1 \to W \end{align} 
 such that letting $E_j$ denote the exceptional divisors of $\pi_j: W_j \to W$ (so $E_r$ is the exceptional locus of $W' \to W$):  \begin{itemize} \item \begin{enumerate} \item the proper transform $Y'$ of $Y$ in $W'$ is smooth \item $Y'$ and $E_r$ simultaneously have only normal crossings.\footnote{This means that $E_r$ is normal crossings and meets $Y'$ transversely or equivalently that $E_r \cup Y'$ is a (not-necessarily equidimensional) normal crossings scheme (see also the fourth paragraph of page 335 of \cite{resofsing}).} \end{enumerate} \item For each $j$, letting $Y_j \subset W_j$ denote the proper transform of $Y$ to $W_j$, we have that either: \begin{enumerate} \item the blow-up center $N_j$ lies in the singular locus of $Y_j$ or \item $Y_j$ is smooth and $N_j \subset E_j \cap Y_j.$ \end{enumerate} \end{itemize} \end{thm} 

We will also need the following statement (both for the argument below and in Section 6): 

\begin{lem} \cite[Exercise 7.14(b) of Chapter 2]{Hartshorne} \label{lem:randomhart} Let $W$ be a smooth variety with ample line bundle $\mathcal{L}$. Let $\pi: W' \to W$ denote the blow up of $W$ along a smooth subvariety $N$ and let $E$ denote the exceptional divisor. Then there exists an $n_0$ such that $\pi^*(\mathcal{L})^{\otimes n} \otimes \mathcal{O}(-E)$ is (very) ample for all $n>n_0.$   \end{lem}
 
We are now in a position to prove: 
\begin{lem} \label{lem:smoothcomp} Any smooth affine variety $X$ admits a log-smooth compactification $(M,\mathbf{D}).$ \end{lem} 
\begin{proof} Compactify $X \subset \mathbb{C}^n$ inside of $\mathbb{P}^n$ to a variety $\tilde{M}$. Notice that $\tilde{M} \setminus X$ supports an ample divisor $\tilde{D}$ (the scheme-theoretic intersection of $\tilde{M}$ with the hyperplane at infinity). Apply embedded resolution of singularities (first to the variety $\tilde{M}$ to obtain a smooth $M$ and then to the compactifying divisors inside of $M$) to obtain a normal crossings compactification $(M,\D)$ of $X$. 
 
 It remains to show that the pair $(M,\D)$ satisfies \eqref{eq:kappai}. Let $M_1$ denote the first stage of the embedded resolution of singularities (i.e. the proper transform of $\tilde{M}$ inside of the resolution $W_1$). Then by Lemma \ref{lem:randomhart}, for $n$ sufficiently large, $n\pi^*(\tilde{D})-E_1$ defines an effective ample divisor on $W_1$ (it is effective because the pull-back of the section defining $\tilde{D}$ vanishes along $E_1.$). The restriction to $M_1$ therefore defines an effective ample divisor supported on $M_1 \setminus X.$  Continuing in this way, we see inductively that $\D$ supports an effective ample divisor $F$. To get one which has positive coefficients on each component $D_i$, one can take $\sum_i D_i + mF$ which is ample for $m$ sufficiently large by \cite[Exercise 7.5(b) of Chapter 2]{Hartshorne}. \end{proof} 
For all pairs $(M, \mathbf{D})$ in this paper, we will assume that the canonical
bundle is supported on $\D$
\begin{align} 
    & 
    \wedge^n_{\C} T^*M
    \cong \mathcal{O}(\sum_{i=1}^k -a_iD_i) \textrm{ for some integers $a_i \in \Z$}, \label{eq:volform} 
\end{align}  
and choose a meromorphic volume form $\Omega_{M, \mathbf{D}}$ on $M$ which is non-vanishing on $X$ and has poles of order $a_i$ along $D_i$ as in \eqref{eq:volform}. 

Given a subset $I \in \lbrace 1,\cdots, k \rbrace$ define
\begin{equation}
    D_I:= \cap_{i \in I} D_i;
\end{equation}    
we refer to the associated open parts of the stratification induced by
$\mathbf{D}$ as 
\begin{equation}
    \DIo = D_I \backslash \cup_{j \notin I} D_j
\end{equation}
By convention, we set $D_{\emptyset} := M$ and $\mathring{D}_{\emptyset} = X$.
Denote by $\mathfrak{i}: X \hookrightarrow M$ the natural inclusion map. 
\begin{defn} Let $X$ be a symplectic manifold equipped with a one-form $\theta_X$ such that $\omega_X=d\theta_X$. 
Let $Y$ denote the $\omega_X$-dual of $\theta_X$. We say that $(X,\theta_X)$ is a {\em finite-type convex symplectic manifold} if there exists an exhausting function $f_X: X \ra [0, \infty)$ together with a $c_0 \in \mathbb{R}^{\geq 0}$ such that $df_X(Y) > 0$ over all of $ f_X^{-1}[c_0,\infty).$ 
 
\end{defn}

See \cite{McLean:2012ab}*{\S A} for a comprehensive survey of these structures
including the notion of deformation of these structures that we will use. Note
in particular that any finite-type convex symplectic manifold $X$ has a
well-defined symplectic cohomology group (in the sense of the previous section)
defined as follows:  
pick an $f_X$ as above, note that the sub-level set $\bar{X}:= f_X^{-1}([N,\infty))$ is a
Liouville domain for any $N > c_0$ (the completion $\hat{\bar{X}}$ of which is
convex deformation equivalent to $X$ and in particular independent of choices
up to deformation equivalence), and define  
\begin{equation}
    SH^*(X):= SH^*(\bar{X}) 
\end{equation}
for any such $N > c_0$.

Any affine variety has a canonical (up to deformation equivalence) structure of
a finite-type convex symplectic manifold constructed as follows:
\vskip 10 pt
\begin{example}[Stein symplectic structure] \label{ex:stein} Pick a holomorphic embedding $i: X \to \C^N$ and
    equip $X$ with \begin{itemize}
        \item the one-form $\theta=i^*(\sum_{k=1}^N \frac{r_k^2}{2}d\theta_k)$
    (in terms of polar coordinates $(r_k,\theta_k)$ on $\mathbb{C}$); and 
    \item the
    exhausting function $f_X= i^*(\sum_k|z_k|^2)$. 
    \end{itemize}
    Up to deformation equivalence, the resulting convex symplectic structure is independent of
    choice of $i: X \hookrightarrow \C^N$.
\end{example} 
There is a different construction of a (deformation equivalent) convex symplectic
structure which is more natural from the point of view of normal crossings
compactifications.  
\begin{example}[Logarithmic symplectic structure]\label{ex:logsymplectic}  Let
    If $(M,\mathbf{D})$ be a log-smooth compactification of $X$ as above, and $s \in H^0(\mc{L})$ a section of a line bundle cutting out $\mathbf{D}$ as in Definition \ref{def:logsmooth}, we can equip $X$ with  
    \begin{itemize}
        \item the one-form $\theta=d^c\operatorname{log}|s|$. 

        \item the exhausting function is given by
        $f_X= -\operatorname{log}|s|$,
    \end{itemize}
    where $| \cdot |$ is any choice of positive Hermitian metric on the line bundle
    $\mc{L}$ (once more, the result is independent of $|\cdot |$ up to deformation
    equivalence).  
\end{example}
The deformation equivalence of the above two convex symplectic structures for a given $X$ is proven in \cite{McLean:2012ab}*{Lemma 5.18} (compare \cite{Seidel:2010fk}*{Lemma 4.4}). This implies in particular that the deformation class of convex symplectic structure on $X$ induced by a log-smooth compactification is independent of $(M,\D)$.
Here we recall a further deformation of the convex
symplectic structure on $X$, due to McLean \cite{McLean:2012ab} (see
\cite{Seidel:2010fk} for the case $\dim_{\C} X = 2$), with ``nice" properties
at infinity with respect to a given compactification $(M, \mathbf{D})$.  
To begin, after a deformation (as symplectic submanifolds) we assume that the smooth components of
$\mathbf{D}$ intersect orthogonally in $M$:
\begin{thm}[\cite{McLean:2012ab}*{Lemma 5.3, 5.15}] \label{thm:makingdivisorsorthogonal}
    There exists a deformation of the divisors $\mathbf{D}$ (through symplectic divisors) such that they
    intersect orthogonally with respect to the symplectic structure on $M$.
    This does not change the symplectomorphism type of the complement $X = M
    \backslash \mathbf{D}$, or the deformation class of its convex symplectic
    structure.  
\end{thm}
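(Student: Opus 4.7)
The plan is to deform the divisor configuration stratum by stratum, ordered by decreasing codimension. I proceed inductively over multi-indices $I \subseteq \{1, \dots, k\}$, starting with the deepest strata (largest $|I|$) and working outward. At stage $I$, the goal is to produce an ambient isotopy $\phi_t$ of $M$, supported in an arbitrarily small neighborhood of $\mathbf{D}$ and equal to the identity near previously normalized deeper strata, such that after applying $\phi_1$ the components $D_i$ for $i \in I$ meet $\omega$-orthogonally along an open neighborhood of $\mathring{D}_I$. Composing the isotopies across all stages gives a global deformation simultaneously orthogonalizing all multiple intersections.

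For the local step at a point $p \in \mathring{D}_I$, I apply the symplectic neighborhood theorem to the compact symplectic submanifold $D_I$ (equipped with the splitting of its symplectic normal bundle as the sum of complex line bundles $\nu_{D_I}(D_i)$, $i \in I$) to obtain Darboux-type coordinates centered at $p$ in which $D_I$ is a linear symplectic subspace and each $D_i$ with $i \in I$ is a codimension-two submanifold containing $D_I$. In the symplectic normal fiber at $p$, a short linear algebra argument produces codimension-two symplectic hyperplanes through the origin whose tangent directions still span the original $T_p D_i$ individually but which are now mutually symplectically orthogonal; this is possible because any transverse collection of symplectic hyperplanes in a standard symplectic vector space can be rotated inside $\operatorname{Sp}(2n)$ to become orthogonal without destroying transversality. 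A relative Moser argument applied to the straight-line interpolation between the original embedding and its orthogonalized linear model then yields a local diffeomorphism straightening each $D_i$ while fixing $D_I$ pointwise. Patching such local straightenings via a partition of unity subordinate to a cover of $\mathring{D}_I$ by Darboux charts, and running an ambient Moser flow on the interpolating family of symplectic forms, globalizes the construction along $\mathring{D}_I$.

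I expect the main obstacle to be maintaining the orthogonality achieved at deeper strata while adjusting shallower ones: the isotopy $\phi_t$ at stage $I$ must not disturb the normalization already arranged near $\mathring{D}_J$ for $J \supsetneq I$. This is controlled by choosing cutoff functions at each stage to vanish on tubular neighborhoods of the deeper strata (shrinking these neighborhoods if necessary so that the local Darboux charts at stage $I$ still cover their complements in $\mathring{D}_I$). That the complement $X = M \setminus \mathbf{D}$ is unaffected up to symplectomorphism and deformation is then essentially automatic: the total isotopy is supported in an arbitrarily small neighborhood of $\mathbf{D}$, hence is the identity on a neighborhood of the Liouville end of $X$ arising from Example \ref{ex:logsymplectic}. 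Running the deformation as a one-parameter family yields a one-parameter family of finite-type convex symplectic structures on $X$ that agree outside a fixed compact set, giving both the claimed symplectomorphism of complements and the deformation equivalence of convex symplectic structures.
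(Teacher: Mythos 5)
The paper does not prove this statement; it is quoted verbatim from McLean (\cite{McLean:2012ab}, Lemmas 5.4 and 5.15), so the comparison is with McLean's argument, whose overall shape (induction over strata by depth, local models near $D_I$, cut-offs preserving what was achieved at deeper strata) your sketch does reproduce. However, two steps in your proposal have genuine gaps. First, the linear-algebra claim is wrong as stated: $\operatorname{Sp}(2n,\R)$ does \emph{not} act transitively on transverse configurations of symplectic codimension-two subspaces. Already in $\R^4$ the orthogonal pairs $(P,P^{\omega})$ form a $4$-dimensional family inside the $8$-dimensional space of transverse symplectic pairs, so a generic transverse pair cannot be ``rotated inside $\operatorname{Sp}$'' to an orthogonal one. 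What is true, and what the argument needs, is that the configuration can be \emph{isotoped through transverse symplectic configurations} to an orthogonal one; but then your ``relative Moser applied to the straight-line interpolation'' is not a Moser argument at all (the form is fixed and the submanifolds move), and patching local straightenings ``via a partition of unity'' does not make sense for diffeomorphisms -- one has to patch generating deformations of the configuration and verify that symplecticity and transversality of all the $D_i$ and their intersections are preserved throughout, which is exactly where the work in McLean's Lemma 5.4 lies.

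Second, the last paragraph is backwards and hides the real content of Lemma 5.15. An isotopy supported in an arbitrarily small neighborhood of $\mathbf{D}$ is supported precisely \emph{in} the convex end of $X$ (the end of the logarithmic structure of Example \ref{ex:logsymplectic} accumulates on $\mathbf{D}$), not away from it, so invariance of the complement's symplectomorphism type and convex deformation class is not ``essentially automatic.'' One must pull back the one-parameter family of structures on the moving complements $M\setminus \mathbf{D}_t$ by the ambient isotopies to obtain a family of structures on a fixed $X$, and then check that each member is a finite-type convex symplectic structure (Liouville field outward-pointing near infinity for every $t$); note also that the deformed divisors are no longer complex hypersurfaces, so the structure on the new complement is not literally $d^c\log|s|$ and must be produced from the symplectic neighborhood description. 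Supplying these two verifications is what the cited Lemmas 5.4 and 5.15 of \cite{McLean:2012ab} actually do.
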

\begin{defn}[\cite{McLean:2012ab}*{Lemma 5.14}] \label{defn:nice}
    A convex symplectic structure $(W,\theta)$ constructed from a log-smooth compactification
    $(M,\mathbf{D})$ is {\em nice} if (after first implicitly applying a deformation to make the divisors symplectically orthogonal as in Theorem \ref{thm:makingdivisorsorthogonal}) 
          there exist tubular neighborhoods $U_i$ of $D_i$ with symplectic
          projection maps 
          \[
            \pi_i: U_i \ra D_i
           \] 
           such that on a $|I|$-fold intersection of tubular neighborhoods 
           \[
           U_I: = \cap_{i \in I} U_i = U_{i_1} \cap \cdots \cap U_{i_{|I|}},\] 
           iterated projection $|I|$ times 
           \[
               \pi_I:= \pi_{i_1} \circ \pi_{i_2} \circ \cdots \circ \pi_{i_{|I|}}: U_I \ra D_I
           \]
           is a symplectic fibration with structure group $U(1)^{|I|}$ and
           with fibers symplectomorphic to a product of standard symplectic discs $\prod_{i
           \in I} \D_\e$ of some radius $\epsilon$ and such that $\theta$ restricts to
           \begin{equation}\label{thetafiber}
               \sum_{i \in I} (\frac{1}{2} r_i^2 - \kappa_i) d\varphi_i,
           \end{equation}
           on each fiber, where $(r_i, \varphi_i)$ are standard polar coordinates for  
           $\D_\e$ and the $\kappa_i$ are as in \eqref{eq:kappai}. 
           Moreover, each $\pi_i$ for
           $i \in I$ is fiber-preserving, sending
           \[
           \prod_{j \in I} \D_\e \ra \prod_{j \in I | j \neq i} \D_\e.
           \]
\end{defn}
\begin{thm}[\cite{McLean:2012ab}*{Theorem 5.20}]  \label{thm:McLean520}
    Given an affine variety $X$ equipped with a log-smooth compactification
    $(M, \D)$ as above, there exists a convex symplectic structure $(W,\tilde{\theta})$
    deformation equivalent to the canonical (up to deformation equivalence)
    convex symplectic structure $(X,\theta)$ which is nice.
\end{thm}

Henceforth, we replace $(X,\theta)$ by the corresponding nice structure. 
By shrinking the tubular neighborhoods in Definition \ref{defn:nice} if
necessary, we suppose their size $\epsilon>0$ is sufficiently small so that 
    $\epsilon^2 \ll \kappa_i \textrm{ for each }i \in \{1, \ldots, k\}$.
In this setting, there is a nice Liouville domain $\bar{X} \subset X$, obtained by
smoothing the complement $M \backslash \cup_{i} U_i$ of the union of the tubular
neighborhoods around smooth divisors.
Below, we describe an explicit model of $\bar{X}$, after first recalling some additional relevant notation and consequences of having a nice structure.

Observe that the coordinates $r_i$ in Definition \ref{defn:nice} induce smooth functions
\begin{equation}
    r_i^2: U_i \to \mathbb{R}
\end{equation}
on the neighborhoods $U_i$. On the intersections $U_I$, these functions give
rise to commuting Hamiltonian $S^1$ actions. Denote by $UD$ the union
$$UD:=\cup_iU_i.$$ 
A basic important consequence of the ``nice'' property is:
\begin{lem}\label{nicehamiltonianvectorfields}
    \begin{enumerate}
        \item[(1)] The symplectic orthogonal to the tangent space of any fiber $\pi^{-1}_i(p)$
            is contained in a level set of the radial function $r_i^2$.

\item[(2)] In particular, if $I = \{i_1, \ldots, i_s\}$, any smooth function
    $f$ of the corresponding radial functions $f(r_{i_1}^2, \ldots,
    r_{i_s}^2): U_I \to \R$ has Hamiltonian vector field $X_{f}$ tangent to
the fibers of $\pi_{I}$ at points $p \in U_I \setminus_{j \notin I} U_j$ in
    $U_I$ away from a deeper stratum), with Hamiltonian vector field of following
    the form: for 
    $F = \pi_{I}^{-1}(p)$ with $p \in U_I \backslash \cup_{j \notin I} U_j$,
    \begin{equation}\label{hamvectorfield}
        (X_f)|_{F} = X_{f |_{F}} = \sum_{i \in I} 2 \frac{\partial f}{\partial (r_i)^2}\partial_{\varphi_i}.
    \end{equation}

\item[(3)] Let $I = \{i_1, \ldots, i_s\} \subset \{1, \ldots, k\}$ be any
    subset. In the associated neighborhood $U_I$ of $D_I$ away from points in
    $U_j$ for $j \notin I$, any two functions of the radial coordinates
    $f(r_{i_1}^2, \ldots, r_{i_s}^2)$, $g(r_{i_1}^2,
    \ldots, r_{i_s}^2)$ have commuting Hamiltonian vector fields: $\omega(X_f,
    X_g) = df(X_g) = -dg(X_f) = 0$.  In particular, $dr_{i_t}^2(X_f)$ = 0 for
    any $t$ and $f$ as above.
\end{enumerate}
\end{lem}
\begin{proof}
    By definition any vector field $Y$ symplectically orthogonal
    to the fibers of $\pi_i$ preserves the function $r_i^2$, i.e., $d(r_i^2)(Y)
    = \omega(X_{r_i}^2, Y) = 0$. It follows that $X_{r_i}^2$ is tangent to the
    fibers of $\pi_i$, which implies (1) (as the symplectic orthogonal to the tangent
    space of a fiber is contained in the symplectic orthogonal to $X_{r_i}^2$
which is the kernel of $d(r_i^2)$). For (2), note that (1) implies that, on $U_I \backslash \cup_{j \notin I} U_j$ where $f$ depends only on $\rho_i$ for $i \in I$, $df =
    \omega(X_f, -)$ is zero on any vector simultaneously tangent to the
    level sets of all radial functions $r_i^2$ for $i \in I$, in particular on any vector that is
    symplectically orthogonal to all fiber directions in $U_I$, hence $X_f$ is tangent to the fibers.
     It follows also that $X_f|_{F} = X_{f |_{F}}$, from which the
    computation \eqref{hamvectorfield}, and hence the remainder of (2) is immediate (given our
    knowledge of the the fiberwise symplectic form as $d$ of
    \eqref{thetafiber}). (3) can be deduced from \eqref{hamvectorfield} and
the fact that $d(r_j^2)(\partial_{\varphi_i}) = 0$.
\end{proof}

\begin{defn}\label{def:normaltorusbundles}
    Define \[
        S_I 
    \] 
    to be the unit $T^{|I|}$ bundle around each $D_I$, e.g. the
    set of points in $U_I$ where $r_i=\delta$ for some small $\delta<\epsilon$
    and $i \in I$. We will use the notation $S_i:= S_{\{i\}}$ for the one
    dimensional torus bundle over $D_i$. Let 
    \[
    \mathring{S}_I
    \] 
    denote the restriction of the $T^{|I|}$ bundle to $\mathring{D}_I$.  
\end{defn}

 Let $J$ denote a subset of $\lbrace 1,\cdots, n \rbrace \setminus I$. There is
 a codimension $|J|$ stratum of $S_I$ corresponding to the restricted torus
 bundle $S_I|_{D_{I\cup J}}$. Near these strata, we have radial coordinates
 $r_j$. We let $UD_{I,j}$ denote set of points in $S_I$ where $r_j \leq
 \epsilon$. We let $\overline{S}_I$ denote the natural closure of the open
 manifold $ S_I \setminus \lbrace \cup_{j \notin I} UD_{I,j} \rbrace $. This is
 a manifold with corners whose boundary is the region
 \[
     \partial\overline{S}_I=\bigcup_{j \notin I} (\lbrace r_j=\epsilon\rbrace \setminus  \cup_{k \neq j} \lbrace r_k< \epsilon \rbrace). 
 \]
 We adopt the convention that $S_{\emptyset} = M$
 and
 \[
     \bar{S}_{\emptyset} := \overline{M \backslash UD}
 \]

For each $I$, let $N_I$ denote the normal bundle to the stratum $D_I$ and $\NIo$ the restriction of this bundle to $\mathring{D}_I$. These two bundles have been equipped with canonical $U(1)^{|I|}$ structures and there is a canonical identification of the associated torus bundles with $S_I$ and $\mathring{S}_I$ respectively.

In order to define $\bar{X}$, we first choose a smooth function  $q:[0,\epsilon^2) \to \mathbb{R}$ satisfying:
\begin{enumerate} 
    \item There exists $\epsilon_q \in (0,\epsilon)$ such that $q(s)=0$ if and only if $s \in [\epsilon_q^2,\epsilon^2)$
\item The derivative of $q(s)$ is strictly negative when $q(s) \neq 0$.
\item $q(s)=1-s^2$ near $s=0$. 
\item There is a unique $\bar{s} \in [0,\epsilon^2)$ with $q''(\bar{s})=0$ and $q(\bar{s}) \neq 0$. 
\end{enumerate}
(Compare \cite{McLean2}*{proof of Theorem 5.16}). Define 
\begin{align}\label{Sfunction}
    F: UD &\to \mathbb{R}\\
    F(x)&=\sum_{i=1}^k q(r_i^2),
\end{align}
where we implicitly smoothly extend $q(r_i^2)$ to be 0 outside of the region
$U_i$ where $r_i^2$ is defined.

\begin{lem}\label{lem:outwardpointing}
For $\delta>0$ sufficiently small, 
the Liouville vector field associated to $\theta$ is strictly outward pointing
along $F^{-1}(\delta)$. 
\end{lem}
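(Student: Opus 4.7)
My approach is a direct local coordinate computation in the nice tubular neighborhoods $U_I$ provided by Theorem \ref{thm:McLean520}. First I fix the orientation: because $F=\sum_i q(r_i^2)$ takes its largest values near $\mathbf{D}$ (where the $r_i$ are small) and vanishes outside $UD$, the sublevel set $\{F\leq\delta\}$ is a compact region that exhausts $X$ as $\delta\to 0^+$, and the conical end of the log Liouville structure on $X$ points toward $\mathbf{D}$. Hence ``outward along $F^{-1}(\delta)$'' is the direction of \emph{increasing} $F$, and the goal reduces to showing $dF(Y)>0$ on $F^{-1}(\delta)$, where $Y$ denotes the $\omega$-dual of $\theta$.

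Next I compute $Y$ locally. On $U_I$, the nice structure identifies the fiber part of $\theta$ with $\sum_{i\in I}(\tfrac{1}{2}r_i^2-\kappa_i)\,d\varphi_i$, so that $\omega$ contains $\sum_{i\in I} r_i\,dr_i\wedge d\varphi_i$. Since $dF=\sum_i 2r_i\,q'(r_i^2)\,dr_i$ only sees the $dr_i$ directions, any horizontal (base-pulled-back) component of $Y$ contributes nothing to $dF(Y)$. Pairing $\iota_Y\omega=\theta$ against $\partial_{\varphi_j}$ for $j\in I$ isolates $r_j\,dr_j(Y)=\tfrac{1}{2}r_j^2-\kappa_j$, and hence $dr_j(Y)=\tfrac{r_j}{2}-\tfrac{\kappa_j}{r_j}$. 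Substituting yields the key formula
\[
   dF(Y)\;=\;\sum_i q'(r_i^2)\bigl(r_i^2-2\kappa_i\bigr).
\]

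To conclude, I shrink the tubular radius $\epsilon$ if necessary so that $\epsilon^2<2\kappa_i$ for every $i\in\{1,\dots,k\}$; this is permitted since the nice property is preserved under shrinking $\epsilon$ and the index set is finite. With this choice, $r_i^2-2\kappa_i<0$ throughout $U_I\cap X$, while property (2) of $q$ forces $q'\leq 0$ everywhere, so each summand in $dF(Y)$ is nonnegative. Strict positivity on $F^{-1}(\delta)$ then follows because the identity $\sum_i q(r_i^2)=\delta>0$ forces at least one index $i$ with $r_i^2\in(0,\epsilon_q^2)$; for that $i$, property (2) gives $q'(r_i^2)<0$ strictly, making the corresponding term strictly positive. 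The main (mild) obstacle is bookkeeping: verifying that the horizontal piece of $\theta$ indeed plays no role in $dF(Y)$, and coordinating the choice of $\epsilon$ with the constants $\kappa_i$. Once these are in place, the lemma reduces to the displayed calculus identity above.
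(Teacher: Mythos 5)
Your proposal is correct and takes essentially the same route as the paper's proof: a direct computation in the nice tubular neighborhoods using the explicit fiber formula for $\theta$, the sign conditions on $q'$, the inequality $\tfrac{1}{2}r_i^2<\kappa_i$, and the observation that $\delta\neq 0$ forces some $q'(r_i^2)<0$ strictly (your identity $dr_j(Y)=\tfrac{r_j}{2}-\tfrac{\kappa_j}{r_j}$ is just the vertical-component formula \eqref{eq: Liouvillevec} that the paper derives, while the paper packages the same computation as $dF(Z)=-\theta(X_F)$). The one step to state explicitly is that isolating $r_j\,dr_j(Y)$, i.e.\ discarding the horizontal part of $Y$, uses the niceness property that the symplectic orthogonal to the fibers is tangent to the level sets of the $r_i$ --- exactly the ingredient the paper invokes --- and note that your side remark that $\{F\leq\delta\}$ ``exhausts $X$ as $\delta\to 0^+$'' is inaccurate (these sublevel sets shrink as $\delta$ decreases), though this plays no role in the argument.
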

\begin{proof}
    This computation is very close to that in \cite{McLean:2012ab}*{Lemma
    5.17} (also, compare \cite{McLean2}*{Theorem 5.16} for a related
    calculation in the case of concave boundaries); we
    include a sketch for convenience. First, note that if $\delta$ is small,
    then $F^{-1}(\delta) \subset UD$ and $F^{-1}(\delta) \cap \D= \emptyset$;
    the latter condition implies that
    the Liouville vector field is
    defined at all points of $F^{-1}(\delta)$. We need to show that at any
    $x \in F^{-1}(\delta)$, $dF(Z) = \omega(X_{F}, Z) = -\theta(X_{F}) > 0$,
    where $X_F$ denotes the Hamiltonian vector field of $F$. Let $X_{q(r_i^2)}$
    denote the Hamiltonian vector field of $q(r_i^2)$, so $X_{F} = \sum_{i=1}^k
    X_{q(r_i^2)}$. The condition that $(X,\theta)$ is nice implies (see Lemma
    \ref{nicehamiltonianvectorfields}) that
    $X_{q(r_i^2)}$ is 
    tangent to the fibers $\D_{\epsilon}$ of $\pi_i$ with the following form
    \[
        (X_{q(r_i^2)})|_{\D_{\epsilon}} = X_{q(r_i^2)|_{\D_{\epsilon}}} = -\frac{1}{2} q'(r_i^2) \frac{\partial}{\partial {\varphi_i}}.
    \]
    Applying $\theta$, we have that
    \begin{equation}\label{eq:thetaX}
        \theta(X_{q(r_i^2)}) = (\frac{1}{2} r_i^2 - \kappa_i)d\varphi_i(X_{q(r_i^2)}) = \frac{1}{2} q'(r_i^2) ( \kappa_i - \frac{1}{2} r_i^2)
    \end{equation}
    (this extends smoothly by zero outside $U_i$).
    Note that $q'(r_i^2) \leq 0$ with equality exactly when $r_i^2 \geq
    \epsilon_q^2$, and also note that $\kappa_i > \frac{1}{2} r_i^2$ for $r_i^2
    \leq \epsilon_q^2$ (recall that $\epsilon^2 \ll \kappa_i$ for all $i$). It
    follows that \eqref{eq:thetaX} is non-positive, and strictly negative if
    $r_i^2$ is sufficiently small.
    Hence $\theta(X_F) = \sum_{i=1}^k \theta(X_{q(r_i)^2}) =
    \sum_i \frac{1}{2} q'(r_i^2) ( \kappa_i - \frac{1}{2} r_i^2) \leq 0$ as
    well. Also condition (2) implies that at least one $q'(r_i^2) < 0$ for $x
    \in F^{-1}(\delta)$ as $ \delta \neq 0$, so we are done. 
\end{proof}
Fixing such a $\delta$, let $\partial \bar{X}=F^{-1}(\delta)$ and 
\begin{defn}\label{def:liouvilledomain}
    Define $\bar{X}$ to be the compact region in $X$ bounded by $\partial \bar{X}$ defined above.
\end{defn} 

\begin{rem}
Conditions (1)-(4) for the function $q$ are stronger than required for this
paper, but
are helpful for further analysis of $\partial \bar{X}$ in the sequel \cite{DPSGII}.
\end{rem}

Flowing by $Z$ for some small negative time $t_0$
defines a collar neighborhood of the boundary 
$\partial{\bar{X}} \times (-t_0, 0] \cong C(\partial{\bar{X}}) \subset
\bar{X}$. Letting $X^{o}$ denote the complement of this collar in $\bar{X}$ 
\[
X^o = \bar{X} \setminus C(\partial{\bar{X}}),
\]
we obtain a Liouville coordinate on points in $X$ outside $X^o$
\[
R: X\setminus X^{o} \to \mathbb{R},
\]
defined as usual by $R(x) = e^t$, where 
$t$ is the time required to flow by $-Z$ from $x$ to the
hypersurface $\partial{\bar{X}}$.

Setting
\begin{equation}\label{eq:rho}
    \rho_i := \frac{1}{2} r_i^2,
\end{equation}
$F$ can be written as $\sum_{i=1}^k q(2\rho_i)$.
Our first claim is that
\begin{lem}\label{lem:Rdependsonrho}
    The Liouville coordinate $R$ defined above is a function of $\rho_1,\cdots,\rho_k$ on $X \setminus \mathring{\bar{X}} = \{R \geq 1\}$,  i.e.,
    $R=R(\rho_1,\rho_2,\cdots,\rho_k)$ (or rather $R$ depends smoothly on whichever subset of $\{\rho_1, \ldots, \rho_k\}$ is defined near a given point in $(X \setminus \mathring{\bar{X}}) \subset UD$).
\end{lem}
\begin{proof}
    Let $H_I = F^{-1}(\delta) \cap (U_I \setminus \cup_{j \notin I} U_j)$ be
    the portion of the hypersurface $\partial \bar{X}$ which lies in $U_I
    \backslash \cup_{j \notin I} U_i$. 
    We denote by $V_I
    \subset U_I \cap (X\setminus \mathring{\bar{X}})$ the subset of points $x \in  U_I \cap X$ for which the time $-\log
    R(x)$ flow by $Z$ from $x$ lands in the subset $H_I \subset \partial \bar{X}$. 
    In $U_I \cap X$, the vertical component of $Z$ with
    respect to the symplectic fibraton $\pi_I: U_I \to D_I$ is given by 
    \begin{align} 
        \label{eq: Liouvillevec}
        Z_{vert}= \sum_{i\in I}(\rho_i-\kappa_i)\partial_{\rho_{i}};
    \end{align} 
    as $\kappa_i > \rho_i$ in $U_I$, each $\rho_i$ (where defined) decreases, resp. increases
     along positive, resp. negative flowlines of $Z$ (and $Z_{vert}$).  It follows that the
     positive flowlines of $Z$ (and $Z_{vert}$) from $H_I$ continue to lie
    in $U_I$ and negative flowlines from $x \in U_I \cap X$
    can only leave $U_i$ for $i \in I$ but cannot enter $U_j$ for $j \notin I$.
    Hence any point $x \in \{R \geq 1\}$ which flows by $-Z$ to $H_I$ must be
    contained in $V_I \subset U_I$, and every point $x \in U_I \cap \{R \geq
    1\}$ is contained in $V_J \subset U_J$ for some $J \subseteq I$, i.e., $\{R
    \geq 1\} = \cup_{J} V_J$.
    It therefore suffices to show that in a neighborhood of $x \in V_I \subset U_I$,
    $R$ depends smoothly on $\rho_i$ for $i \in I$ (note if $x \in V_J \cap U_I$ for
    $J \subseteq I$, it will follow that $R$ depends smoothly on $\{\rho_j |  j
    \in J\}$, hence  $\{\rho_i |  i \in I\}$ if $J \subseteq I$).

    Let $F_I: U_I \to \R$ denote the function $ \sum_{i \in I}
    q(2\rho_i)$ and we view $F_I^{-1}(\delta)$ as defining a hypersurface in $U_I$. Note that since $F_I = F$ on (a small neighborhood in
    $U_I$ containing) $(U_I \setminus \cup_{j \notin I} U_j)$, it follows that $H_I =
    F_I^{-1}(\delta) \setminus \cup_{j \notin I} U_j \subset F_I^{-1}(\delta)$. We define $V_I^{vert}$
    to be the locus of points in $U_I \cap \{R \geq 1\}$ which flow negatively
    by $Z_{vert}$ to $F_I^{-1}(\delta)$. Define 
    $R_{I,vert}(p)$ for $p \in V_I^{vert}$ to be $e^{t_{vert}}$, where $t_{vert}$ is the time it
    takes to flow along $-Z_{vert}$ from the point $p$ to $F_I^{-1}(\delta)$.
    Note evidently that $R_{I,vert}$ is a function of $\rho_{i_1}, \ldots,
    \rho_{i_{|I|}}$ (where $I = \{i_1, \ldots, i_{|I|}\}$).

    Now, observe that $Y
    = Z - Z_{vert}$ is by definition orthogonal to the fibers $\prod_{i \in I}
    \D_{\epsilon}$ of $\pi_I$, hence (by niceness) tangent to the level set of each
    $r_i$ (and thus $\rho_i$), for $i \in I$. In particular, if $\phi_X^t$ denotes the time $t$ flow of
    a vector field $X$ then for any $i\in I$ and $p \in U_I$ such that
    $\phi_Z^t(p) \in U_I$, $\phi_{Z^{vert}}(t) \in U_I$ too and
    $\rho_i(\phi_Z^t(p)) = \rho_i(\phi_{Z_{vert}}^t(p))$. Seeing as $H_I
    \subset F_I^{-1}(\delta)$, it therefore follows that $V_I \subset
    V_I^{vert}$ and $R = R_{I,vert}$ on $V_I$. Since $R_{I,vert}$ is a smooth
    function of $(\rho_{i_1}, \ldots, \rho_{i_{|I|}})$, we are done.

\end{proof}

Although $Z$ does not extend across $\D$, we observe that $R$ does:
\begin{lem}\label{lem:Rextends}
The function $R$ extends smoothly to a function $R_M : M\setminus X^{o} \to \R$.  
\end{lem}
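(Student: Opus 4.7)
The plan is to leverage the local description $R = R_{I,vert}(\rho_{i_{1}},\ldots,\rho_{i_{|I|}})$ established in the paragraph immediately preceding the lemma, together with the observation that (unlike the angular coordinates $\varphi_i$, which are ill-defined on $D_i$) each function $\rho_i = \tfrac{1}{2} r_i^2$ is globally smooth on $U_i$, vanishing transversally on $D_i \cap U_i$. Thus it suffices to show that on each $U_I$, the function $R_{I,vert}$ is smooth as a function of its $\rho$-arguments up to and including the locus where some $\rho_i = 0$, and then to check consistency of the various local extensions on their overlaps.

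For the first step I would analyze the vertical flow directly from \eqref{eq: Liouvillevec}. In the coordinates $(\rho_i)_{i \in I}$ the flow of $-Z_{vert}$ on $U_I$ decouples into the linear ODEs $\dot\rho_i = -(\rho_i - \kappa_i)$, with explicit smooth solutions $\rho_i(t) = \kappa_i + (\rho_i(0) - \kappa_i)e^{-t}$; these are valid for all $t \geq 0$ and all $\rho_i(0) \in [0,\kappa_i)$, a range which (upon taking $\epsilon$ small relative to the $\kappa_i$) contains all points of $U_I$. The hypersurface $F_I^{-1}(\delta) \cap U_I$ is cut out in the $(\rho_i)_{i \in I}$-quadrant by a smooth equation $\sum_{i \in I} q(2\rho_i) = \delta$ whose gradient is nonzero (by the analogue of the transversality argument in the previous lemma). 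An application of the implicit function theorem to the trajectory-hitting equation then shows that the hitting time, and hence $R_{I,vert}$, depends smoothly on the initial point $(\rho_i)_{i \in I}$, including at $\rho_i = 0$ for any subset of indices $i \in I$. Pulling back by the smooth extensions of the $\rho_i$ to $U_I$ thus produces a smooth function $R_{I,m}: U_I \setminus X^o \to \mathbb{R}$ extending $R|_{(U_I \setminus \cup_{j\notin I} U_j)}$.

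Finally, I would verify that these candidates glue. Given $I \subset I'$, the restriction $R_{I',m}|_{U_{I'} \setminus \cup_{j \notin I'} U_j} = R = R_{I,m}|_{U_{I'} \setminus \cup_{j \notin I'} U_j}$, and this locus is a dense open subset of $U_{I'} = U_I \cap U_{I'}$; by continuity the two smooth extensions agree on all of $U_{I'}$, and hence the $R_{I,m}$ patch to a single smooth $R_m : M \setminus X^o \to \mathbb{R}$ (noting that $U_\emptyset := X$ with $R_{\emptyset,m} := R$ covers the rest). The main technical obstacle is this gluing step, since the decomposition $Z = Z_{vert} + (Z - Z_{vert})$ depends on $I$ and the formula for $Z_{vert}$ on $U_{I'}$ is singular along the $\rho_j = 0$ locus for $j \in I' \setminus I$; one circumvents this by working with $R_{I,vert}$ purely as a smooth function of its finitely many $\rho$-arguments (rather than as the exponentiated flow-time of a vector field) and invoking density plus continuity for the matching.
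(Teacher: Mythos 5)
Your core analytic step is correct and takes a genuinely different route from the paper's: where the paper writes the hitting time explicitly in the homogeneous form $R_{I,vert}(\vec\rho)=A\bigl(\tfrac{\vec\rho-\vec\kappa}{2\|\vec\rho-\vec\kappa\|}\bigr)L(\vec\rho-\vec\kappa)$ and reads off the extension by inspection, you integrate the linear flow $\dot\rho_i=\kappa_i-\rho_i$ explicitly and apply the implicit function theorem to the hitting equation $\sum_{i\in I}q(2\rho_i(t))=\delta$. The nondegeneracy you need is not just $dF_I\neq 0$ but transversality of the flow to the level set, i.e.\ $\sum_{i\in I}2q'(2\rho_i)(\kappa_i-\rho_i)<0$ along $F_I^{-1}(\delta)$; this is exactly the computation in the preceding lemma, so that part is fine, and it yields smoothness of $R_{I,vert}$ up to $\{\rho_i=0\}$ just as well as the paper's formula does.

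The gluing step, however, is where your argument breaks. The identity you assert, $R_{I,m}=R$ on $U_{I'}\setminus\cup_{j\notin I'}U_j$ for $I\subsetneq I'$, is false: on that region $R=R_{I',vert}(\rho_j:j\in I')$, which genuinely depends on $\rho_j$ for $j\in I'\setminus I$ wherever $2\rho_j<\epsilon_q^2$, while $R_{I,m}$ does not. Concretely, with two divisors and $I=\{1\}$, $I'=\{1,2\}$, pick $x\in U_{12}\cap X$ with $q(2\rho_1(x))=\delta$ and $\rho_2(x)$ so small that $q(2\rho_2(x))$ is close to $1$: then $R_{I,m}(x)=1$, but $F(x)\approx 1+\delta>\delta$, so $x$ lies strictly outside $\bar X$ and $R(x)>1$. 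Thus $R_{I,m}$ and $R_{I',m}$ do not agree on $U_{I'}$ and there is no well-defined patched function; the density claim also fails, since $U_{I'}\cap U_k$ can be a nonempty open subset of $U_{I'}$ for $k\notin I'$. The honest identity $R=R_{I,vert}\circ(\rho_i)_{i\in I}$ holds only on the partition piece $U_I\setminus\cup_{j\notin I}U_j$, and these pieces are disjoint, so nothing can be propagated by ``density plus continuity.'' The repair is to localize near each $p\in\mathbf{D}$ and use the structure of the cutoff $q$ rather than continuity: letting $I''=\{j:\ p\in U_j,\ 2\rho_j(p)\leq\epsilon_q^2\}$, every other index $j$ has $q(2\rho_j)\equiv 0$ at all nearby points and stays zero along the $-Z$ flow (each $\rho_j$ is nondecreasing along it), so those coordinates drop out of the hitting equation and $R=R_{I'',vert}\circ(\rho_i)_{i\in I''}$ on an entire punctured neighborhood of $p$; your IFT argument applied to this single $I''$ then gives the smooth extension at $p$. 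This vanishing-of-$q$ mechanism, not continuity, is what makes the reduction to the functions $R_{I,vert}$ legitimate.
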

\begin{proof}
    Letting $\rho_I := (\rho_{i_1}, \ldots, \rho_{i_{|I|}}): U_I \to \R^I$, 
it suffices to check the assertion for the functions
$R_{I,vert}(\rho_{i_{1}},\cdots, \rho_{i_{|I|}})$ defined in the previous Lemma, 
which we view as functions
defined on the domain $\Omega := \rho_I(V_{I}^{vert}) \subset 
\{ 0 < \rho_{i_j} < \frac{1}{2}\epsilon^2\} = \rho_I(U_I \cap X)$
in $\mathbb{R}_+^I
\setminus \cup_{i_{j} \in I} \lbrace \rho_{i_{j}}=0 \rbrace$; we'd like to show
this functions extends smoothly across $\rho_{i_j} = 0$ for $i_J \in I$ (or at least over the
portion of these planes meeting the closure of $\Omega$).
Let $H = F_I^{-1}(\delta) \subset \rho_I(U_I)$ 
and on $\rho_I(U_I \cap X)$ define $Z_{vert} = \sum_{j=1}^{|I|} (\rho_{i_j} - \kappa_{i_j})
\partial_{\rho_{i_j}}$ (implicitly this is $d\rho_I(Z_{vert})$); 
the function $R_{I,vert} : \Omega \to \R$
is by definition the exponential of the time it takes to flow by $-Z_{vert}$ to $H$. Observe
that $-Z_{vert}$ is naturally defined (not just on $\Omega$ as specified but) on all of
$\R^I$;\footnote{In contrast, the corresponding vector field $-Z_{vert}$ on $U_I \cap X$ does  not, of course, extend across any $D_i = \{\rho_i= 0\}$ for $i \in I$.} 
it is a radial vector field centered at  $\rho_I 
= \vec{\kappa}:= (\kappa_{i_1}, \ldots,
\kappa_{i_{|I|}})$ (a point outside $\rho_I(U_I)$), whose flow evidently exists for all time and converges at time $+\infty$ to the point $\vec{\kappa}$; its flowlines 
are (reparametrizations of) straight lines from $\vec{\kappa}$.  Similarly, $H$ extends to the global hypersurface $\hat{F}_I^{-1}(\delta)$ where $\hat{F}_I = \sum_{i \in I} q(2\rho_i): \R^I \to \R$. 
Observe that $\hat{F}_I$ is weakly decreasing along positive flowlines of $-Z_{vert}$
and in fact strictly decreasing at points $x$ with $\hat{F}_I(x) = \delta$, as
\[
    d\hat{F}_I(-Z_{vert}) = \sum_{i \in I} 2 (\kappa_i - \rho_i) q'(\rho_i),
\]
each $q'(\rho_i)\leq 0 $ can only be strictly negative when $(\kappa_i - \rho_i) >0$, and there is at least one $\rho_i$ with $q'(\rho_i) < 0$ at all points of $\hat{F}_I^{-1}(\delta)$. By continuity and monotonicity, the flowline (i.e., straight line) from any $x$ with $\hat{F}_I(x) > \delta$ to $\vec{\kappa}$ (noting $\hat{F}_I(\vec{\kappa}) = 0$) must cross $\hat{F}_I^{-1}(\delta)$ at a unique point.
 In particular, we see that 
the function $R_{I,vert}$ extends smoothly across (at least) all such points
with $\hat{F}_I \geq \delta$.  Note that if some $\rho_i = 0$, $\hat{F}_I \geq 1$, so
it follows that $R_{I,vert}$ extends smoothly across the coordinate hyperplanes
as desired. 

\end{proof}

\subsection{Logarithmic cohomology}\label{subsec:logcoh}

We now define the {\em logarithmic cohomology group} of $(M, \mathbf{D}= D_1 \cup \cdots \cup D_k)$
 \begin{equation}
    \QH^*(M,\mathbf{D} )
\end{equation}
which will play a key role in this paper. Recall the notation from the previous
section $D_I$, $\DIo$, $S_I$, $\SIo$ for the stratified components of
$\mathbf{D}$, their open parts, and their unit torus normal bundles.  To start, let
$C^{BM}_{*}(\SIo,\mathbf{k})$ denote the chain-complex of smooth Borel-Moore
chains of $\SIo$ with $\mathbf{k}$-coefficients. The manifolds $\SIo$ have canonical
orientations (induced by the natural ordering of the set $I$) and we will pass freely between cohomology 
classes $\alpha \in H^*(\SIo)$ and the corresponding Poincare dual Borel-Moore homology classes. We
will use multi-index notation, meaning we fix variables $t_1, \ldots, t_k$, and
for a vector $\vec{\v} = (v_1, \ldots, v_k) \in \Z_{\geq 0}^k$, denote
\[
t^{\vec{\v}} := t_1^{v_1} \cdots t_k^{v_k}.
\]

Consider the cochain complex $C^*_{log}(M,\D)$ generated by elements of the form $\alpha_c t^{\vec{\v}}$, where
$\alpha_c$ is a (co)-chain in $C^{BM}_{2n-|I|-*}(\mathring{S}_I,\mathbf{k})$ for some subset $I
\subset \{1, \ldots, k\}$, and $\vec{\v} = (v_1, \ldots, v_k)$ is a vector of
non-negative integer multiplicities {\em strictly supported} on $I$, meaning
$v_i > 0$ if and only if $i \in I$. \\

The differential on  $C^*_{log}(M,\D)$ is induced from the differential on $C^{BM}_{*}(\SIo,\mathbf{k})$.

\begin{defn} 
We will denote by 
\[
\vec{\v}_I
\]
 the vector $(v_1, \ldots, v_k)$ strictly supported on $I$ with entries,
\[
v_i := \begin{cases} 1 & i \in I \\ 0 & \textrm{otherwise}. \end{cases}
\]
In the case that $I$ consists of a single element $\lbrace i \rbrace$, we will use the notation $\vec{\v}_i$. We refer to the vectors $\vec{\v}_I$ as {\em primitive vectors}. 
\end{defn}

We can give an efficient description of $C_{log}^*(M,\D)$ as follows:
\begin{equation}
    C_{log}^*(M,\D):= 
    \bigoplus_{I \subset \{1, \ldots, k\}}  t^{\vec{\v}_I} C_{2n-|I|-*}^{BM}(\SIo,\mathbf{k})[t_i\ |\ i \in I]
\end{equation}
where $S_\emptyset = X$, and $S_I = \emptyset$ if the intersection $\cap_{i \in
I} D_i$ is empty.  We set 
\begin{align} 
    \QH^*(M,D):= H^*(C_{log}^*(M,\D)) 
\end{align} 
The cohomology $\QH^*(M,D)$ is generated by classes of the form $\alpha
t^{\vec{\v}}$. The {\em logarithmic cohomology} of $(M, \mathbf{D})$ of {\em
slope $< \lambda$}, denoted $\QH^*(M,\D)_{\lfloor \lambda \rfloor}$, is the sub
$\K$-module generated by those elements of the form $\alpha t^{\vec{\v}}$ for
some subset $I \subset \{1, \ldots, k\}$, such that 
\[
    \sum_iv_i\kappa_i< \lambda 
\]
(recall the definition of $\kappa_i$ in \eqref{eq:kappai}).

If $\lambda_1 \leq \lambda_2$, we get an inclusion: 
\[
i_{\lambda_1, \lambda_2}: \QH^*(M,\D)_{\lfloor \lambda_1 \rfloor} \to \QH^*(M,\D)_{\lfloor \lambda_2 \rfloor}
\]

The volume form $\Omega_{M, \mathbf{D}}$ from \eqref{eq:volform} gives rise to a cohomological $\mathbb{Z}$-grading on $\QH^*(M,\mathbf{D})$ via the following rule: 
\begin{equation} 
     \label{eq:loggrading}
     |\alpha t^{\vec{\v}}| =|\alpha|+2\sum_i (1-a_i) v_i.
\end{equation}

\begin{rem} 
For a general pair $(M,\D)$, an elaboration of standard Morse-Bott arguments allow one to produce a spectral sequence  
\begin{equation}\label{eq:logSS}
    \QH^*(M,\D) \Rightarrow  SH^*(X) 
\end{equation}
from logarithmic cohomology converging to symplectic cohomology \cite{DPSGII,
McLeanInPreparation}.  It is likely that the differential on the higher pages
can be described in terms of certain relative or log Gromov-Witten invariants.
Often (for instance in the examples of topological pairs discussed in \S \ref{subsec:topological}),
one can use elementary considerations
involving homotopy classes of generators or indices
to conclude that \eqref{eq:logSS} degenerates. However, this method does not
enable one to compute the ring structure on $SH^*(X)$, whereas the Log PSS
morphism does \cite{DPSGII}.
\end{rem}

\begin{defn} 
    Given a class $A \in H_2(M,\mathbb{Z})$, we will denote by $A \cdot \mathbf{D}$ the multivector $[A\cdot D_i] \in \mathbb{Z}^k$. \end{defn}

We finish this section by defining a BV type operator on logarithmic cohomology.
Notice that for any stratum $D_I$, there is a natural $T^{|I|}$ action on
$\mathring{S}_I$.  Any $\vec{\v}$ which is strictly supported on $I$ gives rise
to a natural homomorphism $\phi_{\vec{\v}}: S^1 \to T^{|I|}$ defined by
$\phi_{\vec{\v}}(e^{i\theta})=e^{i\vec{\v}\theta}$. The homomorphism
$\phi_{\vec{\v}}$ in turn induces an action 
\begin{equation}\label{gammaV}
    \Gamma_{\vec{\v}}: S^1 \times \mathring{S}_I \to \mathring{S}_I 
\end{equation}
such that $\Gamma_{\vec{\v}}$ is proper. So
we may define the pushforward on BM homology $$\Gamma_{\vec{\v},*}:
H^{BM}_{*}(S^1 \times \mathring{S}_I) \to H^{BM}_{*}(\mathring{S}_I) $$
\begin{defn} Denote the natural fundamental class on $S^1$ by
    $\epsilon^{\vee}$. Given a class $\alpha t^{\vec{\v}} \in \QH^*(M,\D)$, 
define 
\begin{equation}\label{logBVoperator}
    \Delta(\alpha t^{\vec{\v}})=\Gamma_{\vec{\v},*}(\epsilon^{\vee}\otimes\alpha)t^{\vec{\v}}
\end{equation}
\end{defn}

\subsection{Topological pairs and tautologically admissible classes}
\label{subsec:topological}

We say that a class $A \in H_2(M)_{\omega}$ is spherical if it is in the image of $\pi_2(M) \to H_2(M,\mathbb{Z}).$ Let 
\begin{equation}H_2(M)_{\omega}\end{equation}
denote the classes $A \in H_2(M,\mathbb{Z})$ such that $\omega(A)>0$.

\begin{defn}\label{def:tautologicallyadmissible} 
We say that a class $\alpha t^{\vec{\v}} \in \QH^*(M,\D)$ is {\em
tautologically admissible} if for every spherical class $A \in H_2(M)_\omega$,
\begin{align} 
     A \cdot D_i > v_i
\end{align}
 for some $i \in \lbrace 1, \cdots ,k \rbrace.$
\end{defn}

\begin{defn} \label{def:topologicalpair}
    A pair $(M,\D)$ is \emph{topological} if every class $\alpha t^{\vec{\v}}$
    is tautologically admissible in the sense of Definition
    \ref{def:tautologicallyadmissible}.  
\end{defn}

There are numerous examples of topological pairs. We list some examples: 
\begin{enumerate}
        \item If $\pi_2(M)=0$, any pair $(M,\mathbf{D})$ will be topological. 
        \item Whenever each smooth component $D_i$ of $\mathbf{D}$ corresponds to
            powers of the same line bundle and the number of components of
            $\mathbf{D}$, $k$, satisfies $k \geq \dim_{\C} M+1$, then $(M,
            \mathbf{D})$ is topological. (Such so-called \emph{K{\"a}hler
            pairs} were also considered in \cite{Sheridan} to simplify holomorphic
            curve theory). For example $(\P^n, \mathbf{D} = \{\geq n+1\textrm{
            generic planes}\})$ is such an example.

    \end{enumerate}

    \begin{rem}\label{rem:1ptGW}
        Definition \ref{def:topologicalpair} can be relaxed to a
        (substantially) broader class of $(M, \D)$ for whom suitable 
        1-point relative Gromov-Witten moduli spaces(in $M$ rel $\D$) are empty for certain complex structures preserving $\D$ (see Section 5 of \cite{DPSGII}). 
       The key compactness result (Lemma \ref{lem:compactness}) about log PSS moduli spaces continues to hold for this more
        general notion of topological pair; we 
        have opted for technical simplicity to use Definition
        \ref{def:topologicalpair} to avoid discussion of bubbling in the relative setting.
    \end{rem}

\subsection{Admissible classes}\label{subsec:admissible}
In this subsection, we define a subspace of admissible classes in logarithmic
cohomology described in the introduction. 
\vskip 5 pt
We will sometimes (but not always) impose the following
assumption (on specific subsets $I$ which will be clear from context): 

\textbf{Assumption} (A1): All spherical classes $A \in H_2(M)_\omega$ satisfying 
 \begin{equation} 
        \label{eq: intersection} A \cdot \mathbf{D}=\vec{\v}_I 
    \end{equation}
are indecomposable. \vskip 5 pt

\begin{defn} \label{defn:admissi}
    We say that a class $\alpha t^{\vec{\v}} \in \QH^*(M,\D)$ is {\em admissible} if either
\begin{itemize}
\item $\alpha t^{\vec{\v}} \in \QH^*(M,\D)$ is tautologically admissible; or
\item  $\vec{\v}=\vec{\v}_I$ is primitive, Assumption (A1) holds and for any spherical class $A \in H_2(M)_{\omega}$ such that $A \cdot \D \neq \vec{\v}_I$, there exists an $i  \in \lbrace 1, \cdots ,k \rbrace$ such that $ A \cdot D_i > v_i$.
\end{itemize}
\end{defn}
We will similarly refer to (tautologically) admissible cocycles
$\alpha_ct^{\vec{\v}} \in C^*_{log}(M,\D)$ which are cocycles which represent
(tautologically) admissible classes. We define 
\begin{equation}
    \QH^*(M,\D)^{ad} \subset \QH^*(M,\D)
\end{equation} 
to be the $\K$-module generated by admissible classes. 

\begin{rem} \label{rem:randomtautremark}
For a topological pair $(M, \D)$, of course $\QH^*(M,\D)^{ad} = \QH^*(M,\D)$. Also, in a slight abuse of terminology, we will use the terminology ``primitive admissible" to refer exclusively to classes satisfying the second bullet point of Definition \ref{defn:admissi} (even though the vectors $\vec{\v}$ associated to tautologically admissible classes can of course also be primitive). 
\end{rem}

\subsection{A Gromov-Witten invariant}\label{subsec:gwinvarint}
In this section, we define an element we call the {\em obstruction class} 
\begin{equation}
    GW_{\vec{\v}}(\alpha) \in H^*(X) 
\end{equation}
associated to any admissible class $\alpha t^{\vec{\v}}$. The element $GW_{\vec{\v}}(\alpha)$ can
only be non-zero when $\alpha t^{\vec{\v}}$ is not tautologically admissible, 
which by hypotheses can only happen when $\vec{\v} = \vec{\v}_I$ is primitive. 
\begin{rem}
    The setup we choose here is by no means the most general possible, but is the
    simplest setup which avoids technical issues like multiply-covered curves and
    gluing curves with non-constant components in the divisor $\mathbf{D}$
    while still allowing for non-trivial applications. In particular, we expect
    there should be a more general notion of admissible class for non-primitive
    vectors for which one can define obstruction classes.
\end{rem}
In what follows, we frequently pass between cohomology $H^*(X)$ and the
Poincar\'{e} dual {\em Borel-Moore homology} $H_{2n-*}^{BM}(X)$. We begin by defining a class of almost-complex structures adapted to $(M, \D)$:
\begin{defn} \label{def:JMD}
    Let 
    \begin{equation}
        \mathcal{J}(M,\D)
    \end{equation} 
    be the Banach manifold of compatible $C^{\infty}$ almost-complex
    structures which preserve $\D$ and which are $C^\epsilon$ over (or rather,
    exponentials of $C^{\epsilon}$ infinitesimal deformations of) some fixed
    almost-complex structure, where $C^{\epsilon}$ is Floer's $C^{\epsilon}$
    norm \cite{FloerUnregularized} on sections of a vector bundle for some $\epsilon = (\epsilon_i)_{i \in \mathbb{N}}$.  Similarly,
    we let $\mathcal{J}(D_I, \cup_{j\notin
    I} D_j)$ be the space of compatible complex structures on $D_I$ which
    preserve all divisors $D_j$ for $j \notin I$ 
    (as usual, taking $I= \emptyset$ gives back $\mathcal{J}(M,\D)$).  
\end{defn}
All complex structures in this paper will be of class $\mathcal{J}(M,\D)$.

\begin{defn} 
    Given a Riemann surface $C$ and a marked point $p \in C$, we define the projectivized tangent space 
    \[S_pC:=(T_{p}C \setminus \lbrace 0\rbrace)/\mathbb{R}^+.\]
  \end{defn}

We let $(C, z_0,z_1,\cdots,z_n)$ denote the Riemann sphere with $n+1$ fixed
marked points. Each marked point $z_i$ will be enhanced with a choice of
element $\vec{r}_{z_i} \in S_{z_{i}}C$. For this paper, the most important
cases will be $n=1$, when we may take we may take $z_0=\lbrace 0 \rbrace$ and
$z_1=\lbrace \infty \rbrace$ and $n=2$, when we may take $z_0=\lbrace 0
\rbrace, z_1=\lbrace 1 \rbrace, z_2=\lbrace \infty \rbrace$. In the case $n=1$
or $n=2$, we may assume the elements $\vec{r}_{z_i}$ correspond to the positive
real direction. These additional trivializations will be used to place jet
conditions on holomorphic maps as we now explain. 

\begin{defn} \label{def:regularmap}
    We say that a $J$-holomorphic map $u:C \ra M$ is {\em $D_I$-regular} if $u(C) \not\subseteq D_i$ for $i \in I$. 
\end{defn}

Let $u: C \ra M$ be a $D_I$-regular $J$-holomorphic map such that for some $p \in C$, $u(p) \in \mathring{D}_I.$ Fix
a local complex coordinate $z$ around $p$ and an element $i \in I$. In a small neighborhood $W$ about $u(p)$, choose a system of smooth complex coordinates $y_1=u_1+\sqrt{-1}v_1, \cdots, y_n=u_1 + \sqrt{-1} v_n$ such that $u(p)=\lbrace y_1=0, \cdots y_n=0 \rbrace$, $W \cap D_i= \lbrace y_1=0 \rbrace$, and $J(\partial_{u_{1}})=\partial_{v_{1}}$ along $W \cap D_i$. Inside of $W$, write $u(z)=(f_i(z),\hat{u}(z))$, where $f_i(z)$ denotes the projection to the first factor (and $\hat{u}(z)$ denotes the projection to the remaining factors). By \cite{Ionel:2003kx}*{Lemma 3.4} (see also \cite{TehraniZing}*{Equation 6.1 and Remark 6.1}) $f_i(z)$ (which along $W \cap D_i$ is the locally defined normal component of $u$ to $D_i$) has an expansion
\begin{equation} \label{eq: firstorder}
    f_i(z)= a_i z^{\vi} + O(|z|^{\vi+1})
\end{equation}
where $\vi$, the {\it multiplicity} is a positive integer and the leading
coefficient $a_i \in \C^*$. As explained in \cite{Ionel:2004uq}*{\S 6}, \cite{CieliebakMohnke}*{\S 6} the
coefficient $a_i$ is the $\vi$-jet of the component of $u$ normal to $D_i$ at
$p$ modulo higher order terms and so intrinsically
\[
    a_i \in (T_p^*C)^{\vi} \otimes (N_i)_{u(p)}.
\]
In our case, $C = \mathbb{C}P^1$ and fixing a non-zero unit vector (or real
positive ray) in $T_p (C)$, we can think of $a_i \in
(N_i)_{u(p)} \backslash \{0\}$; define $[a_i] \in (N_i)_{u(p)} /\mathbb{R}^+$
to be the real projectivization. 
Then, the direct sum $[\oplus_{i \in I} a_i] \in S_{I,u(p)}$, lives in a fiber
of the torus bundle $S_I$ to $D_I$. Thus, if $u$ satisfies an incidence
condition as in Definition \ref{def:regularmap} with multiplicities $(v_i)_{i \in I}$,
we can define the
{\em enhanced evaluation} of $u$ at $z_0$ (following \cite{Ionel:2011fk}, \cite{CieliebakMohnke}),
taking values in $S_I$ as:
\begin{equation}\label{eq:enhancedevaluation}
    \operatorname{Ev}^{\vec{\v}}_{z_0}(u) := (u(z_0), [\oplus_{i \in I} a_i])
\end{equation}

\begin{defn}\label{defn: modsphere} 
    For any $A \in H_2(M,\mathbb{Z})$ satisfying \eqref{eq: intersection}, 
    set
    $\widetilde{\mathcal{M} }_{0,2}(M,\mathbf{D},\vec{\v}_I, A)$ to be the
    moduli space of maps $ u: (C,z_0,z_1) \to (M,\D)$ with 
    \begin{align}
        u_*([\mathbb{C}P^1])=A \\ u^{-1}(D_I)= z_0 
    \end{align}
 \end{defn}

 \begin{defn} 
     From the moduli spaces in Definition \ref{defn: modsphere} form: 
     \begin{align} 
         \widetilde{\mathcal{M}}_{0,2}(M,\mathbf{D},\vec{\v}_I) :=\bigsqcup_{A, A \cdot \D=\vec{\v}_I} \widetilde{\mathcal{M}}_{0,2}(M,\mathbf{D},\vec{\v}_I, A) \\ 
         \mathcal{M}_{0,2}(M,\mathbf{D},\vec{\v}_I):=\widetilde{\mathcal{M}}_{0,2}(M,\mathbf{D},\vec{\v}_I)/\mathbb{R} 
     \end{align} 
     where the quotient in the latter equation is by $\mathbb{R}$-translations. 
  \end{defn}

We have an evaluation map at $z_1 = \infty$ which sends $u$ to $u(\infty)$:
\[ 
    ev_{\infty}: \mathcal{M}_{0,2}(M,\mathbf{D},\vec{\v}_I) \to  M  
\] 
Let 
\begin{equation} \label{GWcyclebeforefiberproduct}
    \mathcal{M}_{0,2}(M,\mathbf{D},\vec{\v}_I)^o
\end{equation}
denote the preimage $ ev_{\infty}^{-1}(X)$. 
We will frequently use the following topological assumption 
to simplify compactness and transversality arguments.

\begin{lem} \label{lem: randomcompacti}
    Assume that Assumption (A1) holds. Then the moduli space
    $\mathcal{M}_{0,2}(M,\mathbf{D},\vec{\v}_I)^o$ is a smooth oriented
    manifold for generic $J \in \mathcal{J}(M,D)$. Moreover the map 
    \[
        ev_{\infty}:\mathcal{M}_{0,2}(M,\mathbf{D},\vec{\v}_I)^o \to X
    \] 
    is proper.  
\end{lem}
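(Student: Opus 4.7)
The plan is to establish smoothness via Fredholm theory for simple curves and properness via Gromov compactness, with Assumption (A1) playing the crucial role in both parts. For smoothness, I would first use (A1) to conclude that every $u \in \widetilde{\mathcal{M}}_{0,2}(M, \mathbf{D}, \vec{\v}_I, A)$ is somewhere injective: any factorization $u = v \circ \phi$ with $\deg(\phi) \geq 2$ would give $A = \deg(\phi)[v]$ with $[v] \in H_2(M)_\omega$, contradicting indecomposability of $A$. Since $\vec{\v}_I$ is primitive, $A \cdot D_i \in \{0, 1\}$, and positivity of intersection then reduces the condition $u^{-1}(D_I) = z_0$ to the incidence $u(z_0) \in D_I$ plus the open condition that $u$ avoids $\mathbf{D}$ elsewhere. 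The standard Fredholm package for simple curves with one incidence constraint gives transversality for generic $J \in \mathcal{J}(M, \mathbf{D})$; quotienting by the free residual $\mathbb{R}$-action and using the complex structures on determinant lines yields the smooth oriented manifold structure, and openness of $ev_\infty(u) \in X$ passes this to $\mathcal{M}_{0,2}(M, \mathbf{D}, \vec{\v}_I)^o$.

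For properness, I would take a compact $K \subset X$ and a sequence $[u_n]$ in $ev_\infty^{-1}(K)$, choose representatives, and apply Gromov compactness to extract a convergent subsequence with stable nodal limit $u_\infty$ of total class $A$. By (A1), every decomposition of $A$ into positive-area classes is trivial, so $u_\infty$ has a unique non-constant component $V$ of class $A$ with all other components constant ghost spheres. If no ghosts appear, $u_\infty$ is a smooth class-$A$ sphere in the moduli space with $ev_\infty(u_\infty) \in K$, providing the desired limit.

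The main obstacle is ruling out ghost configurations, which I would handle through a stability analysis of the dual graph. Each constant ghost component must have at least three special points (marked points plus nodes), but only the two marked points $z_0, \infty$ are available in total. A brief enumeration of possible tree configurations shows that the only stable arrangement with at least one ghost is: $V$ connected to a single ghost $G$ by one edge, with both marked points $z_0$ and $\infty$ lying on $G$. In this configuration, $ev_\infty(u_\infty)$ equals the constant value $c$ of $G$, which also equals the limit of $u_n(z_0) \in D_I$; hence $c \in D_I \subset \mathbf{D}$, contradicting $ev_\infty([u_n]) \in K \subset X$. This contradiction rules out all ghost degenerations and completes the proof of properness.
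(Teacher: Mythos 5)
Your proof is correct and follows essentially the same route as the paper: Gromov compactness plus indecomposability from (A1) to exclude nontrivial bubbling, an argument with the two marked points to exclude ghost components (the paper phrases this as the nonconstant component being $D_I$-regular and carrying both marked points, you reach the same conclusion via the stability/dual-graph enumeration and the contradiction $c \in D_I \cap X$), and standard Sard–Smale transversality for simple curves with orientations coming from the homotopy of $D_u$ to $\bar{\partial}$ compatibly with the $\mathbb{R}$-action. The extra details you supply (somewhere injectivity from indecomposability, reduction of the tangency condition to an incidence condition since $A \cdot D_i = 1$) are consistent with, and slightly more explicit than, the paper's sketch.
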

\begin{proof} 
    Consider a sequence of curves with $ev_\infty \subset K$ for some compact
    set $K \subset X$. There is no sphere bubbling in non-trivial classes
    because the classes $A$ are indecomposable. As the evaluation $ev_\infty$
    lies in $X$, it follows that the non-trivial component of the Gromov limit
    $u$ is $D_I$ regular and thus carries both distinguished marked points.
    Thus no bubbling can occur and it therefore follows that the map
    $ev_{\infty}$ is proper. The fact that the moduli space is smooth for
    generic $J$ follows from Lemma 6.7 of \cite{CieliebakMohnke}(we will review related arguments in \S \ref{section:transversality}). The orientation comes from
    the usual orientation on the moduli space
    $\mathcal{M}_{0,2}(M,\mathbf{D},\vec{\v}_I)$ arising from the fact that the
    Fredholm operator $D_u$ is homotopic to the operator $\bar{\partial}$
    together with the fact that the $\mathbb{R}$ translations preserve this
    orientation. 
\end{proof}

If $\Evo(u)$ is contained in $S_I \setminus \mathring{S}_I$, then because $u$
has intersection number zero with all of the divisors $D_j$ for $j \notin I$
then $u$ must be completely contained in $\D$ and hence $ev_{\infty}(u) \in \D$
as well. Therefore we have an evaluation 
\begin{equation}\label{eq:Evo}
    \Evo: \mathcal{M}_{0,2}(M,\mathbf{D},\vec{\v}_I)^o \to  \SIo
\end{equation}
For any class $\alpha$ in $H^*(\mathring{S}_I)$, we may define a BM homology class 
in $H^{BM}_*(X)$ via 
\begin{align} \label{eq: GWinvariantsnormal} 
    GW_{\vec{\v}_I}(\alpha)=[ev_{\infty,*}(\operatorname{Ev}_0^{\vec{\v},*}(\alpha))] \in H^{BM}_{*}(X). 
\end{align}

\begin{defn}\label{def:obstructionclass}
    For any primitive admissible class $\alpha t^{\vec{\v}_I} \in \QH^*(M,D)$,
    we refer to the class defined in Equation \eqref{eq: GWinvariantsnormal} as
    the {\em obstruction class} associated to $\alpha t^{\vec{\v}}$. We extend this
    to non-primitive admissible classes by setting $GW_{\vec{\v}}(\alpha)=0$.
\end{defn}

\begin{lem} 
    \label{lem:degeneracy} For any stratum $D_I$, let $\alpha=1 \in
    H^0(\mathring{S}_I)$ and suppose that $\alpha t^{\vec{\v}}$ is admissible.
    Then $GW_{\vec{\v}_I}(\alpha)=0$.  
\end{lem}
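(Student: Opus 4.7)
The plan is to exploit a free $S^1$-action on the moduli space $\mathcal{M}_{0,2}(M,\mathbf{D},\vec{\v}_I)^o$ under which $ev_\infty$ is invariant, and then conclude vanishing of the pushforward for dimensional reasons. First, dispose of the tautologically admissible case: any $A \in H_2(M,\mathbb{Z})$ appearing in the moduli satisfies $A \cdot \mathbf{D} = \vec{\v}_I$, so by tautological admissibility $\widehat{\mathcal{M}}_{0,0}(M,A,J)$ is empty for generic $J$. Hence $\mathcal{M}_{0,2}(M,\mathbf{D},\vec{\v}_I)^o$ itself is empty and $GW_{\vec{\v}_I}(1) = 0$ trivially. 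Reduce to the case where $\vec{\v}=\vec{\v}_I$ is primitive and Assumption (A1) holds, so all contributing curve classes $A$ are indecomposable and the underlying curves are somewhere-injective for generic $J$.

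Now introduce the $S^1$-action on $\mathcal{M}_{0,2}(M,\mathbf{D},\vec{\v}_I)^o$ by source rotation: $e^{i\theta}\cdot[u] := [u \circ R_\theta]$, where $R_\theta(z) = e^{i\theta}z$. Since $R_\theta$ fixes both marked points and preserves the tangency condition at $z_0$, this descends to a well-defined action on the $\mathbb{R}$-quotient moduli; freeness will follow from somewhere-injectivity. Two key compatibilities are then immediate: first, $ev_\infty(u \circ R_\theta) = u(\infty)$, so $ev_\infty$ is $S^1$-invariant and factors through the quotient map $\pi\colon \mathcal{M}_{0,2}^o \to \mathcal{M}_{0,2}^o/S^1$ as $ev_\infty = \bar{ev}_\infty \circ \pi$; and second, the leading normal coefficients transform as $a_i \mapsto e^{i\theta}a_i$ under the action, so $\operatorname{Ev}_0$ is $S^1$-equivariant for the diagonal rotation on $\mathring{S}_I$.

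Because $\alpha = 1 \in H^0(\mathring{S}_I)$ is trivially $S^1$-invariant, the class $\operatorname{Ev}_0^{\vec{\v},*}(1) = 1 \in H^0(\mathcal{M}_{0,2}^o)$ is pulled back from the quotient, and the Borel--Moore pushforward factors as $ev_{\infty,*} = \bar{ev}_{\infty,*} \circ \pi_*$. Since $\pi_*$ preserves Borel--Moore degree while $\dim(\mathcal{M}_{0,2}^o/S^1) = \dim \mathcal{M}_{0,2}^o - 1$, the image $\pi_*[\mathcal{M}_{0,2}^o]$ lies in $H^{BM}_{\dim \mathcal{M}_{0,2}^o}(\mathcal{M}_{0,2}^o/S^1) = 0$, giving $GW_{\vec{\v}_I}(1) = 0$. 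The main technical point will be verifying freeness of the $S^1$-action---ruling out identities $u \circ R_\theta = u \circ R_t$ for $\theta\not\equiv 0 \pmod{2\pi}$ and $t \in \mathbb{R}^+$---which follows because any such identity would produce a nontrivial finite automorphism of a somewhere-injective curve.
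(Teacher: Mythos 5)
Your proposal is correct and follows essentially the same route as the paper: the paper's proof likewise observes that with $\alpha=1$ there is no constraint at the enhanced marked point, so the invariant is $ev_{\infty,*}[\mathcal{M}_{0,2}(M,\mathbf{D},\vec{\v}_I)^o]$, which vanishes because $ev_\infty$ factors through the lower-dimensional quotient $\mathcal{M}_{0,2}(M,\mathbf{D},\vec{\v}_I)^o/S^1$ by domain rotations. Your added discussion of the tautologically admissible case and of freeness of the $S^1$-action are harmless elaborations of the same argument.
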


\begin{proof} In this case there are no constraints along $\mathring{S}_I$ so
    by definition our invariant is simply the pushforward \[
        ev_{\infty,*}[\mathcal{M}_{0,2}(M,\mathbf{D},\vec{\v}_I)^o].\] But, since the
        evaluation map $ev_{\infty}$ factors through the quotient
        $\mathcal{M}_{0,2}(M,\mathbf{D},\vec{\v}_I)^o/S^1$ it follows that 
        \[
            ev_{\infty,*}[\mathcal{M}_{0,2}(M,\mathbf{D},\vec{\v}_I)^o]=0.
        \]
\end{proof}

\def\lra{\longrightarrow}
\def\J{\widetilde{\mathcal{J}}}

\section{The Log PSS morphism}
\subsection{Symplectic cohomology from Hamiltonians on the compactification}
We continue with the setup and notation described in \S \ref{subsec:nicesymplectic}, with 
$X = M \backslash \mathbf{D}$ (with its nice convex symplectic structure in the sense of Definition \ref{defn:nice}) equipped with the (holomorphic on $X$) restriction of a fixed meromorphic volume form $\Omega_{M, \mathbf{D}}$ 
as in \eqref{eq:volform}.
In this section, we recast the construction of the symplectic cohomology $SH^*(X)$ in terms of the geometry and Floer theory of $(M, \mathbf{D})$, a point of view that will prove technically useful in constructing the Log PSS map.

Recall from \S \ref{subsec:nicesymplectic} the construction of the function $R_M: M \setminus X^o \to \R$: $R_M$ is the smooth extension to $M$ (guaranteed to exist by Lemma \ref{lem:Rextends}) of the Liouville coordinate on $X$ induced by (the exponential of the time it takes to flow by $Z$ from) the hypersurface $\partial \bar{X}$ constructed above Definition \ref{def:liouvilledomain}.
We set $R_{D}= \operatorname{min}_{\D} R_M$, and now consider Hamiltonians $h_{M}^{\lambda}: M \to \mathbb{R}^{\geq 0}$ (for $\lambda > 0$) such that 
\begin{itemize} 
    \item $h_{M}^{\lambda}$ vanishes on $\overline{{X}}$.

    \item On $M \setminus X^{o}$, $h_{M}^{\lambda} = h_{M}^{\lambda}(R_M)$ is a function of $R_M$. Moreoever, 
        for some $R_{H} \in (1, R_D)$ which is much closer to 1 than $R_D$ (so $\log R_H \ll \log R_D$) 
        we have that  
        $$ h_{M}^{\lambda}=\lambda (R_{M}-1) \quad  \forall R_M \geq R_{H} $$ 
    \item  On $M \setminus X^{o}$, $(h^{\lambda}_{M})'(R_M) \geq 0$ and $(h^{\lambda}_M)''(R_M) \geq 0$.
\end{itemize}

Choose a $\mu$ such that 
\begin{align} 
    R_H < R_{D}-\mu.
\end{align} 
Let $V_{0} \subset UD$ be the open subset containing $\D$ defined by $V_{0}:=(R_M)^{-1}(R_{D}-\mu,\infty)$. Let $V \supset V_0$ denote the slightly larger open set $V:=(R_M)^{-1}(R_H,\infty)$. 

Note that since (by Lemma \ref{lem:Rdependsonrho}) $R_M$ is a function of the $\rho_1, \ldots, \rho_k$ on $M \backslash \mathring{\bar{X}}$ (which contains $\D$) , Lemma \ref{nicehamiltonianvectorfields} implies that the Hamiltonian flow of $h_M^{\lambda}$ 
is tangent to the level sets of each $\rho_i$ in this region and hence (as the flow is zero outside this region) everywhere $\rho_i$ is defined; in particular the Hamiltonian flow preserves the divisor $\D$.
It follows that time-1
orbits of this Hamiltonian are either completely contained in $\D$ or
completely contained in $X$ (and in fact $X\setminus V$, as by construction
there are no closed orbits in $V \cap X$).  We will refer to the orbits
contained in $\D$ as {\em divisorial orbits} and denote them by
$\mathcal{X}(\D; h_M^{\lambda})$ and all other orbits by  $\mathcal{X}(X;
h_M^{\lambda})$.  We can make all of the orbits nondegenerate by a $C^2$ small
time-dependent perturbation $H_M^{\lambda}: M \to \mathbb{R}$ supported in
small neighborhoods of the orbit sets (in $X$ and near $\D$). We can moreoever
ensure this $C^2$ small perturbation $H_M^{\lambda}$ satisfies
the following key properties:
\begin{itemize} 
    \item the perturbation is disjoint from $V \setminus V_{0}$.
    \item the Hamiltonian flow of $H_M^{\lambda}$ continues to preserve each divisor $D_i$. 
\end{itemize}
\def\aa{\alpha}
We now give details of this construction, using a refined (to
satisfy the above properties) variant of the perturbation appearing in
\cite{McLean:2012ab}*{Proof of Lemma 6.8} (though note that {\em loc. cit.}
only perturbed the orbits appearing in $X$).
For every $I$ (including $I =
\emptyset$, recalling the convention that $D_\emptyset = M$ and
$\mathring{D}_\emptyset = X$), Lemma \ref{nicehamiltonianvectorfields} (in particular part (2)) implies that the orbits of $X_{h^{\lambda}_M}$ that lie in the open locus
$\mathring{D}_I$ are disjoint unions of manifolds-with-corners, corresponding
to constant orbits living in $\mathring{D}_I$ away from any deeper stratum as well as orbits
which wind a certain number of times around (and live in a neighborhood of)
some deeper stratum $\mathring{D}_J$ for $I \subset J$. Let us use the notation
$Y_{\aa}$ to refer to the connected component of orbits in $\mathring{D}_I$
winding $\mathbf{w}_j$ times around $\mathring{D}_j$ for $j \in J \setminus I$,
where $\aa = (I, J, \mathbf{w})$ is a tuple with $\emptyset \subseteq I
\subseteq J \subseteq \{1, \ldots, k\}$ and $\mathbf{w} \in \Z_{>0}^{J
\setminus I}$ is a tuple of positive integers 
(by convention $\Z_{>0}^0 = \{\mathbf{0}\}$ so we allow the case
$\aa = (I, I, \mathbf{0})$, corresponding to the constant orbits in
$\mathring{D}_I$). Concretely, $Y_{\alpha}$ can be explicitly described as the locus $((\mathring{D}_I \cap U_J) \cap \bigcap_{j \in J \setminus I} \{\frac{\partial h}{\partial \rho_j} = -\mathbf{w}_j\}) \setminus \bigcup_{t \notin J} \{x \in U_t | \frac{\partial h}{\partial \rho_t} \neq 0\}$. 
Each $Y_{\aa}$ is
a $T^{J \setminus I}$ torus bundle (of the form $\{\rho_j = d_{j, \alpha} | j
\in J \setminus I\}$ for some small constants $d_{j,\alpha}$) over a closed
submanifold-with-corners of $\mathring{D}_J$ which we call
$\overline{Y}_{\aa}$; if $I = J$ then $\overline{Y}_{\aa} = Y_{\aa}$.
In turn, $\overline{Y}_{\aa}$ is a compact manifold-with-corners obtained by removing from $D_J$ subsets of the form $\{ \rho_t < c_{t; \aa}\}$ for $t \notin J$ and some small constants $c_{t; \aa}$; 
in particular its boundary and corner strata are the points of the form $\{\rho_t = c_{t; \aa}\}$ for any $t \in \{1, \ldots, k\} \setminus J$.  
The Hamiltonian vector field $X_{h^{\lambda}_M}$ restricted to the fiber of $U_I$ over any point $x
\in Y_{\aa} \subset D_I$ is of the form $\sum_{i\in I}-\lambda_i
\frac{\partial}{\partial \varphi_i}$ with $\lambda_i>0$, which infinitesimally generates a
non-trivial rotation of the fibers fixing the points where $\rho_i=0$.  
It follows that these orbits are nondegenerate in the normal (transversal to $D_I$)
directions in $M$ over the open parts $\mathring{Y}_{\aa}$. The open locus $\mathring{Y}_{\aa}$ is moreover Morse-Bott as an orbit set in $D_I$ (For $I = J$ this is elementary and for the case $J \supset I$ we refer to Step 2 of the proof of Theorem 5.16 of \cite{McLean2}, which we note relies on the special form of the $q$ function chosen in \S \ref{subsec:nicesymplectic}); hence we conclude that $\mathring{Y}_{\aa}$ is Morse-Bott in $M$. See Figure \ref{orbitstrata} for a schematic 

\begin{figure}[h] 
    \caption{A depiction of the (Morse-Bott interior of) the orbit stratum $\mathring{Y}_{\aa}$ for the case $\alpha= (\{1\}, \{1,3\}, w_3)$ for some $w_3$. Namely, $\mathring{Y}_{\alpha}$ is the (interior) of the set of orbits in $D_{1}$ which wind $w_3$ times around $D_{1,3} = D_1 \cap D_3$. The closure $Y_{\aa}$ of $\mathring{Y}_{\aa}$ is a torus bundle over $\overline{Y}_{\aa} \subset D_J$, which is also depicted ($\overline{Y}_{\aa}$ is the image of $Y_{\alpha}$ under projection to the divisor $D_J = D_{1,3}$ in this case). In the pictured schematic, the total space is the divisor $D_I$ (= $D_1$ in this case), divisors are represented by codimension-1 real submanifolds, and normal $(T^1)^k$ bundles by $(S^0)^k$ bundles. \label{orbitstrata}}
    \centering
    \includegraphics[scale=2.0]{./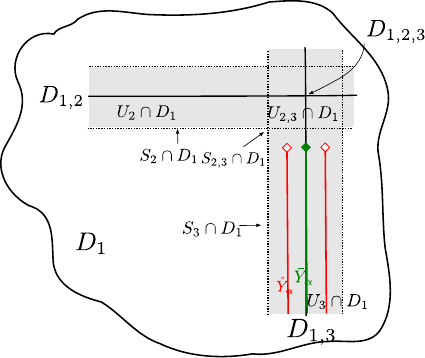}
\end{figure}

We now choose open submanifolds $\overline{Y}_{\aa}'$ of $D_J$
containing $\overline{Y}_{\aa}$ which have manifold-with-corners closures (by e.g., removing regions
where $\rho_t \leq c_{t; \aa} - \delta_{t; \aa}$ for very
small $\delta_{t; \aa}$). 
Fix disjoint isolating neighborhoods $U_{\aa}$ of $Y_{\aa}$ in (the open subset $U_J \setminus (\cup_{s \in J\setminus I} D_s \cup \cup_{t \notin J} \bar{U}_t)$ of) $M$ lying in $V_0$ if $I \neq \emptyset$ and outside $V$ if $I = \emptyset$, which are $T^J$-equivariant subfibrations of $(U_J)|_{\overline{Y}_{\aa}'}$ for some (open in $\mathring{D}_J$) $\overline{Y}_{\aa}' \supset \overline{Y}_{\aa}$, whose 
fibers with respect to $\pi_J$ a product of discs (the fibers of $\pi_j$ with $j \in I$) and
annuli (the fibers of $\pi_j$ for $j \in J \setminus I)$.  
Let $Y_{\aa}' \subset \mathring{D}_I$ denote the natural $T^{J\setminus I}$ bundle over $\overline{Y}_{\aa}'$ such that $Y_{\aa} \subset
Y_{\aa}'$. The Hamiltonian flow of $X_{h_M^{\lambda}}$ along $Y_{\aa}$ generates a local circle action which extends
to $U_J$ as a one parameter subgroup of the $T^{J\setminus I}$ symmetries. 
Let $X_{\aa}$ denote the Hamiltonian vector field of the inverse circle action and $\Delta_t$
denote the time $t$-flow of $X_{\aa}$.  Let $\widehat{h}_{\aa}$ be an outward pointing (on the boundary components of the closure)
Morse function on $Y_{\aa}'$ 
and let $h_{\aa}: S^1 \times U_{\aa} \to \mathbb{R}$ denote
the pull back of the time-dependent function $\widehat{h}_{\aa} \circ \Delta_t(x)$. Note that $h_{\aa}$ is $T^I$-equivariant (where $\aa = (I, J, \mathbf{w})$) but by construction not $T^{J \setminus I}$-equivariant. 

Fix a smaller $T^{J}$-equivariant isolating neighborhood of $Y_{\aa}$ in $M$,  $U_{\aa}' \subset U_{\aa}$
which fibers over some smaller $\overline{Y}_{\aa}'' \subset
\overline{Y}_{\aa}'$ containing $\overline{Y}_{\aa}$. 
Choose a $T^{J}$-equivariant cutoff function
$f_{\aa}$ supported in $U_{\aa}$ with $f_{\aa}=1$ on $U_{\aa}'$.  
Then for some $\delta$ sufficiently small 
set 
\begin{align}
    H_M^{\lambda} := h_M^{\lambda}+\sum_{\aa} \delta f_{\aa} h_{\aa}.
\end{align} 
Note that all of the additions to $H_M^{\lambda}$ are supported on disjoint neighborhoods which are disjoint from $V \setminus V_{0}$, implying the first desired property of this perturbation.
It follows from the $T^{I}$ equivariance of the perturbation (in every $U_I$)
that for every $x \in D_i$
\begin{align} 
    (X_{H_M^{\lambda}})_x \in T_x(D_{i}),
\end{align}  
which of course implies the second desired property of the
Hamiltonian flow of $H_M^{\lambda}$ preserving $\D$. It remains to verify non-degeneracy of $H_M^{\lambda}$.
First, a compactness argument 
shows that for 
$\delta$
sufficiently small, all of the
orbits of $X_{H_M^{\lambda}}$ lie in $U_{\aa}'$ and away from the boundary and corner strata (because in the limit as $\delta \to 0$, there are no orbits along those strata, compare \cite{McLean:2012ab}*{Proof of Lemma 6.8}). Then standard
Morse-Bott theory (compare \cite{Kwon:2016aa}*{Proposition B.4}) implies that for sufficiently small $\delta$ the set of orbits is (finite and) non-degenerate as desired.

Fix a small constant $\hbar>0$. Note that 
by taking $\delta$ sufficiently small in the above perturbation, we can (and henceforth will) assume that
\begin{equation} \label{eq:hbarinequality}
    H_M^{\lambda}>-\hbar\textrm{ everywhere}.
\end{equation}

As before we let $\mathcal{X}(\D; H_M^{\lambda})$ denote the time-1 orbits of
$H_M^{\lambda}$ contained in $\D$ (which we also call {\em divisorial orbits})
and we denote all other orbits by  $\mathcal{X}(X; H_M^{\lambda})$.
 
For what follows, recall the definition of $\mathcal{J}(M,\D)$ in Definition \ref{def:JMD}.
\begin{defn} 
    Define $\mathcal{J}(V) \subset \mathcal{J}(M, \D)$ to be the subspace of compatible almost complex structures which 
\begin{itemize} 
    \item preserve $\D$ (this is in fact automatic from saying $\mathcal{J}(V) \subset \mathcal{J}(M,\D)$), and
    \item are of contact type on the closure of $V \setminus V_{0}$.
\end{itemize} 
\end{defn} 

Define 
\begin{align} 
    \label{eq:CompactifiedFloercomplex}  CF^*(X \subset M;H_M^{\lambda}) 
            := \bigoplus_{x \in \mathcal{X}(X; H_M^{\lambda})} |\mathfrak{o}_x| 
\end{align} 

where $|\mathfrak{o}_x|$ is the $\K$-normalization of the orientation line
$\mathfrak{o}_x$ as in \eqref{eq:normalization}.

\begin{defn} 
    Let $\mathcal{J}_F(V)$ denote the space of $S^1$ dependent complex
    structures, $\mathcal{C}^\infty(S^1; \mathcal{J}(V))$.  
\end{defn}

The differential on \eqref{eq:CompactifiedFloercomplex} is defined by counting
solutions to \eqref{eq:FloerRinv} {\em in $M$} (instead of $X$) with respect to
a generic time-dependent $J_t \in \mathcal{J}_F(V)$ which additionally satisfy the topological constraint
\begin{equation}
    \label{zeromultiplicity} u \cdot \mathbf{D} = 0.
\end{equation}
In view of the following key {\em positivity of intersection
property}, the condition \eqref{zeromultiplicity} actually implies $u \cap
\mathbf{D} = \emptyset$:
\begin{lem}\label{lem:positivity}
    Let $H:=H_{s,t}$ be any Hamiltonian preserving $\mathbf{D}$, and $J:=J_{s,t}$ an
    almost complex structure (both possibly cylinder dependent) for which
    $\mathbf{D}$ is a $J$-holomorphic divisor (with normal crossings). Then, for any Floer
    trajectory $u$ of $(H,J)$ with asymptotics outside $\mathbf{D}$,  $u \cdot
    \mathbf{D} = \sum_{z \in u \cap D} (u \cdot \D)_z$, where each local
    intersection multiplicity $(u\cdot \D)_z$ is 
    $\geq 1$.
\end{lem}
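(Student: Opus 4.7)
The plan is to reduce the statement to the classical positivity of intersection for pseudo-holomorphic curves meeting a pseudo-holomorphic divisor, by using the Hamiltonian flow to locally ``straighten'' $u$ into a perturbed pseudo-holomorphic curve whose incidence pattern with $\mathbf{D}$ is identical to that of $u$.

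First I would observe that $u^{-1}(\mathbf{D})$ is compact in the cylinder $\R\times S^1$: it is closed, and since the asymptotics of $u$ lie outside $\mathbf{D}$, it is contained in some slab $|s|\leq C$. If one can show each $z_0 \in u^{-1}(\mathbf{D})$ is isolated and contributes a strictly positive integer $(u\cdot \mathbf{D})_{z_0}\geq 1$, then the compactness gives a finite sum $u\cdot\mathbf{D} = \sum_z (u\cdot \mathbf{D})_z$. To establish the local positivity, fix $z_0$, let $I = \{i : u(z_0)\in D_i\}$, and work in a small neighborhood $U$ of $z_0$ mapping into a chart $V\subset M$ with $\mathbf{D}\cap V = \bigcup_{i\in I} D_i\cap V$. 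Let $\phi_{s,t}: V\to M$ denote the time-$t$ Hamiltonian flow of $X_{H_{s,\cdot}}$ (with $s$ held fixed) starting from the identity; after shrinking $U$ this is well-defined. Since $X_H$ preserves $\mathbf{D}$, each $\phi_{s,t}$ preserves each stratum $D_i$. Setting $v(s,t) := \phi_{s,t}^{-1}(u(s,t))$, a direct computation using $\partial_t u - X_H(u) = d\phi_{s,t}\,\partial_t v$ transforms Floer's equation into a perturbed Cauchy--Riemann equation
\begin{equation}
  \partial_s v + \tilde J_{s,t}(v)\,\partial_t v + W_{s,t}(v) = 0,
\end{equation}
where $\tilde J_{s,t} := \phi_{s,t}^* J_{s,t}$ preserves each $D_i$ (so each $D_i$ is $\tilde J$-holomorphic) and $W_{s,t}(v) := (d\phi_{s,t})^{-1}(\partial_s \phi_{s,t})(v)$. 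Because $\phi_{s,t}$ preserves $D_i$, the derivative $\partial_s \phi_{s,t}$ is tangent to $D_i$ along $D_i$; hence $W_{s,t}$ is tangent to each $D_i$ at its points of $D_i$. Most importantly, $v^{-1}(D_i) = u^{-1}(D_i)$ inside $U$, with identical local intersection behavior.

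Finally I would invoke the classical local positivity of intersection (Micallef--White, in the perturbed form used for example in \cite{CieliebakMohnke}): for a solution $v$ of a perturbed $\tilde J$-holomorphic equation with $\tilde J$ preserving a smooth pseudo-holomorphic submanifold $D_i$ and with perturbation tangent to $D_i$ along $D_i$, either $v$ is identically contained in $D_i$ near $z_0$, or $z_0$ is an isolated point of $v^{-1}(D_i)$ with a strictly positive integer local intersection multiplicity. The first alternative is ruled out for each $i\in I$ by unique continuation (Aronszajn), applied using the asymptotic condition that $u$, and hence $v$, is not asymptotic to an orbit in $D_i$. Summing over $i\in I$ gives
\begin{equation}
  (u\cdot \mathbf{D})_{z_0} = \sum_{i\in I} (u\cdot D_i)_{z_0} = \sum_{i\in I} (v\cdot D_i)_{z_0} \geq |I| \geq 1,
\end{equation}
which completes the argument. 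The main subtlety is the perturbed similarity-principle analysis underlying the local Micallef--White statement in the presence of a non-vanishing (but $D_i$-tangent) perturbation $W_{s,t}$; once this is in hand, the normal-crossings case is a clean consequence of the smooth case applied stratum by stratum.
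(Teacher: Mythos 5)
Your argument is correct, but it follows a different route than the paper. The paper's proof is a two-line reduction: by Gromov's trick (Proposition \ref{prop:Gromovtrick}), the Floer trajectory $u$ is re-encoded as the graph $\tilde{u}=(\mathrm{id},u)$, which is an honest $\tilde{J}$-holomorphic section of $\Sigma\times M$ for the almost complex structure \eqref{section:acstructure}; since $H$ and $J$ preserve $\mathbf{D}$, this $\tilde{J}$ preserves the divisor $\Sigma\times\mathbf{D}$, and one simply quotes classical (unperturbed) positivity of intersection of pseudo-holomorphic curves with pseudo-holomorphic divisors. You instead straighten $u$ by the Hamiltonian flow, $v=\phi_{s,t}^{-1}\circ u$, arriving at a perturbed Cauchy--Riemann equation whose perturbation $W_{s,t}$ is tangent to each $D_i$ along $D_i$, and then invoke the perturbed local positivity statement (Micallef--White in the form used in \cite{CieliebakMohnke}) together with unique continuation to exclude the alternative that $v$ lies in $D_i$; your computation of the transformed equation and of the tangency of $W$ is correct, and completeness of the flow is automatic since $M$ is compact (and for the purely local dichotomy you only need the flow near $z_0$, so non-periodicity of $\phi_{s,1}$ is harmless). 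The trade-off is exactly the subtlety you flag at the end: your route requires the similarity-principle analysis for a nonvanishing but divisor-tangent zeroth-order perturbation, which is standard but must be cited or proved, whereas the graph trick eliminates the inhomogeneous term altogether and lets one quote the classical statement off the shelf; in exchange, your local argument makes explicit where the hypotheses ($X_H$ tangent to $\mathbf{D}$, $J$ preserving $T\mathbf{D}$) enter and yields the refined stratum-by-stratum count $(u\cdot\mathbf{D})_{z_0}=\sum_{i\in I}(u\cdot D_i)_{z_0}\geq |I|$, slightly more than the lemma asks for.
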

The above Lemma is a consequence of positivity of intersection of
$J$-holomorphic curves $u$ with $J$-holomorphic divisors $\mathbf{D}$, along
with {\it Gromov's trick}, which allows us to import results from the analysis
of $J$-holomorphic curves to the case of Floer curves:

\begin{prop}[Gromov's trick, see e.g., \cite{McDuff:2004aa} \S 8.1] \label{prop:Gromovtrick}
    Let $(\Sigma,j)$ be a Riemann surface equipped with a 1-form $\gamma$,
    surface dependent Hamiltonian $H_{\Sigma}$ and almost complex structure
    $J_{\Sigma}$.  Then, $u:  \Sigma \ra M$ solves Floer's equation
    $(du-X_{H_{\Sigma}}\otimes \gamma)^{0,1} = 0$ if and only if $\tilde{u} = (id, u):
    \Sigma \ra \Sigma \times M$ is a $\tilde{J}$-holomorphic section with
    respect to the almost complex structure on $\Sigma \times M$
    \begin{equation}\label{section:acstructure}
        \tilde{J} = \left(\begin{array}{cc} j & 0 \\ 
            J \circ (X_{H_{\Sigma}} \otimes \gamma) - (X_{H_{\Sigma}} \otimes \gamma) \circ j & J \end{array} \right).
    \end{equation}\qed
\end{prop}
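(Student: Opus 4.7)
The statement is pointwise and purely algebraic, so the plan is a direct two-step verification: first that $\tilde{J}$ in \eqref{section:acstructure} really defines an almost complex structure on $\Sigma\times M$, and then that the $\tilde{J}$-holomorphicity of the graph section $\tilde u = (\id, u)$ is equivalent to Floer's equation on $u$.

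For step one, abbreviate the off-diagonal block by $Y := J\circ(X\otimes\gamma) - (X\otimes\gamma)\circ j$, so that $\tilde J(v,w) = (jv,\, Y(v) + Jw)$ for tangent vectors $(v,w) \in T\Sigma\oplus u^*TM$. Block-multiplying,
\begin{equation*}
   \tilde J^2(v,w) \;=\; \bigl(j^2 v,\ Y(jv) + JY(v) + J^2 w\bigr).
\end{equation*}
Expanding the cross term using the definition of $Y$ together with $j^2 = -\id$ and $J^2 = -\id$, one finds $Y(jv) + JY(v) = J(X\otimes\gamma)(jv) + (X\otimes\gamma)(v) - (X\otimes\gamma)(v) - J(X\otimes\gamma)(jv) = 0$. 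Hence $\tilde J^2 = -\id_{T(\Sigma\times M)}$, so $\tilde J$ is indeed an almost complex structure.

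For step two, since $\tilde u$ is tautologically a section, $d\tilde u(v) = (v, du(v))$ for every $v \in T\Sigma$. The equation $d\tilde u\circ j = \tilde J\circ d\tilde u$ is automatic on the $T\Sigma$-factor, and on the $TM$-factor it reduces to
\begin{equation*}
    du(jv) - J\, du(v) \;=\; J(X\otimes\gamma)(v) - (X\otimes\gamma)(jv).
\end{equation*}
A one-line rearrangement exhibits this as the condition that the $u^*TM$-valued $1$-form $du - X\otimes\gamma$ is complex-linear with respect to $(j,J)$, equivalently that its $(0,1)$-part vanishes, which is precisely Floer's equation. Every step is reversible, giving the claimed equivalence. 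The only obstacle is careful bookkeeping of the sign conventions fixed earlier in the paper (for the Hamiltonian vector field and for the $(0,1)$-part of a bundle-valued form); no analytic input is required, and the full argument occupies a few lines once the definitions are unpacked.
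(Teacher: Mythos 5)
Your plan is the standard one, and in fact the paper supplies no argument of its own for this proposition (it is stated with a citation to McDuff--Salamon \S 8.1 and a \qed), so the intended proof is precisely the pointwise linear-algebra check you carry out. Your Step 1 is correct: the off-diagonal cross term cancels and $\tilde J^2 = -\id$. Your reduction of the graph equation in Step 2 is also computed correctly from the displayed matrix: $d\tilde u \circ j = \tilde J \circ d\tilde u$ is equivalent to $du(jv) - J\,du(v) = J(X\otimes\gamma)(v) - (X\otimes\gamma)(jv)$ for all $v$.

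The gap is in your final ``one-line rearrangement.'' Complex-linearity of $du - X\otimes\gamma$ means $(du - X\otimes\gamma)(jv) = J\,(du - X\otimes\gamma)(v)$, i.e.
\[
du(jv) - J\,du(v) = (X\otimes\gamma)(jv) - J\,(X\otimes\gamma)(v),
\]
whose right-hand side is the \emph{negative} of what you derived. What your identity actually says is that $du + X\otimes\gamma$ is complex-linear, i.e. $(du + X\otimes\gamma)^{0,1}=0$ under the convention $\alpha^{0,1} = \tfrac{1}{2}(\alpha + J\circ\alpha\circ j)$ --- and this is the convention in force in the paper, since it is the one under which $(du - X_H\otimes dt)^{0,1}=0$ on a cylinder unpacks to the paper's Floer equation $\partial_s u + J(\partial_t u - X_H)=0$. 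So, as written, your claimed equivalence with Floer's equation does not hold for the matrix \eqref{section:acstructure}; it holds verbatim if the off-diagonal block is $(X\otimes\gamma)\circ j - J\circ(X\otimes\gamma)$, i.e. the negative of the displayed one (the discrepancy is almost certainly a sign typo in the displayed formula, and is harmless for the paper's application in Lemma \ref{lem:positivity}, which only needs \emph{some} almost complex structure $\tilde J$ preserving $\Sigma\times\mathbf{D}$ whose sections are the Floer solutions). To complete your proof you should either negate that block, or state explicitly the opposite sign convention for the $(0,1)$-part under which your last step is valid; silently asserting the rearrangement is exactly the kind of sign bookkeeping you flagged as the ``only obstacle,'' and it is where the argument currently fails.
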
 

\begin{proof}[Proof of Lemma \ref{lem:positivity}]
    If $J$ and $H$ preserve $\mathbf{D}$, then the almost complex structure
    $\tilde{J}$ appearing in Proposition \ref{prop:Gromovtrick} preserves the
    divisor $\Sigma \times \mathbf{D} \subset \Sigma \times M$.  Hence, we can
    apply positivity of intersection for ordinary $J$-holomorphic curves.
\end{proof}

Applying Lemma \ref{lem:positivity}, we see that Floer curves in $M$ which are
counted in the definition of $CF^{*}(X \subset M;H_M^{\lambda})$ actually lie
in $X$. Then \cite{Abouzaid:2010ly}*{Lemma 7.2} implies that such curves must in fact stay
away from an open neighborhood of 
$\mathbf{D}$. Hence, the compactness Theorem (Lemma \ref{lem:Floercomp})
transfers over to this setup.
In fact, more is true:
\begin{prop}  \label{prop:countincompactification}
    There is a canonical isomorphism of chain complexes 
    \begin{equation} \label{eq:compactidentif}
        CF^{*}(X \subset M;H_M^{\lambda}) \cong CF^{*}(\hatX; H^{\lambda}).
    \end{equation}
    where $\hatX$ (compare \eqref{eq:conicalstructure}) is the completion of the Liouville domain $\bar{X} \subset X$ from Definition \ref{def:liouvilledomain} and
    $H^{\lambda}$ is the family of Hamiltonians on $\hatX$ used to define $CF^*(\hatX;H^{\lambda})$ and $CF_{+}^{*}(\hatX;H^{\lambda})$ in \eqref{eq:posySH}.
\end{prop}
\begin{proof}
    The generators for each complex are identical by construction. Furthermore
    in the common subregion of $X$ and $\hatX$ where Floer curves for
    $CF^*(X\subset M, H_M^{\lambda})$ and $CF^{*}(\hatX, H^{\lambda})$
    respectively are each forced to
    stay by \cite{Abouzaid:2010ly}*{Lemma 7.2} (and also Lemma
    \ref{lem:positivity} for the former complex), $(H_M^{\lambda}, J_t)$ 
    is exactly equal to $(H^{\lambda}, J_t)$. Thus, the Floer trajectories are in bijection too.
\end{proof}

As usual, we may consider a subcomplex of $CF^*(X\subset M, H_M^{\lambda})$
generated by constant orbits in $\mathcal{X}(X; H_M^{\lambda})$, which we denote by
$CF_{-}^*(X\subset M, H_M^{\lambda})$ as well as the quotient complex
$CF_{+}^*(X\subset M, H_M^{\lambda})$. The bijection from equation
\eqref{eq:compactidentif} gives rise to an identification: 
\begin{align}
    CF_{+} ^{*}(X \subset M;H_M^{\lambda}) \cong CF_{+}^{*}(\hatX;H^{\lambda})
\end{align} 

\begin{rem} 
    To orient the reader with the several closely related Floer
    complexes that we have introduced, it may be helpful to note that the
    complex $CF^{*}(X \subset M;H_M^{\lambda})$ is neither a subcomplex nor a
    quotient complex of the Floer chain complex of $M$,
    $CF^{*}(M;H_M^{\lambda})$ because we have avoided counting Floer
    trajectories which intersect $\D$ to define our differential. In general
    $CF^{*}(M;H_M^{\lambda})$ is only defined (potentially using virtual techniques) over a
    suitable choice of Novikov ring.  
\end{rem}

One can similarly exhibit the continuation maps \eqref{eq:continuation} in terms of Floer theory in $(M,\D)$, which we now briefly sketch (as the setup adapts straightforwardly). Given a pair $\lambda_1 \leq \lambda_2$, one considers an $s$-dependent family of time-dependent Hamiltonians  $H^{s,t}_M$ and almost complex structures $J^{s,t} \in \mathcal{J}(V)$ on $M$ (for $(s,t) \in \R \times S^1$), such that each $J^{s,t}$ and $X_{H^{s,t}_M}$ preserves $\D$ and such that $(H^{s,t}_M, J^{s,t})$ coincides with the previously made choices of $(H_M^{\lambda_1}, J_t^{\lambda_1})$ (made for $CF^{*}(X \subset M;H_M^{\lambda_1})$) when $s \gg 0$ and $(H_M^{\lambda_2}, J_t^{\lambda_2})$ (made for $CF^{*}(X \subset M;H_M^{\lambda_2})$) when $s \ll 0$. We further require these choices to agree with the Hamiltonians and almost complex structures appearing in the continuation map \eqref{eq:continuation} on the sub-region common to $X$ and $\hatX$, and more generally for $H_M^{s,t}$ to be a monotone decreasing in $s$ function of $R_M$ on $M \backslash X^o$ (or rather a small perturbation thereof, where the perturbation once more avoids $V \backslash V_0$).  Counting Floer solutions in $M$ (associated to $(H^{s,t}_M, J^{s,t})$) with zero intersection number with $\D$ then defines the chain-level continuation map
\begin{equation}\label{continuationcompactification}
    \mathfrak{c}_{\lambda_1,\lambda_2}: CF^{*}(X \subset M;H_M^{\lambda_1}) \to CF^{*}(X \subset M;H_M^{\lambda_2}).
\end{equation}
An argument identical to Proposition \ref{prop:countincompactification} implies that, under the identification \eqref{eq:compactidentif}, this map \eqref{continuationcompactification} coincides with the previously defined continuation map \eqref{eq:continuation} on the chain level (similarly for the $CF_+$ variant of the continuation map). As desired this implies that the symplectic cohomology $SH^*(X)$ (and $SH^*_+(X)$) can be computed as
 \begin{align}
     SH^*(X) &\cong \lim_{\lambda} HF^{*}(X \subset M;H_M^{\lambda})\\
     SH^*_+(X)  &\cong \lim_{\lambda} HF^{*}_+(X \subset M;H_M^{\lambda}).
 \end{align}

\subsection{The classical PSS morphism}

Next, recall the classical PSS map \cite{Piunikhin:1996aa}.  
Consider the domain 
\[S = \mathbb{C}P^1 \setminus \lbrace 0 \rbrace,\] 
thought of as a punctured sphere, with a distinguished marked point $z_0 =
\infty$ and a negative cylindrical end \eqref{eq:negstrip} near $z=0$  which for
concreteness we take to be given by $$(s,t) \to e^{2\pi(s+it)} $$ The coordinates
$(s,t)$ extend to all of $S \setminus z_0$. Fix a subclosed one-form $\beta$
which restricts to $dt$ on the cylindrical end and which restricts to zero in a
neighborhood of $z_0$. To be explicit, we consider a non-negative, monotone
non-increasing cutoff function $\rho(s)$ such that
\begin{equation}\label{eq:cutoff}
    \rho(s) = \begin{cases} 0 & s \gg 0 \\ 1 & s \ll 0 \end{cases}
\end{equation}
and let \begin{equation}  
    \beta=\rho(s)dt. \label{eq:sco} 
\end{equation}

Near $z_0$, we also fix a distinguished tangent vector which points in the
positive real direction. 

\begin{defn} Let $\mathcal{J}_S(V)$ denote the subspace of (smoothly) domain-dependent complex structures on $S$, $J_S = \{J_z\}_{z \in S}$ with $J_z \in \mathcal{J}(V)$, such that 
\begin{itemize}
\item $J_S$ is independent of $z \in S$ in $V_0$ and in a neighborhood of $z_0$. 
\item Along the cylindrical end, the complex structure only depends on the \emph{time coordinate} in a neighborhood of $0$,  e.g., $(J_S)_{s,t}=J_t$ for all $s <  -K$ for some $K$.
\end{itemize}
\end{defn} 

\begin{defn}  Fix $J_S \in \mathcal{J}_S(V)$. For any
    orbit $x_0 \in \mathcal{X}(H_M^{\lambda})$, we define
    $\mathcal{M}(x_0)$ as the space of solutions to 
\[
u: S \ra M
\]
satisfying
\begin{equation} \label{eq: FloerM}
(du - X_{H_M^\lambda} \otimes \beta)^{0,1} = 0
\end{equation}
with asymptotic condition
\begin{equation} 
    \label{eq:limitsM}
\lim_{s \ra -\infty} u(\epsilon(s,t)) = x_0.
\end{equation}
\end{defn}

 For each $\alpha \in H^{BM}_{*}(X),$  fix a chain level representative
 $\alpha_c \in C^{BM}_{*}(X)$. For any such representative $\alpha_c \in
 C^{BM}_{*}(X)$, and orbit $x_0$ in $\mathcal{X}(X ; H_M^{\lambda})$, choose
 a generic surface dependent almost-complex structure $J_S$ as above which agrees with some $J_t$ used to define the Floer complex $CF^*(X\subset M, H_M^{\lambda})$ along the cylindrical end. Consider those $u \in
 \mathcal{M}_{M}(x_0)$ such that $u(S) \subset X$ and denote this moduli space
 by $\mathring{\mc{M}}_M(x_0)$. 
 Next form 
 \begin{align} 
     \mc{M}(\alpha_c,x_0) :=
     \mathring{\mc{M}}_M (x_0) \times_{ev_{z_{0}}} \alpha_c.
 \end{align} 
 For
 generic choices, this is a manifold of dimension $|x_0|-|\alpha|$ provided
 that $|x_0|-|\alpha| \leq 1$. A standard orientation analysis shows that
 whenever $|x_0| - |\alpha| = 0$, (rigid) elements $u \in \mc{M}(\alpha_c,
 x_0)$ induce isomorphisms of orientation lines
 \begin{equation}
     \mu_u: \R \stackrel{\cong}{\ra} \mathfrak{o}_{x_0}.
\end{equation}
Thus, we can define
\begin{equation} \label{eq:pssdefn}
    \operatorname{PSS}(\alpha_c)= 
    \sum_{x_0, |x_0|-|\alpha|=0} \sum_{u \in \mc{M}(\alpha_c, x_0)} \mu_u.
\end{equation} 
 It is a classical fact that this count gives rise to a well-defined map: 
 \begin{equation} 
     \label{eq:PSScl} \operatorname{PSS}: H^*(X) \to   HF^{*}(X \subset M; H_{M}^\lambda)  
 \end{equation} 

In fact, we have a factorization: 
\[
\xymatrix{
  H^*(X) \ar@{-->}[d] \ar[dr]^{PSS} \\
  HF_{-}^{*}(X \subset M; H_{M}^\lambda)  \ar[r]  & HF^{*}(X \subset M; H_{M}^\lambda)
}.
\]
To see this, note that by Stokes' theorem, the topological energy $E_{top}(u)$
to any solution $u \in \mc{M}(\alpha_c,x_0)$ (in the sense of \eqref{eq:Etop})
is simply the action $\mc{A}(x_0)$; on the other hand by the discussion in \S
\ref{subsec:algstructures} (specifically \eqref{eq:monotonicidentity}) $E_{top}(u)$ is
an upper bound for geometric energy $E_{geo}(u)$ up to a very small error (because the perturbation 1-form in this case is $K = H^{\lambda}\beta$, where $\beta$ is subclosed and $H^{\lambda}_M$ is $C^2$-close to a non-negative function). On
the other hand one can directly compute that the action of any non-constant orbit in
$\mathcal{X}(X; H^{\lambda}_M)$ is negative (and less than a fixed $-\delta$, again using the fact that $H^{\lambda}_M$ is sufficiently close to $h^{\lambda}_M$). Hence for $H_M^{\lambda}$ close to $h^{\lambda}_M$, any element  
$u \in \mc{M}(\alpha_c,x_0)$ with $x_0$ non-constant would have to have
$E_{geo}(u) < 0$, meaning $u$ cannot exist. Hence
$\mc{M}(\alpha_c,x_0)$ is only possibly non-empty for constant orbits $x_0$,
producing the desired factorization.

It will be technically convenient to distinguish the
variant map 
\begin{align}  \label{eq:PSSpseudo}
    \PSS_{\vec{\v}_{\emptyset}}:H_{2n-*}(\bar{X},\partial{\bar{X}}) \to
    HF_{-}^{*}(X \subset M; H_{M}^\lambda),
\end{align} 
which is defined by representing elements of 
$H_{2n-*}(\bar{X},\partial{\bar{X}})$ by relative pseudocycles \cite{Kahn} $P$
such that $\partial P \subset \partial{\bar{X}}$ and counting elements of 
\[
    \mc{M}(P, x_0) = \mathring{\mc{M}}_M(x_0) \times_{ev} P 
\]
for varying $x_0$ as in \eqref{eq:pssdefn}.  On homology this map agrees with
the classical PSS map \eqref{eq:PSScl} defined using singular co-chains under
the isomorphism $H^*(X) \cong H_{2n-*}(\bar{X},\partial{\bar{X}})$.
Furthermore, given a generic relative null-bordism $Z_b$ for a pseudo-manifold
$P$, we have that 
\begin{equation}
    \label{eq:PSS_cl} \operatorname{\partial}_{CF}\circ
   \PSS_{\vec{\v}_{\emptyset}}(Z_b)=\PSS_{\vec{\v}_{\emptyset}}(P) .
\end{equation}
\begin{rem}\label{rem:groundring}
    As stated in our conventions (see \S \ref{conventions}), we restrict to
    $\mathbf{k} = \Z$, $\mathbb{Q}$, or $\C$ in order to make the standard technically simplifying
    use of the theory of pseudocycles up to bordism \cite{Kahn} as a
    model for homology with $\mathbf{k}$ coefficients. To model general
    homology classes over arbitrary $\mathbf{k}$, it seems better to pass to a
    different model for homology (which still
    interacts well with Floer theory, such as Morse homology as implemented in
    the sequel article \cite{DPSGII}). On the other hand sometimes a given
    homology class over $\mathbf{k} \notin \{\Z, \mathbb{Q}, \C\}$ can be represented by a
    pseudochain (e.g, in characteristic $p$ a pseudochain whose boundary is a
    $p$-fold cover), in which case the simplified Floer-theoretic arguments
    given here work without further modification.
\end{rem}

\subsection{Adding multiplicity and jet constraints near $\infty$}

One of the main objectives of the present paper is to describe a version of the
map \eqref{eq:PSScl} which takes into account tangency conditions along the
divisor $\D$. 
As before let $\vec{\v}$ be a vector strictly
supported on a subset $I$ with $D_I$ a non-empty stratum. 
The map 
$\mathrm{PSS}_{log}^\lambda ( (-) t^{\vec{\v}})$ 
is an enhanced version of the
classical PSS morphism, involving curves passing through cycles not necessarily
in $X$ but at $\infty$ ($\mathbf{D}_i$), with various incidence and
multiplicity conditions.
The following moduli space therefore plays a central role in this
paper. 

\begin{defn} 
Fix $J \in \mathcal{J}_S(V)$. For every orbit $x_0 \in \mathcal{X}(X ;
H_M^{\lambda})$ and $\vec{\v}$ as above, define a moduli space 
\begin{equation}\label{modulispace1}
\mc{M}(\vec{\v}, x_0)
\end{equation}
as follows:
consider the space of maps
\[
u: S \ra M
\]
satisfying Floer's equation
\begin{equation}
    (du - X_{H_M^\lambda} \otimes \beta)^{0,1} = 0
\end{equation}
with asymptotics and tangency/intersection conditions
\begin{align}
    &\lim_{s \ra -\infty} u(\epsilon(s,t)) = x_0\\
    \label{eq:incidencecondition}
&u(z) \notin \D\textrm{ for $z \neq z_0$}; \\
\label{eq:incidence} &u(z_0) \textrm{ intersects $D_i$ with multiplicity exactly $\vi$ for all $i$ 
(so $u(z_0) \notin D_i$ if $i \notin I$)}.
\end{align}
\end{defn} 
In the setting of \eqref{eq:incidencecondition}, as in \S \ref{subsec:gwinvarint} (and
specifically \eqref{eq:enhancedevaluation}), the real-oriented-projectivized $\vi$
normal jets of the map $u$ (with respect to a fixed real tangent ray in
$T_{z_0} C$) give an {\em enhanced evaluation map}
\begin{equation}\label{eq:enhancedevaluationpss}
    \Evzo:  \mc{M}(\vec{\v}, x_0) \ra \mathring{S}_I.
\end{equation}
  Let $\alpha_c t^{\vec{\v}} \in C_{log}^{*}(M,\D)$ be a cocycle representing a
  class $\alpha t^{\v} \in \QH^*(M,\D)$.  
\begin{defn} \label{def: higher} The
      moduli space $\mc{M}(\vec{\v}, \alpha_c, x_0)$ is defined to be the
      moduli space 
    \begin{align} 
        \label{eq:eval} \mc{M}(\vec{\v}, x_0) \times_{\Evzo} \alpha_c,
    \end{align}
\end{defn}  \vskip 5 pt

\begin{lem} 
The virtual dimension of $\mc{M}(\vec{\v},\alpha_c, x_0)$ is given by
\begin{equation} \operatorname{vdim}(\mc{M}(\vec{\v}, \alpha_c, x_0))= |x_0|-|\alpha
t^{\vec{\v}}|, \end{equation} 
where $|\alpha t^{\vec{v}}|$ is the grading on log cohomology induced by the
holomorphic volume form $\Omega_{M, \mathbf{D}}$ in \eqref{eq:loggrading}
(and similarly $|x_0|$ is the grading on orbits induced by $\Omega_{M,
\mathbf{D}}$).
\end{lem}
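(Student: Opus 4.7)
The plan is to carry out a Fredholm index computation combined with a fiber-product dimension count, in three steps: compute the index of the linearized Floer operator on the unconstrained cap moduli space in the relevant relative homology class; subtract the codimensions of the higher-order tangency constraints at $z_0$; and finally take the fiber product along the enhanced evaluation $\Evzo$ with the chain $\alpha_c$.

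First, any $u \in \mc{M}(\vec{\v}, x_0)$ satisfies $u \cdot D_i = v_i$ for $i \in I$ and $u \cdot D_j = 0$ for $j \notin I$. Since $K_M^{-1} \cong \mathcal{O}(\sum_i a_i D_i)$ by \eqref{eq:volform}, this gives $c_1(u) = \sum_i a_i v_i$. Using the trivialization of $K_M^{-1}|_X$ supplied by $\Omega_{M,\mathbf{D}}$---the same trivialization underlying the integer grading $|x_0| = n - CZ(x_0)$---the Fredholm index of the linearized operator on a cap asymptotic to $x_0$ is the standard
\begin{equation*}
    |x_0| \,+\, 2\,c_1^{\mathrm{rel}}(u) \;=\; |x_0| \,+\, 2\sum_i a_i v_i .
\end{equation*}
Next, I would impose the tangency conditions. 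Fixing local holomorphic coordinates $y_i$ transverse to each $D_i$ near $u(z_0)$, the requirement that $u^* y_i$ vanish to order $v_i$ at $z_0$ imposes $v_i$ complex-linear conditions on the $v_i$-jet of $u$. For generic $J \in \mathcal{J}_S(V)$, standard jet transversality arguments for pseudo-holomorphic curves meeting a divisor (cf. \cite{Ionel:2003kx}, \cite{CieliebakMohnke}) show that these conditions are cut out transversely, reducing the real dimension by $2\sum_i v_i$. Combining,
\begin{equation*}
    \dim \mc{M}(\vec{\v}, x_0) \;=\; |x_0| + 2\sum_i a_i v_i - 2\sum_i v_i \;=\; |x_0| - 2\sum_i (1 - a_i)\, v_i.
\end{equation*}

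Finally, the enhanced evaluation $\Evzo: \mc{M}(\vec{\v}, x_0) \to \mathring{S}_I$ is transverse to $\alpha_c$ for generic choices; since $\alpha_c$ is a Borel-Moore chain of dimension $2n - |I| - |\alpha|$ in the $(2n - |I|)$-dimensional manifold $\mathring{S}_I$, the fiber-product formula yields
\begin{equation*}
    \operatorname{vdim}\bigl(\mc{M}(\vec{\v}, \alpha_c, x_0)\bigr) \;=\; \dim \mc{M}(\vec{\v}, x_0) - |\alpha| \;=\; |x_0| - |\alpha| - 2\sum_i (1-a_i)\, v_i,
\end{equation*}
which is precisely $|x_0| - |\alpha\, t^{\vec{\v}}|$ by the grading formula \eqref{eq:loggrading}. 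The main technical point to justify lies in Step 2: verifying that prescribing intersection multiplicity \emph{exactly} $v_i$ (rather than $\geq v_i$) cuts out a smooth submanifold of the expected codimension. This is standard in the relative Gromov-Witten literature and is further simplified here by the fact that the incidence condition \eqref{eq:incidencecondition} forces all intersections with $\mathbf{D}$ to concentrate at $z_0$, ruling out collisions of tangency loci elsewhere.
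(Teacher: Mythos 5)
Your proof is correct and follows essentially the same route as the paper: the paper likewise computes the tangent space as the kernel of the linearized Floer operator cut down by the tangency conditions at $z_0$ and the fiber product with $\alpha_c$, obtaining the index of $\tilde{D}_u'$ minus the codimension $|\alpha| + 2\sum_i v_i$. Your careful bookkeeping of the relative first Chern class (namely $c_1^{\mathrm{rel}}(u) = +\sum_i a_i v_i$, coming from the poles of $\Omega_{M,\mathbf{D}}$ along $\mathbf{D}$) is the version consistent with the grading \eqref{eq:loggrading} and the stated formula, so no changes are needed.
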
 
\begin{proof} 
    This is standard: one notes that at a point $u \in
    \mc{M}(\vec{\v},\alpha_c, x_0)$, the tangent space to $\mc{M}(\vec{v},
    \alpha_c, x_0)$ is the kernel of the linearized operator $\tilde{D}_u'$
    corresponding to Floer's equation intersected with the linearization of the
    constraints \eqref{eq:incidence}, \eqref{eq:eval}.  Hence, the dimension of
    the tangent space is the index of $\tilde{D}_u'$ (which is
    $|x_0| + 2 c_1(u) = |x_0| + \sum_i 2 a_i v_i$), minus the codimension of
    the constraints imposed ($|\alpha| + \sum_i 2 v_i$). 
\end{proof} 
For  a generic choice of almost complex structure and when
$\operatorname{vdim}(\mc{M}(\vec{\v}, \alpha_c, x_0)) \leq 1$, the moduli space
is a manifold of the expected dimension, by a standard Sard-Smale argument
applied to Lemmas \ref{lem:transversality} and \ref{lem:submersion} (which are
deferred to the next subsection). 
We next show that the PSS moduli space has a
suitable compactification provided that the value of the Hamiltonian
$H_M^{\lambda}$ is sufficiently big along the symplectization region. We start
with some preparatory lemmas:

\begin{lem} \label{lem:energyintersection}
    Given a solution $u \in \mc{M}(\vec{\v},\alpha_c, x_0)$, its topological energy (as defined in \eqref{eq:Etop}) satisfies
    \begin{align} 
        E_{top}(u)= \sum_iv_i \kappa_i - \int_{x_0(t)}x^*\theta - \int_S d(u^*H^{\lambda}_M\beta).
    \end{align}  
\end{lem}

\begin{proof} 
Recall from \eqref{eq:Etop} (with $K=H^{\lambda}_M\beta$) that $$ E_{top}(u)= \int_Su^*\omega - \int_S d(u^*H^{\lambda}_M\beta)$$ 
 The intersection of $u$ with $\mathbf{D}$ is an isolated point, so we can removed a small ball $B(z_0)$ around that point and as the size of the ball goes to zero, we have that

\begin{align} 
    \int_Su^*\omega= -\int_{S^1}x_0^*\theta- \int_{\partial B(z_0)}(-\kappa_i d\varphi_i) \\
 =\sum_iv_i \kappa_i-\int_{S^1}x_0^*\theta. \end{align}  Therefore $$ E_{top}(u)= \sum_iv_i \kappa_i - \int_{x_0(t)}x^*\theta - \int_S d(u^*H^{\lambda}_M \beta)$$ as claimed. 

  \end{proof}

To state our key compactness result, note that given a primitive admissible
class $\v_I$ and a generic complex structure, we defined in
\eqref{GWcyclebeforefiberproduct} a Gromov-Witten type moduli space
$\mathcal{M}_{0,2}(M,\D,\vec{\v}_{I})^o$ 
along with in \eqref{eq:Evo} an
evaluation map $\operatorname{Ev}_0$ from it to $\mathring{S}_I$.  Restricting the fiber product 
\begin{align}
    \mathcal{M}_{0,2}(M,\D,\vec{\v}_{I})^o \times_{\operatorname{Ev}_0} \alpha_c 
\end{align} 
to $\bar{X} \subset X$ defines a pseudocycle (rel boundary) $\GWvc$ representing
$\GWv$ (which was defined in \eqref{eq: GWinvariantsnormal} using $\mathcal{M}_{0,2}(M, \D, \vec{v}_I)^o$). We suppress the choice of complex structure from our notation, though
they are of course necessary for chain level constructions. 

\begin{lem} 
\label{lem:compactness} 
Suppose that $\sum_i v_i\kappa_i + \hbar -
\lambda < 0$, where $\hbar>0$ is the previously chosen small constant satisfying \eqref{eq:hbarinequality}. Let $\alpha_c t^{\vec{\v}}$ be an admissible cocycle and $x_0$ be a Hamiltonian
orbit in $\mathcal{X}(X; H_M^{\lambda})$. 
Then, for generic $J_S \in
\mathcal{J}_S(V)$, (i) if $|x_0|-|\alpha t^{\vec{\v}}|=0$, the moduli space ${\mc{M}}(\vec{\v},\alpha_c, x_0)$ is a compact 0-manifold, and (ii) if $|x_0|-|\alpha t^{\vec{\v}}|=1$, the moduli space ${\mc{M}}(\vec{\v},\alpha_c, x_0)$
    admits a compactification (in the sense of Gromov-Floer convergence)
    $\overline{\mc{M}}(\vec{\v},\alpha_c, x_0)$ to a compact 1-manifold-with-boundary such that:
\begin{enumerate} 
    \item If $\vec{\v}$ is not primitive, 
        \begin{equation}\label{bd1}
            \partial\overline{\mc{M}}(\vec{\v}, \alpha_c, x_0)=\partial\overline{\mc{M}}(\vec{\v}, \alpha_c, x_0)_F:= \bigsqcup_{x',|x_0|-|x'|=1} \mc{M}(\vec{\v},\alpha_c, x') \times \mc{M}(x_0,x')
        \end{equation}
\item If $\vec{\v}=\vec{\v}_I$, 
    \begin{equation}\label{bd2}
        \partial\overline{\mc{M}}(\vec{\v}, \alpha_c, x_0)=\partial\overline{\mc{M}}(\vec{\v}, \alpha_c, x_0)_F \bigsqcup \partial\overline{\mc{M}}(\vec{\v}, \alpha_c, x_0)_S 
    \end{equation}
    where 
    \begin{equation}
        \partial\overline{\mc{M}}(\vec{\v}, \alpha_c, x_0)_S:=\GWvc \times_{ev} \mathcal{M}(\vec{\v}_{\emptyset},x_0).
    \end{equation}
    (This latter space is empty if $\alpha_c t^{\vec{\v}}$ is tautologically admissible).

\end{enumerate}
  \end{lem}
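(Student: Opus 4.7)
The plan is to run the standard Gromov-Floer compactness package for $\mc{M}(\vec{\v},\alpha_c,x_0)$, with the delicate point being the analysis of sphere bubbling at the marked point $z_0$ where the tangency condition is imposed. I would organize the argument into four steps: an \emph{a priori} energy bound, a $C^0$ bound ruling out escape to infinity, Gromov-Floer compactness together with enumeration of possible limits, and identification of the codimension-one boundary strata using admissibility.

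For the energy and $C^0$ bounds, by Lemma \ref{lem:energyintersection} any $u \in \mc{M}(\vec{\v},\alpha_c,x_0)$ satisfies $\int_S u^*\omega = \sum_i v_i\kappa_i + \int_{x_0}x_0^*\hat{\theta}$; combined with subclosedness of $\beta$, the bound $H_m^\lambda > -\hbar$, and the standard monotonicity $\Egeo(u) \leq \Etop(u) + \int_S \Omega_K$, this yields a uniform upper bound on $\Egeo(u)$ depending only on $\vec{\v},\lambda,\hbar$, and the action of $x_0$. Applying the second part of Lemma \ref{lem:energyintersection} to the portion $\bar{S}=u^{-1}(R^{-1}([c,\infty)))$ and invoking the integrated maximum principle of \cite{Abouzaid:2010ly}*{Lemma 7.2}, the quantitative hypothesis $\sum_i v_i\kappa_i + \hbar - \lambda < 0$ forces all solutions to remain in a fixed compact subset of $M$; this is exactly where the hypothesis on $\lambda$ is used.

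With both bounds in hand, Gromov's trick (Proposition \ref{prop:Gromovtrick}) reduces Floer compactness to $\tilde{J}$-holomorphic section-compactness on $S\times M$. Standard Gromov-Floer compactness then produces limits consisting of a broken Floer trajectory to $x_0$ at $-\infty$, a principal component $u_\infty: S\to M$, and a finite bubble tree of spheres attached at $z_0$, at interior points $z\neq z_0$, or along the breaking. Genericity of $J_S \in \mathcal{J}_S(V)$, together with the Sard-Smale arguments deferred to Lemmas \ref{lem:transversality} and \ref{lem:submersion}, ensures transversality for all constrained strata, so the virtual dimension formula pins every codimension-one stratum to be either a Floer breaking or a sphere-bubble configuration at $z_0$.

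The final step is the case analysis. Floer breakings at an orbit $x'$ with $|x_0|-|x'|=1$ contribute the term appearing in both \eqref{bd1} and \eqref{bd2}. Sphere bubbles at interior points $z\neq z_0$ or along the breaking carry a class $A$ whose intersection $A\cdot \mathbf{D}$ cannot contribute to the tangency at $z_0$; by positivity of intersection (Lemma \ref{lem:positivity}), exactness on $X$, and tautological admissibility applied inductively to curves in the divisor, such bubbles are generically excluded. Sphere bubbles at $z_0$ absorb the tangency: when $\vec{\v}$ is not primitive, tautological admissibility forces all contributing GW-type moduli spaces to be generically empty, yielding \eqref{bd1}. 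When $\vec{\v}=\vec{\v}_I$ is primitive and admissible, Assumption (A1) forces any bubble configuration to be a single non-constant sphere in a class $A$ with $A\cdot \mathbf{D}=\vec{\v}_I$, meeting $\mathbf{D}$ with the prescribed multiplicity $\vec{\v}_I$; the principal component then degenerates to a constant at the bubble point, leaving a classical (no-tangency) PSS cylinder parameterized by $\mc{M}(\vec{\v}_\emptyset,x_0)$. Matching the enhanced evaluation $\Evzo$ of the bubble against the cycle $\alpha_c$ recovers the pseudocycle $\GWvc$, giving the fiber-product description \eqref{bd2}. The chief obstacle is verifying in the primitive case that the bubble is precisely a \emph{single} sphere meeting $\mathbf{D}$ at one point with multiplicity $\vec{\v}_I$ (no further components lying in $\mathbf{D}$, no further nodes): this rests on combining (A1) indecomposability, positivity of intersection, and the dimension count from the virtual dimension formula.
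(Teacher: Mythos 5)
There is a genuine gap: your proposal never rules out Floer breaking along the orbits of $H_m^{\lambda}$ that lie \emph{inside} the divisor, i.e. along $\mathcal{X}(\D; H_m^{\lambda})$, and it mislocates the role of the quantitative hypothesis. Since the target of the PSS curves is the closed manifold $M$, no ``$C^0$ bound ruling out escape to infinity'' is needed (and none is possible in the form you state for $X$, because every solution meets $\D$ at $z_0$). The hypothesis $\sum_i v_i\kappa_i + \hbar - \lambda < 0$ is not used for confinement of solutions; it is used precisely to exclude the dangerous degeneration in which the limit splits as a PSS piece asymptotic to an orbit $y \in \mathcal{X}(\D; H_m^{\lambda})$ followed by a Floer trajectory $u_2$ from $y$ to $x_0$. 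These divisor orbits are nondegenerate but are not generators of $CF^*(X \subset M; H_m^{\lambda})$, so neither transversality within $\mathcal{J}_S(V)$ nor an index count disposes of them, and if they occurred the boundary description \eqref{bd1}--\eqref{bd2} would fail. The actual argument is an energy estimate: the PSS piece has $\Etop \geq -\hbar$, Lemma \ref{lem:energyintersection} bounds the topological energy of the part of the limit lying above the slice $\{R = R_H\}$ by $\sum_i v_i\kappa_i + \hbar + \int_{\partial\bar{S}} u^*\theta - (H\beta)$, and since $u_2$ is asymptotic at its input to $y \subset \D$ one picks up $\int_{\partial\bar{S}} \lambda\beta = -\lambda$ by Stokes; the contact-type condition on $J$ then shows (as in the integrated maximum principle) that the remaining boundary term is nonpositive, so $\Egeo((u_2)|_{\bar{S}}) \leq \sum_i v_i\kappa_i + \hbar - \lambda < 0$, forcing $du = X_K$ and $R$ constant on $\bar{S}$, a contradiction. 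Without this step your enumeration of limits, which tacitly assumes all intermediate breaking orbits are interior orbits of index difference one, is not justified.

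Two secondary points. First, your exclusion of sphere bubbles away from $z_0$ by ``tautological admissibility applied inductively to curves in the divisor'' is not how the argument runs and is not needed in that form: exactness of $X$ forces every nonconstant bubble to intersect $\D$, and then additivity of intersection numbers with each $D_i$, positivity of intersection (Lemma \ref{lem:positivity}), and admissibility either kill all bubbles (non-primitive case) or force a single bubble in an indecomposable class $A$ with $A\cdot\D = \vec{\v}_I$ carrying the point $z_0$ (primitive case); no induction over curves contained in $\D$ is involved. Second, you should justify why the sphere-bubble stratum in the primitive case appears in codimension one rather than two: this comes from the fixed positive real tangent ray at $z_0$, since the bubble inherits a tangent ray at its attaching point and only a one-real-dimensional family of gluing parameters matches the prescribed ray, not from the virtual dimension formula alone.
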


\begin{proof} 
    The space $M$ is compact and so standard Gromov-Floer compactness results
    imply that the only possible accumulation points of a sequence of elements in
    ${\mc{M}}(\vec{\v},\alpha_c, x_0)$ are a PSS solution followed by a
    sequence of broken Floer cylinders with trees of sphere bubbles glued on at
    any point (where intermediate Floer cylinder breaking a priori occurs along
    any orbit of $H^{\lambda}_M$). The Lemma follows from (applying
        transversality --- discussed above Lemma \ref{lem:energyintersection} and
     more in \S \ref{section:transversality} --- and standard gluing
        results to) the below sequence of assertions regarding a given
        limiting stable curve:
\begin{enumerate}
    \item[(i)] There are no cylinder breakings along orbits of $H_M^{\lambda}$ contained
            in the divisor $\D$. 

        \item[(ii)] There are no sphere bubbles unless $\vec{\v}$ is primitive (i.e.,
            $\vec{\v} = \v_I$ for some $I$), in which case sphere bubbling
            occurs in codimension 1 and contributes boundary strata in (2) above.

        \item[(iii)] Any cylinder component has 0 intersection number with $\D$.
            (Hence, by positivity of intersection Lemma \ref{lem:positivity}, 
            is completely contained in $X = M \backslash \D$).  
    \end{enumerate}
    Beginning with assertion (i), we proceed by contradiction and  assume
    that $y$ is the last
    orbit of $\mathcal{X}(\D; H_M^{\lambda})$ which
    arises as an
    asymptote of such a degeneration. More precisely, suppose $y$ is in
   $\mathcal{X}(\D; H_M^{\lambda})$, and that the limiting stable curve $u_{\infty}$ contains
    $(u_1,u_2)$ with $u_1$ a broken PSS solution asymptotic to $y$ at its output and
    and $u_2$ is a broken Floer trajectory from $y$ to $x_0$ 
    (there may be sphere bubbles as
    well attached to $u_1$ and/or $u_2$, but for now we ignore them).
    Abbreviating $H:= H_M^{\lambda}$, we have by \eqref{eq:hbarinequality} 
    (as $(-d\beta) \geq 0$ and $\int (- d \beta) = 1$) that  
    \begin{align} \label{eq:energyu1}
    \Etop(u_1) = \Egeo (u_1)- \int H d\beta 
           \geq  - \hbar 
    \end{align}  
    We now show that $u_2$ cannot exist by energy considerations. 
    Namely, let $R$ be the Liouville coordinate and consider the slice $\{R=R_H\}$.
    Along this slice, 
    \begin{equation} 
        H=\lambda (R-1) 
    \end{equation} 
    and hence (as $\theta(X_H) = \omega(Z, X_H) = dH(Z) = dH(R\partial R) = RH'(R) = \lambda R$) on this slice
    \begin{equation}
        \label{thetaX}
    \theta(X_H) = H + \lambda.
    \end{equation}
    Consider the portion $\bar{S} = u_2^{-1}(R^{-1}([R_H, \infty)))$ of the
    domain of $u_2$ mapping to the region above this slice. Let $\underline{S}$ denote the remainder of the domain of $u_2$, $\underline{S}:=\operatorname{dom}(u_2)\setminus \bar{S}$ and let $S_\infty$ denote the domain of $u_\infty$. 
    The geometric energy of $u_2$ restricted to $\bar{S}$ can be estimated by:
    \begin{align*} 
         E_{geo}((u_2)|_{\bar{S}})  &\leq E_{top}((u_2)|_{\bar{S}}) \ \ \ \ \ \ \ \ \ \ \ \ \ \ \ \ \ \ \ \ \ \ \ \ \ \ \ \ \ \ \ \ \ \ \ \ \ \ \ \ \ \ \ \ \ \ \ \quad \quad \quad \textrm{(by \eqref{eq:monotonicidentityonend})}\\
        &\leq E_{top}(u_{\infty}) - E_{top}(u_1)- E_{top}((u_2)|_{\underline{S}})\\
        & \leq E_{top}(u_\infty) + \hbar- E_{top}((u_2)|_{\underline{S}}) \ \ \ \ \ \ \ \ \ \ \ \ \ \ \ \ \ \ \ \ \ \ \ \ \ \ \ \ \ \ \ \ \ \ \ \ \textrm{(by \eqref{eq:energyu1})}\\
        &=\sum_iv_i \kappa_i - \int_{x_0(t)}x^*\theta - \int_{S_\infty}d(H\beta) + \hbar- E_{top}((u_2)|_{\underline{S}}) \ \ \ \textrm{(by Lemma \ref{lem:energyintersection})}\\
\end{align*}
In the equations above, the definition of any integral
    (such as topological energy) over a broken curve is by definition the
    sum of the integrals over the components. The inequality in the 2nd equation comes from the fact that the topological energy over sphere bubbles is strictly positive.   In applying Lemma
    \ref{lem:energyintersection}, we use the fact that $E_{top}$ is a
    topological quantity and hence is preserved under limits in the Gromov topology. Let $\partial \bar{S} = u_2^{-1}(R^{-1}(R_H))$ (with its induced orientation). Combining the fact that $E_{top}((u_2)|_{\underline{S}})= - \int_{x_0(t)}x^*\theta-\int_{\partial \bar{S}} u^* \theta-\int_{\underline{S}}d(H\beta)$ with the equality $- \int_{S_\infty}d(H\beta)+\int_{\underline{S}}d(H\beta) =- \int_{\partial \bar{S}} u^*(H) \beta$ (by Stokes'
theorem), gives us that:
\begin{align*}
           &\sum_iv_i \kappa_i - \int_{x_0(t)}x^*\theta - \int_{S_\infty}d(H\beta) + \hbar- E_{top}((u_2)|_{\underline{S}}) = \sum_iv_i \kappa_i+ \hbar  + \int_{\partial \bar{S}} u^* \theta-u^*(H) \beta \\
        &= \sum_iv_i \kappa_i+ \hbar  + \int_{\partial \bar{S}}u^* \theta-u^*(\theta(X_H))\beta  + \int_{\partial \bar{S}}\lambda \beta \ \ \ \ \ \ \ \ \ \ \ \ \ \ \ \ \ \ \ \ \ \ \ \ \ \ \ \textrm{(by \eqref{thetaX})}
    \end{align*} 
 
Now because $\beta = dt$ on the domain of $u_2$, Stokes' theorem implies
    that
    \begin{equation} 
        \int_{\partial \bar{S}}\lambda \beta= -\int_{y}\lambda \beta 
        = -\lambda.  
    \end{equation} 
    Therefore, by our assumption that $\sum_i v_i \kappa_i + \hbar - \lambda < 0$,
    \begin{equation}\label{eq:keyinequality} 
        \sum_iv_i \kappa_i+ \hbar  + \int_{\partial \bar{S}}
        u^*\theta-u^*(\theta(X_H))\beta + \int_{\partial \bar{S}}\lambda
            \beta \leq \int_{\partial \bar{S}}u^*\theta-u^*(\theta(X_H))\beta.
        \end{equation} 
    The rest proceeds as in \cite{Abouzaid:2010ly}*{Lemma 7.2}, but we recall
    the details for completeness. Setting $X_K = X_H \otimes dt$ 
    we have that
    the right hand side of \eqref{eq:keyinequality} 
    \begin{align} 
        & = \int_{\partial \bar{S}} \theta \circ (du - X_K) \\ 
        &= -\int_{\partial \bar{S}} \theta \circ J \circ (du - X_H\otimes dt) \circ j \\ 
        &= - \int_{\partial \bar{S}} dR \circ du \circ j,   
    \end{align} 
    as $dR(X_H) = 0$.
     Finally, letting $\hat{n}$ denote the outward normal along
     $\partial\bar{S}$, observe that $\partial \bar{S}$ is  oriented by the
     vector $j\hat{n}$. Now we calculate that
     $-dR(du)j(j\hat{n})=-dR(du)(-\hat{n}) \leq 0$, which implies that the
     final integral, hence $\Egeo((u_2)|_{\bar{S}})$ is non-positive. It follows that $du
     = X_K$ everywhere and in particular, $R$ must be constant on
     $u_2(\bar{S})$ which is impossible.  Hence, such a breaking cannot occur. 

     Since cylinder components have asymptotics all contained in $X = M \backslash \D$, it
     follows that the entire broken curve and each of its components define
     classes in $H_2(M, M \backslash \D)$, hence have well-defined
     topological intersection number with each of the components of $\D$, which
     is additive over the components of the broken curve. The total topological
     intersection number of the broken curve with a given $D_i$ is equal to
     $v_i$, and for any components of the stable curve not completely contained
     in $D_i$, the intersection number with $D_i$ must be positive, by Lemma
     \ref{lem:positivity}.

    Turning to assertion (ii), note that $M \backslash \D$ is exact so every
    non-constant sphere bubble in our stable curve must intersect $\mathbf{D}$.
    Since $\alpha_c t^{\vec{\v}}$ is admissible, we see that:
    \begin{itemize}
        \item If $\vec{\v}$ is not primitive then there are no sphere bubbles at all.
            Indeed, denoting by $A$ the sum of the homology classes of all
            non-constant sphere bubbles in the given stable curve, admissibility
            implies $A \cdot D_i > v_i$ for some $i$. But additivity of
            intersection numbers implies that $A \cdot D_i + (\mathrm{remaining\ curve})
            \cdot D_i = v_i$, a contradiction as the latter intersection number
            is non-negative.

        \item If $\vec{\v} = \vec{\v}_I$ for some $I$, the same arguments imply
            that a sphere bubble can only possibly appear in an indecomposable
            homology class $A$ with $A \cdot \mathbf{D} = \vec{\mathbf{v}}_I$.
            If this happens, there must be a unique sphere bubble $u_1$ with $[u_1]
            \cdot \mathbf{D} = \vec{\mathbf{v}}_I$. The
            remaining curve $u_2$, which does not have any components contained
            in $\D$, must have zero intersection number with $\D$ and hence (by
            positivity of intersection) 
            be completely contained in $X = M \backslash \D$ (in particular,
            $u_1$ must contain the point $z_0$). Furthermore $u_1$, as it meets
            $u_2$ at $u_1(\infty) \in X$,
            must lie in $\mathcal{M}_{0,2}(M, \D, \vec{\v}_I)^o$ (in particular is
            not completely contained in $\D$ either).

    \end{itemize}

    Finally, let's turn to (iii). We have shown that
    all components of our stable curve are not completely contained in $\D$,
    hence intersect $\D$ positively. Also, we have shown that the total
    intersection number of any components which are {\em not} Floer cylinders
    (e.g., the unique element of $\mathcal{M}(\vec{\v}, \alpha_c, x')$ along
    with any sphere bubbles) with $\D$ is $\vec{\v}$. It follows that cylinder
    components each have 0 intersection number with $\D$ as desired.

    With assertions (i)-(iii) established, standard gluing analysis then shows
    that the moduli spaces \eqref{bd1}-\eqref{bd2} are the only ones arising in
    codimension 1. 
    To justify the appearance of the sphere bubble in codimension 1 (rather
    than 2), note that we are studying the moduli space of maps from a plane
    with fixed real tangent ray at $z_0$; hence, any sphere bubble inherits a
    tangent ray at 0, and 
    there is a one-dimensional subspace worth of gluings for which the
    induced tangent ray to the glued solution at $z_0$ is positive real.

\end{proof}
Suppose for the remainder of the subsection that $\sum_i v_i\kappa_i + \hbar -
\lambda < 0$. For any admissible cocycle $\alpha_c t^{\vec{\v}}$, we define $\PSSlog^{\lambda} (\alpha_c t^{\vec{\v}}) \in CF^{*}(X \subset M;H_M^{\lambda})$ (again, the complex structure $J_t$ needed to define $CF^{*}(X \subset M;H_M^{\lambda})$ should agree with $J_S$ along the cylindrical end) by the formula  
\begin{equation}
 \PSSlog^{\lambda} (\alpha_c t^{\vec{\v}}) = \sum_{x_0, \operatorname{vdim}(\mc{M}(\vec{\v}, \alpha_c, x_0))=0} \sum_{u \in\mc{M}(\vec{\v}, \alpha_c, x_0)} \mu_u 
\end{equation}
where once more, for a rigid element $u\in \mc{M}(\vec{\v}, \alpha_c, x_0)$,
$\mu_u: \R \ra \mathfrak{o}_{x_0}$ is the isomorphism induced on orientation
lines (and by abuse of notation, their $\K$-normalizations) by the gluing
theory. 

Let $\PSSlog^{\lambda,+} (\alpha_c t^{\vec{\v}})$ be the image:
\begin{align} 
    \PSSlog^{\lambda,+} (\alpha_c t^{\vec{\v}})= \overline{\PSSlog^{\lambda} (\alpha_c t^{\vec{\v}})} \in CF_{+}^*(X \subset M;H_M^{\lambda}) 
\end{align}
It follows from Lemma \ref{lem:compactness} that $\PSSlog^{\lambda,+} (\alpha_c
t^{\vec{\v}})$ defines a cocycle: 
\begin{align} 
    \partial_{CF_{+}}(\PSSlog^{\lambda,+} (\alpha_c t^{\vec{\v}}))=0. 
\end{align} 
If $\alpha_c t^{\vec{\v}}$ is tautologically admissible, then
Lemma \ref{lem:compactness} implies that $\PSSlog^{\lambda} (\alpha_c
t^{\vec{\v}})$ defines a cocycle as well:
\begin{align} 
    \partial_{CF}(\PSSlog^{\lambda} (\alpha_c t^{\vec{\v}}))=0. 
\end{align} 

\subsection{Transversality} \label{section:transversality}
Fix a multiplicity vector $\vec{\v} = (v_1, \ldots, v_k)$ corresponding to our
collection of divisors $D_1, \ldots, D_k$.  The purpose of this section is to
prove that we can achieve transversality for 
$\operatorname{PSS}_{log}^{\lambda}((-)t^{\vec{\v}})$ 
within the class of complex structures $\mathcal{J}_S(V).$ We  view
$\mathcal{J}_S(V)$ as a subspace of surface dependent complex structures in
$\mathcal{J}(M,\D)$. Concretely, fixing a pair $(H_M, J_t)$ used to define $CF^{*}(X \subset M;H_M^{\lambda})$ (i.e., achieving transversality), we will show transversality can be achieved within the subspace $\J \subset \mathcal{J}_S(V)$ of $S$-dependent almost-complex structures which agree with this particular $J_t$ on the cylindrical end for $s \ll 0$. 
\vskip 10 pt

Let $m$ be an integer larger 
than $|\vec{\v}|= \sup_i \vi+1$, and for some (real) $p>1$ 
define
\begin{equation}\label{eq:banachspace}
    \mathcal{B}^{m,p}_{S,x}
\end{equation}
to be the Banach manifold of maps $u: S \ra M$ which are locally in $W^{m,p}$ and
which converge to some orbit $x$ in the $W^{m, p}$ sense with respect to the
fixed cylindrical end (see e.g., \cite[Proof of Thm. 5.1]{Floer:1995fk}, \cite[Def. 7.1.3]{FOOO2}, \cite[Def.
4.1]{AbouzaidBordism}, \cite[(C.1.8)]{pardonVFC} for precise articulations of
this condition).  When $S$ and $x$ are implicit we will just use the
notation $\mathcal{B}^{m,p}$. Over \eqref{eq:banachspace}
there is a Banach bundle 
\begin{equation}
    \mathcal{E}:= \mathcal{E}^{m-1,p}_{S,x}
\end{equation} 
whose fiber over $f: S \ra M$ consists of the space of maps
\begin{equation}
    \mathcal{E}_f := W^{m-1,p}(S, \Omega^{0,1}_S \otimes f^*TM),
\end{equation} 
where on the right hand side, the Sobolev space of sections 
$S \ra \Omega^{0,1}_S \otimes f^* TM$ is defined using the cylindrical ends on
$S$ (to fix a measure on $S$), 
an almost complex structure on $S$, and a metric and connection on $M$ (though it is independent of the particular choice thereof)
---see for instance \cite{Schwarzthesis} or the references cited above in the definition of $\mathcal{B}^{m,p}$. Again, we leave $S$ and $x$ implicit when they are fixed, and simply use the notation $\mathcal{E}$. 
For any $H$ with time-1 orbit $x$ and any $J$, the Cauchy-Riemann operator
\begin{equation}
    \bar{\partial}_{H,J}: = (d(-) - X_H \otimes \gamma)^{0,1}
\end{equation}
induces a section $\bar{\partial}_H: \mathcal{B}^{m,p} \ra \mathcal{E}^{m-1,p}$
which is Fredholm whenever $x$ is non-degenerate \cite{Floer:1988aa}.

There is an associated {\em universal moduli space} to our problem, the space
of solutions to Floer's equation over $S$ for varying $J$, which have $v_i$
multiplicity intersection with $D_i$ at $z_0$: 
\begin{equation}  \label{eq: universalmodulispace} \mc{M}_{univ}(\vec{\v}, x_0)=: \lbrace (f,J) \in
    \mathcal{B}^{m,p} \times \J, \bar{\partial}_{H,J} f=0,
    d^{\vi-1}(f)_{z_0} \in T_{f(z_0)}D_i  \textrm{ for }1 \leq i \leq k \rbrace 
\end{equation}
(the notation $d^{\vi-1}( f)$ means the {\em $\vi-1$-jet of $f$}, in the sense
described in \cite{CieliebakMohnke}*{\S 6}).   
Let $T_J(\widetilde{J})$
denote the space of infinitesimal deformations of
our (domain-dependent) $J$ within the class $\widetilde{J}$, which necessarily vanish in a neighborhood of 0 on the cylindrical end of $S$.
We recall that our almost-complex structures are all $C^{\infty}$, and moreoever $C^{\epsilon}$ deformations of a fixed almost complex structure in the sense of Definition \ref{def:JMD}. 
The tangent space to $\mc{M}_{univ}(\vec{\v}, x_0)$ at a point $(f,J)$, using a
local chart around $f(z_0)$ in $X$ in which $D_i = \{y_i = 0\} \subset \C^n$ (where $y_1, \ldots, y_n$ are the standard coordinates), so
that points $f(z)$ for $z$ near $z_0$ are thought of as living in $\C^n$:
\begin{equation}
    T_{(f,J)} \mc{M}_{univ}(\vec{\v}, x_0) = \{ (\psi, Y) \in T_f \mathcal{B}^{m,p} \times T_J \J | d^{\vi-1} \psi_{z_0} \in \{y_i = 0\} \textrm{ for }1 \leq i \leq k\}
\end{equation}

As before there is an enhanced evaluation map $\Evzo : \mc{M}_{univ}(\vec{\v}, x_0)  \to \mathring{S}_I$ and with respect to it, we define for any cocycle $\beta \in C^*(\mathring{S}_I) := C_{2n-|I|-*}^{BM}(\mathring{S}_I)$, the constrained moduli space
\begin{equation}
    \mc{M}_{univ}(\beta, \vec{\v}, x_0) := \mc{M}_{univ}(\vec{\v}, x_0) \times _{\Evzo} \beta.
\end{equation}

The following two key Lemmas are the main results of this section:
\begin{lem} 
    \label{lem:transversality}
    For $ m-2/p > |\vec{\v}|$, the space  $\mc{M}_{univ}(\vec{\v}, x_0)$ is a smooth Banach manifold. 
\end{lem}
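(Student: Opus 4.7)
The plan is to exhibit $\mc{M}_u(\vec{\v}, x_0)$ as the transverse zero set of a Fredholm section over a Banach manifold, so that it inherits a smooth Banach manifold structure via the implicit function theorem. As a preliminary step, I would check that the subset $\mathcal{B}^{m,p}_{\vec{\v}} \subset \mathcal{B}^{m,p}_{S,x_0}$ consisting of maps $f$ satisfying the jet-tangency conditions $d^{v_i-1}(f)_{z_0} \in T_{f(z_0)}D_i$ is a smooth Banach submanifold of real codimension $2\sum_i v_i$. Because $m - 2/p > |\vec{\v}|$, the Sobolev embedding $W^{m,p}\hookrightarrow C^{|\vec{\v}|}$ guarantees that the $k$-jet at $z_0$ is a smooth continuous evaluation map for every $k\leq|\vec{\v}|-1$. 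In a chart near $f(z_0)$ in which each $D_i$ is a coordinate hyperplane, the tangency condition becomes vanishing of the first $v_i-1$ Taylor coefficients of the normal components of $f$; since one can realize any prescribed jet by a compactly supported variation of $f$ near $z_0$, these constraints are cut out transversely.

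The next step is to show that the universal Cauchy--Riemann section
\[
\mathcal{F}\co \mathcal{B}^{m,p}_{\vec{\v}} \times \mathcal{J}_S(V) \to \mathcal{E}^{m-1,p}, \qquad \mathcal{F}(f, J) = (df - X_{H_m^\lambda}\otimes \beta)^{0,1}
\]
has surjective linearization at each zero $(f, J)$. The linearization splits as $(\psi,Y) \mapsto D_u \psi + \tfrac{1}{2} Y\circ (df - X_{H_m^\lambda}\otimes \beta) \circ j$, where $D_u\co T_f\mathcal{B}^{m,p}_{\vec{\v}} \to \mathcal{E}_f$ is Fredholm by non-degeneracy of $x_0$ together with the finite-codimension reduction of the domain.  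By duality, if the linearization fails to be surjective there is a nonzero $\eta \in L^q(S, \Omega^{0,1}\otimes f^*TM)$ (with $1/p+1/q=1$) in the cokernel, satisfying $D_u^*\eta = 0$ and pairing to zero against every infinitesimal $J$-variation. The adjoint equation is elliptic with smooth coefficients, so standard unique continuation forces $\eta$ to be nonzero on an open dense subset of $S$ whenever $\eta \not\equiv 0$.

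The crux of the argument, and the main obstacle, is leveraging the limited freedom to vary $J$ within $\mathcal{J}_S(V)$ to rule out such an $\eta$: recall that $J_z$ must preserve $\mathbf{D}$, be of contact type on $\overline{V\setminus V_0}$, be independent of $z$ near $z_0 \in S$ and on the portion of $M$ lying in $V_0$, and depend only on the $S^1$-coordinate along the cylindrical end. The goal is to produce an injective point $z^*\in S$ with $f(z^*)\in X\setminus \overline{V}$, $(df - X_{H_m^\lambda}\otimes \beta)(z^*)\neq 0$, and $\eta(z^*)\neq 0$. I would establish the somewhere-injectivity statement by passing to the $\tilde{J}$-holomorphic section description of $f$ via Proposition \ref{prop:Gromovtrick} and then invoking the standard somewhere-injective theorem for $J$-holomorphic curves, which produces an open dense subset of $S$ of injective points at which $df - X_{H_m^\lambda}\otimes \beta \neq 0$. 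The essential geometric input is that $f$ is asymptotic at $-\infty$ to an orbit $x_0 \in \mathcal{X}(X;H_m^\lambda)$ lying in $X\setminus V$, while $f(z_0)\in \mathbf{D}$; hence $f$ is non-constant and its image meets the open region $X\setminus \overline{V}$, which is disjoint from every locus where $J$-variation is constrained. Intersecting the open dense set of injective regular points with $f^{-1}(X\setminus \overline{V})$ and with the open dense set $\{\eta \neq 0\}$ yields the desired $z^*$.

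Given $z^*$, the classical cutoff construction produces a compactly supported variation $Y$, valued in endomorphisms of $TM$ anti-commuting with $J$ (and supported near $f(z^*)$, hence automatically in $T_J \mathcal{J}_S(V)$ since $f(z^*)$ is far from $\mathbf{D}$ and from $\overline{V\setminus V_0}$), which pairs non-trivially against $\eta$---contradicting the assumed cokernel element. This establishes surjectivity of $D\mathcal{F}_{(f,J)}$ at every zero, and an application of the implicit function theorem then gives $\mc{M}_u(\vec{\v}, x_0)$ the structure of a smooth Banach manifold of the expected (infinite) dimension.
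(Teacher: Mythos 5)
Your overall strategy---realize the jet/tangency conditions as a Banach submanifold $\mathcal{B}^{m,p}_{\vec{\v}}$ of the space of maps, then show the restricted universal Cauchy--Riemann section is transverse to zero---is a legitimate reformulation, and at the linear-algebra level it is equivalent to what the paper proves (surjectivity of the restricted linearization at a solution is the same as surjectivity of the unrestricted one together with submersivity of the jet evaluation on the universal solution space). The genuine gap is in your surjectivity argument. Because $m-2/p>|\vec{\v}|$ forces you to work with target $W^{m-1,p}$ (you cannot reduce to $W^{1,p}$, precisely because the jet conditions need high regularity to make sense), a cokernel element is a functional in $(W^{m-1,p})^{*}$, i.e.\ a distribution of order up to $m-1$, \emph{not} an $L^{q}$ section with $1/p+1/q=1$ as you assert. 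Testing against variations $\psi$ supported away from $z_0$ (which automatically satisfy your linearized jet constraints) and against the $Y$-variations at an injective point in $X\setminus\overline{V}$, together with elliptic regularity for distributions and unique continuation, only shows that the cokernel functional is represented by a smooth solution of the adjoint equation on $S\setminus\{z_0\}$ which vanishes there; it does \emph{not} kill the finite-dimensional space of functionals supported at $z_0$ (derivatives of delta paired with jets of $\eta$ at $z_0$). In other words, your argument proves exactly the analogue of the paper's Lemma \ref{lem:vanishingjetsurjective} ($(\operatorname{im}F_0)^{\perp}\subset(E_0^{m-1,p})^{\perp}$), i.e.\ that the image contains all sections vanishing to high order at $z_0$, and then stops.

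The missing---and hard---step is to show that the allowed variations also realize the remaining jet directions at $z_0$, i.e.\ that one can vary the normal jets of a solution at the tangency point while staying infinitesimally a solution of Floer's equation (with $J$ constrained to be domain-independent near $z_0$ and in $V_0$ and to preserve $\mathbf{D}$). This is exactly what the paper's proof of Lemma \ref{lem:transversality} supplies: an induction over the divisors and over the tangency orders, in which at each stage the linearized jet map normal to the relevant coordinate subspace is shown to be surjective on the kernel of the linearization, following part (b) of Lemma 6.5 of \cite{CieliebakMohnke}, with Lemma \ref{lem:vanishingjetsurjective} used to correct an arbitrary local jet-realizing perturbation to an element of the kernel without disturbing the lower-order jets. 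Without an argument of this kind (or some substitute analysis of how jets of solutions at a point of prescribed tangency with a $J$-invariant divisor can be deformed), your proposal does not establish surjectivity onto all of $\mathcal{E}^{m-1,p}$, so the implicit function theorem cannot yet be applied. Your preliminary step (that $\mathcal{B}^{m,p}_{\vec{\v}}$ is a Banach submanifold of codimension $2\sum_i v_i$) and your use of Gromov's trick plus the asymptotics of $x_0$ to find injective points in the region where $J$ may be varied are fine and parallel what the paper does inside the proof of Lemma \ref{lem:vanishingjetsurjective}.
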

\begin{lem} \label{lem:transversalityjet}
    For $ m-2/p > |\vec{\v}|$, and $\beta \in C^*(\mathring{S}_I)$, where $I =
    \operatorname{supp}(\vec{\v})$, the space  $\mc{M}_{univ}(\beta, \vec{\v}, x_0)$ is a smooth
    Banach manifold.  
\end{lem}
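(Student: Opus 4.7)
I would prove this by extending Lemma \ref{lem:transversality} via a submersion statement for the enhanced evaluation. Namely, fix a smooth cycle representative $\beta_c \subset \SIo$ of $\beta$, and interpret
\[
\mc{M}_u(\beta, \vec{\v}, x_0) := \mc{M}_u(\vec{\v}, x_0) \times_{\Evzo} \beta_c.
\]
By Lemma \ref{lem:transversality}, the first factor is already a smooth Banach manifold; the implicit function theorem then reduces matters to showing that the enhanced evaluation
\[
\Evzo : \mc{M}_u(\vec{\v}, x_0) \to \SIo
\]
is a submersion at every point, since then the preimage of any smooth chain in $\SIo$ is a smooth Banach submanifold.

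To establish submersivity at $(f,J)$, I would decompose $T_{\Evzo(f)}\SIo \cong T_{f(z_0)} \DIo \oplus \mathfrak{t}^{|I|}$ using the $T^{|I|}$-bundle structure $\SIo \to \DIo$ and treat each summand separately. For the base component $T_{f(z_0)} \DIo$, given $v$ I would choose a compactly supported perturbation $\psi$ of $f$ near $z_0$ with $\psi(z_0) = v$ and tangent to $\D$ to the relevant jet orders, then correct $\psi$ to an honest element of $T_{(f,J)} \mc{M}_u(\vec{\v}, x_0)$ by solving the inhomogeneous linearized Floer equation using the surjectivity of $\bar\partial_{H,J}$ exploited in the proof of Lemma \ref{lem:transversality}. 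For the torus fiber component $\mathfrak{t}^{|I|}$, I would argue independently for each $i \in I$ by taking a compactly supported normal perturbation of the form $\chi(|z|) \cdot \varepsilon\, i\, z^{v_i} e_i$, with $e_i$ a $J$-unit frame of $(N_i)_{f(z_0)}$ and $\chi$ a bump near $0$; such a perturbation infinitesimally rotates the leading normal coefficient $a_i$ (hence its projectivization) while fixing $f(z_0)$ and the remaining $a_j$, and it can then be corrected in the same way.

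The main obstacle will be ensuring that the inhomogeneous correction produced by the right inverse of $\bar\partial_{H,J}$ does not itself disturb the prescribed leading $v_i$-jet at $z_0$. Two structural facts make this manageable: first, the hypothesis $m - 2/p > |\vec{\v}|$ guarantees that the $v_i$-jet is a continuous functional on $\mc{B}^{m,p}$; second, the tangency condition $d^{\vi - 1}\psi_{z_0} \in T_{f(z_0)} D_i$ only constrains normal jets of order strictly below $v_i$, so the $v_i$-th normal jet is genuinely unconstrained among tangent vectors to $\mc{M}_u(\vec{\v}, x_0)$. By arranging the support of $\chi$ in a small annulus separated from $z_0$, the $\bar\partial$-error is supported away from $z_0$ and elliptic regularity forces its correction to vanish to order strictly greater than $v_i$ at $z_0$, leaving the prescribed leading jet untouched. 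With surjectivity onto both summands in hand, $\Evzo$ is a submersion and the lemma follows.
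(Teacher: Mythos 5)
Your overall reduction---realizing $\mc{M}_u(\beta,\vec{\v},x_0)$ as the fiber product of the universal space $\mc{M}_u(\vec{\v},x_0)$ with a cycle representing $\beta$ over the enhanced evaluation, and reducing the lemma to submersivity of $\Evzo$---is exactly the paper's structure (Lemma \ref{lem:transversality} combined with Lemma \ref{lem:submersion}). The gap is in how you prove submersivity. You build tangent vectors as a local bump perturbation $\psi$ near $z_0$ plus a correction $\xi$ supplied by surjectivity of the linearized operator, and you assert that because the $\bar{\partial}$-error is supported in an annulus away from $z_0$, ``elliptic regularity forces the correction to vanish to order strictly greater than $v_i$ at $z_0$.'' That claim is false: on the disc around $z_0$ where the error vanishes, $\xi$ merely solves the homogeneous linearized equation $D_u\xi=0$, and solutions of that equation satisfy no vanishing condition at an interior point (elliptic regularity gives smoothness, not vanishing, and there is no unique-continuation statement forcing jets of $\xi$ to vanish at $z_0$). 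Hence $\psi+\xi$ may move $u(z_0)$ and all the leading normal coefficients $a_j$, so you have not exhibited tangent vectors to $\mc{M}_u(\vec{\v},x_0)$ hitting the fiber directions, nor, for the base component, vectors respecting the tangency constraints. A secondary inaccuracy: the error is not literally supported away from $z_0$, since $D_u$ contains zeroth-order terms, so $D_u(z^{v_i}e_i)$ only vanishes to order $v_i$ there unless you work in an integrable local model with a holomorphic frame.

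There are two ways to close this. The paper's route (proof of Lemma \ref{lem:submersion}) avoids corrections entirely: one deforms by pairs $(-X_H,\mathcal{L}_{X_H}J)$ for a cut-off lift $H$ of a Hamiltonian on $\DIo$, and by $(X_{H_i},\mathcal{L}_{X_{H_i}}J)$ with $H_i=\tfrac{1}{2}\bigl(\prod_{j\in I}\eta(r_j)\bigr)r_i^2$. Because these Hamiltonian flows preserve every $D_i$, pushing a solution forward solves Floer's equation for the pulled-back $J$ and preserves all intersection multiplicities, so the pairs are automatically tangent to the universal moduli space and hit, respectively, the base directions $T\DIo$ and the fiber generators $\partial_{\theta_i}$; no jet bookkeeping is required. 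Alternatively, within your scheme the correct tool is Lemma \ref{lem:vanishingjetsurjective}: choose the correction inside the subspace $B_0^{m,p}$ of variations with vanishing $\ell$-jet at $z_0$ (with $\ell\geq \sup_i v_i$), which first requires verifying that your error term lies in $E_0^{m-1,p}$---precisely the point your appeal to elliptic regularity papers over. Note also that the paper proves the submersion statement under the slightly stronger hypothesis $m-2/p>|\vec{\v}|+1$, one more derivative than you assume.
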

The proof of Lemmas \ref{lem:transversality}-\ref{lem:transversalityjet} follow closely
\cite{CieliebakMohnke}*{Lemma 6.5-6.6}. Fixing an $\ell \geq 0$ with $\ell < m
- 2/p$, we define a subspace of the tangent spaces to $\mathcal{B}^{m,p}$ of
tangent vectors with vanishing $\ell$ jet: 
\begin{equation} 
    B_0^{m,p}=: \lbrace \psi \in T_f\mathcal{B}^{m,p}, d^{\ell} \psi_{z_0}=0 \rbrace
\end{equation}
Similarly, we define a subspace of (the vertical tangent space to)
$\mathcal{E}_f^{m-1, p}$ of sections with vanishing $(\ell - 1)$ jet (a condition
which is by convention vacuous if $\ell=-1$):
\begin{equation}
    E_0^{m-1,p}=: \lbrace \eta \in \mathcal{E}_f^{m-1,p}, d^{\ell-1} \eta_{z_0}=0 \rbrace
\end{equation}

The first assertion is that 
\begin{lem}\label{lem:vanishingjetsurjective}
    The linearized operator 
\begin{equation}
    F_0: B_0^{m,p} \oplus T_J(\J) \to  E_0^{m-1,p}
\end{equation}
which sends 
\begin{equation}
    (\psi,Y) \to D_f \psi + \frac{1}{2} Y_{z, f(z)} \circ (df_z - X_{f(z)} \otimes \gamma_z) \circ j_z
\end{equation}
is surjective. 
\end{lem}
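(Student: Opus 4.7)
The plan is to apply the standard dualize-and-perturb argument for surjectivity of Cauchy-Riemann-type operators that carry $J$ as an auxiliary parameter. Since the restriction $D_u\co B_0^{m,p} \to E_0^{m-1,p}$ is Fredholm (the jet-vanishing constraints cut out finite-codimensional subspaces of the usual Floer-theoretic source and target), the image of $F_0$ is closed, and surjectivity is equivalent to density. It therefore suffices to show that any $\eta$ in the topological dual $(E_0^{m-1,p})^*$ annihilating the image of $F_0$ must vanish.

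First I would exploit the $\psi$-direction. Testing the annihilation condition against pairs $(\psi,0)$ with $\psi \in B_0^{m,p}$ compactly supported in $S\setminus\{z_0\}$ forces $D_u^* \eta = 0$ distributionally on $S\setminus\{z_0\}$. Standard elliptic regularity for the formal adjoint of the Cauchy-Riemann-type operator $D_u$ then yields that $\eta$ is smooth on $S\setminus\{z_0\}$.

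Next I would exploit the $Y$-direction. Testing against $(0,Y)$ for $Y \in T_J\mathcal{J}_S(V)$ yields the integral identity
\[
\int_S \bigl\langle \eta,\; Y \circ (du - X_{H_m^\lambda}\otimes \beta) \circ j \bigr\rangle = 0
\]
for every admissible $Y$. Let $\Omega \subset S$ denote the open set of points $z$ with $z \neq z_0$, $z$ not in the cylindrical end, $u(z) \in X \setminus \overline{V_0}$, $u(z)\notin \mathbf{D}$, where $du_z - X_{u(z)}\otimes \beta_z$ is injective, and where $z$ is a somewhere-injective point of $u$. At such points any $J$-antilinear endomorphism of $TM$ at $u(z)$ is an admissible value of $Y_{u(z)}$, and the standard linear-algebraic argument (as in \cite{McDuff:2004aa}*{Lemma 3.2.2}, combined with a cutoff to localize) forces $\eta|_\Omega = 0$. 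Aronszajn-type unique continuation for $D_u^*$ then spreads the vanishing to all of $S\setminus\{z_0\}$, and the jet-vanishing condition built into $E_0^{m-1,p}$ (recall $\ell < m - 2/p$) implies that $\eta$, viewed as a continuous functional, is the zero functional.

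The main obstacle is verifying that $\Omega$ is nonempty in the presence of the substantial constraints on $\mathcal{J}_S(V)$: elements must preserve every $D_i$, be $z$-independent on $V_0$ and on the cylindrical end, and be fixed in a neighborhood of $z_0$. Nonemptiness reduces to the existence of somewhere-injective points of $u$ in the open set $u^{-1}(X\setminus \overline{V_0})$ lying off the cylindrical end. This preimage is nonempty because the asymptote $x_0$ lies in $X \setminus V$ while the incidence condition \eqref{eq:incidence} forces $u(z_0) \in \mathbf{D}$, so by continuity $u$ must cross the region $X \setminus \overline{V_0}$; openness is immediate. Somewhere-injectivity on this region then follows from the classical argument for $s$-dependent Floer equations (cf.\ \cite{McDuff:2004aa}*{Proposition 2.5.1}), using that away from the cylindrical end the perturbation datum $(H_m^\lambda, J_S, \beta)$ genuinely depends on $z$ (via $\beta = \rho(s)dt$ and the $z$-dependence of $J_S$ outside $V_0$), which excludes the degenerate alternative in the injectivity dichotomy.
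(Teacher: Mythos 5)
Your overall skeleton (dualize, test against $(\psi,0)$ to get $D_u^*\eta=0$ and smoothness of $\eta$ away from $z_0$, test against $(0,Y)$ at good points, finish by unique continuation and the jet condition) is the same as the paper's, which follows Floer--Hofer--Salamon via Cieliebak--Mohnke. The divergence, and the problem, is in how you produce the "good points", which you yourself flag as the main obstacle.

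First, your set $\Omega$ is not known to be nonempty. You require the point to lie \emph{off} the cylindrical end with $u(z)\in X\setminus\overline{V_0}$, but the continuity argument only shows that $u$ meets $X\setminus\overline{V_0}$ somewhere in its domain: the obvious such points are precisely on the far part of the end, where $u$ is close to the asymptote $x_0\subset X\setminus V$, and nothing prevents $u$ from mapping the entire complement of the end (the disc around $z_0$) into $\overline{V_0}$. Second, the somewhere-injectivity you build into $\Omega$ is both unjustified and unnecessary. The citation is inapplicable: \cite{McDuff:2004aa}*{Prop.\ 2.5.1} is the factorization theorem for closed $J$-holomorphic spheres, and there is no general somewhere-injectivity statement for solutions of an inhomogeneous, domain-dependent Floer equation on a punctured domain; "the data genuinely depends on $z$, so the degenerate alternative is excluded" is not an argument (what Floer--Hofer--Salamon actually prove, for $t$-dependent data, is density of \emph{regular} points, plus an injectivity statement only relative to $t$-slices near the asymptote). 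The paper's proof needs neither: since elements of $\mathcal{J}_S(V)$ may depend on the domain point away from $V_0$, the $z_0$-neighborhood and (up to decaying deformations) the end, one localizes $Y$ in the \emph{domain} rather than the target, so only density of regular points $du_z-X\otimes\gamma_z\neq 0$ (FHS Thm.\ 4.3) is used, at a regular point with $u(z)\notin V$ --- e.g.\ near the asymptote. Third, a smaller slip: at points of $\Omega$ mapping into $\overline{V\setminus V_0}$ the contact-type condition in the definition of $\mathcal{J}(V)$ constrains admissible $Y$, so "any $J$-antilinear endomorphism is an admissible value" is false there; you would need $u(z)\notin\overline{V}$, which makes the nonemptiness issue above even sharper. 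With the injectivity requirement dropped and the perturbation localized in the domain at a regular point outside $V$, your argument would close up and coincide with the paper's; as written, the key existence step does not hold.
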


Assuming for a moment the proof of Lemma \ref{lem:vanishingjetsurjective}, we may now give the proof of  Lemma \ref{lem:transversality}. 

\begin{proof}[Proof of Lemma \ref{lem:transversality}]
    Without loss of generality by reordering the divisors we can assume that the support of $\vec{\v}$ is $\lbrace
  1,\cdots |I| \rbrace$ and that the vectors $v_i$ are ordered so that $v_i
  \leq v_{i+1}$. Let $\vec{\v}^{(i)}$ denote the vector $(v_1,v_2,\ldots, v_{i-1}, v_i, v_i, \ldots, v_i,0,0 \cdots,0)$ 
  where the final $v_i$ occurs in position
  $|I|$. We will prove by induction on $i$ that the moduli spaces
  $\mc{M}_{univ}(\vec{\v}^{(i)}, x_0)$ are smooth Banach manifolds, where
  as in \eqref{eq: universalmodulispace} 
  \begin{equation}
      \label{eq: universalmodulispace2} \mc{M}_{univ}(\vec{\v}^{(i)}, x_0)=: \lbrace
      (f,J) \in \mathcal{B}^{m,p} \times \J, \bar{\partial}_{H,J}
      f=0, d^{\v^{(i)}_{j}-1}(f)_{z_0} \in T_{f(z_0)}D_j \textrm{ for }1 \leq j \leq k  \rbrace 
  \end{equation}

 The base case $i = 0$ is completely standard, so assume it is true for $i$. If
 $v_i=v_{i+1}$ then $\vec{\v}^{(i)} = \vec{\v}^{(i+1)}$ completing the
 induction, so let us assume $v_{i+1}>v_i$.  Let
 $\vec{v}^{(i,j)}$ denote the vectors $(v_1,v_2,\cdots v_i+j,\cdots, v_i+j,0,0
 \cdots,0)$ for $ 0 \leq j \leq  v_{i+1}-v_i$ and define
 $\mc{M}_{univ}(\vec{\v}^{(i,j)}, x_0)$ in the same way as above. The inductive step
 is itself proven by induction, this time on $j$. For every point $ (f,J) \in
 \mc{M}_{univ}(\vec{\v}^{(i,j)}, x_0)$ pick a chart near $p= f(z_0)$ which we
 identify with (an open subset of) $\C^n$ so that $p$ corresponds to the
 origin. Under this identification, we assume that each $D_i$ is identified
 with the hyperplane given by $y_i=0$, that the chart is small enough so that
 (on the preimage of this chart under $f$) the inhomogeneous term in \eqref{eq: FloerM} is zero
 and that
 after a linear change of coordinates $J(0)=i$, where $i$ is the standard
 almost complex structure on $\mathbb{C}^n$. It suffices to show that for every
 such $(f,J)$ the $v_{i}+j+1$- linearized jet map normal to $
 \mathbb{C}^n/\mathbb{C}^{n-|I|+i}$ is surjective. This is proven exactly as in
 part b) of Lemma 6.5 of \cite{CieliebakMohnke}.  
 \end{proof}

Continuing to assume the proof of Lemma \ref{lem:vanishingjetsurjective}, the proof of Lemma \ref{lem:transversalityjet} requires one further Lemma:
\begin{lem} 
    \label{lem:submersion} Assume that in the notation of Lemma \ref{lem:transversality}, $m-2/p > |\vec{\v}|+1 $. Then the evaluation map 
    \begin{equation} 
        \Evzo : \mc{M}_{univ}(\vec{\v}, x_0)  \to \mathring{S}_I 
    \end{equation} is a submersion. 
\end{lem}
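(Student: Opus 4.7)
The plan is to identify the linearization of $\Evzo$ and produce, for any target tangent vector, a preimage by combining an explicit local jet-prescription with the correction argument from Lemma \ref{lem:vanishingjetsurjective}. This closely follows the template of \cite{CieliebakMohnke}*{Lemma 6.6}, now adapted to the setting of the enhanced evaluation map into the torus bundle $\mathring{S}_I$ rather than into $\mathring{D}_I$.

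First, I will work in local holomorphic coordinates near $p := f(z_0)$ in which $D_i = \{z_i = 0\}$ for $i \in I := \operatorname{supp}(\vec{\v})$, so that $p = 0$. In these coordinates the enhanced evaluation \eqref{eq:enhancedevaluationpss} has the form $\Evzo(f, J) = (f(z_0),\,[\oplus_{i \in I} a_i])$ with $a_i = \tfrac{1}{v_i!} f_i^{(v_i)}(z_0) \in N_i|_p \setminus \{0\}$, and the tangent space to $\mathring{S}_I$ splits canonically as
\[
T_{\Evzo(f,J)}\mathring{S}_I \;\cong\; T_p \mathring{D}_I \;\oplus\; \bigoplus_{i \in I}\bigl(N_i|_p / \mathbb{R}\cdot a_i\bigr).
\]
A direct jet computation gives
\[
d\Evzo(\psi, Y) \;=\; \Bigl(\psi(z_0),\; \bigl[\tfrac{1}{v_i!}\psi_i^{(v_i)}(z_0)\bigr]_{i \in I}\Bigr),
\]
where the defining jet constraint $d^{v_i-1}\psi_{z_0} \in T_p D_i$ (which cuts out $T_{(f,J)}\mc{M}_u(\vec{\v}, x_0)$ inside the linearized Floer kernel) automatically forces $\psi(z_0) \in T_p D_I$.

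Next I will verify surjectivity. Given a target vector $(w,\, \oplus_{i \in I}[\eta_i])$ with $w \in T_p \mathring{D}_I$ and $\eta_i \in N_i|_p$, I choose a smooth section $\psi_0$ of $f^* TM$, compactly supported near $z_0$, such that in the local coordinates $\psi_0(z_0) = w$, the jet $d^{v_i-1}\psi_{0,z_0}$ lies in $T_p D_i$ for each $i \in I$, and $\tfrac{1}{v_i!}\psi_{0,i}^{(v_i)}(z_0) = \eta_i$. Such a $\psi_0$ exists because these conditions only constrain the tangential components of the jets up to order $v_i-1$, while the normal $v_i$-jets are free parameters; concretely one writes down an explicit polynomial in $z,\bar{z}$ and multiplies by a cutoff. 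In general $F(\psi_0, 0) \in E^{m-1,p}$ is nonzero, so $(\psi_0, 0)$ is not itself tangent to $\mc{M}_u(\vec{\v}, x_0)$; I correct it by applying Lemma \ref{lem:vanishingjetsurjective} with $\ell = |\vec{\v}|$, which is admissible since the hypothesis $m-2/p > |\vec{\v}|+1$ guarantees $\ell < m-2/p$. This produces $(\psi', Y') \in B_0^{m,p} \oplus T_J(\mathcal{J}_S(V))$ with $F(\psi', Y') = -F(\psi_0, 0)$. Because $\psi'$ has vanishing $\ell$-jet at $z_0$ and $\ell \geq v_i$ for every $i \in I$, the pair $(\psi, Y) := (\psi_0 + \psi', Y')$ lies in $T_{(f,J)}\mc{M}_u(\vec{\v}, x_0)$ and has the same value and same relevant normal jets at $z_0$ as $\psi_0$, so $d\Evzo(\psi, Y) = (w, \oplus_i [\eta_i])$.

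The main obstacle is really the bookkeeping: one must cleanly separate the constrained jet data (from the incidence/multiplicity conditions) from the free jet data seen by $\Evzo$, and confirm that the jet order at which Lemma \ref{lem:vanishingjetsurjective} is invoked is compatible with both the Sobolev regularity $m-2/p > |\vec{\v}|+1$ and the requirement that the correction $\psi'$ not disturb any of the prescribed jet data. Once this separation is made explicit, surjectivity of $d\Evzo$ on the kernel of $F$ is immediate, and the submersion property follows.
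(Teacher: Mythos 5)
Your overall strategy---linearize $\Evzo$, prescribe the value and the normal $v_i$-jets by an explicit local section supported near $z_0$, then correct into the kernel of the linearized operator---is a genuinely different route from the paper's. The paper argues geometrically: it produces tangent vectors to $\mc{M}_u(\vec{\v}, x_0)$ by flowing along Hamiltonian vector fields supported near $f(z_0)$ that preserve every $D_i$ (pullbacks of Hamiltonians on $D_I$ for the base directions of $\mathring{S}_I$, and the functions $\tfrac12(\prod_{j\in I}\eta(r_j))r_i^2$ for the fiber circle directions), observes that $(-X_H,\mathcal{L}_{X_H}J)$ is tangent to the universal moduli space because such flows carry solutions to solutions and preserve the tangency conditions, and computes the image under $d\Evzo$ directly. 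Your linear-analytic route is in principle viable, but as written it has a genuine gap at the correction step. Lemma \ref{lem:vanishingjetsurjective} asserts surjectivity of $F_0$ \emph{onto} $E_0^{m-1,p}$, i.e. it only supplies preimages for errors whose $\ell$-jet at $z_0$ vanishes. For your $\psi_0$ (a polynomial in $z,\bar z$ times a cutoff, constrained only through its own jets), the error $F(\psi_0,0)=D_u\psi_0$ has no reason to lie in $E_0^{m-1,p}$: in a local trivialization near $z_0$ the operator is $\bar\partial$ plus a zeroth-order term, so even if the prescribed polynomial is holomorphic the zeroth-order part applied to $\psi_0$ is already nonvanishing at $z_0$ in general (e.g. it sees $\psi_0(z_0)=w$). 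Hence the lemma cannot be invoked, as stated, to produce $(\psi',Y')$ with $F(\psi',Y')=-F(\psi_0,0)$.

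The gap is repairable, but it needs an argument you did not supply: choose the finite Taylor jet of $\psi_0$ at $z_0$ so that $D_u\psi_0$ vanishes to order $\ell=|\vec{\v}|$ at $z_0$ (solve the linearized equation formally, order by order, using that its principal part is $\bar\partial$), while checking that the anti-holomorphic correction terms disturb neither the constraints $d^{v_i-1}\psi_{0,z_0}\in T_pD_i$ nor the leading normal coefficients that $d\Evzo$ reads off; only then is $-F(\psi_0,0)\in E_0^{m-1,p}$ and the appeal to Lemma \ref{lem:vanishingjetsurjective} legitimate. (Your bookkeeping is otherwise consistent: with the paper's convention $|\vec{\v}|=\sup_i v_i+1$ one has $\ell>v_i$ for all $i\in I$ and $\ell<m-2/p$, so a correction with vanishing $\ell$-jet would indeed leave the evaluation data untouched.) The paper's Hamiltonian-flow argument sidesteps this entire issue, at the modest cost of also moving $J$ by $\mathcal{L}_{X_H}J$ inside $\mathcal{J}_S(V)$.
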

\begin{proof} 
    As before, let $(f,J)$ be a point in $\mc{M}_{univ}(\vec{\v}, x_0)$ and
    $ev_{z_0}: \mc{M}_{univ}(\vec{\v}, x_0) \to \mathring{D}_I$ the ordinary
    evaluation map (i.e., $ev_{z_0}(f,J) = f(z_0)$).  There is an exact
    sequence 
    \begin{align}\label{exactsequence} 
    0 \to T_{\Evzo(f,J)} T^{|I|} \to T_{\Evzo(f,J)}
    \mathring{S}_I \to T_{ev_{z_0}(f,J)}\mathring{D}_I \to 0 
\end{align}

We first show that $d(ev_{z_{0}})$ surjects onto
$T_{ev_{z_{0}}}\mathring{D}_I$. To prove this, we let $\bar{U}$ be a
sufficiently small open neighborhood of $ev_{z_0}(f,J) = f(z_0)$ in $\mathring{D}_I$, over which the normal bundle of each $D_i$ is trivial for $i \in I$. We may
consider an open neighborhood of $f(z_0)$ in $V \subset M$ of the form $U=\bar{U} \times \mathbf{D}_{\epsilon'}^{|I|} \subset V$ (as usual we abuse notation and use $\subset$ for the trivializing diffoemorphism of $U_I|_{\bar{U}}$ followed by the tubular neighborhood map), where $\epsilon' \ll \epsilon/2$ (recall $\epsilon$ is the size of the tubular neighborhoods specified in Definition \ref{defn:nice}).
 We assume
that $f^{-1}(U)=U_{z_{0}}$ is a connected neighborhood of $z_0$ on which the
inhomogeneous term in \eqref{eq: FloerM} vanishes. Let $v$ be a tangent vector
to $\mathring{D}_I$ at $f(z_0)$ and consider a Hamiltonian $\bar{H}: D_I \to \mathbb{R}$
supported in $\bar{U}$ and for which 
\begin{align} 
    X_{\bar{H}}(f(z_0))=v.
\end{align} 
We next consider a cutoff function $\eta(x): \mathbb{R} \to
\mathbb{R}^{\geq 0}$ which is $1$ for $x \leq \epsilon'$ and 0 for $x \geq
\epsilon/2$ (note by hypotheses $\epsilon' < \epsilon/2$). Denote by 
\begin{align} 
    H=(\prod_{j \in I} \eta(r_j))\pi_I^*(\bar{H})
\end{align} 
Let $\mathcal{L}_{X_H} J$ denote the Lie derivative of $J$ in the
direction of $X_H$. The flow of the Hamiltonian vector field preserves all
divisors $D_i$. Then by \cite{McDuff:2004aa}*{Exercise 3.1.3, discussion on page 60} together with
the invariance of intersection number with the divisors $D_i$ under
homotopies, the tuple $(-X_H, \mathcal{L}_{X_H} J)$ defines a tangent vector to the moduli space
$\mc{M}_{univ}(\vec{\v}, x_0)$ at the point $(f,J).$  Applying the differential of $ev_{z_0}$ at the point $(f,J)$ gives:
\begin{equation} 
    d(ev_{z_0})_{(f,J)}(-X_H, \mathcal{L}_{X_H} J)=v.  
\end{equation}

It therefore suffices to prove surjectivity of the linearized evaluation map to
the fibers. But this is accomplished in the same way by taking for any $i \in I$
the Hamiltonians $H_i=\frac{1}{2} (\prod_{j \in I} \eta(r_j)) r_i^2$. Let $\phi_{i,t}$
denote Hamiltonian flow of this Hamiltonian. Then 
\begin{align}
    d(\Evzo)(X_{H_{i}}, \mathcal{L}_{X_{H_{i}}} J)=\frac{d}{dt}|_{t=0}(\Evzo
    (\phi_{i,t}^{-1}(f), \phi_{i,t}^*(J) )=\partial_{\theta_{i}} 
\end{align}
where $\partial_{\theta_i}$ is the generator of the circle action in the $i$-th
fiber and hence this proves surjectivity onto the fibers.  
\end{proof} 
\begin{proof}[Proof of Lemma \ref{lem:transversalityjet}]
    This is an immediate consequence of Lemmas \ref{lem:transversality} and \ref{lem:submersion}. 
\end{proof}

\begin{proof}[Proof of Lemma \ref{lem:vanishingjetsurjective}]
    This proof is a direct adaptation of \cite{CieliebakMohnke}*{Lemma 6.6}; as in
    {\em loc. cit.}, the presence of the $\ell$ jet condition forces one to
    work directly with distributions instead of the more typical argument of
    reducing to the $W^{1,p}$ case where one can use the adjoint equation (as
    in \cite{McDuff:2004aa}*{Prop. 3.2.1} or \cite{Floer:1995fk}). 
    As in \cite{CieliebakMohnke}*{Lemma 6.6}, standard functional analytic properties of $\mathcal{E}_f^{m-1, p}$ (see {\em loc.
    cit.}) reduce the proof of this Lemma to verifying that 
    $(\operatorname{im} F_0)^{\perp} \subset (E_0^{m-1,p})^\perp$ in the dual space $(\mathcal{E}_f^{m-1,p})^*$ (where $\perp$ denotes annihilator). 
    Suppose that $\Lambda \in (\mathcal{E}_f^{m-1, p})^*$ is contained in
    $(\operatorname{im} F_0)^{\perp}$, i.e., it vanishes on
    $\operatorname{im}(F_0)$.  Then elliptic regularity for distributions
    implies that the restriction of $\Lambda$ to $S^{\ast}:=S \setminus z_0$ is
    represented by some smooth section $\eta: S^{\ast}  \to E$. e.g.
\[ \Lambda(\phi)=\langle \phi, \eta\rangle_{L^2},\]
with $D_f^* \eta = 0$ and $\langle Y \circ (df - X \otimes \gamma) \circ j ,
\eta \rangle = 0$ for all $Y \in T_J \J$. To complete the proof (or rather reduce it to arguments in {\em loc. cit.}) it suffices to show that $\eta$
vanishes on $S^{\ast}$; here we shall deviate slightly from the argument of {\em loc.
cit.} (which closely emulate the standard transversality argument for simple $J$-holomorphic curves
c.f., \cite{McDuff:2004aa}*{Prop. 3.2.1}), turning instead to the corresponding
standard argument for solutions to Floer-type equations (compare \cite{Floer:1995fk}).

To prove $\eta = 0$, first let $R(f)$ denote the set of points $z$ which are {\em regular}, meaning that 
\begin{equation} 
    du_z - X_{u(z)} \otimes \gamma_z \neq 0.  
\end{equation}
A standard argument (see \cite{Floer:1995fk}*{Thm.
4.3} or \cite{Abouzaid:2010ly}*{\S 8.2} for a version adapted to general Floer
curves) establishes that regular points are open and dense in any open set $U
\subset S$ where $d\gamma|_U = 0$. For any regular point $z \in R(f)$ with $\eta(z) \neq 0$ and $f(z) \not \in V$, one may use the
usual argument of \cite{Floer:1995fk} to produce a $Y$ in
$T_J(\J)$ (in particular $Y = 0$ sufficiently near $0$)  
such that 
$ \langle Y (f) \circ df(z') \circ j, \eta(z') \rangle \geq 0$ 
and $>0$ at $z$, a contradiction to $\eta(z) \neq 0$ (note we require $f(z) \notin V$
because our almost complex structures are assumed to be of a restricted form
within $V$; of course such $z$ always exist because the asymptotics of $f$
lie outside $V$, compare \cite{Abouzaid:2010ly}*{Lemma 8.7}). 
This implies that 
$\eta = 0$ on a neighborhood in $S^{\ast}$ of $0$; hence $\eta = 0$ on all of $S^{\ast}$ by unique continuation. 
\end{proof}

\subsection{(In)dependence of various choices} 

We now investigate to what extent the $\PSSlog^{\lambda,+}$ map depends on various choices. 

\begin{lem} 
    \label{lem:continuations4} For any admissible $\alpha_ct^{\vec{\v}}$ and $\lambda_1 \leq \lambda_2$ we have an equality  \begin{align} 
    \PSSlog^{\lambda_{2},+}(\alpha_ct^{\vec{\v}})= \mathfrak{c}_{\lambda_1,\lambda_2} \circ \PSSlog^{\lambda_{1},+} (\alpha_c t^{\vec{\v}}) 
\end{align} on the level of cohomology.  \end{lem}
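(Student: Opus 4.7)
The plan is to construct a chain homotopy between $\PSSlog^{\lambda_2,+}(\alpha_c t^{\vec{\v}})$ and $\mathfrak{c}_{\lambda_1,\lambda_2} \circ \PSSlog^{\lambda_1,+}(\alpha_c t^{\vec{\v}})$ by interpolating the two setups through a one-parameter family of domain-dependent Floer data on $S = \mathbb{C}P^1 \setminus \{0\}$. Concretely, I would choose a smooth family $(K^\tau, J^\tau)_{\tau \in [0,1]}$, with $K^\tau \in \Omega^1(S, C^\infty(M))$ and $J^\tau \in \mathcal{J}_S(V)$, satisfying: at $\tau = 0$, $(K^0, J^0)$ is the reference data defining $\PSSlog^{\lambda_2}$; at $\tau = 1$, along the negative cylindrical end parametrized by $(s,t) \mapsto e^{s+it}$, there is a neck region $[s_0-1, s_0]$ on which the end-slope transitions monotonically ($\partial_s K^\tau \leq 0$) from $H_m^{\lambda_1}\, dt$ (for $s \geq s_0$) to $H_m^{\lambda_2}\, dt$ (for $s \leq s_0 - 1$), matching the data for $\PSSlog^{\lambda_1}$ on the rest of $S$ and for $\mathfrak{c}_{\lambda_1,\lambda_2}$ near the puncture; and for intermediate $\tau$, the neck interpolates smoothly. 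All choices are made to respect the niceness and sub-closedness conventions of \eqref{eq: simplepert}.

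Next, I would form the parametrized moduli space $\mc{M}^{[0,1]}(\vec{\v}, \alpha_c, x_0)$ of pairs $(\tau, u)$ solving the associated Floer equation with the jet-incidence condition at $z_0$ from Definition \ref{def: higher} and asymptote $x_0$ at the puncture. The transversality results of Lemmas \ref{lem:transversality}-\ref{lem:transversalityjet}, applied over the parameter $[0,1]$ (the extra $\tau$-direction only helps), give a smooth manifold of dimension $|x_0| - |\alpha t^{\vec{\v}}| + 1$. When $|x_0| = |\alpha t^{\vec{\v}}|$ this is a 1-manifold, and a verbatim adaptation of Lemma \ref{lem:compactness} yields a compactification $\overline{\mc{M}^{[0,1]}}$: the same combination of the integrated maximum principle and the energy computation of Lemma \ref{lem:energyintersection} rules out cylinder breaking at orbits in $\D$, since both $\lambda_1$ and $\lambda_2$ satisfy $\sum v_i \kappa_i + \hbar - \lambda_j < 0$ and the monotonicity of the interpolation preserves the estimate along every $\tau$-slice. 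Sphere bubbling is controlled exactly as before and arises in codimension one only for primitive $\vec{\v}$, contributing chains supported on constant orbits and hence mapping to $0$ in $CF^*_+$.

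The boundary of the compactified 1-manifold decomposes into four kinds of strata: the slice $\tau = 0$ contributes $\PSSlog^{\lambda_2,+}(\alpha_c t^{\vec{\v}})$; the slice $\tau = 1$, by a standard neck-stretching and gluing argument, corresponds to configurations where $u$ breaks into a $\PSSlog^{\lambda_1}$ solution glued to a continuation cylinder, contributing $\mathfrak{c}_{\lambda_1,\lambda_2}(\PSSlog^{\lambda_1,+}(\alpha_c t^{\vec{\v}}))$; Floer breaking at an intermediate orbit in $X$ contributes a chain-homotopy term of the form $\partial_{CF_+} \mathcal{H} + \mathcal{H} \partial_{CF_+}$, where $\mathcal{H}$ is the chain-level operation defined by counts over zero-dimensional slices of the parametrized moduli space; and the sphere-bubble strata from the primitive case are killed in $CF^*_+$. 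Equating the signed count of boundary points to zero yields the chain-level identity
\[
\PSSlog^{\lambda_2,+}(\alpha_c t^{\vec{\v}}) - \mathfrak{c}_{\lambda_1,\lambda_2}\bigl(\PSSlog^{\lambda_1,+}(\alpha_c t^{\vec{\v}})\bigr) = \partial_{CF_+} \mathcal{H}(\alpha_c t^{\vec{\v}}) + \mathcal{H}\bigl(\partial(\alpha_c t^{\vec{\v}})\bigr),
\]
which gives the asserted equality in cohomology. The main obstacle is designing the interpolation $(K^\tau, J^\tau)$ so that both the PSS-type energy estimate of Lemma \ref{lem:compactness} and the continuation-type maximum principle of \cite{Abouzaid:2010ly}*{Lemma 7.2} hold uniformly in $\tau$; keeping $\partial_s K^\tau \leq 0$ in the neck and preserving sub-closedness of the associated one-form is the delicate point that forces the above explicit geometric setup.
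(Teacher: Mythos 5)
Your proposal is correct and follows essentially the same route as the paper: a one-parameter family of PSS-type Floer data interpolating between the two slopes, with the compactness analysis of Lemma \ref{lem:compactness} extending verbatim to the parametrized moduli space, intermediate Floer breaking supplying the chain homotopy, and the sphere-bubble stratum contributing zero in $CF_{+}^*$ because the classical PSS-type solutions land in the low-energy subcomplex. The only cosmetic difference is that the paper uses a non-compact parameter $q\in\mathbb{R}$ (translating the cutoff), so that the composition $\mathfrak{c}_{\lambda_1,\lambda_2}\circ\PSSlog^{\lambda_{1},+}$ appears directly as the broken boundary stratum at $q\to\infty$, whereas you end at a finite-neck slice at $\tau=1$ and then invoke a further neck-stretching/gluing identification.
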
  \begin{proof} The proof is by a degeneration of domain (as usual equipped with suitable Floer data) argument. We consider a non-negative, monotone non-increasing, function
$f(s)$ such that 
\begin{itemize}\item $f(s)=\lambda_1, s\gg 0$ \item $f(s)=
            \lambda_2, s\ll 0$ 
    \end{itemize} 
The 1-form
$$\beta=f(s)\rho(s)dt,$$ 
where $\rho$ is the cutoff function described in \eqref{eq:cutoff} and in the lines above it,
is subclosed. Now let $q \in \mathbb{R}$ be a
parameter and consider the 1-parameter family of subclosed 1-forms:
$$\beta^q=\rho(s-q)f(s)dt.$$ 
We will count solutions to the perturbed pseudo
holomorphic curve equation with input $\alpha_c$ and output $x_0$. Denote
the resulting parametrized moduli space (for varying $q \in \R$) by
\begin{equation}\label{pssacceleratedmodulispace}
    \mc{P}(\vec{\v},\alpha_c, x_0) = \coprod_{q \in \R} \mc{P}_q(\vec{\v},\alpha_c, x_0)
\end{equation}  
When $|x_0|-|\alpha
t^{\vec{\v}}|=0$, $\operatorname{vdim}(\mc{P}(\vec{\v},\alpha_c, x_0))=1.$ 
The proof of Lemma \ref{lem:compactness} extends verbatim to exclude a priori a
number of components from the Gromov-Floer compactification of the moduli space
\eqref{pssacceleratedmodulispace}, such as Floer breaking along orbits in $\D$, and most sphere bubbles. The end result is that 
the moduli space
\eqref{pssacceleratedmodulispace} has a compactification
whose boundary includes three types of components:  
\begin{itemize} 
\item the limit as $q \to \infty$
\item $q \to -\infty$
\item broken curves at finitely many $q$ values 
\end{itemize} 

The first limit corresponds to the composition $\mathfrak{c}_{\lambda_1,\lambda_2}
\circ \PSSlog^{\lambda_{1},+}$.  The second limit corresponds to taking
$\PSSlog^{\lambda_{2},+}$. The third type of boundary
component consists of two different strata: 
\begin{align} 
    &\coprod_{y ; \ |y| - |\alpha t^{\vec{v}}| = -1}  \mc{P}(\vec{\v},\alpha_c, y) \times
        \mathcal{M}(x_0, y) \\
    &\label{eq:plusstratum}  \mc{P}(\vec{\v}_\emptyset,\underline{GW_{\vec{\v}}(\alpha_c)}, x_0) 
\end{align}
(where by our hypothesis of admissibility this latter moduli space \eqref{eq:plusstratum} is empty unless
$\vec{\v}$ is primitive, i.e., equal to $\vec{\v}_I$ for some $I$).

Consider the assignment $\operatorname{T}$ of degree $-1$ given by counting
solutions to \eqref{pssacceleratedmodulispace} in the usual fashion, namely
\begin{equation} 
    \operatorname{T}(\alpha_c t^{\vec{\v}}) := \sum_{y, |y|-|\alpha t^{\vec{\v}}|=-1} \sum_{u \in  \mc{P}_q(\vec{\v},\alpha_c, y)} \mu_u
\end{equation} 
where $\mu_u: \R \ra \mathfrak{o}_y$ is the induced isomorphism on orientation
lines given by a gluing analysis.  
Then, by studying the boundary strata of one-dimensional components of \eqref{pssacceleratedmodulispace} and noting that the stratum \eqref{eq:plusstratum} contributes zero in $CF_{+}^*$, we obtain that $\operatorname{T}$ defines a chain homotopy between $\PSSlog^{\lambda_{2},+}$ and $\mathfrak{c}_{\lambda_1,\lambda_2} \circ \PSSlog^{\lambda_{1},+}$, e.g. we have that 
\begin{align} 
    \PSSlog^{\lambda_{2},+}(\alpha_c t^{\vec{\v}}) - \mathfrak{c}_{\lambda_1,\lambda_2} \circ \PSSlog^{\lambda_{1},+}(\alpha_c t^{\vec{\v}}) = \partial_{CF_{+}} \circ T (\alpha_c t^{\vec{\v}}) 
\end{align} 
\end{proof}

It follows from the Lemma that for any such chain $\alpha_c t^{\vec{\v}}$, we may obtain an element 
\begin{align} 
    \operatorname{PSS}_{log}^{+}(\alpha_c t^{\vec{\v}}):= \mathfrak{c}_{\lambda,\infty} \circ \PSSlog^{\lambda,+}(\alpha_ct^{\vec{\v}}) \in SH_{+}^*(X) 
\end{align} 
which is independent of $\lambda$. 

\begin{lem} 
    \label{lem: welldef}     
    For any admissible class $\alpha t^{\vec{\v}}$, and given two cycles
    $\alpha_c$ and $\alpha_c'$ representing $\alpha$, we have that
    \begin{align} 
        [\operatorname{PSS}_{log}^{\lambda,+} (\alpha_c t^{\vec{\v}})]= [\operatorname{PSS}_{log}^{\lambda,+}(\alpha_c' t^{\vec{\v}})] \in HF_{+}^{*}(X\subset M, H_{M}^\lambda) 
    \end{align}   
\end{lem}
\begin{proof} 
     Let $\alpha_0$ be a chain such that
    $\partial(\alpha_0)=\alpha_c-\alpha_c'$. Then after choosing complex
    structures generically, consider the moduli space of elements of 
    \eqref{modulispace1} whose enhanced evaluation lies along $\alpha_0$, which we denote by
    $\mc{M}(\vec{\v}, \alpha_0, x_0)$. When this moduli space is 1-dimensional,
    it admits a compactification with boundary (ignoring strata which
    contribute zero in the quotient complex $CF_{+}^*$) $\mc{M}(\vec{\v},
    \alpha_c, x_0)$, $\mc{M}(\vec{\v}, \alpha'_c, x_0)$ and   
    \begin{equation}
        \bigsqcup_{y, |y|-|\alpha_0 t^{\vec{\v}}|=0}
        \mc{P}_q(\vec{\v},\alpha_0, y)\times \mc{M}(x_0,y)
    \end{equation} 
    Thus $\PSSlog^{\lambda,+}(\alpha_c t^{\vec{\v}})$ and $\PSSlog^{\lambda,+}(\alpha_c' t^{\vec{\v}})$ are
    cohomologous as claimed.  
\end{proof}

As a consequence of Lemma \ref{lem:
welldef}, the PSS construction gives rise to a well-defined map on cohomology 
\begin{equation} \label{eq: maponclasses} 
    \PSSlog^{\lambda,+}:
\QH^*(M,\D)_{\lfloor \lambda' \rfloor}^{ad} \to HF_{+}^{*}(X\subset M ; H_M^{\lambda}) 
\end{equation}

for $\lambda'+\hbar<\lambda.$ Furthermore, by Lemma \ref{lem:continuations4}, we have a commutative diagram: 

\begin{equation} \label{eq: commutativediagram}
    \xymatrix{
        \QH^*(M,\D)_{\lfloor \lambda_1' \rfloor}^{ad} \ar[r]^{i_{\lambda_1',\lambda_2'}} \ar[d]^{PSS^{\lambda_{1},+}} & \QH^*(M,\D)_{\lfloor \lambda_2' \rfloor}^{ad}  \ar[d]^{PSS^{\lambda_{2},+}}\\ 
        HF_{+}^{*}(X \subset M; H_M^{\lambda_{1}}) \ar[r]^{\mathfrak{c}_{\lambda_1,\lambda_2}}  &  HF_{+}^{*}(X \subset M; H_M^{\lambda_{2}})
    }
\end{equation}

As a consequence, we obtain a well-defined map 
 \begin{align} 
     \PSSlog^{+}: \QH^*(M,\D)^{ad} \to SH_{+}^*(X) 
 \end{align}
A similar further argument shows that this map is indeed canonical, i.e.,
independent of choices made (of complex structures and Hamiltonians in the class
specified).

\subsection{Lifting to symplectic cohomology} \label{subsec:lifting}

We will need one more choice to define classes in symplectic cohomology:

\begin{defn} 
    Fix $\lambda$. Given an admissible cocycle $\alpha_c t^{\vec{\v}}$ with
    $\lambda \geq \sum \kappa_i v_i+\hbar$ corresponding to a primitive vector, we
    say that a pseudomanifold with boundary $Z_b$ is a {\em bounding cochain}
    for $\alpha_ct^{\vec{\v}}$ if
    \begin{align} 
        \label{eq:chaineq} \operatorname{\partial}_{CF} \circ
        \PSSlog^{\lambda} (\alpha_ct^{\vec{\v}})-\operatorname{\partial}_{CF}\circ
        \PSS^{\lambda}_{\vec{\v}_{\emptyset}}(Z_b)=0 
    \end{align} 
    We formally extend this definition to non-primitive $\vec{\v}$ by only allowing $Z_b=0$.
    We denote such a pair by $(\alpha_c t^{\vec{\v}}, Z_b)$.  
\end{defn} 
We suppress $\lambda$ in the notation that follows.
\begin{lem} 
    If $\alpha_c t^{\vec{\v}}$ is an admissible cocycle with vanishing
    obstruction class \eqref{eq: GWinvariantsnormal}, then there exists $Z_b$
    such that \eqref{eq:chaineq} holds.  
\end{lem}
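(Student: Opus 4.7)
The plan is to interpret the desired equation \eqref{eq:chaineq} as the signed-boundary identity for a family of one-dimensional moduli spaces, and to use the vanishing of the obstruction class to produce a null-bordism $Z_b$ that cancels the unwanted sphere-bubble contribution.

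First I would dispose of the non-primitive case. Here, by the convention in the definition of bounding cochain, $Z_b = 0$, so it suffices to verify that $\partial_{CF} \circ \PSSlog^{\lambda}(\alpha_c t^{\vec{\v}}) = 0$. This follows directly from Lemma \ref{lem:compactness}(1) applied to the one-dimensional moduli spaces $\overline{\mc{M}}(\vec{\v},\alpha_c, x_0)$ with $|x_0| - |\alpha t^{\vec{\v}}| = 1$: admissibility forces every codimension-one boundary stratum to be a Floer breaking, and cobordism invariance of the signed count of boundary points translates into the vanishing of $\partial_{CF} \circ \PSSlog^{\lambda}(\alpha_c t^{\vec{\v}})$.

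Now assume $\vec{\v} = \vI$ is primitive. I would apply Lemma \ref{lem:compactness}(2) to the one-dimensional moduli spaces $\overline{\mc{M}}(\vI, \alpha_c, x_0)$ for those $x_0$ with $|x_0| - |\alpha t^{\vI}| = 1$. Their boundary splits as Floer breakings $\partial_F$ together with sphere-bubble strata $\partial_S = \GWvc \times_{ev} \mc{M}(\vec{\v}_{\emptyset}, x_0)$. Summing the signed counts, $\partial_F$ reproduces the chain $\partial_{CF} \circ \PSSlog^{\lambda}(\alpha_c t^{\vI})$, while inspection of the definition of $\PSS_{\vec{\v}_\emptyset}$ (applied to the relative pseudocycle $\GWvc$) identifies the $\partial_S$ contribution with $\PSS^{\lambda}_{\vec{\v}_\emptyset}(\GWvc)$. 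Cobordism invariance then yields, on the chain level,
\[
    \partial_{CF} \circ \PSSlog^{\lambda}(\alpha_c t^{\vI}) = \PSS^{\lambda}_{\vec{\v}_\emptyset}(\GWvc),
\]
up to an overall sign fixed by orientation conventions.

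To finish, the hypothesis that the obstruction class vanishes means $[\GWvc] = 0$ in $H^{BM}_*(X) \cong H_{2n-*}(\bar{X}, \partial \bar{X})$, so a standard Sard--Smale perturbation produces a relative pseudomanifold $Z_b \subset \bar{X}$ with $\partial Z_b = \GWvc$ (rel $\partial \bar{X}$) that is transverse to the PSS moduli spaces $\mathring{\mc{M}}_M(x_0)$. Equation \eqref{eq:PSS_cl} then gives $\partial_{CF} \circ \PSS^{\lambda}_{\vec{\v}_\emptyset}(Z_b) = \PSS^{\lambda}_{\vec{\v}_\emptyset}(\GWvc)$, and subtracting from the displayed identity delivers \eqref{eq:chaineq}. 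I expect the main technical hurdle to be the orientation bookkeeping for the sphere-bubble stratum: one must verify that the one-parameter family of gluings flagged at the very end of the proof of Lemma \ref{lem:compactness} (the family parametrizing the induced tangent ray at $z_0$) orients $\partial_S$ in precisely the way that matches the count defining $\PSS^{\lambda}_{\vec{\v}_\emptyset}(\GWvc)$, so that the sign in the boundary identity is the one demanded by \eqref{eq:chaineq}.
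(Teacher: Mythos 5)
Your proposal is correct and follows essentially the same route as the paper: the non-primitive (tautologically admissible) case via Lemma \ref{lem:compactness}(1) with $Z_b=0$, and in the primitive case the identification of the sphere-bubble boundary stratum with $\PSS_{\vec{\v}_{\emptyset}}(\GWvc)$, cancelled by a null-bordism $Z_b$ of $\GWvc$ (which exists by vanishing of the obstruction class) via \eqref{eq:PSS_cl}.
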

\begin{proof} 
    Assume first that $\vec{\v}$ is not primitive, or more generally
    that $\alpha t^{\vec{\v}}$ is tautologically admissible. Given $\alpha_c
    t^{\vec{\v}}$, consider a given $x_0$ such that
    $\operatorname{vdim}(\mc{M}(\vec{\v}, \alpha_c, x_0))=1$. The boundary of
    the compactification $\overline{\mc{M}}(\vec{\v}, \alpha_c, x_0)$ has one
    component 
    \begin{align} \bigsqcup_{|x_0|=|x'|+1}
        \mc{M}(\vec{\v},\alpha_c, x') \times \mc{M}(x_0,x')
    \end{align} 
    which represents the coefficient of $|\mathfrak{o}_{x_0}|$ in the composition
    $\partial_{CF} \circ \operatorname{PSS}(\alpha_c t^{\vec{\v}})$; in particular
    $\partial_{CF} \circ \operatorname{PSS}(\alpha_c t^{\vec{\v}}) = 0$, and
    $\alpha_c t^{\vec{\v}}$ satisfies \eqref{eq:chaineq} with $Z_b = 0$ as
    desired.  In the primitive non-tautologically admissible case, choose
    generic nullbordism $Z_b$ for $\GWvc$. There is one extra component to the
    boundary of $\overline{\mc{M}}(\vec{\v}, \alpha_c, x_0)$ in this case,
    given by: 
    \begin{align} 
        \GWvc \times_{ev} \mathcal{M}(\vec{\v}_{\emptyset},x_0).
    \end{align} 
    In this case, we have that 
    \begin{align} 
        \partial_{CF}(\PSSlog (\alpha_c t^{\vec{\v}}))=\PSS_{\vec{\v}_{\emptyset}}(\GWvc) 
    \end{align} 
    In view of \eqref{eq:PSS_cl}, this can be canceled out by
    $\operatorname{\partial}_{CF} \circ \operatorname{PSS}_{\vec{\v}_{\emptyset}}(Z_b)$ and thus
    \eqref{eq:chaineq} holds.  
\end{proof}

For any pair $(\alpha_ct^{\vec{\v}}, Z_b)$, define $\PSSlog^{\lambda} (\alpha_c t^{\vec{\v}}, Z_b) \in HF^{*}(X \subset M;H_M^{\lambda})$ via the formula 
\begin{equation} \PSSlog^{\lambda} (\alpha_c t^{\vec{\v}}, Z_b)= [\PSSlog^{\lambda} (\alpha_c t^{\vec{\v}})-  \operatorname{PSS}_{\vec{\v}_{\emptyset}}(Z_b)] \end{equation} 

For the rest of this subsection, we exclusively consider admissible cocycles
$\alpha_ct^{\vec{\v}}$ for which we may take $Z_b=0$ for generic $J \in
\mathcal{J}_S(V)$. This applies for all tautologically admissible classes, but
also in other situations. For example: 

\begin{defn} 
    After choosing a triangulation $T$ of $\mathring{S}_I$, we can represent
    the fundamental class $\alpha$ on $\mathring{S}_I$ by a cycle given by
    taking the sum of all $2n-|I|$ dimensional simplices, $\alpha_c$. We refer
    to this as the fundamental cycle associated to $T$. 
\end{defn}

\begin{lem} \label{lem:trivialbounding}
    Fix a triangulation $T$ of $\mathring{S}_I$. If $\alpha=1 \in
    H^0(\mathring{S}_I)$, and $\alpha_c$ is the fundamental cycle associated to
    $T$, then equation \eqref{eq:chaineq} holds with $Z_b=0$.  
\end{lem}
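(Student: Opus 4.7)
The plan is to show $\partial_{CF} \circ \PSSlog^{\lambda}(\alpha_c t^{\vec{\v}}) = 0$, which is precisely \eqref{eq:chaineq} with $Z_b=0$. If $\vec{\v}$ is not primitive, then $(\alpha_c,\vec{\v})$ must be tautologically admissible, and the result follows immediately from Lemma \ref{lem:compactness}(1) by the standard one-manifold argument applied to $\overline{\mc{M}}(\vec{\v},\alpha_c,x_0)$ for each $x_0$ with $\operatorname{vdim}=1$. The substantive case is $\vec{\v}=\vec{\v}_I$ primitive, where Lemma \ref{lem:compactness}(2) contributes an additional sphere-bubble boundary stratum $\partial\overline{\mc{M}}_S = \GWvc \times_{ev} \mc{M}(\vec{\v}_{\emptyset},x_0)$; the plan is to show this stratum is empty for generic $J$ by exhibiting a free $S^1$-action on it.

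Since $\mc{M}_{0,2}(M,\D,\vec{\v}_I)$ is obtained from $\widetilde{\mc{M}}_{0,2}(M,\D,\vec{\v}_I)$ by quotienting out only the positive real dilations in the stabilizer $\mathbb{C}^* = \mathbb{R}^+ \times S^1$ of $\{0,\infty\}$ in the M\"obius group, there is a residual $S^1$-action by rotation on the open stratum $\mathcal{M}_{0,2}(M,\D,\vec{\v}_I)^o$. Assumption (A1) guarantees that every class $A$ with $A\cdot\D=\vec{\v}_I$ is indecomposable, so for generic $J\in\mathcal{J}(M,\D)$ the underlying spheres are somewhere-injective and therefore have trivial continuous automorphism group; hence this $S^1$-action is free. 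The action preserves $ev_\infty$ (as $\infty$ is a fixed point of the rotation) and is equivariant for $\Evzo$, where $S^1$ acts on $\mathring{S}_I$ via the homomorphism $\phi_{\vec{\v}_I}\colon S^1 \to T^{|I|}$ that rotates the normal jets at $z_0$.

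When $\alpha=1 \in H^0(\mathring{S}_I)$ is represented by the top-dimensional fundamental cycle $\alpha_c$ associated to $T$, the incidence constraint $\Evzo(u)\in\alpha_c$ is vacuous, so (as a pseudocycle) $\GWvc$ coincides with $\mathcal{M}_{0,2}(M,\D,\vec{\v}_I)^o \cap ev_\infty^{-1}(\bar X)$ and inherits the free $S^1$-action. This action then extends to $\partial\overline{\mc{M}}_S = \GWvc \times_{ev} \mc{M}(\vec{\v}_{\emptyset},x_0)$, acting on the first factor by rotation and trivially on the second; this is well-defined by $S^1$-invariance of $ev_\infty$, and it remains free because the action on $\GWvc$ is free. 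From the dimension count in the preceding subsection, $\partial\overline{\mc{M}}_S$ has dimension zero precisely when $\operatorname{vdim}(\mc{M}(\vec{\v},\alpha_c,x_0))=1$. Since a $0$-dimensional smooth manifold with a free $S^1$-action is necessarily empty, we conclude $\partial\overline{\mc{M}}_S = \emptyset$ for generic $J$. The one-manifold argument applied to $\overline{\mc{M}}(\vec{\v},\alpha_c,x_0)$ then yields $\partial_{CF} \circ \PSSlog^{\lambda}(\alpha_c t^{\vec{\v}}) = 0$.

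The main obstacle is confirming the freeness of the residual $S^1$-action on $\mathcal{M}_{0,2}(M,\D,\vec{\v}_I)^o$; this rests on generic transversality together with the indecomposability afforded by Assumption (A1). The remainder of the argument is dimensional bookkeeping and the topological observation that a $0$-dimensional manifold cannot support a free circle action.
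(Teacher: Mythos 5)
Your proposal is correct and follows essentially the same route as the paper: the key point in both is that $ev_\infty$ on $\mathcal{M}_{0,2}(M,\mathbf{D},\vec{\v}_I)^o$ is invariant under the residual $S^1$-rotation of the domain, which kills the sphere-bubble boundary stratum at chain level for generic $J$. The paper packages this as the evaluation factoring through the lower-dimensional quotient $\mathcal{M}_{0,2}(M,\mathbf{D},\vec{\v}_I)^o/S^1$ (as in Lemma \ref{lem:degeneracy}), so that the PSS moduli space generically misses the resulting chain, while you package it as a free $S^1$-action on the zero-dimensional fiber product; these are the same argument.
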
 
\begin{proof} 
    As in the proof of Lemma \ref{lem:degeneracy},  the evaluation
    $ev_{\infty,*}$ map factors through a lower dimensional chain
    $[\overline{\mathcal{M}}_{0,2}(M,\mathbf{D},\vec{\v}_I)^o/S^1]$. For
    generic $J$, the PSS moduli space misses this cycle completely, implying
    that
    \begin{align} 
        \PSS_{\vec{\v}_{\emptyset}}(\GWvc)=0 
    \end{align}
    at the chain level.  
\end{proof}

In view of Lemma \ref{lem:trivialbounding}, we will always set $Z_b=0$ and will
drop it from the notation when working with classes of the form
$\alpha_ct^{\v}$, with $\alpha_c$ a fundamental cycle.  We now state analogues
of  Lemmas \ref{lem:continuations4}, \ref{lem: welldef}. 

\begin{lem} 
    \label{lem:continuationsPSSH} 
    Fix an admissible cocycle $\alpha_ct^{\vec{\v}}$ for which we may take
    $Z_b=0$ for generic $J \in \mathcal{J}_S(V)$. For any $\lambda_1 \leq \lambda_2$,
    we have an equality  
    \begin{equation} 
        \PSSlog^{\lambda_{2}}(\alpha_ct^{\vec{\v}})= \mathfrak{c}_{\lambda_1,\lambda_2} \circ \PSSlog^{\lambda_{1}} (\alpha_c t^{\vec{\v}}) 
    \end{equation}
\end{lem}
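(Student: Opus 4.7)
The plan is to follow the strategy of Lemma \ref{lem:continuations4} verbatim, but now to produce a chain homotopy in the unquotiented Floer complex $CF^*(X \subset M; H_m^\lambda)$ rather than in the quotient $CF_+^*$. Fix a monotone non-increasing function $f(s)$ with $f(s) = \lambda_1$ for $s \gg 0$ and $f(s) = \lambda_2$ for $s \ll 0$. For $q \in \R$, consider the subclosed one-forms $\beta^q = \rho(s-q) f(s)\, dt$ on $S$ together with a compatible interpolating family of domain-dependent almost complex structures and perturbation data, and let $\mc{M}_H(\vec{\v}, \alpha_c, x_0)$ denote the parametrized PSS moduli space over $q \in \R$ of solutions with $\vec{\v}$-tangency at $z_0$, enhanced evaluation into $\alpha_c$, and output asymptote $x_0$. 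When $|x_0| = |\alpha t^{\vec{\v}}|$ this parametrized space has virtual dimension one, and counting rigid points in the analogous space for orbits $y$ with $|y| = |\alpha t^{\vec{\v}}| - 1$ produces the candidate chain homotopy
\[
\operatorname{T}(\alpha_c t^{\vec{\v}}) := \sum_{y,\; |y| = |\alpha t^{\vec{\v}}| - 1}\ \sum_{u \in \mc{M}_H(\vec{\v}, \alpha_c, y)} \mu_u.
\]

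The core analytic step is to identify the codimension-one boundary of the Gromov-Floer compactification $\overline{\mc{M}_H}(\vec{\v}, \alpha_c, x_0)$. The compactness, energy, maximum-principle and positivity-of-intersection arguments of Lemma \ref{lem:compactness} transfer verbatim to the parametrized setting---the inequality $\sum_i v_i \kappa_i + \hbar < \lambda_1 \leq \lambda_2$ guarantees that no Floer breaking along orbits in $\D$ occurs at either endpoint of the homotopy, cylinder components are confined to $X$ by positivity of intersection, and sphere bubbles can arise only in the primitive case and only with the unique bubble carrying the intersection class $\vec{\v}_I$ and absorbing the marked point $z_0$. The surviving codimension-one strata are thus: the $q \to +\infty$ limit, contributing $\mathfrak{c}_{\lambda_1, \lambda_2} \circ \PSSlog^{\lambda_1}(\alpha_c t^{\vec{\v}})$; the $q \to -\infty$ limit, contributing $\PSSlog^{\lambda_2}(\alpha_c t^{\vec{\v}})$; Floer-breaking strata at finite $q$ of the form $\mc{M}_H(\vec{\v}, \alpha_c, y) \times \mc{M}(x_0, y)$, which assemble to $\partial_{CF} \circ \operatorname{T}(\alpha_c t^{\vec{\v}})$ (the reverse term $\operatorname{T} \circ \partial_{CF}$ vanishes since $\alpha_c t^{\vec{\v}}$ is a cocycle); and, in the primitive admissible case, a sphere-bubble stratum of the form $\underline{GW_{\vec{\v}}(\alpha_c)} \times_{ev} \mc{M}_H(\vec{\v}_\emptyset, x_0)$, appearing in codimension one rather than two thanks to the tangent-ray matching explained in the proof of Lemma \ref{lem:continuations4}.

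The main obstacle, and the only substantial new point compared to Lemma \ref{lem:continuations4}, is that this last sphere-bubble stratum can no longer be absorbed into the quotient $CF_+^*$ and must be eliminated on the nose. The hypothesis that $Z_b = 0$ is admissible for generic $J \in \mathcal{J}_S(V)$ asserts that the unparametrized fiber product $\underline{GW_{\vec{\v}}(\alpha_c)} \times_{ev} \mc{M}(\vec{\v}_\emptyset, x_0)$ is empty in the relevant degree. To extend this vanishing to the parametrized setting, I would choose the interpolating one-parameter family of Floer data in a Baire-generic manner; applying the parametrized analogue of the submersion statement of Lemma \ref{lem:submersion} to the evaluation $ev : \mc{M}_H(\vec{\v}_\emptyset, x_0) \to X$, with $\underline{GW_{\vec{\v}}(\alpha_c)}$ a fixed pseudocycle, makes the parametrized fiber product transverse, and the dimension count---after subtracting one for the tangent-ray reduction---shows that the relevant component of the sphere-bubble stratum is empty.

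With the sphere-bubble stratum eliminated, the remaining boundary strata of the one-dimensional parametrized moduli spaces assemble into the chain-level identity
\[
\PSSlog^{\lambda_2}(\alpha_c t^{\vec{\v}}) - \mathfrak{c}_{\lambda_1, \lambda_2} \circ \PSSlog^{\lambda_1}(\alpha_c t^{\vec{\v}}) = \partial_{CF} \circ \operatorname{T}(\alpha_c t^{\vec{\v}}),
\]
from which the claimed equality on cohomology follows at once.
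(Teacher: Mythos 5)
Your proposal is correct and follows essentially the same route as the paper, whose proof is literally a one-line reference back to Lemma \ref{lem:continuations4}: you rerun the parametrized ($q$-family) argument in the unquotiented complex and observe that the only new issue is the sphere-bubble stratum, which is exactly what the hypothesis ``$Z_b=0$ for generic $J$'' is there to kill. Your explicit treatment of that stratum via a Baire-generic one-parameter family (using that the relevant GW evaluation is either empty or factors through a lower-dimensional set, so the parametrized fiber product of expected dimension zero is generically empty) is precisely the implicit content of the paper's citation, so there is nothing to correct.
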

\begin{proof} This is proven exactly as in Lemma \ref{lem:continuations4}. \end{proof}

It follows from the lemma that for any such cocycle $\alpha_c t^{\vec{\v}}$, we may obtain an element 
\begin{align} 
    \label{canonicalclass}
    \operatorname{PSS}_{log}(\alpha_c t^{\vec{\v}}):= \mathfrak{c}_{\lambda,\infty} \circ \PSSlog^{\lambda}(\alpha_ct^{\vec{\v}}) \in SH^*(X) 
\end{align} 
which is independent of $\lambda$. 

\begin{rem} 
    \label{rem: BormanSheridan} 
Let $(M, \D)$ be any pair where all $\kappa_i$ are equal to some $\kappa > 0$. 
For every component $D_i$, consider a fundamental chain $\alpha_i$ on the
corresponding circle
bundle $\mathring{S}_i$. We leave it as an exercise in this case to verify that $\alpha_i t^{\vec{\v}_i}$ is admissible for each $i$ in the sense of Definition \ref{defn:admissi} (second bullet point): for instance to verify Assumption (A1), suppose that $A \in H_2(M)_{\omega}$ satisfied $A = A_1 + A_2$ with $A_1, A_2 \in H_2(M)_{\omega}$. Then $A_1 \cdot \D + A_2 \cdot \D = \vec{\v}_i$ so $\sum_j (A_1 \cdot D_j + A_2 \cdot D_j) = 1$; on the the other hand since $[\omega] = \kappa (\sum_j [D_j])$, we have $\omega(A_1) = \kappa \sum_j (A_1 \cdot D_j) > 0$ and similarly $\omega(A_2) = \kappa \sum_j (A_2 \cdot D_j) > 0$, which is impossible.
Now Lemma \ref{lem:trivialbounding} further implies that $\alpha_i t^{\vec{\v}_i}$ is an admissible cocycle with $Z_b = 0$, hence we obtain via \eqref{canonicalclass} canonical
classes $\theta_i=\PSSlog (\alpha_it^{\vec{v}_i}) \in SH^*(X)$.  It is the classes
$\theta_{BS}=\sum_i \theta_i$  which are constructed in the work of \cite{bormansheridan}. 
\end{rem}

\begin{lem} 
    \label{lem: welldef2}     For any tautologically admissible class $\alpha t^{\vec{\v}}$, and given two cycles $\alpha_c$ and $\alpha_c'$ representing $\alpha$, we have that \begin{align} 
        [\operatorname{PSS}_{log}^{\lambda} (\alpha_c t^{\vec{\v}})]= [\operatorname{PSS}_{log}^{\lambda}(\alpha_c' t^{\vec{\v}})] \in HF^{*}(X\subset M, H_{M}^\lambda) 
    \end{align}   
\end{lem}
\begin{proof} This is proven exactly as in Lemma \ref{lem: welldef}.  \end{proof}

For topological pairs $(M,\mathbf{D})$, Lemma \ref{lem: welldef2} implies that there is a well-defined map: 
\begin{equation} \label{eq: maponclasses2} 
    \PSSlog^{\lambda}:
\QH^*(M,\D)_{\lfloor \lambda' \rfloor} \to HF^{*}(X\subset M ; H_M^{\lambda}) 
\end{equation}
for $\lambda'+\hbar<\lambda.$  The natural analogue of the diagram \eqref{eq: commutativediagram} (without ``+") also holds, giving rise to a map:  
\begin{align} \label{eq:logpsstopological}
    \operatorname{PSS}_{log}: \QH^*(M,\mathbf{D}) \to SH^*(X) 
\end{align}

\subsection{Interplay with the BV operator} 
In this subsection, we 
show that the
log PSS map intertwines the BV operator $\Delta$ on symplectic cohomology with the map $\Delta$ on $\QH^*(M,\D)$ defined in \eqref{logBVoperator}. Rather than prove a maximally general statement, we content ourself with proving such compatibility for log cohomology classes which have primitive multiplicity and are representable by (pushforwards of) fundamental chains (along proper maps), or more generally pseudocycles (see Remark \ref{pssBVpseudo}); variations on these arguments would handle more general log cohomology classes.

Let
$S = \mathbb{C}P^1 \setminus \lbrace 0 \rbrace$, thought of as a punctured
sphere, with a distinguished marked point $z_0 = \lbrace \infty \rbrace $ and a
cylindrical end around $\lbrace 0 \rbrace $ as before.  At $z_0$ we consider an
$S^1$-family of domains by adding a unit-tangent vector (or equivalently, a real tangent ray) at $z_0$,
$\vec{r}_{z_{0}} \in S_{z_0} S$ which we allow to take on arbitrary values
(previously a single such vector was fixed and used in our definition of the
enhanced evaluation map, see discussion above
\eqref{eq:enhancedevaluationpss}).
We may, in complete analogy with log PSS, define the
moduli spaces
$\overline{\mc{M}}_{S^1}(\vec{\v}, \alpha_c, x_0)$ of maps from this varying
family of domains with intersection multiplicity, enhanced evaluation
constraint (with respect to $\vec{r}_{z_0}$), and Floer asympotic conditions
specified by $\vec{\v}$, $\alpha_c$ and $x_0$ respectively, satisfying Floer's
equation (for some generic $J$).  We can (also as before) define an operation
$\operatorname{PSS}_{log,S^1}^{\lambda}$ by the formula 
\begin{equation}
    \operatorname{PSS}_{log,S^1}^{\lambda}(\alpha_c t^{\vec{\v}})=
    \sum_{x_0, |x_0| - |\alpha t^{\vec{v}}|=-1,} \sum_{u \in
    \overline{\mc{M}}_{S^1}(\vec{\v}, \alpha_c, x_0)} \mu_u \in CF^*(X \subset M, H^{\lambda}_M)
\end{equation}
where $\mu_u: \R \ra \mathfrak{o}_{x_0}$ is the isomorphism of orientation
lines (and their $\K$-normalizations) induced by rigid elements $u$.  (an
analogue of Lemma \ref{lem:compactness} implies that the above count is
well-defined for generic $J$). Furthermore, 
\begin{lem}
    If $[\alpha_c t^{\vec{\v}}] \in \QH^*(M,\D)$ is an admissible class, then the element $\operatorname{PSS}_{log,S^1}^{\lambda}(\alpha_c t^{\vec{\v}})$ is a cocycle for the Floer differential, i.e., $\partial_{CF}(\operatorname{PSS}_{log,S^1}^{\lambda}(\alpha_c t^{\vec{\v}})) = 0$.
\end{lem}
\begin{proof}
The same analysis performed in Lemma \ref{lem:compactness}  tells us that, in the case $|x_0| - |\alpha t^{\vec{v}}|=0$, the Gromov-Floer compactification
$\overline{\mc{M}}_{S^1}(\vec{\v}, \alpha_c, x_0)$ is a compact 1-manifold-with-boundary
\begin{equation}\label{bd2s1}
    \partial\overline{\mc{M}}_{S^1}(\vec{\v}, \alpha_c, x_0)=
    \partial\overline{\mc{M}}_{S^1}(\vec{\v}, \alpha_c, x_0)_F:= \bigsqcup_{x',|x_0|-|x'|=1} \mc{M}_{S^1}(\vec{\v},\alpha_c, x') \times \mc{M}(x_0,x');
\end{equation}
in particular while as before there could be a sphere bubble appearing the limit of a
sequence of maps if $\vec{\v} = \vec{\v}_I$,  such a bubble now appears in the
(more usual) codimension 2 and hence does not appear in \eqref{bd2s1} (note that the last paragraph in the proof of Lemma
\ref{lem:compactness}, which explained why sphere bubbling occurred in codimension 1 in that case, no longer applies as we allow the
tangent ray at $z_0$ to vary in $\overline{\mc{M}}_{S^1}(\vec{\v}, \alpha_c, x_0)$). The desired statement then follows as usual by counting elements of $\partial\overline{\mc{M}}_{S^1}(\vec{\v}, \alpha_c, x_0)$.
\end{proof}
Note that $\alpha_c t^{\vec{\v}}$ is not necessarily \emph{tautologically} admissible 
in the above Lemma. For simplicity, we now consider a special class of such
$\alpha_c t^{\vec{\v}}$, those with $\alpha_c$ represented by a submanifold and with primitive
multiplicity $\vec{\v}:=\vec{\v}_I$.  Let $\vec{\v}_I$ be a primitive vector and  
\begin{align}
    \Gamma_{\vec{\v}_{I}}: S^1 \times \mathring{S}_I \to \mathring{S}_I 
 \end{align} 
the corresponding
$S^1$ action (see \eqref{gammaV}). Let $P$ be an oriented (possibly non-compact) manifold and $p: P
\to \mathring{S}_I$ be a proper map.
Choose a triangulation on $P$ and let
$\alpha_c$ denote the pushforward of the fundamental chain along the map $p$,
with $\alpha = [\alpha_c]$ the corresponding cohomology class. We then have a map 
\begin{align} \label{eq: actingpush} 
    \Gamma_p :S^1 \times P \to \mathring{S}_I 
\end{align} 
which is defined as the composition of maps
$\Gamma_p:=\Gamma_{\vec{\v}} \circ (\operatorname{id} \times p)$. Let $\sigma_c$
denote the pushforward of the fundamental chain (defined for example as the
Eilenberg-Zilber product of the fundamental chain on $S^1$ and the fundamental
chain on $P$) under $\Gamma_p$ and let $\sigma:=[\sigma_c]$ the corresponding
cohomology class. 

 \begin{lem} \label{lem: BVandPSS} 
     Let $P$, $\alpha_c$ be as above and assume that $\alpha t^{\vec{\v}_I}$ 
     is an admissible class in $\QH^*(M,\D)$. Then, for any bounding cochain $Z_b$ for $\alpha_c t^{\vec{\v}_I}$ as in \S \ref{subsec:lifting}, there is a cohomological equality
     \begin{align} \label{eq: BVPSSeq} [\Delta \circ \PSSlog^{\lambda}(\alpha_c t^{\vec{\v}_I}, Z_b)]=[\operatorname{PSS}^{\lambda}_{log,S^1}(\alpha_c t^{\vec{\v}_I})]. \end{align} 
 \end{lem} 
 \begin{proof} 
     The proof involves studying a version of the log PSS moduli space for a
     family of domains which, along with their Floer data, are parametrized by
     $(r,q) \in S^1 \times [0,\infty]$. To simplify the notation, we will denote $H^{\lambda}_M$ by $H_t$ (as usual $J_t$ will be an almost complex structure needed to define the Floer complex). We fix an extension of $(H_t, J_t)$ to a family of surface dependent Hamiltonians/almost complex structures $(H_{s,t, r}, J_{s,t,r})$ used to define the BV operator as in \eqref{eq:FloerS1}. Finally, over $S$, we fix the subclosed one form $\beta$ as in \eqref{eq:sco} as well as $J_S \in \mathcal{J}_S(V)$. 

     For each point $\nu = (r,q) \in S^1 \times
     [0,\infty)$, we associate the domain $S = \C P^1 \setminus \lbrace 0 \rbrace$,
     thought of as a punctured sphere with negative end along with\footnote{Strictly speaking, to make the discussion compatible with the general framework outlined in Section 2, we should make the negative cylindrical end $\epsilon_r$ around $z=0$ depend on $r \in S^1$. However, this would make the notation more cumbersome.}    \begin{itemize}
         \item a distinguished marked point $z_0$ at $\infty$  along with a
     distinguished tangent direction at $z_0$ pointing in the positive real
     direction.
   \item A surface dependent almost complex structure $J_{\nu}$ which agrees with some surface independent $J_0$ in a neighborhood of $z_0$ and agrees with $J_{t-r}$ along the cylindrical end. 

    \item A perturbation one form $K_\nu$ which vanishes in a neighborhood of $z_0$
        and which along the cylindrical end, is equal, in some neighborhood (which will depend on $q$) of $-\infty$, to:
        \begin{align} 
            K_\nu= X_{H(t-r,x)}\otimes dt.
        \end{align}  
\end{itemize}
Abbreviating $X_r=X_{H_{s,t,r}}$ and $J_r=J_{s,t,r}$, we assume that the $\nu$-dependent choice of Floer data satisfies the following consistency conditions
\begin{itemize}
    \item for $q\gg 0$, the datum $(K_\nu, J_{\nu})$ is obtained by taking finite connected sum (by an amount depending on $q$ and limiting as $q \to \infty$ to a nodal connect sum) of the Floer data $(X_{H_{t}}\otimes \beta, J_S)$ and $(X_{r}\otimes dt, J_{r})$ along the cylindrical ends. That is,  for $q \gg 0$, the Floer data coincides at a point $(s,t)$ in (cylindrical coordinates) with $(X_{s+N + q, t, r} \otimes dt, J_{s + N + q, t, r})$ for some sufficiently large fixed $N$ (chosen so that $s \geq N$ lies in the region where $(H_{s,t,r}, J_{s,t,r})$ coincides with $(H_t, J_t)$). 
    \item Over $q=0$, the datum on the cylindral end agrees with the $r$-clockwise rotation of the data $(H_{M}^{\lambda}, J_S)$, i.e., $(K_\nu, J_\nu) = (X_{H_M^{\lambda}(t-r)} \otimes \beta, J_S(s,t-r))$. 
\end{itemize}
For any $\vec{\v}$ and $x_0$, let $\mc{P}^{S^{1}}(\vec{\v}, x_0)$ denote the moduli space of maps $u: S \to M$ which solve the equation 
        \begin{equation}
(du - X_{K_{\nu}})^{0,1} = 0
\end{equation}
and which satisfy \eqref{eq:incidence} as well as the asymptotic condition
\begin{equation}
\lim_{s \ra -\infty} u(\epsilon(s,t)) = x^{(r)}_0.
\end{equation}
  (For the present argument, we will only need to consider $\vec{\v}= \vec{\v}_I$ or $\vec{\v}=\vec{\v}_{\emptyset}.$) Form the moduli spaces  
  \begin{align} \label{eq:lalalalala} 
  \mc{P}^{S^{1}}(\vec{\v}_I, \alpha_c, x_0):= \mc{P}^{S^{1}}(\vec{\v}_I, x_0) \times_{\Evzo} \alpha_c. 
  \end{align} 
  \begin{align} 
      \label{eq:plusstratumIII}  \mc{P}^{S^{1}}(\vec{\v}_\emptyset, Z_b, x_0):= \mc{P}^{S^{1}}(\vec{\v}_\emptyset, x_0) \times_{ev} Z_b. 
\end{align}
and suppose $|\alpha_c t^{\vec{\v}_I}|-|x_0|=1$, so that both moduli spaces above (recalling that $Z_b$ is a bounding cochain for $\alpha_c t^{\vec{\v}_I}$) are 1-dimensional for generic choices of Floer data satisfying the above conditions (by a standard variation of the arguments in \S \ref{section:transversality}). The key step in the proof is to analyze the boundary strata of their Gromov compactifications; we'll summarize the result of appealing to Gromov compactness as well as the relevant variant of Lemma \ref{lem:compactness}. First, in the compactification of \eqref{eq:lalalalala}, 4 types of boundary strata occur: 

\emph{(Type I:)}  In the limit as $q\to \infty$, when solutions converge to elements of the fiber product 
 \begin{align} 
     \bigsqcup_{|x_1|-|x_0|=1} \mc{M}(\vec{\v}_I, \alpha_c, x_1) \times \mathcal{M}_{S^1}(x_0,x_1) 
 \end{align} 
 where  $\mathcal{M}_{S^1}(x_0,x_1)$ denotes the space of solutions to  \eqref{eq:FloerS1}. 
 The signed count of elements in this fiber product defines the composition $\Delta \circ \PSSlog^{\lambda}(\alpha_c t^{\vec{\v}_I})$.

 \emph{(Type II:)} The moduli space restricted to $q=0$ appears as part of the boundary, and can be identified with $\mc{M}_{S^1}(\vec{\v}_I, \alpha_c, x_0)$. To see this, note
that in the definition of $\mc{M}_{S^1}(\vec{\v}_I, \alpha_c, x_0)$, if we want to fix the
vector $\vec{r}_{z_{0}}$ to point in the positive real direction, we can rotate
the surface $S$ by $(s,t) \to (s,t-r)$. This is equivalent to solving Floer's
equation for the rotated inhomogeneous term $(X_{H_M^{\lambda}(t-r)} \otimes \beta)$ with rotated almost complex structure $J_S(s,t-r)$ such that the solutions are asymptotic to the output orbit
$x^{(r)}_0$. This boundary accounts for (on the chain level) the right-hand side of \eqref{eq: BVPSSeq}.

\emph{(Type III:)} This is the stratum where sphere bubbles form as in the proof of Lemma \ref{lem:compactness}. It is given by: 
\begin{align} 
\label{eq:plusstratumII}  
\mc{P}^{S^{1}}(\vec{\v}_\emptyset,\underline{GW_{\vec{\v}_I}(\alpha_c)}, x_0):= \mc{P}^{S^{1}}(\vec{\v}_\emptyset, x_0) \times_{ev}\underline{GW_{\vec{\v}_I}(\alpha_c)}. 
\end{align} 

 \emph{(Type IV:)} Finally, we have ordinary cylindrical breaking at intermediate values of $(r,q)$ as well, which gives rise to:
 \begin{align} 
     \bigsqcup_{|x_0|-|x_1|=1} \bigcup_{r \in S^1} \mc{P}^{S^{1}}_r (\vec{\v}_I, \alpha_c, x_1) \times \mathcal{M}_{r}(x_0,x_1) \cong  \bigsqcup_{|x_0|-|x_1|=1} \mc{P}^{S^1}(\vec{\v}_I, \alpha_c, x_1 \times \mathcal{M}(x_0, x_1) 
 \end{align} 
 where $\mathcal{M}_{r}(x_0,x_1)$ is the moduli space of twisted Floer trajectories introduced just before \eqref{eq:FloerSBV} and $\mc{P}^{S^{1}}_r (\vec{\v}_I, \alpha_c, x_1)$ denotes the moduli space over some fixed $r \in S^1$.  Since $\mathcal{M}_r(x_0, x_1)$ can be canonically identified with $\mathcal{M}(x_0, x_1)$, we obtain the second identification, and conclude that this contribution is exact.

 Next, the 1-dimensional moduli spaces \eqref{eq:plusstratumIII} again have (for generic choices of data)  a compactification involving strata over $q=\infty$ which give strata of the same type as \emph{(Type I)} above as well as strata with cylindrical breaking as in \emph{(Type IV)} above.\footnote{Unlike the above case, the boundary over $q=0$ is generically empty because $\mathcal{M}(\vec{\v}_\emptyset, Z_b, x_0)$ is empty; this is one explanation for why there is no bounding cochain on the right-hand side of \eqref{eq: BVPSSeq} even though there is one on the left.} There is also a third boundary stratum isomorphic to \eqref{eq:plusstratumII} (though it is important to note that it arises from curves whose evaluation at $z_0$ lies on the boundary of $Z_b$, which by definition is a nullbordism for $\underline{GW_{\vec{\v}_I}(\alpha_c)}$, rather than from sphere bubbling).

With this analysis in place, we complete the proof of \eqref{eq: BVPSSeq}. As in the proof of Lemma \ref{lem:continuations4}, we may use the count of \emph{rigid} elements of $\mc{P}^{S^{1}}(\vec{\v}_I, \alpha_c, y)$ and  $\mc{P}^{S^{1}}(\vec{\v}_\emptyset, Z_b, y)$ to define an operator $T^{S^{1}}$ of degree -2 via the formulae:\begin{align} 
    \operatorname{T}^{S^{1}}(\alpha_c t^{\vec{\v}}) := \sum_{y, |y|-|\alpha_c t^{\vec{\v}}|=-2} \sum_{u \in \mc{P}^{S^{1}}(\vec{\v},\alpha_c, y)} \mu_u \\
 \operatorname{T}^{S^{1}}(Z_b) := \sum_{y, |y|-|Z_b|=-2} \sum_{u \in  \mc{P}^{S^{1}}(\vec{\v}_\emptyset,Z_b, y)} \mu_u  
\end{align}  The above analysis of the boundary of the compactified 1-dimensional moduli spaces then implies that:
  \begin{align} \label{eq: strongerBVPSS} \Delta \circ \PSSlog^{\lambda}(\alpha_c t^{\vec{\v}_I})- \Delta \circ \PSS_{\vec{\v_\emptyset}}(Z_b)-\operatorname{PSS}^{\lambda}_{log,S^1}(\alpha_c t^{\vec{\v}_I})= \partial_{CF} \circ (T^{S^1}(\alpha_c t^{\vec{\v}_I})-T^{S^1}(Z_b)) \end{align}

In deducing this equation, we have used the fact that the contributions of the two boundary strata isomorphic to \eqref{eq:plusstratumII} cancel each other out. Equation \eqref{eq: strongerBVPSS} descends to \eqref{eq: BVPSSeq} on the level of cohomology, completing the proof.
\end{proof} 

\begin{lem} 
\label{lem:S1eq} Let $p: P \to \SIo$ be as in Lemma \ref{lem: BVandPSS}. We have an equality 
\begin{align} \label{eq:S1eq} \operatorname{PSS}_{log,S^1}^{\lambda}(\alpha_{c} t^{\vec{\v}_I})= \operatorname{PSS}_{log}^{\lambda}(\sigma_{c} t^{\vec{\v}_I}) 
\end{align} 
\end{lem}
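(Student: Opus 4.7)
The plan is to identify the underlying parametrized moduli space $\mc{M}_{S^1}(\vec{\v}_I, x_0)$ (varying tangent ray at $z_0$) with $S^1 \times \mc{M}(\vec{\v}_I, x_0)$ in such a way that the enhanced evaluation $\Evzo$ factors through the circle action $\Gamma_{\vec{\v}_I}: S^1 \times \SIo \to \SIo$. As in the proof of Lemma \ref{lem: BVandPSS}, rotating $\vec{r}_{z_0}$ by $e^{i\theta}$ can be absorbed into a rotation of the domain, which is then equivalent to solving Floer's equation with the fixed reference ray $\vec{r}_0$ and the shifted asymptote $x_0^{(\theta)}$; the canonical bijection between orbits of $H_t$ and $H_{t-\theta}$ then identifies $\mc{M}_{S^1}(\vec{\v}_I, x_0)$ with $S^1 \times \mc{M}(\vec{\v}_I, x_0)$ as oriented manifolds.

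Next I would track what this reparametrization does to $\Evzo$. Given $u \in \mc{M}(\vec{\v}_I, x_0)$ with normal expansions $f_i(z) = a_i z^{v_i} + O(|z|^{v_i+1})$ in a chart adapted to $\vec{r}_0$, the element of $\mc{M}_{S^1}$ obtained from $(\theta, u)$ has leading normal coefficients $a_i e^{i v_i \theta}$ (up to a uniform sign convention from the conversion between the jet direction on $T_{z_0}S$ and the real projectivization). Passing to the real projectivization $[(a_i)_{i\in I}] \in \SIo$, this yields
\begin{equation}
    \Evzo(\theta, u) = \Gamma_{\vec{\v}_I}\bigl(e^{i\theta}, \Evzo(u)\bigr),
\end{equation}
so the parametrized enhanced evaluation factors as $\mathrm{id}_{S^1} \times \Evzo^{\mathrm{fixed}}$ followed by $\Gamma_{\vec{\v}_I}$.

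Using this factorization, the fiber product defining $\operatorname{PSS}_{log,S^1}^{\lambda}(\alpha_c t^{\vec{\v}_I})$ becomes
\begin{equation}
    \bigl(S^1 \times \mc{M}(\vec{\v}_I, x_0)\bigr) \times_{\Gamma_{\vec{\v}_I} \circ (\mathrm{id} \times \Evzo),\, \SIo,\, p} P,
\end{equation}
which by reassociating the fiber product is equal to
\begin{equation}
    \mc{M}(\vec{\v}_I, x_0) \times_{\Evzo,\, \SIo,\, \Gamma_p} \bigl(S^1 \times P\bigr).
\end{equation}
Since $\beta_c = (\Gamma_p)_*[S^1 \times P]$ by construction, the latter is exactly the moduli space counted by $\operatorname{PSS}_{log}^{\lambda}(\beta_c t^{\vec{\v}_I})$. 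The hypothesis that $p$ is a section of $\bar{p}$ implies that distinct points of $P$ lie on distinct $S^1$-orbits, so no over-counting occurs and the transversality arguments of Lemmas \ref{lem:transversality} and \ref{lem:transversalityjet} apply directly to the enriched fiber product.

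The main obstacle is the orientation comparison: the product orientation on $S^1 \times \mc{M}(\vec{\v}_I, x_0)$ induced from the $S^1$-family of domains must match the Eilenberg--Zilber product orientation on $S^1 \times P$ fed into $\beta_c$, after passing through the factorization of $\Evzo$. Because the same circle parameter appears on both sides through the action $\Gamma_{\vec{\v}_I}$, this reduces to a single global sign that can be absorbed into the choice of reference ray $\vec{r}_0$. Once this bookkeeping is verified, the two signed counts agree term by term, establishing \eqref{eq:S1eq}.
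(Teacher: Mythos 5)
Your proposal is correct and takes essentially the same approach as the paper: the paper's proof simply asserts a bijection of moduli spaces, equivalence of the transversality conditions, and preservation of (relative) orientations, which is exactly what you make explicit via the identity $\Evzo(\theta,u)=\Gamma_{\vec{\v}_I}\bigl(e^{i\theta},\Evzo(u)\bigr)$ and the reassociation of the fiber product against $\beta_c=(\Gamma_p)_*[S^1\times P]$. One minor simplification: since the tangent ray at $z_0$ enters only the enhanced evaluation and not Floer's equation or the asymptotic condition, the identification $\mc{M}_{S^1}(\vec{\v}_I,x_0)\cong S^1\times\mc{M}(\vec{\v}_I,x_0)$ is tautological, so the domain-rotation/shifted-asymptote detour in your first paragraph (imported from Lemma \ref{lem: BVandPSS}, where it is genuinely needed) can be dropped.
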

\begin{proof} 
  By definition, the moduli space $\mc{M}(\vec{\v}_I, \sigma_c, x_0)$ is a subspace of $\mc{M}(\vec{\v}_I, x_0) \times (S^1 \times P)$. Denoting points in this product by $(u, (\theta,x))$, then $\mc{M}(\vec{\v}_I, \sigma_c, x_0)$ is the subset of maps $u \in \mc{M}(\vec{\v}_I, x_0)$ where $\Evzo = \theta \cdot p(x)$. Meanwhile, $\mc{M}_{S^1}(\vec{\v}_I, \alpha_c, x_0)$ is similarly a subspace of $(\mc{M}(\vec{\v}_I, x_0) \times S_{z_0}\mathbb{C}) \times P$.  We identify  $S_{z_0}\mathbb{C} \cong S^1$ by viewing $\vec{r}_{z_0}$ as being the rotation of the positive real ray by some $\theta^{-1}$ for $\theta \in S^1$. If the enhanced evaluation with respect to $\vec{r}_{z_0}$ is $p(x)$, then the enhanced evaluation with respect to the positive real ray is $\theta \cdot p(x)$ (and vice-versa). Thus, the map  $((u,\theta),x) \mapsto (u, (\theta,x))$ defines a canonical bijection of moduli spaces of curves used to define the
left and right-hand sides of equation \eqref{eq:S1eq}. Moreover, it is
easy to check that the conditions for these moduli spaces to be cut out
transversely are equivalent and that the bijection preserves (relative)
orientations.  
\end{proof}
Altogether, Lemmas \ref{lem: BVandPSS} and \ref{lem:S1eq} along with the definition \eqref{logBVoperator} of the BV operator on log cohomology immediately imply
\begin{cor}\label{corpssBV}
    Let $\alpha t^{\vec{\v}_I}$ be a primitive admissible class with $\alpha = [\alpha_c]$ representable by the fundamental chain $p_*[P]$ associated to a proper (not necessarily compact) map $P \to \mathring{S}_I$, 
    and let $Z_b$ be a bounding cochain for $\alpha_c t^{\vec{\v}_I}$. Then, on the level of cohomology, there is an equality:
    \begin{equation}
        [\PSSlog^{\lambda}(\Delta (\alpha_c t^{\vec{\v}_I}))] =  [\Delta \circ \PSSlog^{\lambda}(\alpha_c t^{\vec{\v}_I}, Z_b)].
    \end{equation}
    \qed
\end{cor}

\begin{rem} \label{pssBVpseudo}
    Corollary \ref{corpssBV} (and in particular Lemma \ref{lem: BVandPSS} and
    \ref{lem:S1eq}) immediately generalize (with the same proof) to the case
    that $\alpha$ is representable by a pseudocycle rather than the fundamental
    chain of a smooth map. When our divisor $\D = D$ consists of a single
    smooth component, $SD$ is compact  and we can represent
    every cohomology class by a psuedocycle, allowing us to apply this variant of
    Corollary \ref{corpssBV} to all primitive log cohomology classes. We will make use 
of this in the proof of Theorem \ref{thm: dilationcrit}.
\end{rem}

\def\lra{\longrightarrow}

\section{Quasi-dilations in string topology} \label{sec:stringtopology}
For simplicity, we assume
\begin{equation}\label{eq:manifoldhypotheses}
    \textrm{all manifolds $Q$ appearing in this section are connected, closed, oriented and Spin.}\footnote{As most of our analysis will be in dimension three, we recall that any closed, oriented three manifold is Spin.}
\end{equation}
The main goal of this section is to explain how the existence of quasi-dilations in the string topology of such $Q$ imply topological constraints on the manifold, particularly in dimension three. 

\subsection{Review of string topology} \label{subsection:stringtopback} 
Throughout this subsection let $n=\operatorname{dim}(Q)$ and $\K$ will be an arbitrary commutative ring. For any such $Q$, Chas and Sullivan \cite{Chas:aa, CohenJones} constructed a BV algebra structure on the homology of the free loop space of $Q$, $\mathbb{H}^*(\mathcal{L}Q,\K) = H_{n-*}(\mathcal{L}Q,\mathbf{k})$, called the \emph{loop homology algebra} (over $\mathbf{k}$). The basic manifestations of this structure most relevant to us are:
\begin{itemize}
    \item $\mathbb{H}^*(\mathcal{L} Q,\K)$ has a {\em unit element} $1 \in \mathbb{H}^0(\mathcal{L} Q) \cong H_{n}(\mathcal{L}Q,\mathbf{k})$ corresponding to a choice of fundamental class $[Q]$ via the inclusion of constant loops $Q \hookrightarrow \mathcal{L} Q$; and
    \item $\mathbb{H}^*(\mathcal{L} Q, \K)$ has a BV-operator, which is especially easy to describe. Denote the canonical rotation action on the loop space by 
 \begin{align} \label{eq:looprot} \Gamma: S^1 \times \mathcal{L}Q \to \mathcal{L}Q\end{align}  
 The BV-operator is given by 
 \begin{align}  
     \Delta(\alpha)=\Gamma_*(\epsilon \otimes \alpha), 
 \end{align}  
 where $\epsilon \in H_1(S^1)$ is a fixed choice of fundamental class.
 \end{itemize}
 There is an isomorphism of groups \cite{Viterbo:1996kx, Salamon:2006ys, AbSch1, Abouzaid:2015ad}
\begin{equation}\label{eq:abschmap}
\mathcal{AS}: SH^*(T^*Q,\mathbf{k}) \to \mathbb{H}^*(\mathcal{L}Q,\K) 
\end{equation}
which intertwines algebra structures \cite{AbSch2}.  Although it is not
explicitly mentioned, the particular isomorphism \eqref{eq:abschmap} given in
\cite{AbSch1} can be straightforwardly shown to intertwine BV operators as
well, hence is an isomorphism of BV-algebras (alternatively, see the more
recent \cite{Abouzaid:2015ad}). Fixing the base-point $1 \in S^1$, evaluating a loop at $1$ produces an
evaluation morphism 
\[ \operatorname{ev}: \mathcal{L} Q \to Q 
\] 
Let $\mathbb{H}^*(Q)$
denote the intersection ring of $Q$ (meaning the homology $H_{n-*}(Q, \K)$ equipped
with the intersection product). The induced map 
\[
    \operatorname{ev}_*: \mathbb{H}^*(\mathcal{L}Q, \K) \to \mathbb{H}^*(Q, \K)
\] 
is a unital ring homomorphism.

Fix a BV-algebra $(A, \Delta)$ over $\K$. Let $\alpha \in A^0$ and $\Psi \in
A^1$ be two elements satisfying  
\begin{equation} \label{eq:dilation} \Delta(\Psi)=\alpha \end{equation}
We say that $(\Psi,\alpha)$ is 
\begin{itemize}
 \item a \emph{quasi-dilation} if $\alpha$ is a unit with respect to the algebra structure in $A$;
    \item a \emph{dilation} if $\alpha=1$. 
\end{itemize}

Whenever the string topology BV-algebra $\mathbb{H}^*(\mathcal{L}Q,\K)$ of a manifold $Q$ carries a pair $(\Psi,\alpha)$ which define a (quasi-) dilation, we will say that $Q$ admits a (quasi-) dilation over $\K$. 

\begin{rem} The notion of
    quasi-dilation is due to Seidel \cite{catdyn}, who
    introduced it in the equivalent form 
    \begin{equation} \label{eq:dilationS}
        \Delta(\alpha\Psi_S)= \alpha.
    \end{equation} 
    Equation \eqref{eq:dilationS}, which is
    related to the dilation condition \eqref{eq:dilation} by
    replacing $\alpha \Psi_S$ with $\Psi$
    (which is an invertible operation), has a natural geometric
    interpretation. Let $Z$ be a smooth algebraic variety equipped with an
    invertible function $\alpha$, a holomorphic volume form $\omega$, and a
    vector-field $\Psi_S$. The condition that the vector field $\Psi_S$
    dilates $\alpha \omega$ may be written as
    \[\mathcal{L}_{\Psi_S}(\alpha\omega)=\alpha \omega. \] Under a standard 
    dictionary between commutative and non-commutative geometry (see e.g., Lecture 19 of \cite{catdyn}), 
    this translates to the equation \eqref{eq:dilationS}.  
\end{rem}
We will also consider one further definition, which is specific to the string topology setup, but useful for formulating Lemmas \ref{lem: dominate}, \ref{lem: charS3}  and in the proof of Corollary \ref{cor: Vitdil}.
\begin{defn} For any (closed, oriented, spin) manifold $Q$, let $\alpha \in \mathbb{H}^0(\mathcal{L} Q,\K)$, $\Psi \in \mathbb{H}^1(\mathcal{L} Q, \K)$ be two elements satisfying  \eqref{eq:dilation}. We say that $(\Psi, \alpha)$ is a \emph{pseudo-dilation} if $\operatorname{ev}_*(\alpha)$ lies in $\K^{\times}$ inside $\mathbb{H}^0(Q,\K)=\K.$ Similarly to the (quasi-) dilation case, whenever such $(\Psi, \alpha)$ exist, we will say that $Q$ admits a pseudo-dilation (over $\K$). \end{defn}
Evidently if $(\Psi, \alpha)$ is a quasi-dilation, then it is a pseudo-dilation. \vskip 5 pt

It will be helpful at several points later on to discuss the case where $Q$ is a $K(G,1)$ space in slightly more detail. Let $C(G)$ denote the set of conjugacy classes of $G$ and for any element $g \in G$, let $C_g$ denote the centralizer of $g$. Following e.g. Section 10.1 of \cite{Chas:aa}, we have that, up to homotopy, the loop space decomposes as a disjoint union of connected components \begin{align} \label{eq:loopdecomp} \mathcal{L}Q \cong \bigsqcup_{[g] \in C(G)} K(C_g,1)  \end{align}
where $C_g$ denotes the centralizer of a representative $g$ of the given conjugacy class $[g]$. Notice that each of these components $K(C_g,1)$  is a covering space of the original $Q$ and thus we immediately see that $H_*(\mathcal{L}Q)$ vanishes for $*>n$ or equivalently that  $\mathbb{H}^*(\mathcal{L}Q)$ is concentrated in non-negative degree.  The decomposition \eqref{eq:loopdecomp} also easily yields the following well-known description of $\mathbb{H}^0(\mathcal{L}Q)$ (which is additively $H_n(\mathcal{L}Q))$:

\begin{lem} \label{lem:centre} There is a ring isomorphism $\mathbb{H}^0(\mathcal{L}
Q) \cong \mathcal{Z}_{\mathbf{k}}(G)$, the center of the group algebra
$\mathbf{k}[G]$.  \end{lem}
\begin{proof}[Proof Sketch] 
  $\mathcal{Z}_{\mathbf{k}}(G)$ can be presented more explicitly as follows:
    \label{lem: groupring} for any group $G$, and any commutative ring
    $\mathbf{k}$, the center $\mathcal{Z}_{\mathbf{k}}(G)$ has a free $\mathbf{k}$-basis $\lbrace \sum_{g \in C} g \rbrace$ indexed by finite conjugacy
    classes $C$ of $G$. Similarly, a given connected component of the loop space has homological dimension $n$ precisely when, using the same notation as in \eqref{eq:loopdecomp}, $C_g$ has finite index in $G$ (so that $K(C_g,1)$ is a \emph{finite} cover of $K(G,1)$). Hence, $\mathbb{H}^0(\mathcal{L}Q)$ also has a canonical basis indexed by finite conjugacy classes. It is not difficult to check that this isomorphism preserves ring structures on both sides. \end{proof}

We also have the following useful observation of Seidel-Solomon (Example 6.2 of \cite{Seidel:2010uq}):

\begin{lem} \label{lem: nodiK} Suppose $Q$ is a $K(G,1)$ space. Then $Q$ does not admit a dilation over any $\K.$ \end{lem}
\begin{proof} The BV operator respects the decomposition of the loop space into connected components. So, for the purpose ruling out dilations it suffices to analyze the component of contractible loops which, as can be seen from \eqref{eq:loopdecomp}, retracts onto the space of \emph{constant} loops. The BV-operator therefore acts trivially on homology classes arising from this component and there are no dilations. \end{proof} 

Let us determine which (as usual closed, oriented) surfaces admit quasi-dilations. It follows immediately from the main calculation in \cite{Menichi} that $\mathbb{H}^*(\mathcal{L}S^2,\mathbb{Z})$ admits a quasi-dilation over $\mathbb{Z}$ which becomes a dilation after tensoring with any field $\K$ of characteristic not equal to two. The remaining surfaces are aspherical and hence can be analyzed using \eqref{eq:loopdecomp} (together with the related Lemmas \ref{lem:centre} and \ref{lem: nodiK}). When the genus $g=1$, $\mathbb{H}^*(\mathcal{L}T^2,\mathbb{Z})$ admits a quasi-dilation as can be seen by explicit computation using \eqref{eq:loopdecomp}; $\mathcal{Z}_{\mathbf{k}}(G)= \mathbf{k}[G]$, which is isomorphic to a Laurent polynomial ring in two variables and one may take $\alpha$ to be one of the generators. On the other hand if $g \geq 2$, it is well-known that the center of the group algebra $\mathbf{k}[\pi_1(\Sigma_g)]$ is trivial (see e.g. page 564 of \cite{MR2379052}) so $\mathbb{H}^0(\mathcal{L}
\Sigma_g)=\K$. It follows that if $g\geq 2$, there are no quasi-dilations in $\mathbb{H}^*(\mathcal{L} \Sigma_g, \K)$, because there are no dilations. 

Our description of 2-manifolds admitting quasi-dilations uses the classification of surfaces. To obtain a similar description of 3-manifolds admitting quasi-dilations, we need to import some tools from 3-manifold topology. The next subsection recalls the necessary background and \S \ref{subsection:3dil} carries out the classification. 

\subsection{Some 3-manifold topology} 
We begin with the following definition: 

\begin{defn} We say that a closed, oriented manifold $Q$ is \emph{dominated} by a closed oriented manifold $\hat{Q}$ if there is a non-zero degree map $f: \hat{Q} \to Q$. We say that $Q$ is \emph{1-dominated} if the map $f$ has degree 1. \end{defn}

In this subsection, we will study the following question:

\begin{ques} Which 3-manifolds are dominated (respectively 1-dominated) by products $S^1 \times \hat{B}$ where $\hat{B}$ is an oriented Riemann surface? \end{ques} 

Let us first recall the notion of a Seifert manifold, which will arise at several points in our discussion. Let $Q_f$ denote the mapping torus of the rotation $f: D^2 \cong D^2$ by $2\pi p/q$ with $p/q \in \mathbb{Q}$ and $p,q$ relatively prime. $Q_f$ is homeomorphic to $S^1 \times D^2$ and is decomposed into disjoint circles: a special fiber which is the image in the quotient space $[0,1] \times \lbrace 0 \rbrace$ and the regular fibers which are the images of the union of $q$ segments of the form $[0,1] \times \lbrace x_i \rbrace$ where the $x_i \in D^2$ form an orbit under rotation. A Seifert fibering of a manifold $Q$ is a decomposition of $Q$ into disjoint circles (``fibers") such that each fiber has a neighborhood which is fiber preserving diffeomorphic to a neighborhood of a fiber in one of the above standard fiberings of $S^1 \times D^2$ (for some choice of $p/q$).

 A Seifert manifold $Q$ is a 3-manifold which admits a Seifert fibration (see e.g. \cite{Neunotes} for a nice introduction to these manifolds). The orbit space given by collapsing each of the fibers to a point can be given the structure of a two dimensional orbifold $B_Q$. Seifert manifolds are easily classified by their (un-normalized) Seifert invariants $(g, (\alpha_1,\beta_1), \cdots (\alpha_m,\beta_m))$, where $g$ is the genus\footnote{For nonorientable surfaces, one regards the genus as being minus the number of $\mathbb{R}P^2$ connect summands needed to construct $B_Q$.}  of the surface (underlying) $B_Q$ and the $(\alpha_i,\beta_i)$ are relatively prime integers. As we will only need these invariants in the proof of Lemma \ref{thm:Rong}, we don't recall their definition here but refer to page 14 of \cite{Neunotes}.

 Let $h \in \pi_1(Q)$ denote the class of a regular fiber of the Seifert fibering. Then it follows from the standard presentation of the fundamental group of a Seifert manifold (page 34 of \cite{Neunotes}) that the subgroup $\langle h\rangle$ generated by $h$ is a normal subgroup. The following celebrated result shows that this property characterizes Seifert fibered 3-manifolds:

\begin{thm} \cite{MR1189862, MR1296353} \label{thm:SFC} Let $Q$ be a closed oriented 3-manifold such that $\pi_1(Q)$ admits a non-trivial cyclic normal subgroup. Then $Q$ is Seifert fibered. If the center of $\pi_1(Q)$ is non-trivial, then $Q$ is Seifert fibered over an oriented base. \end{thm} 

\begin{rem} Building on this result, the reference \cite{MR2379052} shows something slightly stronger--- that a closed, oriented 3-manifold $Q$ with $\mathcal{Z}_{\mathbf{k}}(\pi_1(Q)) \neq \K$ is Seifert-fibered. Parallel to the discussion for 2-manifolds, Lemmas  \ref{lem:centre} and \ref{lem: nodiK} then immediately show that any aspherical  3-manifold which admits a quasi-dilation is Seifert-fibered. Our approach in \S \ref{subsection:3dil} handles all cases simultaneously but also relies on Theorem \ref{thm:SFC} (via  Theorem \ref{thm: productdomtop}). \end{rem}

The starting point for our analysis will be the following Theorem which relies on Theorem \ref{thm:SFC} as well as many other important developments in 3-manifold topology (notably \cite{Gromov, MR895623, Perelman1, Perelman2}). 

\begin{thm} 
    \label{thm: productdomtop} (Theorem 1 and Theorem 3 of \cite{MR3054316}) Let $\hat{B}$ be an oriented Riemann surface and let $Q$ be a closed, oriented 3-manifold dominated by $S^1 \times \hat{B}$. Then, either:
\begin{itemize}
\item  $Q$ is finitely covered by $S^1 \times B$ for $B$ closed and oriented of genus $\geq 1$ (and in particular aspherical), or
    
\item $Q$ admits a metric of positive scalar curvature. Equivalently (see the two paragraphs preceding Remark 1 on page 24 of \cite{MR3054316}), $Q$ is finitely covered by a connected sum $\#_n S^1 \times S^2$ (by convention, the case $n=0$ corresponds to $S^3$). 
\end{itemize}
 \end{thm}  

In order to classify 3-manifolds admiting quasi-dilations over $\mathbb{Z}$ (see Lemma \ref{lem: charS3}), we will need the following refinement of Theorem \ref{thm: productdomtop} which concerns 1-domination by $S^1 \times \hat{B}$. 

\begin{lem} \label{thm:Rong} Let $Q$ be a closed, oriented aspherical manifold which is 1-dominated by $S^1 \times \hat{B}$. Then $Q \cong S^1 \times B$ for $B$ closed and oriented of genus $\geq 1$. \end{lem}
\begin{proof} Because the map $f: S^1 \times \hat{B} \to Q$ has degree 1, it is surjective on $\pi_1$. Let $h \in \pi_1(S^1)$ denote a generator of $\pi_1(S^1).$ As the map does not factor through $\hat{B}$, we must have that $f_*(h) \neq 0 \in \pi_1(Q)$ and is central by surjectivity. By Theorem \ref{thm:SFC}, this implies that $Q$ is Seifert fibered. 

We may therefore apply the following simplified version of a result of Rong (Theorem 3.2 of \cite{Rong2} building on Corollary 3.3. of \cite{Rong}) which states that if $f: Q' \to Q$ is a map of degree 1 between Seifert manifolds and $Q'$ has Seifert invariants $(g',(\alpha_1',\beta_1'), \cdots, (\alpha_l',\beta_l'))$ then $Q$ has Seifert invariants $(g, (\alpha_1,\beta_1), \cdots (\alpha_m,\beta_m))$ with $l \geq m$ (and $g' \geq g$ but this plays no role). In our present (very simple) case, the Seifert invariants of $S^1 \times \hat{B}$ are empty and so $Q$ also admits a Seifert fibration with empty Seifert invariants. This immediately implies that $Q \cong S^1 \times B.$   \end{proof} 

Finally, in the proofs of Lemmas \ref{lem: charS3} and \ref{lem:dilatp}, we will also require the following two Lemmas concerning spherical space forms (i.e. a quotient of $S^3$ by a finite subgroup of $SO(4)$ acting freely on $S^3$ by rotations) and their fundamental groups.  

\begin{lem} \label{lem:covertricks} Let $Q$ be a spherical space form. For any prime $p$ which divides $|\pi_1(Q)|$ and any map $f: S^1 \times B \to Q$, $p$ divides $\operatorname{deg}(f)$. \end{lem}

\begin{proof} We first note the following fact:  Let $\pi:C \to S^1 \times B$ be a finite covering where $B$ is an oriented Riemann surface. Then $C \cong S^1 \times B_C$ for some Riemann surface $B_C$. To see this, observe that the manifold $C$ inherits a canonical Seifert fibration structure $C \to B_C$ by Lemma 8.1 of \cite{Neunotes} and there is an induced covering map $B_C \to B$ (see Lemma 4.4. of \cite{MR3874954} for a detailed proof of this fact). It follows that $B_C$ is an ordinary Riemann surface(without orbifold structure) and the Euler number of the circle bundle $C \to B_C$ is zero. Therefore $C \cong S^1 \times B_C.$

With this established, let $p$ be a prime as in the statement of the Lemma. We first note that it suffices to assume that the map $f$ in the statement of the Lemma is surjective on $\pi_1$. To see this, suppose that it is not surjective on $\pi_1$, then the map $f$ factors through a covering space $\tilde{Q} \to Q$ 
\[ \begin{tikzcd}
S^1 \times B \arrow{r}{\tilde{f}}  \arrow{rd}{f} 
  & \tilde{Q} \arrow{d}{h} \\
    & Q
\end{tikzcd} \]
so that $\tilde{f}$ is $\pi_1$-surjective. Then, if $p$ no longer divides $\pi_1(\tilde{Q})$, this means that the degree of $h$ must be divisible by $p$ which implies that the degree of $f$ is divisible by $p$ and we are done. Otherwise, replacing $Q$ by $\tilde{Q}$ gives the desired reduction. 
 
 Having reduced to this case, we assume for the rest of the proof that $f$ is surjective on $\pi_1$. The next step is to reduce to the case where $Q$ is a lens space. To do this, note that by Cauchy's theorem, we may find a cyclic subgroup $\langle g\rangle$ of order $p$ in $\pi_1(Q)$ and we let $\tilde{Q}$ be the covering space of $Q$ corresponding to this subgroup. Using the fact noted at the beginning of this proof that any finite covering space of $S^1 \times B$ is of the same form, we may, by passing to covering spaces in both the source and the target of the map, obtain a Cartesian square:
\[
\begin{tikzcd}
S^1 \times \tilde{B} \arrow{r}{\tilde{f}} \arrow[swap]{d}{h'} & \tilde{Q} \arrow{d}{h} \\
S^1 \times B  \arrow{r}{f} & Q
\end{tikzcd}
\]
with $\tilde{Q}$ a lens space and $\operatorname{deg}(\tilde{f})= \operatorname{deg}(f).$ Again replacing $Q$ by $\tilde{Q}$, it suffices to consider the case where $Q$ is a lens space.

Finally we treat the case where $Q$ is a lens space.  Let
$\mathcal{B}$ denote the Bockstein homomorphism associated to the coefficient sequence 
$0 \ra \Z/p \ra \Z/p^2 \ra \Z/p \ra 0$. The Bockstein long exact sequence shows that $H^1(Q,\mathbb{Z}/ p \mathbb{Z}) \to H^2(Q,\mathbb{Z}/ p \mathbb{Z})$ is an isomorphism. This (together with the cup product structure on the cohomology of lens spaces) shows that there is a
generator $\alpha \in H^1(Q,\mathbb{Z}/ p \mathbb{Z})$ such that $\alpha \cup
\mathcal{B}(\alpha)$ is a generator of $H^3(Q,\mathbb{Z}/p\mathbb{Z})$.  Since the Bockstein
homomorphisms are zero for $S^1\times B$, the Lemma follows from the
naturality of the cup-products and Bockstein homomorphisms.
\end{proof}

\begin{lem}\label{lem:swanlemma} For any finite group $\pi$ which acts freely on $S^3$, \begin{align} \label{eq:classifyingcalc} H_3(B\pi,\mathbb{Z}) \cong \mathbb{Z}/|\pi| \end{align} where $B\pi$ denotes the classifying space of $\pi$ and $|\pi|$ denotes its cardinality. \end{lem}
\begin{proof} By Lemma 3.1 and the discussion after the first Definition on page 268 of \cite{MR0124895} (see Section 6.1 and Example 6.3 of \cite{MR2655176} for a nice exposition) the Tate cohomology groups $\hat{H}^*(\pi,\mathbb{Z})$ are 4-periodic for any finite group which acts freely on $S^3$. In particular, we have \begin{align} \label{eq: periodicity} \hat{H}^{-4}(\pi,\mathbb{Z}) \cong \hat{H}^0(\pi,\mathbb{Z}) \cong \mathbb{Z}/|\pi| \end{align} where the second isomorphism in \eqref{eq: periodicity} is Equation (6.2) of \cite{MR2655176}. But by definition of Tate cohomology, $\hat{H}^{-4}(\pi,\mathbb{Z}):= H_3(B\pi,\mathbb{Z})$ and so \eqref{eq: periodicity} immediately implies \eqref{eq:classifyingcalc}. \end{proof} 

\subsection{Classifying 3-manifolds which admit quasi-dilations} \label{subsection:3dil}

We now turn to describing how the results of the previous section constrain the topology of 3-manifolds admitting quasi-dilations.  

\begin{lem} \label{lem: dominate} 
Suppose that a closed oriented 3-manifold $Q$ admits a pseudo-dilation over $\mathbb{Q}$.  Then $Q$ is dominated by $S^1 \times \hat{B}$ for some closed, orientable Riemann surface $\hat{B}$. 
\end{lem}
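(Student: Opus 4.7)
The plan is to unwind the pseudo-dilation condition $(\Psi,\alpha)$ geometrically and extract a nonzero-degree map from a product $S^1\times\hat{B}$ to $Q$. First, identify $\Psi \in \mathbb{H}^1(\mathcal{L}Q,\mathbb{Q}) = H_2(\mathcal{L}Q,\mathbb{Q})$ and $\alpha = \Delta(\Psi) \in \mathbb{H}^0(\mathcal{L}Q,\mathbb{Q}) = H_3(\mathcal{L}Q,\mathbb{Q})$. By a classical representability result for $2$-cycles in an arbitrary CW complex (Thom/Steenrod: every integer class in $H_2(X,\mathbb{Z})$ is realized by a continuous map from a closed oriented surface), we may choose a positive integer $N$, a closed oriented surface $B$ (a priori possibly disconnected), and a continuous map $g\colon B\to \mathcal{L}Q$ such that $g_*[B] = N\Psi$ in $H_2(\mathcal{L}Q,\mathbb{Q})$.

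Next, apply the explicit formula $\Delta(\Psi) = \Gamma_*(\epsilon\otimes\Psi)$, where $\epsilon\in H_1(S^1)$ is the fundamental class. The class $N\alpha = \Delta(N\Psi)$ is then represented by the continuous map
\[
\tilde{F}\colon S^1\times B \longrightarrow \mathcal{L}Q, \qquad \tilde{F}(s,b) = \Gamma(s,g(b)),
\]
with $S^1\times B$ carrying the product orientation. Composing with the evaluation $\operatorname{ev}\colon \mathcal{L}Q\to Q$ and using the identification $\operatorname{ev}\circ\Gamma(s,\gamma) = \gamma(s)$, we see that
\[
F\colon S^1\times B \longrightarrow Q, \qquad F(s,b) := g(b)(s),
\]
satisfies $F_*[S^1\times B] = N\cdot\operatorname{ev}_*(\alpha)$ in $H_3(Q,\mathbb{Q})$.

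The pseudo-dilation hypothesis says $\operatorname{ev}_*(\alpha)$ is a unit in the intersection ring $\mathbb{H}^*(Q,\mathbb{Q})$. Since $Q$ is a closed, connected, oriented $3$-manifold, the only nonzero graded piece in which units can live is $\mathbb{H}^0(Q,\mathbb{Q}) = H_3(Q,\mathbb{Q}) \cong \mathbb{Q}\cdot[Q]$, and the unit $1 \in \mathbb{H}^0$ is $[Q]$; hence $\operatorname{ev}_*(\alpha) = c\cdot[Q]$ for some $c\in\mathbb{Q}^\times$. Consequently $F$ has nonzero degree $Nc$. Decomposing $B = B_1 \sqcup \cdots \sqcup B_k$ into connected components, the total degree equals the sum of the degrees $d_i$ of the restrictions $F|_{S^1\times B_i}\colon S^1\times B_i \to Q$, so $d_i\neq 0$ for some $i$. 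Setting $\hat{B}:=B_i$ produces the desired closed connected orientable surface exhibiting $S^1\times\hat{B}$ as a domination of $Q$.

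The main point of possible concern is the passage from the chain-level algebraic condition on $\Psi$ to a manifold representative of it: one needs that $2$-dimensional rational homology classes on the (infinite-dimensional) loop space admit surface representatives, which is handled by the low-degree case of Thom's representability theorem applicable to any CW complex. Once this is granted, the remainder is essentially the observation that the BV operator on loop homology geometrically corresponds to ``multiplying by the $S^1$ of loop rotations'', together with the fact that a unit in the top-degree intersection ring of a closed connected oriented manifold is detected by non-vanishing of the degree of a representing cycle.
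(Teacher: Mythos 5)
Your proof is correct and follows essentially the same route as the paper: represent (a multiple of) $\Psi \in H_2(\mathcal{L}Q,\mathbb{Q})$ by a map from a closed oriented surface, compose with the loop rotation $\Gamma$ and the evaluation $\operatorname{ev}$, and use that $\operatorname{ev}_*(\alpha)$ is an invertible (hence nonzero) multiple of $[Q]$ to conclude the resulting map $S^1\times\hat{B}\to Q$ has nonzero degree. Your added care in passing to an integral multiple $N\Psi$ and splitting off a connected component of a possibly disconnected representing surface is just a more explicit version of the paper's "without loss of generality" reductions.
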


\begin{proof} 

    By definition the element $\Psi$  is a class $\mathbb{H}^1(\mathcal{L} Q) = H_2(\mathcal{L}Q)$.  Without loss of generality assume that $\Psi$ is concentrated in the homology of a single connected component of the loop space $H_2(\mathcal{L}_{c}Q)$. By \cite{MR61823} (see also \cite{McDuff:2004aa}*{Ex. 6.5.4} for a simple argument in degree 2), we may represent a non-zero multiple of this class (i.e. $N \Psi$ for some non-zero $N \in \mathbb{Z}$) by a map from an orientable, connected 2-manifold  
 \[ \Sigma: \hat{B} \to \mathcal{L}_c Q \]  
 It follows that the map \[\mathbf{ev} \circ \Gamma \circ (\operatorname{id}
 \times \Sigma): S^1 \times \hat{B} \to Q\] has non-zero degree, in particular
 that $Q$ is dominated by this manifold.  
 \end{proof}

It follows from Theorem \ref{thm: productdomtop} that we have the following result:

\begin{thm} 
    \label{thm: productdom}  Let $Q$ be a closed, oriented 3-manifold which admits a psuedo-dilation over $\mathbb{Q}$. Then, either:
\begin{itemize}
\item  $Q$ is finitely covered by $S^1 \times B$ for $B$ closed and oriented of genus $\geq 1$ (and in particular aspherical), or
    
\item $Q$ admits a metric of positive scalar curvature. Equivalently, $Q$ is finitely covered by a connected sum $\#_n S^1 \times S^2$ (by convention, the case $n=0$ corresponds to $S^3$). 
\end{itemize}
 \end{thm}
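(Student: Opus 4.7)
The plan is to combine the Kneser--Milnor prime decomposition, Perelman's geometrization theorem, and a simplicial-volume/fundamental group analysis to classify the 3-manifolds appearing in the conclusion.

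First I would reduce to prime $Q$. Suppose $Q = Q_1 \# \cdots \# Q_k$ is the prime decomposition. A degree-nonzero map $f \colon S^1 \times \hat{B} \to Q$ induces a surjection on rational homology, and an analysis of how such maps behave under connected sum (together with the fact that $\pi_1(S^1 \times \hat{B})$ surjects onto a subgroup of finite index in $\pi_1(Q)$) shows that each prime summand is itself dominated by a manifold of the form $S^1 \times \hat{B}_i$. So it suffices to handle the prime case.

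Second, I would argue that $Q$ has vanishing simplicial volume. Since $S^1 \times \hat{B}$ admits self-maps of arbitrarily large degree (via covers of the $S^1$-factor), its simplicial volume vanishes; by Gromov's functoriality of $\|\cdot\|$ under degree-nonzero maps, $\|Q\| = 0$. By geometrization (Perelman) and the characterization of zero simplicial volume in dimension three (Soma, Thurston), a prime closed oriented 3-manifold with $\|Q\|=0$ is either a graph manifold (in particular its JSJ decomposition has no hyperbolic pieces), has finite fundamental group, or is $S^2 \times S^1$.

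Third, I would split into cases according to $|\pi_1(Q)|$:
\begin{itemize}
\item If $\pi_1(Q)$ is finite, then by the elliptization theorem (a consequence of Perelman) $Q$ is a spherical space form, so finitely covered by $S^3$, i.e.\ $\#_0 S^1 \times S^2$.
\item If $Q = S^2 \times S^1$, it is already $\#_1 S^1 \times S^2$.
\item Otherwise $Q$ is aspherical and carries one of the five aspherical Thurston geometries $\R^3$, $H^2 \times \R$, $\widetilde{SL_2\R}$, $Nil$, $Sol$. I would rule out $Sol$, $Nil$, and $\widetilde{SL_2\R}$ using the domination hypothesis: the fundamental group of $S^1 \times \hat{B}$ contains a finite-index subgroup of the form $\Z \times \pi_1(\hat{B})$, and pushing this structure forward through a $\pi_1$-surjective finite cover of $Q$ forces the existence of a central $\Z$ in a finite-index subgroup of $\pi_1(Q)$ whose quotient is a surface group. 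A case check against the structure of lattices in the three geometries above (using e.g.\ the fact that $Sol$- and $Nil$-lattices are not virtually $\Z \times \pi_1(\mathrm{surface})$, and that $\widetilde{SL_2\R}$-lattices have non-trivial Euler class obstructions) eliminates these. The surviving geometries are $\R^3$ and $H^2 \times \R$, which are (by Bieberbach/Seifert theory) respectively finitely covered by $T^3$ and by $S^1 \times B$ for a closed oriented surface $B$ of genus $\geq 2$.
\end{itemize}

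The main obstacle is the third step, specifically excluding the non-product Seifert geometries $\widetilde{SL_2\R}$ and $Nil$. These are the cases where $Q$ is a nontrivial $S^1$-bundle over a 2-orbifold with nonzero Euler number; the delicate point is to show that such a twisted $S^1$-bundle cannot receive a degree-nonzero map from a trivial one, which I would handle by a cohomological computation in the bundle's classifying sequence, combined with the multiplicativity of Euler numbers under fibered maps between Seifert spaces (following arguments of Wang and of Brooks--Goldman on Seifert volume). Once these obstructions are in place, combining the cases yields the stated dichotomy.
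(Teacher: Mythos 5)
A preliminary remark: the paper does not prove Theorem \ref{thm: productdom} at all --- it is quoted from Kotschick--Neofytidis \cite{MR3054316}, whose proof goes through the notion of fundamental groups presentable by products rather than through your simplicial-volume and Thurston-geometry case division. So your outline has to stand on its own, and as written it has a genuine gap right at the start: the reduction ``it suffices to handle the prime case.'' Knowing the dichotomy for each prime summand does not yield it for $Q$. For instance $Q=T^3\#(S^1\times S^2)$ has both summands dominated by products, yet $Q$ satisfies neither alternative of the theorem: the aspherical summand rules out positive scalar curvature, and every finite cover of $Q$ is again a nontrivial connected sum (Kurosh/Bass--Serre), so no cover is a product or a $\#_n S^1\times S^2$. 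Thus the substantive content in the reducible case is that a nontrivial connected sum containing an aspherical summand is \emph{not} dominated by $S^1\times\hat B$, and your proposal never addresses this. It is exactly where \cite{MR3054316} needs presentability by products: a nontrivial free product has trivial center, so the $S^1$-factor of $\pi_1(S^1\times\hat B)$ must die in the finite-index image, and one must still show the degree vanishes --- which is not automatic, since such a $Q$ is not aspherical and the ``factor through $\hat B$'' trick is unavailable.

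The second gap is in the prime aspherical case. From $\|Q\|=0$ you conclude $Q$ is a graph manifold and then immediately assert that $Q$ carries one of the five aspherical non-hyperbolic geometries; but aspherical graph manifolds with nontrivial JSJ decomposition have zero simplicial volume and are not geometric, so your case list is incomplete. The standard repair is the argument you only half-state: a finite-index subgroup $\Gamma\le\pi_1(Q)$ is a quotient of $\Z\times\pi_1(\hat B)$, and the image of the central $\Z$ is either infinite --- in which case $\pi_1$ of a finite cover has an infinite cyclic normal subgroup, hence that cover is Seifert fibered by Casson--Jungreis/Gabai, and only then are you reduced to the Euclidean, $H^2\times\R$, Nil and $\widetilde{SL_2(\R)}$ cases (where your exclusion of the last two via multiplicativity of Euler numbers, respectively Seifert volume, following Wang and Brooks--Goldman, is a reasonable route) --- or trivial (it cannot be finite and nontrivial, since $\pi_1$ of a closed aspherical manifold is torsion-free), in which case the map is homotopic, by asphericity of the cover, to one factoring through $\hat B$ and hence has degree zero; note this also disposes of Sol without the group-theoretic case check. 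As stated, your central-$\Z$ claim presumes the image is infinite and that the quotient is a surface group, neither of which is automatic (it is only a quotient of a surface group). With these two repairs the skeleton can be made to work, but as written the proposal does not prove the theorem.
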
  

One can improve this result when the pseudo-dilation $(\Psi, \alpha)$ has an
integral lift:

\begin{lem} 
    \label{lem: charS3} 
    Suppose $Q$ is a closed, oriented 3-manifold. \begin{enumerate} \item If $Q$ admits a metric of positive scalar curvature and a psuedo-dilation over $\mathbb{Z}$ then $Q \cong \#_n S^1 \times S^2$, $n\geq 0$. 
\item If $Q$ is aspherical and admits a quasi-dilation over $\mathbb{Z}$, then $Q \cong S^1 \times B$ where B is a Riemann surface of genus $\geq 1.$
  \end{enumerate}
\end{lem} 
\begin{proof}

To prove (1) observe that if $Q$ admits a pseudo-dilation over $\mathbb{Z}$, the fundamental class can be written as the sum of pushforwards of the fundamental class along $f_j: S^1 \times B_j \to Q$ (the $j$ range over different components of $\mathcal{L}Q$). In particular, we must have that the integers $\operatorname{deg}(f_j)$ are relatively prime. On the other hand, as was shown in \cite{Gromov, MR895623 , Perelman1, Perelman2} and reviewed in
(\cite{MR3054316}*{\S 2}), $Q$ admits a prime decomposition
with primes $S^1\times S^2$ or spherical space forms $Q_i$. For every
spherical space form which appears, we obtain a degree 1 map $Q \to Q_i$. This
implies by Lemma \ref{lem:covertricks} that for each map $f_j: S^1 \times B_j \to Q$, and every $p$ which
divides $|\pi_1(Q_i)|$, $p$ divides $\operatorname{deg}(f_j)$. Therefore, every
$Q_i$ must be $S^3$ and the Lemma is proved.

To prove (2) note that $Q$ is aspherical and hence the fundamental group of $Q$ is torsion free.\footnote{To see this, suppose there is finite cyclic subgroup $C \subset \pi_1(Q)$ which induces a covering space $BC \to Q$. As $BC$ has infinite homological dimension, this is impossible.} Corollary 2.3 of \cite{MR1726305} implies that all central units of the integral group ring $\mathbb{Z}[\pi_1(Q)]$ are of the form $\pm g$ for $g$ in the center of $\pi_1(Q).$ In particular $\alpha= \pm g \in \mathcal{Z}(\mathbb{Z}[\pi_1(Q)]) \cong \mathbb{H}_0(\mathcal{L}Q)$ and it therefore follows that all of the $B_j$ from the proof of (1) above map to a single component of the loop space and that $Q$ admits a degree 1 map from $Y=S^1 \times \hat{B}.$ It follows from Lemma \ref{thm:Rong} that $Q \cong S^1 \times B$ for some $B$ of genus $\geq 1$.
 \end{proof}

\begin{rem}
     Lemma \ref{lem: charS3} is sharp in the sense that examples of exact
        Lagrangian embeddings in Appendix \ref{section:AppendixA} show that there exist quasi-dilations in
        the loop homology algebra over $\mathbb{Z}$ of $\#_n S^1 \times S^2$
        for all $n$ (the K{\" u}nneth formula produces an
        integral quasi-dilation when $Q \cong S^1 \times B$).  
\end{rem} 

Finally, we examine the implications of the existence of \emph{dilations} over $\K=\mathbb{Z}$ or a field of characteristic $p>0$ leading to a curious 
characterization of the 3-sphere in terms of dilations:

\begin{lem} \label{lem:dilatp} Suppose that $Q$ is a closed, oriented 3-manifold. \begin{enumerate} \item For any $Q$  such that $\mathbb{H}^*(\mathcal{L} Q,\mathbb{Z})$ admits a dilation, then $Q \cong S^3$.  \item Suppose that  $\mathbb{H}^*(\mathcal{L} Q,\K)$ admits a dilation where $\K$ is a field of characteristic $p>0$. Then $Q$ admits a prime decomposition with primes $S^1\times S^2$ or spherical space forms $Q_i$ such that $p$ does not divide $\pi_1(Q_i)$.   \end{enumerate} \end{lem}   

\begin{proof} For (1), by Lemma \ref{lem: charS3} we have $Q$ is an aspherical $S^1 \times B$ or $Q=Q_n :=\#_n S^1 \times S^2$
for $n \geq 1$.  It is impossible that $Q$ is an aspherical $S^1 \times B$ by Lemma \ref{lem: nodiK} and so we only need to treat the second case.
 We have a degree 1 map $f: Q_n \to
S^1 \times S^2$, which induces a map $f_L: \mathcal{L}Q_n \to \mathcal{L}(S^1\times S^2)$ on loop spaces. The map $f_L$ has the following two properties \begin{equation} \label{eq:deltaprop}
f_{L,*}([Q_n])= [S^1 \times S^2]
   \quad\text{and}\quad 
\Delta(f_{L,*}(\Psi))= f_{L,*}(\Delta(\Psi))
\end{equation}
where $[Q_n]$ and $[S^1 \times S^2]$ denote fundamental cycles of constant loops and $\Psi$ is an arbitrary class in $\mathbb{H}^*(\mathcal{L}Q_n)$.  If $Q_n$ had an integral dilation $(\Psi,[Q_n])$, then the two equations from \eqref{eq:deltaprop} show that \begin{align} \Delta(f_{L,*}(\Psi))= f_{L,*}(\Delta(\Psi))= [S^1 \times S^2] \end{align}
and hence that $S^1 \times S^2$ would have an integral dilation, which is known not to be true from the calculation of \cite{Menichi} mentioned above together with an elementary argument using the K{\" u}nneth formula. 

To check (2), note that for any prime summand we get a degree 1 map $f: Q \to Q_i$ again inducing a map $f_L$ on loop spaces satisfying the properties \eqref{eq:deltaprop}. As in the proof of (1), this means that if $Q$ had a dilation $(\Psi,[Q])$ over $\K$, then \begin{align} \label{eq:dilQi} \Delta(f_{L,*}(\Psi))= f_{L,*}(\Delta(\Psi))= [Q_i] \end{align} Equation \eqref{eq:dilQi} together with Lemma \ref{lem: nodiK} imply that none of the $Q_i$ can be aspherical. 

So, again in view of Equation \eqref{eq:dilQi}, the claim (2) reduces to showing that for any $Q=S^3/\pi$, $Q$ does not admit a dilation over a field of characteristic $p$ which divides $|\pi|$. As the classifying space $B\pi$ can be obtained from $S^3/\pi$ by attaching the higher cells, the classifying map induces a surjection: 
\begin{align} \label{eq:piattaching}p_*: H_3(S^3/\pi) \to H_3(B\pi). 
\end{align}  
which by Equation \eqref{eq:classifyingcalc} is an isomorphism in characteristic $p$ (it is here that we use that $p$ divides $|\pi|$). Suppose we have $\Psi \in H_2(\mathcal{L}Q,\mathbf{k})$ for $\mathbf{k}$ a field of characteristic $p$ with $\Delta(\Psi)=[Q]$. Then by transfering, we obtain a class $p_*(\Psi)  \in H_2(\mathcal{L}_{[id]}B\pi,\mathbf{k})$ with $\Delta(p_*(\Psi)) \neq 0$. This is a contradiction, since for any aspherical space, the space of contractible loops retracts onto the constant loops and hence $\Delta=0.$
\end{proof}

These classification results have the following application to Lagrangian
embeddings: 
\begin{cor} 
    \label{cor: Vitdil} Suppose that $X$ is a 6-dimensional Liouville
    domain such that $SH^*(X,\mathbb{Q})$ admits a quasi-dilation. Suppose that
    $Q \hookrightarrow X$ is an exact Lagrangian embedding. Then $Q$ is one of
    the two types of manifolds listed in Theorem \ref{thm: productdom}. If the
    quasi-dilation lifts to $SH^*(X,\mathbb{Z})$, then $Q \cong \#_n S^1 \times S^2$ for some 
    $n \geq 0$ or $Q \cong S^1 \times B$ with $g(B) \geq 1.$ \end{cor}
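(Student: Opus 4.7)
The plan is to transport the quasi-dilation from $SH^*(X)$ into the loop homology of $Q$, reinterpret it as a pseudo-dilation, and then feed this into Lemma \ref{lem: dominate}, Theorem \ref{thm: productdom}, and Lemma \ref{lem: charS3}. First I would consider the Viterbo transfer map $j^{!}: SH^*(X,\mathbb{Q}) \to SH^*(T^*Q,\mathbb{Q})$, which is a morphism of unital BV-algebras, and compose with the Abbondandolo--Schwarz BV-algebra isomorphism $\mathcal{AS}: SH^*(T^*Q,\mathbb{Q}) \xrightarrow{\cong} \mathbb{H}^*(\mathcal{L} Q,\mathbb{Q})$. Starting from a quasi-dilation $(B,h) \in SH^1(X,\mathbb{Q}) \times SH^0(X,\mathbb{Q})$ with $h$ invertible and $\Delta(hB)=h$, I would set $(\Psi,\alpha) := (\mathcal{AS}\circ j^{!}(hB),\, \mathcal{AS}\circ j^{!}(h))$ and observe that $\Delta(\Psi)=\alpha$ in $\mathbb{H}^*(\mathcal{L} Q,\mathbb{Q})$ with $\alpha$ invertible, since any unital ring homomorphism preserves units.

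Next I would promote this quasi-dilation to a pseudo-dilation by pushing along the evaluation $\operatorname{ev}_*: \mathbb{H}^*(\mathcal{L} Q,\mathbb{Q}) \to \mathbb{H}^*(Q,\mathbb{Q})$. Since this map is a unital ring homomorphism, the invertibility of $\alpha$ yields $\operatorname{ev}_*(\alpha) \in \mathbb{H}^*(Q,\mathbb{Q})^{\times}$, which is precisely the pseudo-dilation condition. Lemma \ref{lem: dominate} then produces a non-zero degree map $S^1 \times \hat{B} \to Q$ from a product of the circle with a closed orientable surface, and Theorem \ref{thm: productdom} places $Q$ in one of the two diffeomorphism classes listed there, completing the first assertion of the corollary.

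For the integral refinement I would repeat the construction over $\mathbb{Z}$: Viterbo functoriality and $\mathcal{AS}$ are both defined integrally, so the output is an integral pseudo-dilation on $\mathbb{H}^*(\mathcal{L} Q,\mathbb{Z})$. The positive scalar curvature hypothesis places $Q$ in the second case of Theorem \ref{thm: productdom}, and Lemma \ref{lem: charS3} then forces $Q \cong \#_n S^1 \times S^2$. The principal subtlety I anticipate is bookkeeping the invertibility: one must verify that $\alpha$ truly remains a unit in the target of each morphism, which rests crucially on the \emph{unital} nature of Viterbo functoriality and of $\mathcal{AS}$. This is precisely why a quasi-dilation, rather than an outright dilation, is already enough to drive the argument.
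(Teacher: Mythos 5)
Your route --- Viterbo transfer, the Abbondandolo--Schwarz BV isomorphism, then Lemma \ref{lem: dominate}, Theorem \ref{thm: productdom} and Lemma \ref{lem: charS3} --- is exactly the paper's, but there is one unjustified step: you place the transferred classes in the degrees required by the definition of a pseudo-dilation, writing $\Psi = \mathcal{AS}\circ j^{!}(hB) \in \mathbb{H}^1(\mathcal{L}Q)$ and $\alpha = \mathcal{AS}\circ j^{!}(h) \in \mathbb{H}^0(\mathcal{L}Q)$. The corollary makes no Maslov class assumption on $Q$, and the $\mathbb{Z}$-grading on $SH^*(X)$ (from the chosen trivialization of $\mathcal{K}$) need not agree with the canonical grading on $SH^*(T^*Q) \cong \mathbb{H}^*(\mathcal{L}Q)$; when the Maslov class of $Q$ in $X$ is nonzero, $j^{!}$ is not $\mathbb{Z}$-graded, so the images of $hB$ and $h$ are in general inhomogeneous and your pair is not literally a (quasi- or pseudo-)dilation in loop homology as you assert. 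The paper's proof addresses precisely this point by taking the degree-$1$ component of the transferred degree-one class and the degree-$0$ component of the transferred unit, explicitly remarking that no Maslov class assumption is made.

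The repair is short and stays within your framework, so you should make it explicit: since $j^{!}$ and $\mathcal{AS}$ commute with the BV operators and $\Delta$ is homogeneous of degree $-1$ on $\mathbb{H}^*(\mathcal{L}Q)$, decomposing $\Delta(j^{!}(hB)) = j^{!}(h)$ by degree gives $\Delta\bigl((j^{!}(hB))_1\bigr) = (j^{!}(h))_0$. For the unit condition, $j^{!}(h)$ is a unit because $j^{!}$ and $\mathcal{AS}$ are unital ring maps, hence $\operatorname{ev}_*(j^{!}(h))$ is a unit of $\mathbb{H}^*(Q)$; as this ring is concentrated in degrees $0,\dots,3$, the degree-zero part of any unit is itself invertible, so $\operatorname{ev}_*\bigl((j^{!}(h))_0\bigr) \in \mathbb{H}^*(Q)^{\times}$ and $\bigl((j^{!}(hB))_1,(j^{!}(h))_0\bigr)$ is a genuine pseudo-dilation (over $\mathbb{Q}$, and over $\mathbb{Z}$ when the quasi-dilation lifts). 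With that insertion your argument matches the paper's; the remaining appeals to Lemma \ref{lem: dominate}, Theorem \ref{thm: productdom} and Lemma \ref{lem: charS3} are used correctly.
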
 
\begin{proof}   Assume $SH^*(X,\mathbb{Q})$ admits a quasi-dilation $(\Psi,\alpha)$ and consider the Viterbo restriction (recall \eqref{eq:Viterbo}) of 
these elements $(j^!(\Psi),j^!(\alpha))$ to a Weinstein neighborhood $D^*Q$. Because we make no assumption on the Maslov class of our Lagrangians (see \cite{Ritter} for other applications of Viterbo functoriality in the absence of Maslov class assumptions), 
$\mathcal{AS} \circ j^!$ no longer necessarily preserves $\mathbb{Z}$-gradings (only $\mathbb{Z}/2\mathbb{Z}$-gradings) and so $\mathcal{AS} \circ j^!(\alpha)$ and $\mathcal{AS} \circ j^!(\Psi)$ may not be concentrated in a single degree.  
    Nevertheless, because $\mathcal{AS}$ and $j^!$ are both BV-algebra maps and $\operatorname{ev}_*$ is an algebra map, $\operatorname{ev}_*(\mathcal{AS} \circ j^!(\alpha))$ must still be a unit in $\mathbb{H}^*(Q, \K)$. It follows that the degree 0 piece of $\operatorname{ev}_*(\mathcal{AS} \circ j^!(\alpha))$ is a unit in $\mathbb{H}^0(Q, \K)$ and consequently that $Q$ admits a rational pseudo-dilation by taking the
    degree 1 component of $\mathcal{AS} \circ j^!(\Psi)$ and the degree 0 component of
    $\mathcal{AS} \circ j^!(\alpha)$. The result then
    follows from Theorem \ref{thm: productdom}.

 When $Q$ admits a metric of positive scalar curvature, the second statement is
    proven in exactly the same way after invoking Part (1) of Lemma \ref{lem: charS3}. When $Q$ is aspherical note that  we have that $\mathbb{H}^*(\mathcal{L}Q)$ is concentrated in non-negative degree which implies that the degree zero piece of $\mathcal{AS} \circ j^!(\alpha)$ is also invertible. We therefore conclude using Part (2) of Lemma \ref{lem: charS3}.
\end{proof}

    \begin{rem}   Corollary \ref{cor: Vitdil} has implications in symplectic topology outside of the main cases we consider in this paper. For example, let $X$ be a 3-dimensional $A_n$ Milnor fibre which we recall admits a Lefschetz fibration with general fiber $T^*S^2.$ Using Lefschetz fibrations techniques (as in  \cite{Seidel:2010uq}*{\S 7}), $SH^*(X,\mathbb{Z})$ can be seen to admit a quasi-dilation which becomes a dilation after tensoring with any field $\K$ with characteristic not equal to 2(because as noted at the end of \S \ref{subsection:stringtopback}, $\mathbb{H}^*(\mathcal{L}S^2,\mathbb{Z})$ has a quasi-dilation with this property). This implies that if $Q$ is a rational homology sphere which admits an exact Lagrangian embedding, $j: Q \hookrightarrow X$, then $Q \cong S^3$. \end{rem}

\begin{rem}\label{fukayacontrast} 
    In pioneering work, Fukaya \cite{FukayaLoopSpace1, FukayaLoopSpace2} produces
      for (any) Lagrangian embedding $Q \subset
    \C^3$, a Maurer-Cartan element of the (chain-level $L_{\infty}$ structure on the) 
    (equivariant) chains on the free loop space $C_{n-*}(\mc{L} Q)$ whose
    associated deformation is trivial. In the spirit of the present section, it may be interesting to study the following 
homological (rather than chain-level) version of Fukaya's condition: Let  $\alpha \in
 \mathbb{H}^0(\mathcal{L}Q,\mathbf{k})$ and $\Psi \in
 \mathbb{H}^1(\mathcal{L}Q,\mathbf{k})$ be two loop space classes. Then $(\Psi,
 \alpha)$ define a \emph{coordinate function} if $$ [\alpha,\Psi]=1. $$ We conjecture that if $Q$ is a closed, 
orientable  3-manifold with a coordinate function $(\Psi,\alpha)$, then $Q$ 
 is diffeomorphic to $S^1 \times B$, where $B$ is an orientable Riemann surface. While discussing Fukaya's work, it also seems 
important to note that in the present paper, we are able to obtain much sharper results when our quasi-dilations are defined integrally (see Lemma \ref{lem: charS3}) --- something which is not possible (or relevant) in Fukaya's setup. 
\end{rem}

\def\lra{\longrightarrow}

\section{Dilations and Applications} 
\subsection{Dilations}

Let $M$ be a Fano variety of dimension $n \geq 3$ and $D$ a very ample smooth
divisor such that $H^2(M,\mathbf{k})= \K \langle \PD(D)\rangle$, where $\PD$ denotes the Poincar\'e dual, and
\[K_M^{-1}=\mathcal{O}(mD)\] for $m>n/2$.  Let $j_D: D \to M$ denote the
inclusion.

\begin{lem} \label{lem: H2zero} As usual, let $X= M\setminus D.$ Then, under the preceding assumptions, $$H^2(X,\K)=0.$$ \end{lem} 
\begin{proof} Consider the long exact sequence (all homologies are taken with $\K$-coefficients): $$...\to H_{2n-2}(D) \cong H_{2n-2}(M) \to H_{2n-2}(M,D) \to H_{2n-3}(D) \to... $$   By the adjunction formula, $D$ is a Fano manifold. Hence, we have that $D$ is simply connected (see e.g. Section 5.1. of \cite{MR2011745}) and in particular $H_{2n-3}(D,\K) \cong H^1(D,\K)=0.$ We therefore have that $H_{2n-2}(M,D)=0.$ Using excision and Poincar\'e duality, this implies that $H^2(X,\K)=0$ as claimed. \end{proof}

 For any three cohomology classes $\beta_i \in H^*(M)$, let
$GW_M(\beta_1, \beta_2)$ and $GW_M(\beta_1,\beta_2,\beta_3)$ denote the
2-point and 3-point Gromov-Witten invariants on $M$ \cite{McDuff:2004aa}. Throughout this subsection,
since we are in the case of a smooth divisor, we will use the more suggestive notation $S_D$ for $S_1$, the unit normal bundle around $D$. Similarly, for a class/chain in log cohomology, we will use the notation $\alpha t$ for the class $\alpha t^{\vec{\v}_1}$ with primitive multiplicity vector on the single smooth component $D_1 = D$.
 
\begin{thm} \label{thm: dilationcrit} Let $\alpha_0$ be a class in $H^{2m-4}(M)$ and set $\alpha_i=\PD(D)^i\cup \alpha_0$ for $i \in \lbrace 1,2 \rbrace$. Suppose that  
\begin{align} \label{eq: dilation1}
j_D^*(\alpha_2)=0 &\in H^*(D)\\
\label{eq: dilation2} GW_{M}(\PD(D),\alpha_1,pt) &\in \mathbf{k}^{\times}.
\end{align}
Then $SH^*(X,\mathbf{k})$ admits a dilation.
\end{thm}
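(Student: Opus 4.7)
The plan is to use the Log PSS morphism to construct a class $\Psi \in SH^1(X)$ whose BV image is an invertible multiple of the unit, so that a rescaling of $\Psi$ is a dilation. Since $D$ is smooth, $\QH^*(M,D) = H^*(X) \oplus tH^*(S_D)[t]$, and the grading formula \eqref{eq:loggrading} reduces to $|\alpha t^v| = |\alpha| - 2(m-1)v$ (using $a_1=m$, since $K_M = \mathcal{O}(-mD)$). A degree-$1$ class in the $v=1$ summand therefore corresponds to a class in $H^{2m-1}(S_D)$. Using the Gysin sequence of the circle bundle $\pi\colon S_D \to D$ with Euler class $j_D^*c_1(D)$, the class $j_D^*\alpha_1 \in H^{2m-2}(D)$ admits a lift $\widetilde{\alpha}_1 \in H^{2m-1}(S_D)$ satisfying $\pi_*\widetilde{\alpha}_1 = j_D^*\alpha_1$, precisely because hypothesis \eqref{eq: dilation1} gives
\[
j_D^*\alpha_1 \cup j_D^*c_1(D) = j_D^*\alpha_2 = 0.
\]

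After verifying admissibility of $\widetilde{\alpha}_1 t$ and vanishing of its obstruction class $GW_{(1)}(\widetilde{\alpha}_1)$ from Definition \ref{def:obstructionclass} (both of which I expect to reduce to index and indecomposability arguments for curve classes $A \in H_2(M)$ with $A \cdot D = 1$, using $m > n/2$ and the Picard-rank-one condition $H^2(M,\mathbf{k}) = \mathbf{k}\langle c_1(D)\rangle$), I obtain a class
\[
\Psi := \PSSlog(\widetilde{\alpha}_1\, t) \in SH^1(X).
\]
Lemma \ref{lem: BVandPSS} combined with Lemma \ref{lem:S1eq} then yields
\[
\Delta\Psi = \PSSlog(\Delta(\widetilde{\alpha}_1\, t)) = \PSSlog(\pi^*(j_D^*\alpha_1)\, t) \in SH^0(X),
\]
where I have used that the BV operator on $tH^*(S_D)$ is, by definition, fiber pushforward along the $S^1$-action generating $\pi$, which sends any Gysin lift of $j_D^*\alpha_1$ to $\pi^*(j_D^*\alpha_1)$.

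The computational heart of the proof is to identify
\[
\PSSlog(\pi^*(j_D^*\alpha_1)\, t) = GW_M(c_1(D),\alpha_1,[\mathrm{pt}])\cdot 1 \in SH^0(X).
\]
Heuristically, a one-parameter energy/neck-stretching deformation of the defining Log PSS moduli space---refining the compactness analysis of Lemma \ref{lem:compactness}---degenerates the counted configurations into a closed holomorphic sphere in $M$ with a tangency-order-$1$ marked point on $D$ carrying the incidence $j_D^*\alpha_1$, attached to a rigid PSS solution realizing the Floer-theoretic unit $1 \in H^0(X) \subset SH^0(X)$ via a second marked point constrained to a point. The divisor equation for closed Gromov-Witten invariants then converts the tangency-$1$ condition along $D$ into a factor of $A \cdot D$ (summed over curve classes $A$), producing precisely the three-point invariant $GW_M(c_1(D),\alpha_1,[\mathrm{pt}])$. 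By hypothesis \eqref{eq: dilation2} this scalar lies in $\mathbf{k}^\times$, and its inverse times $\Psi$ is the desired dilation.

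The principal obstacle is the PSS-to-GW identification in the third step: rigorously realizing $\PSSlog$ of a codimension-one cycle on $S_D$ in the $v=1$ summand as a closed Gromov-Witten invariant requires a delicate compactness and gluing analysis for a one-parameter family of PSS solutions, tracking when and how a sphere bubble carrying the full $D$-tangency can form, and verifying signs at the chain level. Secondary technical obstacles are the verification of admissibility, vanishing of $GW_{(1)}(\widetilde{\alpha}_1)$, and independence of $\Psi$ from the non-unique Gysin lift $\widetilde{\alpha}_1$; the last point should follow because altering $\widetilde{\alpha}_1$ by a class in $\pi^* H^{2m-1}(D)$ shifts $\Psi$ by an element whose $\Delta$-image vanishes.
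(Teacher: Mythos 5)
Your outline coincides with the paper's strategy---Gysin-lift a class built from $\alpha_1$ to $H^{2m-1}(S_D)$ using \eqref{eq: dilation1}, apply $\PSSlog$ to get a degree-one class, compute its BV image, and identify the result with $GW_M(c_1(D),\alpha_1,pt)\cdot 1$ via sphere bubbling and the divisor equation. But the step you defer (``the computational heart'') is where the content lies, and as you have formulated it, it is not merely unproved but ill-posed: $\pi^*(j_D^*\alpha_1) = \pi^*(j_D^*c_1(D))\cup\pi^*(j_D^*\alpha_0) = 0$ in $H^*(S_D)$, because the Euler class pulls back to zero on the total space of the circle bundle. So the log cohomology class $\pi^*(j_D^*\alpha_1)\,t$ that you feed into $\PSSlog$ in your third and fourth steps vanishes in $\QH^*(M,D)$; if $\PSSlog$ descended to cohomology for such primitive, non-tautologically admissible classes, your two displayed identities would force $\Delta\Psi = 0$ rather than an invertible multiple of $1$. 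The nonvanishing of $\Delta\Psi$ is strictly a chain-level effect: in the obstructed regime $\PSSlog$ does not annihilate exact cocycles, and the defect is exactly the sphere-bubbling boundary term of Lemma \ref{lem:compactness}.

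This is precisely how the paper organizes the argument. One builds explicit pseudocycles in $S_D$: a cycle $K$ (a geometric Gysin lift of $j_D^*\alpha_1$, closed up using \eqref{eq: dilation1} via real blow-ups of $D$ along divisors $D_N, D_N'$ representing $\mathcal{O}(D)|_D$) together with an auxiliary chain $L$ whose boundary $\partial L$ is the $S^1$-sweep of $K$, i.e.\ a chain-level representative of your $\pi^*(j_D^*\alpha_1)\,t$. The obstruction class of $Kt$ vanishes simply because it lives in $H^2(X)=0$ (Gysin sequence plus $H^2(M)=\K\langle c_1(D)\rangle$), not by an index argument, so one may choose a bounding cochain $Z_b$ and set $\beta=\PSSlog^{\lambda}(Kt,Z_b)$. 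The BV intertwining gives $\Delta\beta=\PSSlog^{\lambda}(\partial L\,t)$, and the chain identity $\partial_{CF}\circ\PSSlog^{\lambda}(Lt)=\PSSlog^{\lambda}(\partial L\,t)+\PSS_{\vec{\v}_{\emptyset}}(\underline{GW_{\vec{\v}_1}(L)})$---an instance of the already-proven Lemma \ref{lem:compactness} applied to the degree $-1$ chain $Lt$---yields $\Delta\beta=-\PSS_{\vec{\v}_{\emptyset}}(\underline{GW_{\vec{\v}_1}(L)})$ on cohomology (Lemma \ref{lem:pssBV6}). Lemma \ref{lem:gweq6} then computes the scalar by cutting with a point and invoking the divisor equation, $GW_{\vec{\v}_1}(L)\cdot[pt]=GW_M(c_1(D),\alpha_1,pt)$, invertible by \eqref{eq: dilation2}, and one normalizes. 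So no new neck-stretching or gluing analysis is needed; what your proposal is missing is the chain-level reformulation of your last two steps through such an auxiliary bounding chain, without which the central identity cannot even be stated correctly.
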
 
The proof of Theorem \ref{thm: dilationcrit} requires two lemmas. To state
the first, we introduce some notation. Choose two generic smooth very ample divisors $D_N,D_N' \subset D$
representing the normal bundle $ND$ (these exist by Bertini's theorem). Recall that given such a $D_N$, one may
form the real oriented blow up $Bl_{D_N}(D)$ of $D$ along $D_N$, which is a submanifold
with boundary inside of $S_D$ (see e.g \cite{Sabbah}*{\S 8.2}). Locally, given a
holomorphic function $f:U \to \mathbb{C}$, the real  oriented blow-up along $f^{-1}(0)$
is constructed as closure of the graph of \[\frac{f}{||f||}: U \setminus
\lbrace f^{-1}(0) \rbrace \to \mathbb{S}^1.\] inside of $U \times S^1.$ For a general smooth divisor, it
is constructed by patching together these local models  and has boundary
$\pi^{-1}(D_N) \subset S_D$. 

Pick a generic pseudocycle $P$ representing
$j_D^*(\alpha_0)$ in $H^*(D)$ (the condition \eqref{eq: dilation2} implies that $j_D^*(\alpha_0)$
is a non-trivial class by the adjunction formula).  Recall that by definition of a pseudocycle, the limiting set of $P$ is covered by a smooth manifold $W_P$ of dimension at most $\operatorname{dim}(P)-2$. By perturbing $P$ (or $D_N$), we can assume that $D_N$ intersects $P$ and $W_P$ transversely. Under this transverality assumption, $L:= P \times_D Bl_{D_N}(D) \to S_D$ is a pseudocycle (with boundary) whose
boundary $\partial L$ is the pullback of $S_D$ along  $k: P \times_{D} D_N \to D$. 

 We next set $K_0:= k \times_{D} Bl_{D_N'}(D)$, which after a possible further perturbation of either $P$ or $D_N'$, will be a pseudocycle with boundary the pull-back of $S_D$ along $$P \times_{D} (D_N \cap D_N') \to D. $$ Under assumption \eqref{eq: dilation1}, $P \times_{D} (D_N \cap D_N') \to D$ represents a trivial (pseudo) homology class i.e. there exists a pseudocycle $K_1$ which bounds it. We may therefore may glue $K_0$ with $\pi^{-1}(K_1)$ (and smoothly approximate) to form a pseudocycle $K$ mapping to $S_D$. We have that $H^2(X)=0$ by Lemma \ref{lem: H2zero} and so
\begin{align} \label{eq:rk1coh}
    GW_{\vec{\v}_1}([K])=0 \in H^2(X). 
\end{align} 
where $GW_{\vec{\v}_1}([K])=0$ is the obstruction class defined as in Equation \ref{eq: GWinvariantsnormal}. 
We can therefore choose a Hamiltonian $H_M^{\lambda}$ with $\lambda> m+ \hbar$ and pick a bounding relative pseudocycle $Z_b$ so $\PSSlog^{\lambda}(Kt, Z_b)$ defines a class in
$SH^*(X, \mathbf{k})$.\footnote{This class \emph{a priori} depends on $\lambda$ since we did not address invariance issues when $Z_b \neq 0.$} As
before consider the fiber product 
\begin{align} ev_\infty:
    \mathcal{M}_{0,2}(M,D,\vec{\v}_1)^o \times_{\operatorname{Ev}_0}
    L \to X. 
\end{align} 
This defines a pseudocycle
$\underline{GW_{\vec{\v}_1}(L)}$ when restricted to $\bar{X}$. To see this, notice that $\partial L$ is a union of circle fibers and so there is an rotational $S^1$-symmetry on $\mathcal{M}_{0,2}(M,D,\vec{\v}_1)^o \times_{\operatorname{Ev}_0}
    \partial L.$ Hence, as in the proof of Lemma
\ref{lem:degeneracy},  the map
from the fiber-product 
\begin{align} ev_{\infty}:
    \mathcal{M}_{0,2}(M,D,\vec{\v}_1)^o \times_{\operatorname{Ev}_0}
    \partial L \to X 
\end{align} 
factors through the quotient $(\mathcal{M}_{0,2}(M,D,\vec{\v}_1)^o \times_{\operatorname{Ev}_0}
    \partial L)/S^1$. We may now state our first lemma: 

\begin{lem}\label{lem:pssBV6}
There is a (cohomological) equality 
    \begin{equation}
    \Delta (\operatorname{PSS}_{log}^{\lambda}(Kt,Z_b))= - \PSS_{\vec{\v}_{\emptyset}}(\underline{GW_{\vec{\v}_1}(L)}) 
    \end{equation}
\end{lem}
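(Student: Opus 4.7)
The plan is to combine three tools: Lemma \ref{lem: BVandPSS} (which re-expresses $\Delta \circ \PSSlog^{\lambda}$ as an $S^1$-parameterized PSS operation), Lemma \ref{lem:S1eq} (which identifies $\PSSlog_{S^1}^{\lambda}$ with $\PSSlog^{\lambda}$ applied to an $S^1$-sweep in $S_D$), and a chain-level extension of the compactness analysis of Lemma \ref{lem:compactness} to the case where the constraint in $S_D$ is a pseudochain-with-boundary such as $L$.

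Starting from the decomposition $\PSSlog^{\lambda}(Kt, Z_b) = \PSSlog^{\lambda}(Kt) - \PSS_{\vec{\v}_\emptyset}(Z_b)$, I will apply $\Delta$ at the chain level.  The contribution of the $Z_b$-term vanishes on cohomology, because $\Delta$ is null-homotopic on the image of the classical PSS: an $S^1$-parameterized variant of the classical PSS construction, rotating the fixed tangent ray at $z_0$, provides the requisite null-homotopy since constant orbits of $H_m^{\lambda}$ are $S^1$-invariant.  For the $\PSSlog^{\lambda}(Kt)$-term, Lemma \ref{lem: BVandPSS} yields (up to chain-homotopies controlled by GW-type bubbling corrections that will cancel below) the cohomological identity $[\Delta \PSSlog^{\lambda}(Kt)] = [\PSSlog^{\lambda}_{S^1}(Kt)]$.

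Next I will show that $[\PSSlog^{\lambda}_{S^1}(Kt)] = [\PSSlog^{\lambda}(\partial L \cdot t)]$ on cohomology.  The key geometric input is that in Borel--Moore chains on $S_D$, the $S^1$-sweep $\Gamma_{*}([S^1] \otimes K)$ is chain-equivalent to $\partial L = \pi^{-1}(k(P \times_D D_N))$: from the construction $K = K_0 \cup \pi^{-1}(K_1)$, the sweep of the section-like part $K_0$ fills out the full $S^1$-bundle over $(P \times_D D_N) \setminus D_N'$ with multiplicity one, while the sweep of the $S^1$-fibration part $\pi^{-1}(K_1)$ vanishes by a dimensional degeneracy (the map $\Gamma \colon S^1 \times \pi^{-1}(K_1) \to \pi^{-1}(K_1) \subset S_D$ has generically positive-dimensional fibers onto its image); the discrepancy supported over $D_N' \cap (P \times_D D_N)$ is compensated for by the chain $\pi^{-1}(K_1)$ glued in to form $K$.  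Applying Lemma \ref{lem:S1eq} in its pseudochain formulation (cf.\ the remark after its proof) to the section-like part and a direct degeneracy argument for the $\pi^{-1}(K_1)$-part then translates the sweep identification into the asserted equality of PSS counts.

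Finally I will extend the compactness analysis of Lemma \ref{lem:compactness} to the moduli $\overline{\mathcal{M}}(\vec{\v}_1, L, x_0)$, where the cycle constraint on $\Evo$ is replaced by a pseudochain-with-boundary $L$.  The codimension-one boundary now comprises three types of strata: Floer breakings, the primitive sphere-bubbling stratum contributing $\PSS_{\vec{\v}_\emptyset}(\underline{GW_{\vec{\v}_1}(L)})$ exactly as in Lemma \ref{lem:compactness}(2), and a new stratum arising from $\partial L$ that contributes $\PSSlog^{\lambda}(\partial L \cdot t)$.  The resulting chain equation $\partial_{CF} \PSSlog^{\lambda}(Lt) = \PSSlog^{\lambda}(\partial L \cdot t) + \PSS_{\vec{\v}_\emptyset}(\underline{GW_{\vec{\v}_1}(L)})$ passes to $[\PSSlog^{\lambda}(\partial L \cdot t)] = -[\PSS_{\vec{\v}_\emptyset}(\underline{GW_{\vec{\v}_1}(L)})]$ on cohomology, which combined with the preceding steps gives the lemma.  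The main technical obstacle will be the chain-level sweep identification $\Gamma_{*}([S^1] \otimes K) \simeq \partial L$: one must carefully track orientations and check that the GW-bubbling corrections appearing in the chain-homotopies of the first two steps cancel precisely against the chain-equivalences supplied in the third.
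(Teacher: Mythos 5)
Your proposal takes essentially the same route as the paper's proof: the paper likewise combines the argument of Lemma \ref{lem: BVandPSS} (with the sweep identification $\Gamma_{\vec{\v}_1,*}(\epsilon^{\vee}\otimes K)=\partial L$ built into the construction of $K$) to obtain $\Delta(\PSSlog^{\lambda}(Kt,Z_b))=\PSSlog^{\lambda}(\partial L\, t)$, and then the chain equation $\partial_{CF}\circ\PSSlog^{\lambda}(Lt)=\PSSlog^{\lambda}(\partial L\, t)+\PSS_{\vec{\v}_{\emptyset}}(\underline{GW_{\vec{\v}_1}(L)})$ coming from the compactness analysis with the pseudocycle-with-boundary constraint $L$, exactly as in your third step. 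The only cosmetic difference is your explicit term-by-term handling of the $Z_b$ piece (which, being a chain rather than a cycle, should strictly be treated inside the chain-level homotopy rather than "on cohomology"), a point the paper absorbs into its citation of Lemma \ref{lem: BVandPSS}.
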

\begin{proof} 
    The argument of Corollary \ref{corpssBV} and Lemmas \ref{lem: BVandPSS}, \ref{lem:S1eq} (c.f. Remark \ref{pssBVpseudo})  shows that there is a cohomological equality \begin{align} \Delta (\PSSlog^{\lambda}(Kt,Z_b)) = \PSSlog^{\lambda}(\Gamma_Kt). \end{align}
where $\Gamma_K: S^1 \times K \to SD$ denotes the spinning (with speed 1) from \eqref{eq: actingpush}. It is useful at this stage to recall that $K$ was defined by gluing $K_0$ and $K_1$ above. Next, we make two observations: the first is that the spinning $\Gamma_{K_{1}}: S^1 \times K_1 \to SD$ factors through $K_1$. Thus, for any $x_0$ such that $|x_0|=|\Gamma_K t|$, we can choose complex structures so that for any $u \in \mathcal{M}(\vec{\v}_1,x_0)$, $\operatorname{Ev}_{z_{0}}(u)$ is disjoint from $\Gamma_{K_{1}}.$  The second observation is that the pseudocycles $\Gamma_{K_{0}}$ and $\partial L$ are identified away from $S^1 \times \partial K_0$ viewed as a submanifold of (the domain of) $\Gamma_{K_{0}}$ and the pull-back of $SD$ to $P \times_{D} (D_N \cap D_N')$ viewed as a submanifold of (the domain of) $\partial L$ (the images of these two pseudocycles coincide exactly). 

Combining these two observations shows that \begin{align}\PSSlog^{\lambda}(\Gamma_Kt)= \PSSlog^{\lambda}(\partial{L}t) \end{align} We therefore have that: 
 \begin{align} \label{eq:spinningwheels} \Delta (\PSSlog^{\lambda}(Kt,Z_b))= \PSSlog^{\lambda}(\partial{L}t).
 \end{align}  Meanwhile we have that \begin{align} \partial_{CF} \circ \PSSlog^{\lambda} (Lt)=\PSSlog^{\lambda}(\partial{L}t) + \PSS_{\vec{\v}_{\emptyset}}(\underline{GW_{\vec{\v}_1}(L)}) \end{align} This means that on the level of cohomology, $\PSSlog^{\lambda}(\partial{L}t)= -\PSS_{\vec{\v}_{\emptyset}}(\underline{GW_{\vec{\v}_1}(L)})$. Combining this with \eqref{eq:spinningwheels} we see that as desired, on the level of cohomology \begin{align}  \Delta ( \operatorname{PSS}_{log}^{\lambda} (Kt,Z_b))= -  \PSS_{\vec{\v}_{\emptyset}}(\underline{GW_{\vec{\v}_1}(L)}). \end{align}
\end{proof}

\begin{lem} \label{lem:gweq6}
    We have an equality 
    \begin{equation}
        GW_{\vec{\v}_1}(L) \cdot [pt]= GW(\PD(D),\alpha_1,pt),\end{equation}
    where $\cdot$ on the the left-hand side denotes the intersection product (recall that
    $GW_{\vec{\v}_1}(L) \in H^{BM}_{2n}(X) \cong H^0(X)$).
\end{lem}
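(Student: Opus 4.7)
The plan is to translate both sides of the claimed identity into concrete signed counts of holomorphic spheres in $M$, and then exhibit an elementary bijection between them.

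I would first unfold the left-hand side: by the definitions of $GW_{\vec{v}_1}(L)$ and the intersection product with $[pt]$, this is the signed count of rigid elements $u$ of $\mathcal{M}_{0,2}(M,D,\vec{v}_1)^o \times_{\operatorname{Ev}_0} L$ satisfying $ev_\infty(u) = pt$. Unwinding the definitions, this is the count of maps $u: (\mathbb{C}P^1, 0, \infty) \to M$ (with positive-real tangent rays at both $0$ and $\infty$, modulo the residual $\mathbb{R}^+$-rescalings of the source) in classes $A$ with $A \cdot D = 1$, transverse to $D$ at $u^{-1}(D)=\{0\}$, such that $u(0)\in P$, the positive-real projectivized normal derivative $[u'(0)_{\mathrm{normal}}] \in (ND)_{u(0)}/\mathbb{R}^+$ agrees with $[s_{D_N}(u(0))]$, and $u(\infty) = pt$. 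The lower-dimensional boundary stratum of $L$ sitting over $P\cap D_N$ is generically avoided when intersecting with $[pt]$, so it does not contribute.

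For the right-hand side, I would apply the divisor equation for genus-zero Gromov-Witten invariants: since $A\cdot c_1(D)=1$, $GW_M(c_1(D),\alpha_1,pt) = GW_M(\alpha_1,pt)$ as a two-point invariant. Since $c_1(D)$ is Poincar\'e dual to $[D]$ and $P$ represents $j_D^*(\alpha_0)$ in $D$, the class $\alpha_1 = c_1(D)\cup\alpha_0$ is Poincar\'e dual to the pushforward $j_{D,*}[P]$. Choosing this cycle representative and again using $A\cdot D = 1$, the constraint $u(y_1)\in P\subset D$ forces $y_1$ to be the unique preimage of $D$; after fixing marked points to $y_1 = 0$ and $y_2 = \infty$ by using the residual automorphisms, the count becomes the number of $u : (\mathbb{C}P^1,0,\infty) \to M$ in class $A$ with $u(0)\in P$ and $u(\infty)= pt$, modulo the $\mathbb{C}^*$-action of reparameterizations fixing $\{0,\infty\}$.

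To identify the two counts, note that both sit over the same ambient set $\tilde{\mathcal{M}} := \{u : u(0)\in P,\; u(\infty)=pt,\; A\cdot D = 1\}$, but modded out by different groups: $\mathbb{R}^+$ together with the direction condition on the LHS, versus the full $\mathbb{C}^* = \mathbb{R}^+\times S^1$ on the RHS. Under the residual $S^1$-action $z\mapsto e^{i\theta}z$, the normal derivative $u'(0)_{\mathrm{normal}}$ rotates freely and transitively through the circle $(ND)_{u(0)}/\mathbb{R}^+$; hence each $S^1$-orbit contains exactly one element satisfying $[u'(0)_{\mathrm{normal}}] = [s_{D_N}(u(0))]$, producing the desired bijection. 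The main obstacle is a sign check: one must verify that the orientations induced by the enhanced evaluation, the fiber product, and $ev_{\infty,*}$ agree with the standard Gromov-Witten orientation on $\mathcal{M}_{0,3}(M,A)$ under this bijection. This is a routine but somewhat careful diagram chase using the conventions fixed earlier in the paper, and it is the only place where the geometric reduction above must be accompanied by nontrivial bookkeeping.
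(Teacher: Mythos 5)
Your proposal is correct and follows essentially the same route as the paper: identify the jet-constrained count over $L$ with the count of curves through $P$ and $pt$ modulo the residual $S^1$ (since rotating the domain sweeps the projectivized normal derivative once around the circle fiber, each $S^1$-orbit meets the $\tilde{D}$-direction condition exactly once), and then invoke the divisor equation using $A\cdot D=1$. The paper records the same bijection (stated as an orientation-preserving identification of fiber products) and likewise leaves the orientation bookkeeping implicit, so nothing essential is missing from your argument.
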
 
\begin{proof}  
    Fix a generic point $pt$ in the interior of $\bar{X}$. Let $S^1$ act on  $\mathcal{M}_{0,2}(M,D,\vec{\v}_1)$ by rotation in the domain. Observe that for generic $J \in \mathcal{J}(M,\D)$, we have an orientation preserving bijection of moduli spaces  \begin{align} \underline{GW_{\vec{\v}_1}(L)} \times_{ev_{\infty}} pt=  \mathcal{M}_{0,2}(M,D,\vec{\v}_1)/S^1\times_{ev_0} P \times_{ev_{\infty}} pt \end{align} Moreover all such curves passing through $pt$ are $D_u$ regular for generic $J$ even before taking fiber products with $L$ and $P$ respectively. It follows that $GW_{\vec{\v}_1}(L) \cdot pt = GW_M(\alpha_1, pt)$. To conclude, observe that the divisor equation implies that $GW_M(\alpha_1, pt)=GW(\PD(D),\alpha_1,pt)$. 
\end{proof}

\begin{proof}[Proof of Theorem \ref{thm: dilationcrit}]
    Set $\beta = \PSSlog^{\lambda}(Kt,Z_b)$.
    Lemmas \ref{lem:pssBV6} and \ref{lem:gweq6} imply that $\Delta \beta =  -GW(\PD(D),\alpha_1,pt) \cdot
    \PSS_{\vec{\v}_{\emptyset}} (1) = -GW(\PD(D),\alpha_1,pt) \cdot 1 $, 
    a non-zero multiple of the unit by hypothesis. Now normalize.
\end{proof}

\begin{rem} \label{rem:Seidelconj} Theorem \ref{thm: dilationcrit} is closely related to \cite{catdyn}*{Conjecture 18.6}.  In the situation of Theorem \ref{thm: dilationcrit}, Seidel proposes an explicit formula for the Hamiltonian Floer cohomology (with its BV operator) of a Hamiltonian of  a specific slope $\lambda$ and suggests applying this to produce dilations (see Examples 18.8 and 18.9 of \emph{loc. cit}). Theorem \ref{thm: dilationcrit} uses the Log PSS map to bypass the computation of Hamiltonian Floer cohomology. \end{rem}

\begin{rem} \label{rem:applicationsofdilationcrit} It would be interesting to apply Theorem \ref{thm: dilationcrit} to cases where the Gromov-Witten invariants are well-understood (such as homogeneous varieties). A sample case where this should be possible is where $M$ is a generic hyperplane generating the divisor class group in $Gr(2,2n+2)$ and $D$ is a generic hyperplane section. \end{rem} 

\subsection{Quasi-dilations}

For $J \in \mathcal{J}(M,\D)$ and $A \neq 0 \in H_2(M,\mathbb{Z})$, let
$\mathcal{M}_{0,0}(M,A,J)$ denote the moduli space of $J$-holomorphic spheres
$$ u: \mathbb{C}P^1 \to M$$ such that $u_*([\mathbb{C}P^1])=A$. Let
$\widehat{\mathcal{M}}_{0,0}(M,A,J)$ denote the moduli space 
\begin{equation} \label{eq:hatM}
   \widehat{\mathcal{M}}_{0,0}(M,A,J):=\bigsqcup_{A_i \neq 0,\sum A_i=A} \prod \mathcal{M}_{0,0}(M,A_i,J). 
 \end{equation} 

 For the remainder of this section, assume $(M,\D)$ is a pair and $J:= \lbrace 2, \cdots, |J|+1 \rbrace$ is a subset of $\lbrace 1, \cdots, k \rbrace$ such that $D_J=\cap_{j \in J} D_j$ is connected and non-empty. We assume that there exists a volume form $\Omega$ on $M$ whose divisor of zeroes is as in \eqref{eq:volform} with $a_i=1$ for $i \in \lbrace 1 \cup J \rbrace$. We will also assume that $D_1$ is a divisor which satisfies $D_1 \cap D_J = \emptyset$ as well as the following conditions (B1)-(B4) below:
\begin{itemize}
    \item[(B1)] The normal bundle to $D_1$ is trivial when restricted to $D_1\setminus \cup_{i\neq 1}D_i $
\end{itemize}

Because the torus bundle $\mathring{S}_1$ is trivial, the cohomology splits as
the cohomology $H^*(\mathring{S}_1) \cong H^*(S^1)\otimes H^*(\mathring{D}_1)$.
In particular, if we fix an isomorphism $ k[\epsilon]/\epsilon^2 \cong
H^*(S^1)$, there is a corresponding cohomology class $\beta_1:=\epsilon\otimes 1 \in
H^*(\mathring{S}_1)$. This cohomology class in turn gives rise to a  generator
$\beta_{1}t^{\vec{\v}_1} \in \QH^{1}(M,\D)$. We fix a section $\beta_{1,c}$ inducing the trivialization of $\mathring{S}_1$ (which gives a cycle representing $\beta_1$). Let $\alpha_{1}t^{\vec{\v}_1}$
denote the generator in $\QH^0(M,\D)$ corresponding to the fundamental class on
$\mathring{S}_1$ and let $\alpha_{1,c}$ be a representative fundamental cycle constructed via the Eilenberg-Zilber product of fundamental cycles (and the isomorphism $\mathring{S}_1 \cong S^1 \times \mathring{D}_1$ induced by $\beta_{1,c}$). Let $\alpha_Jt^{\vec{\v}_J}$ denote the generator in
$\QH^0(M,\D)$ corresponding to the fundamental class in $\mathring{S}_J$ and choose a fundamental cycle representative $\alpha_{J,c}$.  We require:

\begin{itemize}  
    \item[(B2)] The class $\beta_{1}t^{\vec{\v}_1}$  is admissible with vanishing obstruction class.
    \item[(B3)]  There exists a $J_0 \in \mathcal{J}(V)$ so that  \begin{align} \label{eq: compactness} \widehat{\mathcal{M}}_{0,0}(M,B_1,J_0)=\mathcal{M}_{0,0}(M,B_1,J_0)  \end{align} for any class $B_1 \in H_2(M)_{\omega}$ which satisfies $B_1 \cdot \D =\vec{\v}_J$.
\item[(B4)] \begin{enumerate} [label=(\alph*)] \item Let $B$ be a spherical class with $B\cdot \mathbf{D}=\vec{\v}_{1}+\vec{\v}_{J}$. Given a decomposition of $B=B'+B''$ with $B' \cdot D_i \geq 0$ for all $i$ and such that $\widehat{\mathcal{M}}_{0,0}(M,B',J_0)$ and $\widehat{\mathcal{M}}_{0,0}(M,B'',J_0)$ are non-empty, we have that $B' \cdot \mathbf{D}=\vec{\v}_1$ or $B' \cdot \mathbf{D}=\vec{\v}_{J}.$ \item Set $\underline{\vec{\v}}= \vec{\v}_{1}+\vec{\v}_{J}$. Then for any $B$ with $\widehat{\mathcal{M}}_{0,0}(M,B,J_0) \neq \emptyset$ either \begin{align} \label{eq: constrainingspheresIII} B \cdot \D \in \lbrace \underline{\vec{\v}},\vec{\v}_{1},\vec{\v}_{J}\rbrace \end{align} or there exists $i \in \lbrace 1,\cdots, k \rbrace$ with $B \cdot D_i > \underline{v}_i.$  \end{enumerate}
\end{itemize}

Because $\vec{\v}_1$ is admissible, Lemma \ref{lem: randomcompacti} shows that for generic $J$, the moduli space $\mathcal{M}_{0,2}(M,\mathbf{D},\vec{\v}_1)^o$ is smooth and maps properly to $X$. Note that by Gromov compactness, for an almost complex structure $J$ sufficiently close to $J_0$, conditions (B3) and (B4) continue to hold with $J_0$ replaced by $J$.  In what follows, we will assume that all complex structures are taken sufficiently close to $J_0$ so that these conditions still hold. In particular, we have the following lemma: 

\begin{lem} \label{lem:smoothmapproperly}
For generic $J$ sufficiently close to $J_0$, the moduli spaces $\mathcal{M}_{0,2}(M,\mathbf{D},\vec{\v}_J)^o$ are smooth and map properly to $X$ as well. \end{lem} 
\begin{proof}   Consider a sequence of curves with $ev_\infty \subset K$ for some compact
    set $K \subset X$. As stated above, for $J$ sufficiently close to $J_0$, condition (B3) continues to hold and hence no bubbling can occur in this sequence. The rest of the argument proceeds exactly as in the proof of Lemma \ref{lem: randomcompacti}. \end{proof} 

In the proof of Lemma \ref{lem: multiply} (which concerns product structures), we will also need to consider 3-pointed versions of these moduli spaces:

\begin{defn} 
    Let $B \in H_2(M)_{\omega}$ be a spherical class such that $B\cdot
    \mathbf{D}=\vec{\v}_{1}+\vec{\v}_{J}$ and let
    $\mathcal{M}_{0,3}(M,\mathbf{D},\vec{\v}_{1},\vec{\v}_{J},B)$ denote the
    space of $D_{1}$ and $D_{J}$-regular maps (Definition \ref{def:regularmap}) $ u: (C,z_0,z_1,z_{\infty}) \to (M,\D)$
    such that 
    \begin{align*}
        u_{\ast}([\mathbb{C}P^1])&= B\\  
        u^{-1}(D_1)&=  z_0  \\
        u^{-1}(D_j)&=  z_1 \textrm{ {\rm for} } j \in J.
    \end{align*}
We let  \begin{align} \mathcal{M}_{0,3}(M,\mathbf{D},\vec{\v}_{1},\vec{\v}_{J}):=\bigsqcup_B\mathcal{M}_{0,3}(M,\mathbf{D},\vec{\v}_{1},\vec{\v}_{J},B)  \end{align} 
\end{defn}

As the classes are primitive, all such curves $u$ are automatically somewhere
injective and thus we may achieve transversality for such maps. Consider
$ev_{\infty}^{-1}(X)=\mathcal{M}_{0,3}(M,\mathbf{D},\vec{\v}_{1},\vec{\v}_{J})^{o}$.
We have a partial compactification of this moduli space
$\overline{\mathcal{M}}_{0,3}(M,\mathbf{D},\vec{\v}_{1},\vec{\v}_{J})^{o}$
given by incorporating nodal curves with the following properties:
\begin{itemize} 
\item $ev_{\infty} \in X.$
\item there are exactly two non-constant components $u_0$ and $u_1$ in classes $B_0$ and
$B_1$ with $B_0 \cdot \D=\vec{\v}_1$ and $B_1\cdot \D=\vec{\v}_J$. Furthermore, each of these components is $\D$-regular and $z_0$ lies in $u_0$ while $z_1$ lies in $u_1$.
\item There are either no constant components, or one constant component containing the marked point $z_\infty.$
\end{itemize} 

\begin{lem} \label{lem:evproper} For $J$ sufficiently near $J_0$, the evaluation map
$$ev_{\infty}:
\overline{\mathcal{M}}_{0,3}(M,\mathbf{D},\vec{\v}_{1},\vec{\v}_{J})^{o} \to X$$
is proper. \end{lem} 
\begin{proof}  Consider a sequence of curves in $\mathcal{M}_{0,3}(M,\mathbf{D},\vec{\v}_{1},\vec{\v}_{J})^{o}$ such that $ev_{\infty}$ lies in some fixed compact set $K.$ Then by (B4) (a), there are two possible cases for bubbling: \vskip 5 pt \emph{Case I}: There is only one non-constant component $u$ with some constant bubbles attached. Then two of the marked points would have to lie on the same constant component. But this is impossible because $z_0$, $z_1$, $z_\infty$ all map to pairwise disjoint subsets of $M$(recall that by assumption $D_1 \cap D_J=\emptyset $).
\vskip 5 pt \emph{Case II}: There are two non-constant components $u_0$ and $u_1$ with homology classes $B_0 \cdot \D=\vec{\v}_1$ and $B_1\cdot \D=\vec{\v}_J$. Because $ev_\infty$ lies in $K$ and hence in $X$, we know that one of the components must be $\D$ regular and from intersection considerations, we see that $u_0$ and $u_1$  can only meet in $X$. It follows that both components are $\D$-regular. As in \emph{Case I}, constant components can only contain (exactly) one of the marked points and hence there is at most one constant component which is glued to $u_0$ at one nodal point and glued to $u_1$ at another nodal point (the constant component has a total of three special points). This potential constant component must therefore map to a point where $u_0$ and $u_1$ intersect, which must lie in $X$. It follows that the third special point (the marked point) on the constant component must be $z_\infty.$ \end{proof}

For generic $J$ near $J_0$, we therefore have that $\overline{\mathcal{M}}_{0,3}(M,\mathbf{D},\vec{\v}_{1},\vec{\v}_{J})^{o}$ defines a relative pseudocycle of codimension zero \begin{equation}
    \underline{GW(\vec{\v}_1,\vec{\v}_J)}
\end{equation}
when intersected with $\bar{X}$. 

\begin{thm} 
    \label{thm: generalquasi} 
    Let $(M,\D)$ be a pair as above which satisfies (B1)-(B4). Choose a Hamiltonian $H_M^{\lambda}$ with $\lambda>\kappa_1+\hbar$ (as in Lemma \ref{lem:compactness}) and choose a bounding cochain $Z_b$ for
    $\beta_1t^{\vec{\v}_1}$. Suppose further that the cohomology class
    $GW(\vec{\v}_1,\vec{\v}_J) \in H^0(X)^{\times}$, and let $\beta_{1,c}$, $\alpha_{1,c}$ be the chain-level representatives of $\beta_1$ and $\alpha_1$ chosen above. Then the pair
    \begin{align} (\mathfrak{c}_{\lambda,\infty} \circ \PSSlog^{\lambda}
        (\beta_{1,c}t^{\vec{\v}_1},Z_b), \PSSlog
        (\alpha_{1,c}t^{\vec{\v}_1}))\end{align} defines a quasi-dilation. 
\end{thm}
\begin{proof}
This follows immediately from the fact that on cohomology $\Delta(\PSSlog^{\lambda}
(\beta_{1,c}t^{\vec{\v}_1},Z_b))=\PSSlog^{\lambda}(\alpha_{1,c}t^{\vec{\v}_1})$ (by Corollary \ref{corpssBV}, noting that $\beta_{1,c}$ and $\alpha_{1,c}$ as chosen tautologically satisfy $[\Delta(\beta_{1,c}t^{\vec{\v}_1})] = [\alpha_{1,c}t^{\vec{\v}_1}]$; see \eqref{logBVoperator} for the definition of $\Delta$ on log cohomology),
together with the identity proven in Lemma \ref{lem: multiply} below (which by the hypothesis $GW(\vec{\v}_1,\vec{\v}_J) \in H^0(X)^{\times}$ implies that $\PSSlog (\alpha_{1,c}t^{\vec{\v}_1})$ is a unit).
\end{proof}
\begin{lem} 
    \label{lem: multiply} Under the assumptions of Theorem \ref{thm: generalquasi}, let $\alpha_{1,c}$ and $\alpha_{J,c}$ denote the chain-level representatives of $\alpha_1$ and $\alpha_J$ chosen above. Then there is a cohomological equality
    \[\PSSlog(\alpha_{1,c}t^{\vec{\v}_1}) \cdot \PSSlog(\alpha_{J,c}t^{\vec{\v}_J})=\PSS(GW(\vec{\v}_1,\vec{\v}_J)).\]
\end{lem} 
\begin{proof} 
We define an auxiliary moduli space which will allow us to prove that the PSS
map defined above preserves the ring structure. We work over a parameter space
$q \in [b,\infty)$ with $b\gg 0$. We consider the surfaces $S_{q,2}$ whose underlying domain is
$\mathbb{C}P^1 \setminus \lbrace 0 \rbrace$ with a negative cylindrical end
as before, but this time with two distinguished marked points at $z_1=q$
and $z_2=-q$. Let $\beta$ be a subclosed 1-form on $S_{q,2}$ satisfying

\begin{itemize} 
    \item The form $\beta$ restricts to $2dt$ on the cylindrical end;
    \item $\beta=0$ in neighborhoods of $z_1=-q$ and $z_2=q$.  
\end{itemize} \vskip 10 pt 

Define \[
\mathcal{M}_q( M, \vec{\mathbf{v}}_1, \vec{\mathbf{v}}_J  ; x_0)\] 
to be the moduli space of pairs $(q,u)$, with 
$q \in  [b,\infty)$ and 
\[ u: S_{q,2} \to M \]
as usual satisfying
\begin{equation}
(du - X_{H_m^\lambda} \otimes \beta)^{0,1} = 0
\end{equation}
with asymptotic condition
\begin{equation}
\lim_{s \ra -\infty} u(\epsilon(s,t)) = x_0
\end{equation}
and tangency/intersection conditions
\begin{align}
u(x) &\notin \mathbf{D} \textrm{ for $x \neq z_i$}; \\
 u(z_1) &\textrm{ intersects $D_i$ with multiplicity $\vec{\v}_1$}.\\
 u(z_2) &\textrm{ intersects $D_i$ with multiplicity $\vec{\v}_J$ }.
\end{align}

When $|x_0|=0$, $\mathcal{M}_q( M, \vec{\mathbf{v}}_1, \vec{\mathbf{v}}_J  ; x_0)$ has dimension 1 for generic choices. These moduli spaces then admit a Gromov compactification, which has two distinct types of boundary strata:  

\emph{(Type I:)} This case corresponds to the usual cylindrical breaking and sphere bubbling at some finite $q \in [b,\infty).$ Cylindrical breaking gives rise to strata: \begin{align} \label{eq:anotherhomotopyterm} \bigsqcup_{|y|=-1} \mathcal{M}_q( M, \vec{\mathbf{v}}_1, \vec{\mathbf{v}}_J  ; y) \times \mathcal{M}(x_0, y) \end{align}

On the other hand, sphere bubbling does not arise at finite $q$. To see why this is the case, suppose that we are given a configuration consisting of a collection sphere bubbles attached to a solution $u_{\operatorname{thimble}}: S_{q,2} \to M$. Note that by (B4)(b) the total homology class of all sphere bubbles must lie in a class $B$ which satisfies \eqref{eq: constrainingspheresIII} and that there can be at most two non-constant bubble components. Suppose there is a sphere bubble $u_{\operatorname{sphere}}$ which attaches at a point in $S_{q,2}$ which is not $z_1$ or $z_2$. We have: \begin{align} \label{eq:randomthing55} u_{\operatorname{thimble}} \cdot \D + u_{\operatorname{sphere}} \cdot \D \leq \vec{\v}_1+\vec{\v}_J \end{align}

Then because there can be at most two non-constant bubble components, there must be at least one marked point ($z_1$ or $z_2$) where no sphere bubble attaches. Assume without loss of generality that this point is $z_1$, in which case we have that  $u_{\operatorname{thimble}}$ intersects $\D$ with multiplicity $\vec{\v}_1$ at $z_1$. By \eqref{eq:randomthing55} (and \eqref{eq: constrainingspheresIII}), it follows that $u_{\operatorname{sphere}}$ is the only sphere bubble, which means there is no sphere bubbling at $z_2$ either. Hence we have \begin{align} u_{\operatorname{thimble}} \cdot \D \geq \vec{\v}_1+\vec{\v}_J \end{align} We then have that $u_{\operatorname{thimble}} \cdot \D + u_{\operatorname{sphere}} \cdot \D > \vec{\v}_1+\vec{\v}_J$, which is a contradiction. It follows that there is no sphere bubbling away from $z_1$ and $z_2$. Moreover, the only sphere bubbles which occur at finite $q$ are $\D$-regular sphere bubbles in classes which intersect $\D$ with multiplicity $\vec{\v}_1$ and $\vec{\v}_J$. However, these bubble configurations are of codimension at least two and hence do not arise in our one dimensional moduli space. \vskip 5 pt

\emph{(Type II:)} The compactification also incorporates limits as the parameter $q \to \infty$. This stratum is given by the fiber product: 
\begin{equation}\label{eq:fiberproductGW}
\mathcal{M}_{0,3}(M,\mathbf{D},\vec{\v}_{1},\vec{\v}_{J})^o \times
_{ev_{\infty}} \mc{M}(\vec{\v}_{\emptyset}, x_0) 
\end{equation}

 Similar arguments to those given in the \emph{(Type I)} case explain why there are no configurations with spherical components mapped entirely into $\D.$ The operation associated to counting rigid configurations of
\eqref{eq:fiberproductGW} for varying $x_0$
is by definition the composition 
\begin{align}
    \label{eq: fiberprod} \operatorname{PSS}(GW(\vec{\v}_1,\vec{\v}_J))
\end{align} 
Next consider the moduli space $\mathcal{M}_b( M, \vec{\mathbf{v}}_1,
\vec{\mathbf{v}}_J ; x_0)$ which is the restriction of the above moduli space
to domains $S_{b, 2}$. We have that the operation defined by $\mathcal{M}_b(M,
\vec{\mathbf{v}}_1, \vec{\mathbf{v}}_J ; x_0)$ is homotopic to \eqref{eq:
fiberprod} (as usual, counting configurations of the form \eqref{eq:anotherhomotopyterm} shows that the difference between these two operations is a Floer coboundary). Let $\Sigma$ denote the pair of pants equipped with its three
standard cylindrical ends as in \eqref{eq:FloerS}. We consider the nodal domain
$S_{n}$ of the form 
\[S \cup_\epsilon \Sigma \cup_\epsilon S \]
where the negative cylindrical ends of $S$ are glued to the positive cylindrical ends of $\Sigma$, a pair of pants.  Maps from $S_{n} \to M$ are given by the fiber product of moduli spaces given by: 
\[ 
\coprod_{x_1,x_2} \mathcal{M}( \vec{\mathbf{v}}_1, \alpha_1, x_1) \times \mathcal{M}(\Sigma, x_0, x_1, x_2 ) \times \mathcal{M}( \vec{\mathbf{v}}_J, \alpha_J, x_2) 
\]

We construct a homotopy between this moduli space and $\mathcal{M}_b( M,
\vec{\mathbf{v}}_1, \vec{\mathbf{v}}_J; x_0)$ in two steps. First, we perform a
finite connect sum along the cylindrical ends. Then, we can further homotopy
the complex structure and Floer datum to the domain $S_{b, 2}$ above. We thus
reach the desired conclusion. \end{proof} 

We now turn to constructing a wide class of examples. Let $X^o$ be a smooth affine
variety with trivial canonical bundle $\Lambda^{\operatorname{top}} T^*X^{o} \cong \mathcal{O}_{X^o}$ and $Z^o \hookrightarrow X^o$ be a principal, smooth
hypersurface. 

\begin{defn} 
\label{defn: goodcomp} 
We say that a pair $(\bar{M},\bar{\D})$ with $\bar{\D}=\sum_i \bar{D}_i$  $i \in \lbrace 3,\cdots, r+2 \rbrace$ (the reason for this choice of numbering will be clear momentarily) and $X^o=\bar{M}\setminus \bar{\D}$ is a {\em good compactification} of $Z^o \hookrightarrow X^o$ if 
\begin{enumerate} 
    \item $\bar{M}$ is equipped with an ample line bundle $\mathcal{O}(\sum_i \kappa_i \bar{D}_i)$ such that each $\kappa_i > 0.$
   \item The closure of $Z^o$ in $\bar{M}$, $Z$, is a smooth hypersurface which intersects each stratum $\bar{\D}_I=\cap_{i \in I}\bar{D}_i$ transversely. 
    \item $\bar{\D}$ supports a canonical divisor i.e. $\Lambda^{\operatorname{top}} \bar{M} \cong \mathcal{O}_{\bar{M}}(\sum_i -a_i D_i)$ for some $a_i \in \mathbb{Z}$. 
 \end{enumerate} 
\end{defn}

\begin{lem} 
    Let $Z^o \hookrightarrow X^o$ be a smooth hypersurface in a smooth affine variety with trivial canonical bundle. Then
    there is always a good compactification $(\bar{M}, \bar{\D})$ of $Z^o \hookrightarrow X^o$. 
\end{lem} 

\begin{proof}  

    Because $X^o$ is a smooth affine variety, we can use Lemma \ref{lem:smoothcomp} to find a simple-normal crossings compactification $(\bar{M}_o,\bar{\D}_o)$ of $X^o$ such that $\bar{\D}_o$ supports an effective ample divisor. Let $Z'$ be the compactification of $Z^o$ in $\bar{M}_o$. Next, we can do an embedded resolution of singularities (Theorem \ref{thm:Hironaka}) to the divisor $\bar{\D}_o \cup Z'$ to obtain a pair $(\bar{M}, \bar{\D})$ so that the proper transform of $Z'$, $Z$, is smooth and such that the
   divisor $\bar{\D} \cup Z$ is simple-normal crossings. The smooth centers of the sequence of blow-ups involved in the resolution will all lie over $\bar{\D}_o$. As in the proof of Lemma \ref{lem:smoothcomp}, we can therefore use Lemma \ref{lem:randomhart} to show that $\bar{\D}$ supports an effective ample divisor $F$. Again, to get one which has positive coefficients on each component $\bar{D}_i$, one can take $\sum_i\bar{D}_i + mF$ which is ample for $m$ sufficiently large by \cite[Exercise 7.5(b) of Chapter 2]{Hartshorne}.

Finally, to address (3), note that by applying Proposition 6.5 of \cite[Chapter 2]{Hartshorne} inductively, one obtains an exact sequence $$ \bigoplus_{i=3,\cdots, r+2} \mathbb{Z} \to \operatorname{Pic}(\bar{M}) \to \operatorname{Pic}(X^o) \to 0 $$  where the first map denotes the inclusion of the divisors $\mathcal{O}(\bar{D}_i)$ into the Picard group of $\bar{M}$ and the second is restricting the line bundle to $X^0$. As the canonical bundle restricted to $X^o$ is trivial, it follows that $\Lambda^{\operatorname{top}} \bar{M} \cong \mathcal{O}_{\bar{M}}(\sum_i -a_i \bar{D}_i)$ for $a_i \in \mathbb{Z}.$
\end{proof}

As $Z^o \hookrightarrow X^o$ is principal, it is the zero set of a regular function $f: X^o \to \mathbb{C}.$ We define the \emph{affine conic bundle} 
to be the smooth affine variety $X$ defined by the equation: \begin{align} 
    \label{eq: conicbundle2} X=\lbrace (u,v, \bar{x}) \in \mathbb{C}^2 \times X^o |\; uv=f(\bar{x}) \rbrace 
\end{align}  This variety depends only on the hypersurface $Z^o$. We we will use a good compactification $(\bar{M},\bar{D})$ to produce a compactification $(\hat{M}, \hat{D})$ of the affine conic bundle \eqref{eq: conicbundle2}. Let
$\mathbb{C}P^1$ be equipped with its standard toric polarization $H_i$,
$i=\lbrace 1,2 \rbrace.$ Consider the blow up of $M:=\bar{M} \times
\mathbb{C}P^1$ at $Z \times H_1$ which we denote by \[\pi: \hat{M} \to M.\] On
$M$, consider divisors $D_i=\bar{M} \times H_i$ for $i=\lbrace 1,2 \rbrace$ and $D_i=\bar{D}_i \times
\mathbb{C}P^1$ for $i= \lbrace 3, \cdots, r+2 \rbrace$ and let $\D$ denote the normal crossings collection of divisors
given by \[ \D= D_1, D_{2}, D_{3}, \cdots, D_{r+2} \] Let $\hat{D}_i$ denote
the proper transform of all of the divisors $D_i$. Then the divisors $\hat{\D}$
are normal crossings and $\hat{M}$ has a canonical divisor with $a_1=a_2=1$.
We choose a sufficiently small rational number $\epsilon_Z= 1/m$ for $m\gg 0$ and equip the blow
up with the $\mathbb{Q}$-divisor given by \begin{align} \label{eq:conicpolar} \epsilon_Z \hat{D}_{1} +(1-\epsilon_Z) \hat{D}_{2} + \sum_{i=3}^{r+2}a_i \hat{D}_i  \end{align}
Note that the exceptional divisor $E$ of the blow up is linearly equivalent to $\hat{D}_2-\hat{D}_1$ and hence \eqref{eq:conicpolar} is linearly equivalent to $-\epsilon_Z E + \hat{D}_{2} + \sum_{i=3}^{r+2}a_i \hat{D}_i.$ Therefore \eqref{eq:conicpolar} defines a rational ample line bundle for sufficiently small $\epsilon_Z$ again using Lemma \ref{lem:randomhart}. We say that $(\hat{M}, \hat{D})$ is the {\em conic bundle} associated to the good compactification $(\bar{M}, \bar{D})$. The affine conic bundle from \eqref{eq: conicbundle2} is then the complement $X=\hat{M}\setminus \hat{D}.$ We may perturb the exceptional divisor $E$ so that it is orthogonal to all of the $\D$ and take the complex structure $J_0$ to be any complex structure in $\mathcal{J}(V)$ which preserves $E$. 

\begin{lem}\label{lem:goodcompactificationGW} 
    Let $(\hat{M}, \hat{D})$ be a conic bundle associated to a good compactification $(\bar{M},\bar{D})$. Set $J=\lbrace 2 \rbrace$. Then the pair $(\hat{M}, \hat{D})$ (together with the above choice of complex structure $J_0$) satisfy (B1)-(B4) above.  Moreover $GW(\vec{\v}_1,\vec{\v}_J)=1$. 
\end{lem}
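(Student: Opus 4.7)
The plan is to identify the key spherical classes $B_0, B_1, B$ explicitly using the blow-up geometry, verify the intersection-theoretic conditions (B1)--(B4) via local calculations and small-area arguments, and finally compute the three-point invariant $GW(\vec{\v}_1, \vec{\v}_J)$ directly.

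Let $F = \{pt\} \times \mathbb{P}^1$ denote a fiber of the projection $\bar M \times \mathbb{P}^1 \to \bar M$ over a point of $\bar M \setminus Z$; let $\tilde F$ denote the proper transform in $\hat M$ of a fiber over a point of $Z$; and let $f_E$ denote a generic fiber of the exceptional $\mathbb{P}^1$-bundle $E \to Z \times \{0\}$, so that $[F] = [\tilde F] + [f_E]$ in $H_2(\hat M)$. I take $B_0 = [f_E]$, $B_1 = [\tilde F]$, and $B = [F] = B_0 + B_1$. Using $H_2(\hat M, \mathbb{Z}) = H_2(\bar M) \oplus \mathbb{Z}\langle F\rangle \oplus \mathbb{Z}\langle f_E\rangle$ and a direct local computation on the blow-up chart, I verify $f_E \cdot \hat D_1 = 1$ and $f_E \cdot \hat D_j = 0$ for $j \geq 2$; $F \cdot \hat D_1 = F \cdot \hat D_2 = 1$ and $F \cdot \hat D_i = 0$ for $i \geq 3$; and for $\gamma \in H_2(\bar M)$, $\gamma \cdot \hat D_i = \gamma \cdot \bar D_i$ for $i \geq 3$ while $\gamma \cdot \hat D_1 = \gamma \cdot \hat D_2 = 0$. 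For (B1), the fact that $Z \times \{0\}$ has codimension $1$ in $D_1$ implies the blow-up restricts to an isomorphism $\hat D_1 \xrightarrow{\sim} \bar M$, and combining $\pi^* D_1 = \hat D_1 + E$ with triviality of $ND_1$ gives $N\hat D_1 \cong \mathcal{O}_{\bar M}(-E|_{\hat D_1}) \cong \mathcal{O}_{\bar M}(-Z)$; restricting to $\hat D_1 \setminus \bigcup_{i \neq 1}\hat D_i \cong X^o$ yields $\mathcal{O}_{X^o}(-Z^o)$, which is trivial because $Z^o$ is principal.

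For (B2) and (B4), any candidate spherical class $B' = \gamma + nF + mf_E$ with the prescribed intersection pattern is forced to have $(n,m) = (0,1)$ for $B' \cdot \hat{\D} = \vec{\v}_1$ (resp.\ $(1,-1)$ for $\vec{\v}_J$) and $\gamma \cdot \bar D_i = 0$ for all $i \geq 3$, so $\omega_{\bar M}(\gamma) = 0$. For any $\gamma$ carried by (a component of) an effective $J_0$-holomorphic representative, K\"ahler positivity of area combined with a sufficiently small choice of $\epsilon_Z$ forces $\gamma = 0$, giving the indecomposability required for admissibility (A1). For (B4), the intersection balance $B' \cdot \hat D_j + B'' \cdot \hat D_j = (B \cdot \hat D_j)$ together with $B', B'' \in H_2(\hat M)_\omega$ leaves only the two symmetric cases $\{B', B''\} = \{B_0, B_1\}$, yielding the unique nontrivial decomposition $B = B_0 + B_1$.

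For (B3) and the Gromov--Witten computation, I choose $J_0 \in \mathcal{J}(V)$ compatible with the blow-up projection $\pi: \hat M \to M$. Then $J_0$-holomorphic spheres in class $B_1$ are proper transforms of $\pi_{\bar M}$-fibers through points of $Z$, and uniqueness from (B2) excludes any sphere bubbling, yielding $\widehat{\mathcal{M}}_{0,0}(\hat M, B_1, J_0) = \mathcal{M}_{0,0}(\hat M, B_1, J_0)$. Admissibility of $\beta_1 t^{\vec{\v}_1}$ follows from (B2). For the obstruction class: the moduli space $\mathcal{M}_{0,2}(\hat M, \hat{\D}, \vec{\v}_1)^o$ parameterizes fibers of $E \to Z^o$ together with a second marked point on each fiber, the enhanced evaluation is $S^1$-equivariant (the $S^1$ acting by rotation of the fiber parameterization fixing $z_0$ and $z_1$), and $ev_\infty$ factors through the $S^1$-quotient as a generically-injective map onto $E \cap X$. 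The pushforward then gives $GW_{\vec{\v}_1}(\beta_1) = \pm [E \cap X] \in H^{BM}_{2n-2}(X) \cong H^2(X)$; but $E \cap X$ is the divisor of the regular function $\pi^* z|_X: X \to \mathbb{C}$ (where $z$ is the coordinate on $\mathbb{P}^1$ vanishing along $D_1$), hence principal, so $[E \cap X] = 0$ and the obstruction vanishes. Finally, $GW(\vec{\v}_1, \vec{\v}_J)$ counts $J_0$-holomorphic maps in class $B = [F]$ through a generic $pt' \in X$ satisfying the tangency constraints at $\hat D_1$ and $\hat D_2$; by $J_0$-holomorphicity of $\pi$ these are fibers of $\pi_{\bar M}$, and for generic $pt'$ (projecting outside $Z$) the unique such fiber is smooth, satisfies the tangencies automatically, and contributes $+1$.

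The main obstacle is the uniqueness argument in (B2), for which one must rule out nontrivial spherical classes of zero $\omega_{\bar M}$-area; this is handled by K\"ahler positivity applied to effective classes together with sufficiently small $\epsilon_Z$, and the indecomposability really needed is that of effective holomorphic decompositions. A secondary subtlety is the identification of the obstruction class with $[E \cap X]$ via the $S^1$-equivariant pushforward, and recognizing its vanishing as a principal-divisor calculation in $X$.
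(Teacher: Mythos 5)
Your overall geometry matches the paper's: you take $B_0$ the exceptional-fiber class, $B_1$ the proper transform of the $\mathbb{P}^1$-fiber class, $B=B_0+B_1$; you prove (B1) by identifying $N\hat D_1$ with $\mathcal{O}(-Z)$ and using that $Z^o$ is principal (the paper phrases this as $\hat D_1+E=\pi^*D_1$ being principal away from $\hat D_2$); you identify the obstruction class of $\beta_1 t^{\vec{\v}_1}$ with a multiple of $[E\cap X]$, which vanishes because $E|_X$ is principal; and you compute $GW(\vec{\v}_1,\vec{\v}_J)=1$ from the unique conic fiber through a generic point, bubbling into $B_0+B_1$ over $Z$ — all as in the paper (which additionally records regularity of the exceptional spheres via McDuff--Salamon, a point you omit but which is minor).

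There are, however, two genuine gaps. First, your verification of the compactness condition \eqref{eq: compactness} in (B3) — ``uniqueness from (B2) excludes any sphere bubbling'' — is not an argument. Equation \eqref{eq: compactness} asserts that $B_1$ admits no decomposition $B_1=\sum_i A_i$ into nonzero classes each represented by a $J_0$-holomorphic sphere; (B2), which is uniqueness of the spherical class with intersection pattern $\vec{\v}_J$, says nothing about such decompositions, since summands may lie inside the divisors $\hat D_i$ (or $E$) and have intersection patterns with negative entries, so they are not constrained by the pattern of $B_1$. The paper's actual mechanism is an area estimate using the weights $\epsilon_Z,\,1-\epsilon_Z,\,a_i$ of the chosen polarization together with the linear equivalence $\hat D_1+E\sim \hat D_2$: any component meeting $\hat D_2$ (necessarily with multiplicity one) meets $\hat D_1$ at most once and already has area at least $1-\epsilon_Z=\omega(B_1)$, leaving no room for further nonconstant components. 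Nothing of this sort appears in your (B3) step, and the choice of $J_0$ ``compatible with the blow-down'' does not substitute for it, since $\bar M$ may contain many rational curves. Second, your (B4) argument (``the intersection balance \dots leaves only the two symmetric cases'') implicitly assumes $B''\cdot \hat D_i\ge 0$ for all $i$, which is not among the hypotheses of (B4) — only $B'$ is required to meet the divisors non-negatively; the paper instead argues that if $B'$ meets some $\hat D_i$ with $i\ge 3$ positively then $\omega(B')\ge 1=\omega(B)$, contradicting $\omega(B'')>0$, and then sorts out the remaining patterns. Relatedly, your reduction of the $\gamma=0$ step in (B2)/(B4) to classes ``carried by effective $J_0$-holomorphic representatives'' proves only an effective version of these conditions; that is indeed what is used downstream (and the paper itself dismisses (B2) as immediate), but as literally stated these are conditions on all classes in $H_2(\hat M)_\omega$, and your argument does not address nonzero spherical classes $\gamma$ with $\gamma\cdot\bar D_i=0$ for all $i$.
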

An immediate Corollary of the above Lemma and Theorem \ref{thm: generalquasi} is:
\begin{cor}
    If $X$ is any conic bundle over an affine variety $X^o$ with trivial canonical bundle as in \eqref{eq: conicbundle2}, then $X$ admits an integral quasi-dilation. \qed
\end{cor}

\begin{proof}[Proof of Lemma \ref{lem:goodcompactificationGW}] 
    To prove (B1),  note that
    in the complement of $\hat{D}_2$, the divisor $E+\hat{D}_1$ is a principal divisor. In particular, restricting to $\hat{D}_1$, we have that $N\hat{D}_1 \simeq \mathcal{O}(-E)_{|\hat{D}_{1}}.$  $E \cap \mathring{\hat{D}}_1$ can be identified with a copy of $Z^o$ inside of $X^o$. Because $Z^o$ is principal we have that $\mathcal{O}(-E)_{|\mathring{\hat{D}}_{1}}$ is trivial and (B1) follows. To prove (B2), we check that $\beta_1t^{\vec{\v}_1}$ is admissible with vanishing obstruction class. First note that the class $B_0$ of a $J_0$-holomorphic sphere with $u \cdot \D= \vec{\v}_1$ has minimal symplectic area ($\epsilon_Z=1/m$) and thus cannot bubble. Next, observe that via the argument of Lemma \ref{lem:gweq6} the obstruction class can be calculated using a Gromov-Witten invariant. To calculate this, we may use the standard integrable complex structure where the the moduli
    space of spheres  is empty except in the class $B_0=[E_{\mathbb{C}P^1}]$ of the fibers of the projection $E \to \bar{M}$(exceptional spheres). Moreover, the spheres of class $[E_{\mathbb{C}P^1}]$ are precisely the collection of exceptional
    spheres. These spheres are checked to be regular by applying Lemma 3.3.1 of
    \cite{McDuff:2004aa}. As the restriction of the exceptional divisor $E$ to
    $X$ is principal, it follows
    that $[E]=0 \in H^2(X)$. 

We take $J_0 \in \mathcal{J}(V)$ to be any almost complex structure preserving $E$. To prove (B3) and in particular Equation \eqref{eq: compactness}, let $B_1$ be such a homology class. Note that any sphere meeting $\hat{D}_2$
with multiplicity 1 must intersect $E$ non-negatively. As a consequence, it can
meet $\hat{D}_1$ at most once. It must therefore have non-negative weighted
intersection with the remaining divisors and thus the total area of this sphere
is either $1-\epsilon_Z$ or $1$.  The latter case is impossible as spheres in
class $B_1$ have area $1-\epsilon_Z$. 

To prove (B4)(a), assume $B' \cdot \D \neq \v_1$ or $\v_J$. Observe that it cannot
intersect some $\hat{D}_i$ for $i \geq 3$ with positive multiplicity. Otherwise, its symplectic
area (with respect to the rational polarization) would be at least 1, which
is the same as the area of $B$. It follows that $B' \cdot \D = d\v_1$ with $d>1$. However, then the spheres in class $B''$ must have a component which meets $\hat{D}_2$ with multiplicity one, which by the preceeding paragraph must have area at least $1-\epsilon_Z$. Hence, there can be no such bubbling. Condition (B4)(b) is immediate from simple area considerations--- if the class $B$ had positive area and strictly negative intersection with one of the divisors $\hat{D}_i, i \geq 3$, it would need to have intersection strictly bigger than one with either $\hat{D}_1$ or $\hat{D}_2$ to compensate.  The final
claim about the Gromov-Witten invariant can again be checked using the
integrable complex structure. All spheres in the class $B$ lie in the fiber of
the conic fibration $\pi_{\bar{M}}:  \hat{M} \to \bar{M}$ and there is a unique
such sphere (bubbled into two components in classes $B_0$ and $B_1$ where the
conic fibration is singular) passing through every point.  
\end{proof} 

\begin{rem} 
 One may generalize this example somewhat by considering the blow up of $M:=\bar{M} \times \mathbb{C}P^s$ at $Z \times \cap_{i=2}^{s+1} H_i$. 
\end{rem}

\subsection{Applications to embedding problems} 

We finally turn to the proofs of our last two applications. We first need a
lemma (Lemma \ref{lem: genquasi}) which is a variant of  Theorem 1.7 of \cite{Seidel:2014aa} for
quasi-dilations. We will now summarize some of the ingredients from \cite{Seidel:2010uq}, \cite{Seidel:2014aa}
which will be used in the proof of Lemma \ref{lem: genquasi} and the remaining facts required from these papers are reviewed in Appendix \ref{section:AppendixB}. Let $(M,\D)$ be a pair and $\Psi \in HF^1(X \subset M ; H_{m}^{2\lambda})$ (the motivation for using Hamiltonians of slope $2\lambda$ will be made clear momentarily).  Then Section 4(e) of \cite{Seidel:2014aa} introduced certain $\Psi$-twisted Floer cohomology groups $\tilde{H}^*(X)$ which fit into a long exact sequence \begin{align} \label{eq:leseq} \cdots \tilde{H}^{*-1}(X) \to HF^{*-1}(X \subset M ; H_{m}^{-\lambda}) \xrightarrow{-\Psi \cdot} HF^*(X \subset M ; H_{m}^{\lambda}) \to \tilde{H}^*(X) \to  \cdots  \end{align}

where $\Psi \cdot$ denotes multiplication with $\Psi$.

 Suppose $Q$ is an exact (closed, oriented) Lagrangian submanifold in $X$. We assume that $Q$ is equipped with the additional choices needed to make it an object of the $\mathbb{Z}$-graded Fukaya category of $X$ and let $CF^*(Q,Q)$ denote the Lagrangian Floer cochain complex of $Q$. For any $\lambda>0$, there is a closed-open map (see Section 2 of \cite{Seidel:2010uq} for a review) \begin{align} \label{eq:zerooc} \phi^0_Q: CF^*(X \subset M ; H_{m}^{2\lambda})  \to CF^*(Q,Q) \end{align}

\begin{defn} \label{defn:sseqs}
Let $\underline{\Psi}$ be a cocycle in $CF^1(X \subset M ; H_{m}^{2\lambda})$. A $\underline{\Psi}$-equivariant structure on $Q$ is a choice of $c_Q \in CF^0(Q,Q)$ such that $\partial(c_Q)= \phi^0_Q(\underline{\Psi})$. Two $c_Q$ are equivalent if their difference is a coboundary.  
\end{defn}   

Given a cohomology class $\Psi \in HF^1(X \subset M ; H_{m}^{2\lambda}),$ a $\Psi$-equivariant structure on $Q$ is the choice of a cochain level lift $\underline{\Psi}$ of $\Psi$ together with a $\underline{\Psi}$-equivariant structure on $Q$. Note that if $\underline{\Psi}= \underline{\Psi}' +\partial_{CF}(a)$  the space of $\underline{\Psi}$ and $\underline{\Psi'}$ equivariant structures up to equivalence are canonically identified. 

In what follows, we will sometimes use $\tilde{Q}$ to denote a Lagrangian with an equivariant structure when we wish to supress the particular choice of $c_Q$. Given a pair of equivariant Lagrangians $\tilde{Q}_0, \tilde{Q}_1$, we can introduce certain canonical operators (see \eqref{eq:operation} for more details)  \begin{align} \Phi_{Q_0,Q_1}: HF^*(Q_0,Q_1) \to HF^*(Q_0,Q_1). \end{align} 
Let us assume that the coefficient ring $\K$ is a field and let $\bar{\K}$ denote the algebraic closure of $\K$. Then we can decompose $HF^*(Q_0,Q_1) \otimes_\K \bar{\K}$ into a direct sum (over $\sigma  \in \bar{\K}$) of its generalized $\sigma$-eigenspaces $HF^*(Q_0,Q_1)_\sigma$ for $\Phi_{Q_0,Q_1}$. We can define the Lefschetz super-trace of $\tilde{Q}_0, \tilde{Q}_1$ by the formula \begin{align} \label{eq: lefschetztrace} \tilde{Q}_0 \cdot \tilde{Q}_1= \sum_\sigma  \sigma \cdot \chi(HF^*(Q_0,Q_1)_\sigma) \in \K  \end{align} where $\sigma$ runs over elements of $\bar{\K}$ and $\chi(HF^*(Q_0,Q_1)_\sigma))$ is the Euler characteristic of the $\sigma$-eigenspace. 

 The following is the main Floer theoretic result of \cite{Seidel:2014aa}:

\begin{thm} \label{thm:SeidelFloer} The groups $\tilde{H}^*$ have a canonical non-degenerate pairing $I$. Furthermore, for any equivariant Lagrangian $\tilde{Q}$, there is a canonical element $[[\tilde{Q}]] \in \tilde{H}^*$ such that for any pair of equivariant Lagrangians $\tilde{Q}_0, \tilde{Q}_1$, we have that $$I([[\tilde{Q}_0]], [[\tilde{Q}_1]])= (-1)^{n(n+1)/2} \tilde{Q}_0 \cdot \tilde{Q}_1$$ \end{thm}

 This should be thought of as a kind of Cardy relation in the (infinitesimally) equivariant context. We are now in a position to prove:  

\begin{lem} 
    \label{lem: genquasi}  
     Let $(M, \D)$ be a pair with
    $n=dim_\mathbb{C} M$ odd. Suppose that there exists a slope $\lambda$ such
    that: \begin{enumerate}[(a)] \item  there exists an element $\Psi \in HF^{1}(X \subset M ;
    H_{m}^{2\lambda})$ such that  $\mathfrak{c}_{\lambda,\infty}(\Psi) \in
    SH^1(X,\mathbb{Z})$ defines the degree 1 piece of an integral quasi-dilation.
\item the natural map $H^*(X,\mathbb{Z}) \to HF^{*}(X \subset M ; H_{m}^\lambda)$ is an
    isomorphism.  \end{enumerate} Suppose further
    that $Q_1,\cdots, Q_r$ is a collection of embedded Lagrangian spheres which
    are pairwise disjoinable. Then  for any field $\mathbf{k}$, the classes $[Q_1],\cdots,[Q_r]$ span a
    subspace of $H_n(X,\mathbf{k})$ which has rank at least $r/2$.   
\end{lem} 

\begin{proof}
The proof follows the proof of
 Theorem 1.7 of \cite{Seidel:2014aa} almost verbatim. (The only modification we must make is that in our situation, there are
 non-trivial signs appearing in the computation (3.17) of \emph{loc. cit}.) More precisely, fix a field $\K$. We can transfer the element $\Psi$ to obtain a quasi-dilation over $\K$. For the remainder of the proof all (Floer) cohomology groups will be taken to have $\K$-coefficients.  Because the $Q_i$ are spheres and $H^1(Q_i,\K)=0$, all $Q_i$ can be given a $\Psi$-equivariant structure $\tilde{Q}_i$. We use these $\Psi$-equivariant structures to obtain endomorphisms $\Phi_{Q_i,Q_i}$ on $HF^*(Q_i,Q_i)$ according to the formula from \eqref{eq:operation}. By Equation \eqref{eq:actiondegn}, we have that this operation acts by $\pm 1$ on the degree $n$ piece (here we use assumption (a) and in particular that our quasi-dilation is defined over
 $\mathbb{Z}$). 
 By \eqref{eq:zzzzzsphere},  we therefore have that the Lefschetz traces from \eqref{eq: lefschetztrace} are given by (compare with Equation (3.17) of \cite{Seidel:2014aa}) 
\begin{align} 
    \tilde{Q}_i \cdot \tilde{Q}_j = \left\{
     \begin{aligned} & \pm 1 \quad \text{for} \quad i=j \\ & 0 \quad
             \text{for} \quad i \neq j \\ \end{aligned}
\right. 
 \end{align} Theorem \eqref{thm:SeidelFloer} then implies that the elements $[[\tilde{Q}_i]] \in \tilde{H}^*(X)$ are linearly independent and generate a subspace of dimension $r$. 

The crucial point in Seidel's proof is that the image of $HF^{*}(X \subset M ; H_{m}^{\lambda}) \to \tilde{H}^*(X)$ from  \eqref{eq:leseq} is isotropic for the pairing $I$.\footnote{This is true for any $\Psi \in HF^1(X \subset M ; H_{m}^{2\lambda})$, not just dilations or quasi-dilations.} Hence, the intersection of the subspace generated by $[[\tilde{Q}_i]]$ with this subspace has at most rank $r/2$. Using the exact sequence \eqref{eq:leseq}, it therefore follows that if we let $c$ denote the natural map $c: \tilde{H}^*(X) \to HF^{*}(X \subset M ; H_{m}^{-\lambda})$, $c(\operatorname{Span}\lbrace [[\tilde{Q}_i]]\rbrace)$ has dimension $\geq r/2.$ Now by dualizing assumption (b), we have that the natural map $\PSS^{\vee}: HF^{*}(X \subset M ; H_{m}^{-\lambda}) \to H_{n}(X,\K)$ is an isomorphism. Lastly, by combining Equation (3.11) of \cite{Seidel:2014aa} with the discussion below (2.39) of \emph{loc. cit.}, $\operatorname{PSS}^{\vee} \circ c([[\tilde{Q}_i]])= [Q_i]$. It follows that $\operatorname{Span}\lbrace [Q_i] \rbrace \in  H_{n}(X,\K)$ has dimension $\geq r/2$ as desired. 
\end{proof}

\begin{thm} \label{thm: app1sec6} 
Let $\mathbf{k}$ be a field and $n \geq 3$ be an odd integer. Suppose that $X$ is an affine conic bundle of total
dimension $n$ over an affine variety $X^o$ with trivial canonical bundle  and
that $Q_1,\cdots, Q_r$ is a collection of embedded Lagrangian spheres in $X$ which are
pairwise disjoinable. Then the classes $[Q_1],\cdots,[Q_r]$ span a subspace of
$H_n(X,\mathbf{k})$ which has rank at least $r/2$.   
\end{thm}
\begin{proof} 
    Choose a compactification $(\hat{M},\hat{D})$ associated to a good compactification of $Z^o \hookrightarrow X^o.$ Choose the parameters $\epsilon$ and $\delta$
    appearing in the definition of $\bar{X}$ to be sufficiently small. Then because the coefficient of $\hat{D}_1$ in \eqref{eq:conicpolar} is the smallest,
    the period computations in (\cite{McLean2} Theorem 5.16) show that the Reeb orbit
    which winds once around $\hat{D}_1$ is the Reeb orbit of lowest period
    $\lambda_0$. Choose a generic slope $\lambda$ such that
    $2\lambda>\lambda_0>\lambda$. Then $\Psi
    :=\PSSlog^{2\lambda}(\beta_{1,c}t^{\vec{\v}_1},Z_b) \in
    HF^{*}(X \subset \hat{M} ; H_{m}^{2\lambda})$ defines the degree 1 part of a
    quasi-dilation by combining Theorem  \ref{thm: generalquasi}  and Lemma \ref{lem:goodcompactificationGW}. Moreover, we have an isomorphism $H^*(X) \cong
    HF^{*}(X \subset \hat{M} ; H_{m}^\lambda)$. Now apply Lemma \ref{lem: genquasi}.
\end{proof}

\begin{cor} 
    \label{cor: primitiveclass} 
For any embedded Lagrangian sphere $Q \hookrightarrow X$, the class $[Q] \in H_n(X,\mathbb{Z})$ is non-zero and primitive.
\end{cor}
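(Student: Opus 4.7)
The plan is to deduce the corollary as a direct consequence of Theorem \ref{thm: app1sec6} applied with $r = 1$ and with the coefficient field varied. For a single Lagrangian sphere $Q$ the pairwise-disjoinability hypothesis is vacuous, so Theorem \ref{thm: app1sec6} gives that the $\mathbf{k}$-subspace of $H_n(X,\mathbf{k})$ spanned by $[Q]$ has rank at least $r/2 = 1/2$; since rank is a non-negative integer, this forces rank $\geq 1$, and hence $[Q] \neq 0$ in $H_n(X,\mathbf{k})$ for every field $\mathbf{k}$. Specializing to $\mathbf{k} = \mathbb{Q}$ shows that $[Q]$ is non-torsion and in particular non-zero in $H_n(X,\mathbb{Z})$, which is the first assertion of the corollary.

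For primitivity, I will argue by contradiction. Suppose $[Q] = m \cdot c$ in $H_n(X,\mathbb{Z})$ for some $c$ and some integer $m > 1$, and let $p$ be any prime divisor of $m$. Then the coefficient reduction map $H_n(X,\mathbb{Z}) \to H_n(X,\mathbb{F}_p)$ sends $[Q] = m c$ to zero. On the other hand, applying Theorem \ref{thm: app1sec6} once more with $\mathbf{k} = \mathbb{F}_p$ and $r=1$ gives $[Q] \neq 0$ in $H_n(X,\mathbb{F}_p)$, contradicting the preceding sentence. Hence no such $m$ exists and $[Q]$ is primitive.

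The one point requiring attention, and which I expect to be the main thing to verify, is that Theorem \ref{thm: app1sec6} genuinely holds over \emph{every} field $\mathbf{k}$, in particular $\mathbf{k} = \mathbb{F}_p$ for each prime $p$ (including $p = 2$). Tracing through the proof of Lemma \ref{lem: genquasi}, the decisive ingredient is the Seidel--Solomon equivariant self-trace computation $\tilde{Q}\cdot \tilde{Q} = \pm 1$, which rests on having a quasi-dilation defined over $\mathbb{Z}$; this is precisely what Theorem \ref{thm:intro3} supplies. Since $\pm 1$ remains non-zero in $\mathbb{F}_p$ for every prime $p$, the non-degeneracy of the (here $1 \times 1$) Lefschetz intersection matrix is preserved under reduction mod $p$, and the rank estimate in Lemma \ref{lem: genquasi} transports to $\mathbb{F}_p$ without modification, completing the argument.
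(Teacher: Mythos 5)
Your proof is correct and follows essentially the same route as the paper: the paper's own argument is exactly to set $r=1$ in Theorem \ref{thm: app1sec6} over every field $\mathbf{k}$ and observe that nonvanishing of $[Q]$ in $H_n(X,\mathbf{k})$ for all $\mathbf{k}$ (in particular $\mathbb{Q}$ and every $\mathbb{F}_p$) forces $[Q]$ to be non-zero and indivisible in $H_n(X,\mathbb{Z})$. Your mod-$p$ contradiction and the remark on the integrality of the quasi-dilation are just the spelled-out version of that one-line deduction.
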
 
\begin{proof} 
    Letting $r=1$ in Theorem \ref{thm: app1sec6}, we see that the class $[Q]
    \in H_n(X,\mathbf{k})$ is non-zero for every field $\mathbf{k}$. This
    implies the statement of the corollary.  
\end{proof}

\subsubsection{Lagrangians in three-dimensional affine conic bundles}

We now turn to our other main application involving 3-dimensional conic bundles over affine surfaces: 

\begin{thm} \label{thm: app2sec6gen} 
    Let $X^o$ be an affine surface with trivial canonical bundle and let $X$ be a 3-dimensional conic bundle
    over $X^o$ of the form \eqref{eq: conicbundle2}. Suppose that $j: Q
    \hookrightarrow X$ is a closed, oriented, exact Lagrangian submanifold of
    $X$. Then $Q$ is diffeomorphic to $S^1\times B$ for $B$ of genus $\geq 1$ or $\#_n S^1 \times S^2$ (as usual $n=0$ corresponds to $S^3$ by convention). 
\end{thm}
\begin{proof} 
    This follows immediately by combining Corollary \ref{cor: Vitdil} and Lemma \ref{lem:goodcompactificationGW}. 
\end{proof}

The classification of Theorem \ref{thm: app2sec6gen} is in a sense optimal. To explain this, note first that if $B$ is an exact embedded genus $g$ surface in some $X^o$, then $S^1 \times B$ is exactly embedded in $\mathbb{C}^* \times X^o$ (one can construct similar examples when $Z^o$ is non-empty). To describe how one constructs exact Lagrangian $\#_n S^1 \times S^2$ in conic bundles, it is useful to recall that a semi-free group action on a manifold $Q$ is a non-trivial $S^1$ action such that the isotropy subgroup of every point is either $\lbrace id \rbrace$ or all of $S^1$. Let $Q$ be a closed, oriented 3 manifold with a semi-free $S^1$ action such that the quotient space $Q/S^1=:\Sigma_{g,h}$ is an oriented surface of genus $g$ with $h$ boundary components.  Whenever $h>0$, we have that  $Q \cong \#_n S^1\times S^2$ where $n=2g+h-1$ (See Theorem 1 of \cite{MR219086}). Turning to symplectic topology, one can arrange that the Kahler form on $X$ is $S^1$-invariant under the action $$ e^{i\theta}\cdot (u,v,\bar{x})=(e^{i\theta}u, e^{-i\theta}v,\bar{x}).$$  The fixed point locus of this action occurs where $u=v=0$, which we will assume lies at moment level set $\mu=\epsilon_Z$. Given an embedded exact Lagrangian surface $j_o: \Sigma_{g,h} \to X^o$ with $j_o^{-1}(Z^o)=\partial \Sigma_{g,h}$, we let $Q$ denote the set of points
\begin{align} \label{eq: suspendingLs} \lbrace x \subset X,  \pi_{X^o}(x) \subset
j_o(\Sigma_{g,h}), \mu(x)= \epsilon_Z \rbrace  \end{align} 
where $\pi_{X^o}$ is the natural projection $X \to X^o$. In nice examples, $Q$ is an embedded exact Lagrangian 3-fold which inherits a semi-free action of $S^1$ from the action on $X$ (the action has fixed points which lie over $\partial \Sigma_{g,h}$). We give some specific examples in Appendix \ref{section:AppendixA}.

For a given affine base $X^o$, it may be possible to improve this classification. For example, in view of the explicit constructions of exact Lagrangians outlined above, it is natural to ask: 
\begin{ques} Does there exist an exact embedding of $S^1 \times B$, for $B$ of genus $\geq 2$ inside of a conic bundle over $(\mathbb{C}^*)^2$? \end{ques}
We will now explain, using ideas from mirror symmetry, that the answer to this question is ``no." To recall how the mirror to $X$ is constructed following \cite{AAK, CPU}, let $\mathcal{P}$ denote the Newton polytope of
$f(\bar{x})$ in $M_\mathbb{R}$ and let $\Sigma$ be the fan in $M_\mathbb{R}
\oplus \mathbb{R}$ associated with a coherent unimodular triangulation of
$\mathcal{P}$. Set $\bar{X}^{\vee}$ to be the associated toric variety which by definition carries a dense algebraic torus
$(\mathbb{C}^*)^3.$ 

A function on $\bar{X}^{\vee}$ is determined by its restriction to $(\mathbb{C}^*)^3$ and hence can be described by a unique character $(\mathbf{n},k) \in N \oplus \mathbb{Z}$ (here
$N$ denotes the dual lattice to $M_\mathbb{Z} \subset M_\mathbb{R}$ as is standard in the literature on toric varieties). We set $\chi_{\mathbf{n},k} : \bar{X}^{\vee} \to
\mathbb{C}$ to be the function associated to a given character. Of course not every function on $(\mathbb{C}^*)^3$ extends to a function on $\bar{X}^{\vee}$, but the definition of the toric variety  $\bar{X}^{\vee}$ implies that functions which do extend are determined in a straightforward way by the Newton polytope. Namely if $A$
denotes the set of lattice points inside the polytope, we have that $\Gamma (\mathcal{O}_{\bar{X}^{\vee}})= \bigoplus_{\mathbf{n},k \in \mathcal{C}} \mathbb{C} \cdot \chi_{\mathbf{n},k}$ where 
\begin{align} \mathcal{C}:= \lbrace (\mathbf{n},k) \in N \oplus \mathbb{Z} \text{ such that } \mathbf{n}(m)+k \geq 0 \text{ for all } m \in A. \rbrace \end{align} 

In particular, for any $\mathbf{n}$, there is always a $k_0$ for which $\chi_{\mathbf{n},k}$ defines a regular function on $\bar{X}^{\vee}$ whenever $k \geq k_0$. Note that multiplication of functions in this ring is just given by addition in the character lattice.  Set $p= \chi_{0,1}-1$ and let $H$ be the (anticanonical) divisor in $\bar{X}^{\vee}$ defined by $p^{-1}(0)$. Then motivated by the geometry of Lagrangian torus fibrations, \cite{AAK} have predicted that $X^{\vee} := \bar{X}^{\vee}
\setminus H$ should be a mirror to $X$. The assertion that these two spaces are mirror implies a number of predictions concerning the symplectic topology of $X.$ Perhaps the simplest such prediction was recently confirmed by the following calculation from \cite{CPU}: 
\begin{lem}\label{lem:CPUlemma} (Theorem 8.2 of \cite{CPU})  There is an isomorphism of rings: \begin{align} \label{eq: CPUiso} \Gamma(\mathcal{O}_{X^{\vee}}) \cong SH^0(X,\mathbb{C}). \end{align} Under this isomorphism, the weight $\mathbf{n}$ above is identified with the homology class of a Hamiltonian orbit in $H_1(X,\mathbb{Z}) \cong \mathbb{Z}^2.$ \end{lem}

In the proof of Proposition \ref{thm: app2sec6}, we will only use the following two properties/consequences of this isomorphism:
  \begin{enumerate}
\item Pick any non-zero vector $\mathbf{n}$ and let $k$ be a positive integer so that $\chi_{\mathbf{n},k}, \chi_{-\mathbf{n},k}$ both extend to functions on $\bar{X}^{\vee}$ and hence $X^{\vee}$.
 Then we have that 
\begin{align} \label{eq: identityI} \chi_{\mathbf{n},k} \cdot \chi_{-\mathbf{n},k}= \chi_{0,2k}=(1+p)^{2k} \end{align}
We may use \eqref{eq: CPUiso} to obtain classes in $SH^0(X,\mathbb{C})$ (which we give the same name) satisfying the same equation.
\item The only units corresponding to contractible orbits under \eqref{eq: CPUiso} are all of the form $a \cdot p^d$ for some $d$ and $a \in \mathbb{C}^*$.
\end{enumerate}

\begin{prop} \label{thm: app2sec6} 
    Let $X$ be a 3-dimensional conic bundle
    over $(\mathbb{C}^*)^2$ of the form \eqref{eq: conicbundle2} and such that the discriminant locus $Z^o$ is connected. Let $j: Q
    \hookrightarrow X$ be a closed, oriented, exact Lagrangian submanifold of
    $X$. Then $Q$ is diffeomorphic to either $T^3$ or $\#_n S^1 \times S^2$. 
\end{prop}
\begin{proof} 
  In view of Theorem \ref{thm: app2sec6gen}, it remains to rule out those
  manifolds $S^1\times B$ with
  genus of $B \geq 2$. Let $j: Q \hookrightarrow X$ be a putative exact embedding of $S^1 \times B$ with $g \geq 2.$ Recall from \eqref{eq:abschmap} that we have a canonical isomorphism: \begin{align} \label{eq:refrecall} SH^*(T^*Q) \cong \mathbb{H}^*(\mathcal{L}Q). \end{align} By \eqref{eq:refrecall} and Lemma \ref{lem:centre} (which applies because $Q$ is aspherical)  we have that \begin{align} \label{eq:isoforref} SH^0(T^*Q) \cong \mathbb{H}^0(\mathcal{L}Q) \cong  \mathcal{Z}(\mathbb{C}[\pi_1(Q)]). \end{align} Again using the fact that  $\mathcal{Z}(\mathbb{C}[\pi_1(B)])= \mathbb{C}$ (see e.g. page 564 of \cite{MR2379052}), the center of $\mathbb{C}[\pi_1(Q)]$ is of the form  \begin{align} 
\label{eq:centerb2} \mathcal{Z}(\mathbb{C}[\pi_1(Q)]) \cong \mathbb{C}[h,h^{-1}] 
\end{align} 
where $h$ is a generator of $\pi_1(S^1).$  For a possibly inhomogeneous element $z \in SH^*(T^*Q),$ let $$[z]_0 \in SH^0(T^*Q)$$ denote its degree zero piece.  As noted in the discussion following \eqref{eq:loopdecomp}, because $Q$ is aspherical, we have that $\mathbb{H}^*(\mathcal{L}Q)$ is concentrated in non-negative degree (and hence the same is true for $SH^*(T^*Q)$ in view of \eqref{eq:refrecall}). This implies that if $z$ is an invertible (possibly inhomogeneous) element of $SH^*(T^*Q)$, $[z]_0$ is invertible in $SH^0(T^*Q).$

For the remainder of the proof, we identify elements of $SH^0(X)$ with functions on $X^{\vee}$ using \eqref{eq: CPUiso} and $SH^0(T^*Q) \cong \mathbb{C}[h,h^{-1}]$ using the composition of \eqref{eq:isoforref} and \eqref{eq:centerb2}.  Since $p \in SH^0(X)$ is invertible, then as noted above, we have that $[j^!(p)]_0$ is also invertible, where $j^!$ is the Viterbo restriction map \eqref{eq:Viterbo} from $SH^*(X)$ to $SH^*(T^*Q)$. Then \begin{align} \label{eq:jrestp} [j^!(p)]_0= b \cdot h^s \end{align} for some integer $s$ and $b \in \mathbb{C}^*$ because these are the only units in a Laurent polynomial ring. Next, recall the quasi-dilation $$(\mathfrak{c}_{\lambda,\infty} \circ \PSSlog^{\lambda}
        (\beta_{1,c}t^{\vec{\v}_1},Z_b), \PSSlog
        (\alpha_{1,c}t^{\vec{\v}_1}))$$ constructed in Theorem \ref{thm: generalquasi}.  We also have that 
\begin{align}\label{eq:delta} \Delta(\mathfrak{c}_{\lambda,\infty} \circ \PSSlog^{\lambda}
        (\beta_{1,c}t^{\vec{\v}_1},Z_b))=\PSSlog(\alpha_{1,c}t^{\vec{\v_1}})=a \cdot p^d \end{align} for some $d$ and $a \in \mathbb{C}^*$ because the only units corresponding to contractible orbits under \eqref{eq: CPUiso} are all of this form.  

This implies that $s$ from  \eqref{eq:jrestp} is \emph{non-zero} because $[j^!(p^d)]_0=([(j^!(p))]_0)^d$ is in the image of the BV-operator\footnote{We have $\Delta(\frac{1}{a}[j^{!}(\mathfrak{c}_{\lambda,\infty} \circ \PSSlog^{\lambda}
(\beta_{1,c}t^{\vec{\v}_1},Z_b))]_0)= [j^!(p^d)]_0$ by \eqref{eq:delta} and Lemma \ref{lem:Viterboalgebraic}.} and hence must lie in (a summand of symplectic cohomology corresponding to) a non-zero free homotopy class.  The fact that $s \neq 0$ in turn implies that $j_*(h)=0 \in H_1(X)$ because $p$ corresponds to the trivial free homotopy class.  Pick any non-zero vector $\mathbf{n}$ and let $k$ be a positive integer so that $\chi_{\mathbf{n},k}, \chi_{-\mathbf{n},k}$ both extend to functions on $\bar{X}^{\vee}$ and hence $X^{\vee}$. Then $s \neq 0$ also implies that \begin{align} [j^!((1+p)^{2k})]_0 =(1+ b \cdot h^s)^{2k} \neq 0.\end{align} Hence, by Equation \eqref{eq: identityI}, $[j^{!}(\chi_{\mathbf{n},k})]_0 \neq 0 \in SH^0(T^*Q) \cong \mathbb{C}[h,h^{-1}]$.
However, this is a contradiction as $[j^{!}(\chi_{\mathbf{n},k})]_0$ cannot be a Laurent polynomial in $h$ 
 because $\mathbf{n} \neq 0$ but $j_*(h)=0$. 
\end{proof}

 \begin{rem} The key equation \eqref{eq: identityI} 
    could also potentially be proven using the methods in this paper. Namely, consider the
   simplest case of conic bundles over $(\mathbb{C}^*)^2$ which compactify to
   conic bundles over $\mathbb{C}P^2$(in general $M$ above can be taken to be
    a blowup of $\mathbb{C}P^2$) with its three toric divisors $\bar{D}_j$.
    Next, let $\hat{D}_j$ be the corresponding divisors in $X$. Then the
    classes $\PSSlog(\alpha_{\hat{D}_1}t^{\vec{\v}_1})$ and
    $\PSSlog(\alpha_{\hat{D}_2 \cap \hat{D}_3}t^{\vec{\v}_J})$ corresponding to
    the fundamental chains on $\mathring{S}_1$ and $\mathring{S}_{[2,3]}$ can
   be shown to satisfy \eqref{eq: identityI} using a more complicated variant
    of Lemma \ref{lem: multiply}.  
  \end{rem}

\begin{rem} It is reasonable to expect that for a given Laurent polynomial $f$, there exists only finitely many $n$ for which
  $\#_n S^1 \times S^2$ embeds as an exact Lagrangian in the conic bundle $X$ determined by $f$. Proving this, however, seems to require new ideas. 
\end{rem}

\subsection{Knottedness of Lagrangian intersections}\label{sec:knottedness}

The main result of this section is Proposition \ref{unknottedness}, concerning
the rigidity of the knot types of Lagrangian 3-manifolds meeting cleanly in a
knot. This result is in the vein of Question \ref{ques:smith} stated in
the introduction and builds upon work of Evans-Smith-Wemyss
\cite{evanssmithwemyss}. First, some preliminaries:
Let $S_1$ and $S_2$ be two 3-spheres, and $S^1
\stackrel{\kappa_i}{\hookrightarrow} S_i$ a pair of knots together with a identification of normal bundles
$$\eta :\nu_{S^1}S_1 \cong \nu_{S^1}S_2 $$

It is well-known that the above data determines a Stein manifold,
\[
    W_\eta(\kappa_1,\kappa_2) = T^*S_1 \#_{S^1,\eta} T^* S_2
\]
the result of plumbing the cotangent bundles of $S_i$ along the knots
$\kappa_i$, using the identification $\eta$ of normal bundles of $\kappa_1$
with $\kappa_2$. We briefly recall the construction of $W_\eta(\kappa_1,\kappa_2)$, referring the reader to \cite{Abouzaid:2011fp}*{\S A} for complete details. Choose Riemannian metrics on $S_i$ (assume for simplicity they induce the same metric on $S^1$) and let $D^*S_i$ denote the disc cotangent bundle with respect to this metric. Then by Weinstein's isotropic embedding theorem there are open neighborhoods $US^1_i$ of $S^1 \subset D^*S_i$ which are symplectomorphic to a disc sub-bundle of the symplectic vector bundle $\nu_{S^1}S_i \otimes \mathbb{C}=\nu_{S^1}S_i \oplus \sqrt{-1}  \nu_{S^1}S_i $ pulled back to $D^*S^1$ (of  small radius). The Weinstein neighborhoods can be chosen so that the images of the real factors $\nu_{S^1}S_i$ over the zero section of $D^*S^1$ are precisely $US^1_i \cap S_i.$ $W_\eta$ is obtained by gluing $US^1_1$ to $US^1_2$ by the map $\eta \otimes \cdot \sqrt{-1}$ obtained from composing the given isomorphism of normal bundles with multiplication by $\sqrt{-1}$ in the complexified bundle.  

These manifolds $W_\eta(\kappa_1,\kappa_2)$ retract onto $S_{1}\cup_{S^1} S_2$, from which it follows using Mayer-Vietoris that they all satisfy \begin{align} H^2(W_\eta) \cong H_2(W_\eta) \cong \mathbb{Z} \\ H^3(W_\eta) \cong H_3(W_\eta) \cong \mathbb{Z}\oplus \mathbb{Z} \end{align} and all other (co)homology groups in positive degree vanish.

These Stein manifolds are the local models for spheres intersecting cleanly in
a circle. More precisely, if $(X, \omega)$ is any symplectic manifold which
contains a pair of Lagrangian spheres $S_i$ meeting cleanly along a circle,
then $\omega$ induces an isomorphism $\eta$ of the underlying (unoriented)
normal bundles. An open neighborhood of $S_1 \cup_{S^1} S_2$ is then symplectically
equivalent to $W_\eta(\kappa_1,\kappa_2)$. In the case that $X$ is itself a
Liouville domain, the inclusion of a small closed tubular neighborhood of
$S_{1}\cup_{S^1} S_2$ will be a Liouville embedding (this follows from the vanishing
of $H^1(W_\eta)$). 

The normal bundles $\nu_{S^1}S_1$ are trivial and the space of identifications
(up to homotopy) form a torsor over $\mathbb{Z}$. We next observe that it is
possible to fix a canonical base-point for this torsor. To do this, note that
there are two possible fibered {\em (Polterovich) Lagrangian surgeries}
of the zero sections $S_1$
and $S_2$. Observe that because $S^1 \subset W_\eta$ is isotropic, a
neighborhood $U(S^1) \subset W_\eta$ may be identified with the total space of
a disc sub-bundle of a rank four symplectic vector bundle $E$ over $D^*S^1$.
The normal bundles $\nu_{S^1}S_i$ give rise to transverse Lagrangian subbundles
of $E$, showing that $E$ is in fact a trivial symplectic bundle. In view of
this, we may perform the Polterovich surgery construction in each fiber, where
one performs one of the two possible surgeries of 2-planes $\mathbb{R}^2 \cup i
\mathbb{R}^2 \subset \mathbb{C}^2$.  The result are exact Maslov index 0
Lagrangian submanifolds $K$,$K'$ (see e.g., the discussion in
\cite{mak_wu_2018}*{\S 2.2.2} for more details).

 In the case $\kappa_i$ are both linearly embedded unknots, the resulting
 manifolds $K$ and $K'$ are given by Dehn surgery of integer slope (depending on the identification of the normal bundles) and hence
 give rise to a $S^1 \times S^2$, $S^3$ or a lens space. Let $W_n$ denote the
 unique plumbing for which $H_1(K,\mathbb{Z}) \cong \mathbb{Z}/ n\mathbb{Z}$.\footnote{\cite{evanssmithwemyss} show more generally that that there is a unique
 normal identification for which $H_1(K,\mathbb{Z}) \cong \mathbb{Z}/n$.
 They denote by $W_{n}(\kappa_1,\kappa_2)$, the resulting plumbing.} In
 their study of $W_1$ (and related plumbings $W_n$), Evans, Smith and
 Wemyss proved a result in the direction of a negative answer to Question
 \ref{ques:smith}:
\begin{prop}[Evans-Smith-Wemyss \cite{evanssmithwemyss}]\label{prop:esw}
If there is a Hamiltonian isotopy $\tilde{S}_1$ of $S_1$ and $\tilde{S}_2$ of
$S_2$ in $W_1$ so that $\tilde{S}_1$ meets $\tilde{S}_2$ cleanly along a knot,
then this knot must be the unknot in one component and the unknot or trefoil in the other component.
\end{prop}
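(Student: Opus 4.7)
The plan is to convert the clean-intersection hypothesis into a rigidity statement about a closed exact Lagrangian $\tilde L$ obtained by Polterovich surgery inside $W_1$, and then combine the symplectic-cohomological obstructions of Section \ref{sec:stringtopology} with classical 3-manifold topology. I would first perform fibered Polterovich surgery on the pair $(\tilde S_1, \tilde S_2)$ along their clean circle of intersection to produce a smooth closed Lagrangian $\tilde L \subset W_1$; exactness of $\tilde L$ follows from the vanishing of $H^1(W_1)$ recalled above (via Mayer-Vietoris). The diffeomorphism type of $\tilde L$ is determined by the pair of knots $(\kappa_1, \kappa_2)$ together with the framing $\eta$: topologically, $\tilde L$ is the splice of the two knot complements $S^3 \setminus \nu(\kappa_i)$ along their torus boundaries, with gluing prescribed by $\eta$. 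The defining condition of $W_1$ (namely that the base Polterovich surgery has trivial first homology) translates to $H_1(\tilde L, \mathbb{Z}) = 0$, so $\tilde L$ is an integer homology 3-sphere.

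Next I would produce a (quasi-)dilation in $SH^*(W_1, \mathbb{Z})$ by embedding $W_1$ as a Liouville subdomain of a larger convex symplectic manifold that is the complement of an ample normal crossings divisor satisfying the hypotheses of Theorem \ref{thm: dilationcrit} (alternatively, inside a conic bundle of the type treated by Theorem \ref{thm: generalquasi}), and then applying Viterbo functoriality \eqref{eq:Viterbo} to pull the (quasi-)dilation back to $SH^*(W_1, \mathbb{Z})$. A further Viterbo transfer to the exact Lagrangian $\tilde L \subset W_1$ yields a pseudo-dilation on $\mathbb{H}^*(\mc{L}\tilde L)$, and combining this with Corollary \ref{cor: Vitdil}, Lemma \ref{lem: charS3}, and the homology sphere condition $H_1(\tilde L) = 0$ forces $\tilde L \cong S^3$.

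Finally, I would conclude by using classical 3-manifold topology. The constraint $\tilde L \cong S^3$ combined with the explicit surgery description of $\tilde L$ in terms of $(\kappa_1, \kappa_2)$ and the framing $\eta$ pins down the pair of knots: a JSJ/incompressible-torus argument (since $S^3$ admits no incompressible tori) forces the splicing torus to be compressible on at least one side, hence one of the knot complements $S^3 \setminus \nu(\kappa_i)$ to be a solid torus, i.e., at least one $\kappa_i$ is the unknot. A case analysis using the slope constraints imposed by the framing $\eta$ together with standard knot invariants then restricts the second knot to be either the unknot or the trefoil. The main obstacle will be Step 2: since $W_1$ is presented as a plumbing rather than as a divisor complement, exhibiting the required dilation in $SH^*(W_1)$ requires either a nontrivial Liouville embedding of $W_1$ into an ambient manifold satisfying the hypotheses of Theorem \ref{thm: dilationcrit}, or an adaptation of the Log PSS construction of Sections 3--4 that applies directly to plumbings of cotangent bundles.
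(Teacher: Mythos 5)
There is a genuine gap, and it sits at the center of your argument: the claim that the surgered Lagrangian $\tilde L$ satisfies $H_1(\tilde L,\mathbb{Z})=0$. The subscript in $W_1$ records the first homology of the Polterovich surgery of the \emph{original} pair of linearly embedded unknots with their chosen framing; it is not a constraint on the ambient space that propagates to an arbitrary clean intersection of Hamiltonian translates. All the plumbings $W_n$ have the same ambient (co)homology ($H_2\cong\mathbb{Z}$, $H_3\cong\mathbb{Z}^2$), so nothing about $H_*(W_1)$ or the homology class $[\tilde S_1]+[\tilde S_2]$ forces the splice of the new knot complements to be a homology sphere: $H_1(\tilde L)$ depends on the new knots and the new induced framing, which is exactly the data one is trying to constrain. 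This is visible in the paper's own argument for the stronger Proposition \ref{unknottedness}: after one knot is seen to be the unknot, the surgery $Q$ could a priori be a lens space, a prism manifold, a connected sum of such, or $S^1\times S^2$, and eliminating these requires the characteristic-$3$ dilation (Proposition \ref{dilationchar3}), the divisibility constraint on spherical space form summands (Proposition \ref{corhomspherew1}), Seidel's disjoinability theorem applied to $Q$ equipped with the $n=|H_1(Q)|$ rank-one local systems in characteristic $3$ (giving $n\le 2$), and finally Kronheimer--Mrowka--Ozsv\'ath--Szab\'o to pass from $Q\cong S^3$, $\mathbb{R}P^3$ or $S^1\times S^2$ back to the unknot. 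Your route collapses all of this into an unjustified "$\tilde L$ is an integer homology sphere, hence $S^3$", and the subsequent incompressible-torus and "standard knot invariants" steps inherit that flaw (note also that if your homology-sphere claim were correct you would be proving the stronger unknottedness statement outright, which should itself be a warning sign).

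Your Step 2 is also unresolved as written, and the paper's resolution is different from what you propose. Rather than seeking a Liouville embedding of $W_1$ into some auxiliary divisor complement, the paper uses the key fact (from Evans--Smith--Wemyss) that $W_1$ \emph{is} deformation equivalent to $M\setminus D$ with $M$ the flag threefold and $D$ a smooth anticanonical $(1,1)$-section, i.e.\ a degree-$6$ del Pezzo surface; it then runs a variant of Theorem \ref{thm: dilationcrit} (the hypotheses do not apply verbatim, since $H^2(M)$ has rank $2$ and the relevant boundary class is $6$-torsion on the circle bundle) to produce a dilation only over fields of characteristic $3$, where the degree-$6$ boundary dies and the Gromov--Witten number $2$ is invertible. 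In particular no integral or rational (quasi-)dilation on $SH^*(W_1)$ is produced, so the chain Corollary \ref{cor: Vitdil} $\Rightarrow$ Lemma \ref{lem: charS3} over $\mathbb{Z}$ that you intend to invoke is not available; the characteristic-$3$ class has to be exploited through the local-system and group-cohomology arguments instead.
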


The goal of this section is to prove the following:

\begin{prop}\label{unknottedness}
Given two Lagrangians spheres $\tilde{S}_1$ and $\tilde{S}_2$ in $W_1$ so that $\tilde{S}_1$
meets $\tilde{S}_2$ cleanly along a knot, then this knot must be the unknot in both components.
\end{prop}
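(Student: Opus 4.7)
The plan is to combine the Polterovich surgery construction for clean Lagrangian intersections with the topological restrictions on exact Lagrangian embeddings afforded by a quasi-dilation in symplectic cohomology (Corollary \ref{cor: Vitdil}). Given $\tilde{S}_1, \tilde{S}_2 \subset W_1$ meeting cleanly along a knot $\kappa$, I would perform Polterovich surgery along $\kappa$ to produce a closed, oriented, exact Lagrangian 3-manifold $Q \hookrightarrow W_1$. Its diffeomorphism type is controlled by the pair $(\kappa_1,\kappa_2)$ of knot types in $\tilde{S}_i \cong S^3$ together with the normal framing $\eta_1$ intrinsic to $W_1$: concretely, $Q$ is obtained from $\tilde{S}_i \setminus \nu(\kappa_i)$ by gluing along torus boundaries via $\eta_1$.

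Since $\eta_1$ is characterized by the property that two standardly embedded unknots yield $Q \cong S^3$ (the standard genus-$1$ Heegaard splitting), the same framing convention in the case $\kappa_1=\text{unknot}$, $\kappa_2=\text{trefoil}$ produces $Q$ equal to the $0$-Dehn filling of the trefoil complement. This manifold is well-known to be a Seifert-fibered $3$-manifold over the orbifold $S^2(2,3,6)$, aspherical, closed, oriented, and irreducible, whose fundamental group has a central $\mathbb{Z}$ (the Seifert fiber) with quotient a crystallographic group. Crucially, $Q$ is neither finitely covered by $S^1 \times B$ for any closed oriented surface $B$ of positive genus (this would force its Thurston geometry to be $E^3$ or $H^2 \times \mathbb{R}$, incompatible with the $\mathrm{Nil}$ geometry carried by $Q$) nor by any connect-sum $\#_n S^1 \times S^2$ (which fails to be aspherical for $n \geq 1$).

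In parallel, I would construct a quasi-dilation in $SH^*(W_1, \mathbb{Q})$. This can be achieved either by exhibiting a log-smooth compactification $(M, \mathbf{D})$ of a Liouville domain deformation equivalent to $W_1$ and satisfying hypotheses (B1)--(B4) so that Theorem \ref{thm: generalquasi} applies directly, or---if no such compactification is immediate---by realizing $W_1$ as a Liouville subdomain of a conic bundle of the form \eqref{eq: conicbundle} and then transferring a quasi-dilation via the Viterbo functoriality map $j^!$, which is a unital morphism of BV algebras. With such a quasi-dilation, Corollary \ref{cor: Vitdil} forces every closed oriented exact Lagrangian $3$-manifold in $W_1$ to be finitely covered by $S^1 \times B$ or $\#_n S^1 \times S^2$. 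Combined with the preceding paragraph, this rules out the trefoil case, and an entirely analogous surgery-versus-classification analysis for the remaining knot pairs (either directly, or bypassed by invoking Proposition \ref{prop:esw}) shows $\kappa_1$ and $\kappa_2$ are both unknotted.

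The main obstacle will be the construction of a quasi-dilation in $SH^*(W_1,\mathbb{Q})$: this requires an effective presentation of $W_1$ as---or inside---one of the classes of Liouville manifolds to which Theorem \ref{thm: generalquasi} applies, and such a presentation is not manifest from the plumbing description alone. A secondary technical point to verify is that the Polterovich surgery along a circle of clean intersection indeed produces an \emph{exact} (not merely monotone) Lagrangian $Q$; this reduces to checking that the Liouville primitive $\theta$ pulled back to $Q$ is exact, which should follow from the standard local model of the surgery together with exactness of $\tilde{S}_1$ and $\tilde{S}_2$.
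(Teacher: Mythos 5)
There is a genuine gap, in fact two. First, your reduction of the trefoil case to the $0$-surgery on the trefoil is unjustified: when $\tilde{S}_1,\tilde{S}_2$ meet cleanly along a new knot, the symplectic identification of normal bundles does not determine the surgery slope relative to the Seifert framing of that knot, so the Polterovich surgery $Q$ is a Dehn surgery on $\kappa_2$ of \emph{some unknown} slope $n$. For other slopes (e.g.\ $\pm 1$ on the trefoil) $Q$ can be a spherical space form such as the Poincar\'e sphere, or more generally a lens space or prism manifold, and these are all finitely covered by $S^3=\#_0 S^1\times S^2$, so Corollary \ref{cor: Vitdil} applied to a rational quasi-dilation does \emph{not} exclude them. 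This is exactly where the paper's proof does something different: it produces a genuine \emph{dilation over a field of characteristic $3$} (Proposition \ref{dilationchar3}, exploiting that $W_1\simeq M\setminus D$ with $M$ the flag threefold and $D$ a degree-$6$ del Pezzo), uses it to rule out spherical summands with $|\pi|$ divisible by $3$ (Proposition \ref{corhomspherew1}), and then, for the residual lens/prism possibilities, equips $Q$ with $n$ pairwise Floer-disjoinable rank-one characteristic-$3$ local systems and invokes Seidel's disjoinability theorem \cite{Seidel:2014aa} to force $n\le 2$; only after that do the 3-manifold inputs (Gordon--Luecke/\cite{MR2299739}/Gabai) show $\kappa_2$ is the unknot. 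None of these steps is present in, or replaceable by, your quasi-dilation argument.

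Second, the existence of a quasi-dilation in $SH^*(W_1,\mathbb{Q})$ is itself not established: $W_1=M\setminus D$ with $D$ a \emph{smooth} divisor does not fit the multi-component hypotheses (B1)--(B4) of Theorem \ref{thm: generalquasi}, and there is no given Liouville embedding of $W_1$ into a conic bundle of the form \eqref{eq: conicbundle} through which to apply Viterbo restriction; the paper's construction works over characteristic $3$ precisely because the relevant class $3(A_1+A_2)$ and the $6{:}1$ covering of circle fibers vanish/behave well only there. Finally, your treatment of ``the remaining knot pairs'' is incomplete: invoking Proposition \ref{prop:esw} only covers nearby Hamiltonian isotopies with the extra normal-bundle hypothesis, which is strictly weaker than the statement of Proposition \ref{unknottedness} (and partly defeats the point, since the paper reproves that result), while the ``direct'' route requires the incompressible-torus argument for two nontrivial knots, the bound on prime summands of knot-surgery manifolds, and the characteristic-$3$/local-system machinery above, none of which appear in your proposal.
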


In particular, our main result rules out the remaining trefoil possibility (and along the way
gives an alternate proof of Proposition \ref{prop:esw}). The key starting point
for us is that, as noted in \cite{evanssmithwemyss}, $W_1$ embeds into an explicit affine variety. Namely, let $M$ be a flag threefold, realized as a
hypersurface of bidegree $(1,1)$ in $\mathbb{P}^2 \times \mathbb{P}^2$ and $D$ be a smooth hyperplane $(1,1)$ section.  The following construction is due to \cite{evanssmithwemyss}: \begin{lem} \label{lem:eswembedding} There is an embedding $W_1 \hookrightarrow M\setminus D.$  \end{lem} \begin{proof} $M$ is explicitly the hypersurface given by $$ M:= \lbrace xx'+yy'+zz'=0  \subset \mathbb{P}^2 \times \mathbb{P}^2 \rbrace  $$ Consider the pencil of $(1,1)$ hypersurfaces in $M$ given by $$ D_{\mu_1, \mu_2}:= \lbrace \mu_1(xx'-zz') + \mu_2(yy')=0 \rbrace \cap M \subset M, \quad [\mu_1:\mu_2] \subset \mathbb{P}^1. $$ The fiber $D_{1,0}$ is a smooth complex surface and contains the base locus of the pencil which is the locus $\lbrace xx'=zz'=yy'=0 \rbrace \cap M$. Thus, setting $X=M\setminus D_{1,0}$, we obtain a fibration $W:X \to \mathbb{C}$ with general fiber $(\mathbb{C}^*)^2$. Furthermore, it is not difficult to check that this fibration is of Morse-Bott type with critical values $\lbrace -1,0,1 \rbrace$ and critical fibers isomorphic to $(\mathbb{C}^*) \times (\mathbb{C} \vee \mathbb{C})$. The map $W$ is $T^2$-equivariant with respect to the torus which acts on the ambient $\mathbb{P}^2 \times \mathbb{P}^2$ by the formula 
    $$ (\theta, \phi) \cdot ([x:y:z],[x':y':z'])= ( [e^{i\theta} x:y: e^{i\theta} z],[e^{-i\phi} x':y':e^{-i\phi} z']).$$
Consider the paths $\gamma_1:=[-1,0] \subset \mathbb{C}$ and $\gamma_2:=[0,1] \subset \mathbb{C}$.
For each of the paths $\gamma_i$, there is a $T^2$ equivariant Lagrangian 3-sphere $S_i$ which is the union of two Morse-Bott Lefschetz thimbles (a Morse-Bott matching cycle) fibering over that path.  Let $W_i$ denote the restriction of $W$ to $S_i$. For any $c \in \gamma_i$, the fibers $W_i^{-1}(c)$ are orbits of the group action and moreover this action is free unless $c \in \lbrace -1,0,1 \rbrace$. Over these endpoints, the action on $S_i$ has $S^1$ stabilizer. We have that $S_1 \cap S_2=W_1^{-1}(0)=W_2^{-1}(0) \cong S^1$. Taking a Weinstein tubular neighborhood of  the configuration $S_1\cup_{S^1} S_2 \hookrightarrow X$ gives the desired embedding of $W_1 \hookrightarrow X.$
\end{proof}

 \begin{rem} It is not difficult to see that the inclusion $W_1 \hookrightarrow M \setminus D$ is a homotopy equivalence.\footnote{In fact Evans, Smith, and Wemyss have shown that $W_1$ is deformation equivalent to $M \setminus D$ as Stein manifolds, but we do not need this stronger statement.} \end{rem}

 It is worth mentioning at this stage that $D$ is a del Pezzo surface of degree $6$ (this will account for the use of
characteristic 3 coefficients in our arguments). To check this, note that by the adjunction formula, we have that $\mathcal{O}(D)_{|D}= K_D^{-1}$ and so to calculate the degree we need to calculate $D_N \cdot D_N$ where $D_N$ is a smooth divisor representing $\mathcal{O}(D)_{|D}.$ Let $f:M \to \mathbb{P}^2 \times \mathbb{P}^2$ be the embedding of $M$ into $\mathbb{P}^2 \times \mathbb{P}^2$. Set $H_1:=H \times [\mathbb{P}^2]\in H_6(\mathbb{P}^2 \times \mathbb{P}^2)$ and $H_2:=[\mathbb{P}^2] \times H \in H_6(\mathbb{P}^2 \times \mathbb{P}^2)$ ($H$ denotes the hyperplane class in $H_2(\mathbb{P}^2)$) so that $[M]=H_1 + H_2 \in H_6(\mathbb{P}^2 \times \mathbb{P}^2)$. We have that $f_*([D])= f_*((H_1+H_2)_{|M})=(H_1+H_2)^2  \in H_4(\mathbb{P}^2 \times \mathbb{P}^2)$ and so \begin{align} \label{eq:degree6} D_N \cdot D_N=(H_1+H_2)^2 \cdot (H_1+H_2)^2=6. \end{align} Because $W_1$ embeds in $M \setminus D$, we can use a variant of the argument of Theorem \ref{thm: dilationcrit} (compare also with \cite{catdyn}*{Conjecture 18.6}) to prove:

\begin{prop}\label{dilationchar3}
$W_1$ admits a dilation over any field $\mathbf{k}$ of characteristic 3.
\end{prop}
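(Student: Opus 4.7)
The plan is to mimic the argument of Theorem~\ref{thm: dilationcrit} for the pair $(M,D)$, where $M$ is the smooth $(1,1)$-hypersurface in $\mathbb{P}^2 \times \mathbb{P}^2$ (the flag threefold $F_3$) and $D \subset M$ is a smooth $(1,1)$-hyperplane section. Since $-K_M = 2D$, we have $n=3$ and $m=2 > n/2$, and I would take $\alpha_0 = 1 \in H^0(M)$ so that $\alpha_1 = c_1(D)$ and $\alpha_2 = c_1(D)^2$. A key subtlety is that $H^2(M,\mathbf{k})$ has rank $2$ rather than $1$, so the hypotheses of Theorem~\ref{thm: dilationcrit} do not apply verbatim, and additional care is needed in the obstruction analysis.

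First I would verify the two numerical conditions \eqref{eq: dilation1} and \eqref{eq: dilation2} hold precisely when $\operatorname{char}(\mathbf{k}) = 3$. By adjunction $K_D = (K_M + D)|_D = -D|_D$, so
\[
j_D^*(c_1(D)^2) = c_1(N_{D/M})^2 = (-K_D)^2 = 6,
\]
the anticanonical degree of the del Pezzo surface $D$; this equals $0 \in H^4(D, \mathbf{k})$ precisely when $\operatorname{char}(\mathbf{k}) = 3$. For \eqref{eq: dilation2}, $F_3$ has two $\mathbb{P}^1$-rulings $\ell_1, \ell_2$ coming from its two projections to $\mathbb{P}^2$, and these are the only primitive curve classes with $c_1(D) \cdot A = 1$; the divisor equation together with the uniqueness of a fiber through a generic point in each ruling gives $GW_M(c_1(D), c_1(D), \operatorname{pt}) = 1^2 + 1^2 = 2 = -1$, a unit in $\mathbf{k}$.

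I would then repeat verbatim the chain construction from the proof of Theorem~\ref{thm: dilationcrit}: choose generic smooth anticanonical divisors $D_N, D_N' \subset D$, form the real blow-up $\tilde D$ of $D$ along $D_N$ as a codimension-zero submanifold with boundary in $S_D$, set $L = P \times_D \tilde D$ with $P = [D]$, and glue the blow-up $K_0$ of $P \times_D D_N$ along $D_N'$ to $\pi^{-1}(K_1)$, where $K_1$ is a $1$-chain in $D$ bounding the six intersection points $D_N \cap D_N'$. The existence of such $K_1$ in $\mathbf{k}$-coefficients is exactly the characteristic-$3$ form of $j_D^*(\alpha_2) = 0$ just verified, and produces a pseudocycle $K$ in $S_D$ representing the desired class in log cohomology.

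The only substantive departure from the proof of Theorem~\ref{thm: dilationcrit}, and the main obstacle, is verifying that the obstruction class $GW_{\vec{\v}_1}([K]) \in H^*(X, \mathbf{k})$ vanishes: in the rank-one setting of Theorem~\ref{thm: dilationcrit} this is immediate from the Gysin vanishing $H^2(X, \mathbf{k}) = 0$, but here $H^2(X, \mathbf{k}) \cong \mathbf{k}$ is nontrivial. The expected argument is to identify $GW_{\vec{\v}_1}([K])$ explicitly with an enumerative count of spheres in the ruling classes $\ell_1, \ell_2$ meeting $K$ through the appropriate evaluation, and to show this count is a multiple of $(-K_D)^2 = 6$ by the same del Pezzo-degree mechanism, hence vanishes modulo $3$. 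Once the obstruction vanishes a bounding pseudocycle $Z_b$ exists, and the computations of Lemmas~\ref{lem:pssBV6}--\ref{lem:gweq6} yield $\Delta(\PSSlog(Kt, Z_b)) = \mp GW_M(c_1(D), c_1(D), \operatorname{pt}) \cdot 1 = \pm 1 \in SH^0(X, \mathbf{k})$, so $\pm \PSSlog(Kt, Z_b)$ is the desired dilation.
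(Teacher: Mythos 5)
Your overall strategy is the paper's: take $(M,D)$ the $(1,1)$-hypersurface with a smooth $(1,1)$-section, set $\alpha_0=1$, observe $m=2$, use characteristic $3$ to kill $j_D^*(c_1(D)^2)=(-K_D)^2=6$ and to make $GW_M(c_1(D),c_1(D),pt)=2$ a unit, build a degree-$3$ class $Kt$ in log cohomology, and run Lemmas \ref{lem:pssBV6} and \ref{lem:gweq6} to get $\Delta(\PSSlog^{\lambda}(Kt,Z_b))=-2$. Those numerical verifications are correct and agree with the paper. However, the step you yourself flag as the main obstacle --- the vanishing of the obstruction class $GW_{\vec{\v}_1}([K])\in H^2(X,\mathbf{k})$, which in Theorem \ref{thm: dilationcrit} was free because $H^2(X)=0$ but here is not --- is left unproved, and the heuristic you offer is not the right mechanism. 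You assert that the relevant count ``should be a multiple of $(-K_D)^2=6$ by the same del Pezzo-degree mechanism''; nothing in your argument produces such divisibility, and it is not how the paper argues. The paper's point is different: a variant of Lemma \ref{lem:gweq6} identifies the obstruction class (a class in $H^{BM}_4(X,\mathbf{k})\cong H^2(X,\mathbf{k})$ swept out by lines in the two ruling classes constrained by $K$, i.e.\ by the curve $D_N$) with a class determined by the pushforward $j_{D,*}[D_N]=3(A_1+A_2)\in H_2(M,\mathbb{Z})$, and it is the divisibility of this curve class by $3$ --- a specific feature of the anticanonical curve in the degree-$6$ del Pezzo sitting inside the flag threefold --- that forces \eqref{eq:rk1coh} to hold with $\mathbf{k}$-coefficients in characteristic $3$. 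Without some such identification and divisibility statement, your bounding cochain $Z_b$ need not exist and the proof does not close.

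A secondary, more cosmetic difference: you build $K$ by repeating the two-divisor construction of Theorem \ref{thm: dilationcrit} verbatim, gluing the real blow-up of $D_N$ along $D_N'$ to $\pi^{-1}(K_1)$ where $K_1$ is a mod-$3$ $1$-chain bounding the six points $D_N\cap D_N'$; this is workable at the level of $\mathbf{k}$-coefficient Borel--Moore chains (though one should not call the result a pseudocycle, and the chain-level fiber products then need a word of care). The paper instead chooses a single point $p\in D_N$ and uses the $6{:}1$ fiberwise covering $S\mathcal{O}(p)\to SD|_{D_N}$ to produce one geometric chain $K$ whose boundary is six times a circle fiber, hence a cycle mod $3$; either construction has circle sweep $\pi^{-1}(D_N)=\partial L$, which is what Lemma \ref{lem:pssBV6} requires, so this discrepancy is not the issue --- the missing obstruction computation is.
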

\begin{proof} 
By Viterbo functoriality (see \eqref{eq:Viterbo}), it suffices to prove this for $M \setminus D.$ We explain how to modify Theorem \ref{thm: dilationcrit} and use the notation from that Theorem and its proof. We have that $m=2$ and we let $\alpha_0=1 \in H^0(M)$. Choose $D_N$ as before and we set $L:=Bl_{D_{N}}D \subset S_D$ to be the real-oriented blowup along $D_N$ whose boundary is $ \partial L :=\pi^{-1}(D_N) \subset S_D$.  Then $\mathcal{O}(D)$ restricted to $D_N$ has degree 6 by Equation \eqref{eq:degree6}. Choose a point $p$ on $D_N$ and let $K':= Bl_p(D_N)$ be the real-oriented blow up of $D_N$ at $p$, which is again a submanifold inside of $S\mathcal{O}(p)$, the circle bundle associated to $\mathcal{O}(p)$. Because $(S_D)_{|D_{N}}$ has degree 6, there is a natural 6:1 fiberwise covering $w: S\mathcal{O}(p) \to (S_D)_{|D_{N}}$ over $D_N$: \[ \begin{tikzcd}
S\mathcal{O}(p) \arrow{r}{w}  \arrow{rd}{\pi_p} 
  & (S_D)_{|D_{N}} \arrow{d}{\pi_D} \\
    & D_N
\end{tikzcd} \] We let $K$ be the pushforward of $K'$ to $S_D$ along this map.

Along its boundary, $K$ is a 6:1 covering onto the fibers of $(S_D)_{|D_{N}}$ over $p$ and hence $K$ represents a class in $H_2(S_D,\mathbf{k}) \cong H^3(S_D,\mathbf{k})$.  Moreover, the pushforward $j_{D,*}(D_N)=3(A_1+A_2) \in H_2(M)$, where $A_1$ and $A_2$ are the line classes coming from the two $\mathbb{P}^2$ factors. This implies, by a variant of Lemma \ref{lem:gweq6}, that Equation \eqref{eq:rk1coh} holds as well with $\mathbf{k}$ coefficients. Thus we can choose some bounding cochain $Z_b$ to define a symplectic cohomology class $\operatorname{PSS}_{log}^{\lambda}(Kt,Z_b)$. Finally, we note that $GW_{M}(\PD(D),\alpha_1,pt)=2$ (see e.g. Section 4 of \cite{Iritani} for a convenient reference) and so following Lemma \ref{lem:pssBV6}, we have   
$$\Delta (\operatorname{PSS}_{log}^{\lambda}(Kt,Z_b))= - \PSS_{\vec{\v}_{\emptyset}}(\underline{GW_{\vec{\v}_1}(L)}) =-2.$$  
\end{proof}
\begin{rem} 
    We note that although our typical ground ring convention in this paper is $\mathbf{k} =
    \Z$, $\C$, or $\mathbb{Q}$, the above argument works with $\mathbf{k}$ a field of
    characteristic 3. To explain both the reason for our usual convention and why
    the above argument encounters no related issues, see Remark \ref{rem:groundring}.
\end{rem}

\begin{lem} \label{lem: laginW} Let $i: Q \hookrightarrow W_1$ be an exact Lagrangian embedding of a closed orientable manifold. Then $Q$ admits a prime decomposition with prime summands $S^1 \times S^2$ and spherical spaces forms $Q_i$ such that $|\pi_1(Q_i)|$ is not divisible by 3. \end{lem} 

\begin{proof} This follows from Viterbo functoriality by combining Proposition \ref{dilationchar3} together with the claim (2) of Lemma \ref{lem:dilatp} (with $p=3$). \end{proof}

\begin{rem} \label{rem: Poincare} In particular, none of the summands $Q_i$ can be a Poincar\'e homology sphere $S^3/\pi$ with $|\pi|=120$. This is sufficient to rule out the case of the trefoil from Proposition \ref{prop:esw} because in the case of a trefoil the Polterovich surgery of $\tilde{S}_1$ and $\tilde{S}_2$ would be a Poincar\'e homology sphere. Note that Proposition \ref{unknottedness} is slightly stronger as it concerns \emph{a priori} arbitrary Lagrangian spheres $\tilde{S}_1$ and $\tilde{S}_2$ meeting cleanly in a circle. \end{rem}

We record one final preparatory Lemma before proceeding with the proof of Proposition \ref{unknottedness}.

\begin{lem} \label{lem: spherenot3} The only spherical space forms $Q$ such that $|\pi_1(Q)|$ is not divisible by $3$ and $|H_1(Q)| \leq 2$ are $S^3$ and $\mathbb{R}P^3.$  \end{lem} 
\begin{proof} This statement is obvious if one restricts to lens spaces. The general case follows from the classification of spherical space forms; see Theorem 2.2 of \cite{MR2484716} for a table listing the fundamental groups of all spherical space forms which are not lens spaces. In particular, inspecting this classification shows that all of the space forms (other than lens spaces) such that $|\pi_1(Q)|$ is not divisible by 3 are ``prism manifolds." There are two types of prism manifolds. The first type have fundamental group  \begin{align} Q_{4n}:= \langle x,y| x^n=y^2, xyx=y \rangle \end{align} for some $n \geq 1$. The abelianization ($=H_1(Q)$) of $Q_{4n}$ has order 4 (see the beginning of Section 3 of \emph{loc. cit.}). In the second case, the fundamental group is \begin{align} B_{2^{k}(2n+1)}:= \langle x,y| x^{2^{k}}=y^{2n+1}=1, xyx^{-1}=y^{-1} \rangle \end{align} for some $k \geq 2, n \geq 0$. As discussed in Section 5.2 of \emph{loc. cit.}, the abelianization of $B_{2^{k}(2n+1)}$ has order $2^k>2.$  \end{proof}

\begin{proof}[Proof of Proposition \ref{unknottedness}]
  Form a Polterovich surgery $Q$ of $\tilde{S}_1$ and $\tilde{S}_2$, which as we have seen will be an exact, orientable Lagrangian. If both knots are non-trivial, then $Q$ contains an incompressible torus and is irreducible by Proposition 2.3 of \cite{Hedden}. Lemma \ref{lem: laginW} shows that the only possible irreducible Lagrangians in $W_1$ are spherical space forms. As none of the spherical space forms contain an incompressible torus,\footnote{For instance because an incompressible torus gives rise to an injective map $\pi_1(T^2) \to \pi_1(Q).$} it follows that at least one of the knots is the unknot. Without a loss of generality we can assume this is $\kappa_1$, and the surgery is then a Dehn surgery on $\kappa_2$.

It is known that a manifold $Q$ which arises from surgery on a knot has a prime decomposition with at most three summands. If there are three summands, then two of the summands are lens spaces and the third a homology sphere (Corollary 5.3 of \cite{MR1915093}). Thus, because none of the summands arising in Lemma  \ref{lem: laginW} are homology spheres (see Remark \ref{rem: Poincare}), there are at most two summands. Note as well that $H_1(Q) \cong \mathbb{Z}/n\mathbb{Z}$ where $n$ is the surgery slope, which means that $Q$ cannot be one of the manifolds in Lemma \ref{lem: laginW} with an $S^1 \times S^2$ summand and a non-trivial spherical space form. It follows that $Q$ is either \begin{enumerate} \item a spherical space form with $|\pi_1(Q)|$ not divisible by 3, \item a connected sum of two of these types of spherical manifolds, \item or $Q=S^1 \times S^2$. \end{enumerate}

We will discuss each of these cases individually, beginning with: \vskip 5 pt

\emph{Case} $(1)$: Throughout the rest of the proof we let $\mathbf{k}$ be an algebraically closed field of characteristic 3. Suppose that $Q$ is a spherical space form with $|\pi_1(Q)|$ not divisible by 3 (note that this implies that $Q$ is a $\K$-homology sphere) and fix a generator for $H_1(Q,\mathbb{Z})=\mathbb{Z}/n\mathbb{Z}$. We may then equip $Q$ with $n$ rank one $\mathbf{k}$-local systems $\psi_m$ corresponding to $\rho^m, m\in \lbrace 0,\cdots,n-1\rbrace$ where $\rho$ is a primitive $n$-th root of unity.

 For $m \neq 0$, the cohomology groups $H^i(Q,\psi_m)=0$. This is obvious for $i=0$ and follows for $i=3$ using Poincar\'e duality in the form $H^i(Q,\psi_m) \cong H^{3-i}(Q,\psi_m^*)^*$. For $i=1,2$, we can again reduce the claim to $i=1$ using duality. To check this case, recall that cohomology with coefficients in this local coefficient system is by definition the (hyper-)ext $\operatorname{Ext}_{\mathbb{Z}[\pi_1(Q)]}^*(C_{-*}(\tilde{Q}), \K)$ where $\tilde{Q}$ is the universal cover of $Q$ and in the right-most argument we give $\K$ the module structure from the local coefficients. We have the hyper-Ext (or geometrically the Eilenberg-Moore) spectral sequence 
 \begin{align} \operatorname{Ext}_{\mathbb{Z}[\pi_1(Q)]}^p(H_q(\tilde{Q}),\K) \Rightarrow H^{p+q}(Q,\psi_m) 
 \end{align} 
which vanishes when $p=0,q=1$ because $\tilde{Q}$ is a universal cover and vanishes when $p=1,q=0$ because the group cohomology $H^1(\pi_1, \psi_m)=0$ (again we use that 3 does not divide $|\pi_1|$). 

In the Fukaya category of $W_1$, we have that $$\operatorname{Hom}((Q,
\psi_i),(Q,\psi_j)) \cong H^* (Q,\psi_{i-j})= 0 \textrm{ if $i \neq j$}.$$
Thus, the objects $\{(Q, \psi_i)\}$ are
pairwise disjoinable in the sense of \cite{Seidel:2014aa}; hence their homology
classes must span a subspace of rank at least $n/2$ by Theorem 1.7 of
{\em loc. cit.}, which applies because $Q$ is a $\K$-homology sphere.\footnote{Here, as
in Theorem \ref{thm: app1sec6}, we use the fact that our dilation arises
from a Hamiltonian which wraps around the divisor once; hence our manifold
has ``property (H)'' in the terminology of \cite{Seidel:2014aa}*{Definition
2.11}.} However, these objects all represent the same homology class and hence
$n\leq 2$; so by Lemma \ref{lem: spherenot3} we have that $Q=S^3$ or $\mathbb{R}P^3$. It follows from the main result of
\cite{MR2299739} that in either case $\kappa_2$ must be the unknot. \vskip 5 pt

\emph{Case} $(2)$: Essentially the same argument works for a connected sum of two of these types of manifolds $Q=Q_1\#Q_2.$  Namely, let $\psi_m$ denote a rank one $\K$ representation of $\pi_1(Q_1)$ considered in Case $(1)$. Note that $\pi_1(Q) \cong \pi_1(Q_1) \ast \pi_1(Q_2)$ and extend $\psi_m$ to a representation of $\pi_1(Q)$ by letting $\pi_1(Q_2)$ act trivially. We denote the resulting representation of $\pi_1(Q)$ by $\hat{\psi}_m.$ As in Case $(1)$, the key vanishing of the groups $H^i(Q,\hat{\psi}_m)$ can be reduced to the vanishing of the group cohomology $H^1(\pi_1(Q), \hat{\psi}_m)$ using Poincar\'e duality and the hyper-ext spectral sequence. We have that  $K(\pi_1(Q),1)$ is homotopy equivalent to the wedge sum: \begin{align} K(\pi_1(Q),1) \cong K(\pi_1(Q_1),1) \vee K(\pi_1(Q_2),1). \end{align}   By the Mayer-Vietoris sequence  \begin{align} H^1(\pi_1(Q),\hat{\psi}_m) \cong H^1(\pi_1(Q_1),\psi_{m}) \oplus H^1(\pi_2(Q_2),\K) = 0. \end{align}  (The point is that the restriction map $H^0(\pi_1(Q_2),\K) \to H^0(pt, \K)$ is an isomorphism because we have taken the representation to be trivial on the $\pi_1(Q_2)$ factor.) The same argument using Theorem 1.7 of \cite{Seidel:2014aa} shows that $Q_1$ is $\mathbb{R}P^3$ (or $S^3$ but then $Q$ is irreducible). By symmetry, we also deduce that $Q_2$ is $\mathbb{R}P^3$. However, then $H_1(Q)$ is not cyclic which is a contradiction. \vskip 5 pt

\emph{Case} $(3)$: Finally, if
$Q=S^1\times S^2$, then again using the result of \cite{MR2299739}, $\kappa_2$
must be the unknot (and the surgery slope is zero).  \end{proof}

\begin{rem}The idea of studying which diffeomorphism types of Lagrangians may arise as the surgery $Q$ of $\tilde{S}_1$ and $\tilde{S}_2$ is also borrowed from \cite{evanssmithwemyss}, who use a detailed classification of objects in the Fukaya category of $W_1$ (together with similar results from 3-manifold theory to those used above) to prove Proposition \ref{prop:esw}. \end{rem}

%%fakesection: starting appendix
\appendix
\section{Examples of Lagrangian connected sums} \label{section:AppendixA}

In this Appendix, we give some specific examples of exact Lagrangian connected sums in affine conic bundles. Let $X^o:= (\mathbb{C}^*)^2$ and $Z^o$ be the zero locus of $f(\bar{x})=x_1+x_2+\frac{1}{x_1x_2}-c$ for $c >3$ (recall \eqref{eq: conicbundle2}). In this case, we can take our good compactification (recall Definition \ref{defn: goodcomp}) so that $\bar{M}$ is a toric surface and $\bar{D}$ is its toric boundary. We will need to choose a specific K{\"a}hler form on the conic bundle. To do this, note that $\hat{M}$ is a hypersurface in the $\mathbb{C}P^1$ bundle, $\mathbb{P}(\mathcal{O}(Z) \oplus \mathcal{O})$ over $M:=\bar{M} \times \mathbb{C}P^1$ (in an abuse of notation we let $\mathcal{O}(Z)$ be the line bundle associated to the hypersurface $Z \times \mathbb{C}P^1 \subset \bar{M} \times \mathbb{C}P^1$). Let $\bar{\omega}$ be a toric K{\"a}hler form on $\bar{M}$ and lift this to a K{\"a}hler form on $M$  by taking product with the standard symplectic structure on $\mathbb{C}P^1$. Then a choice of Hermitian metric on the line bundle $\mathcal{O}(Z)$ induces a K{\"a}hler form on $\mathbb{P}(\mathcal{O}(Z) \oplus \mathcal{O})$ so that the $\mathbb{C}P^1$ fibers have area $\epsilon_Z$. We can then restrict this form to the hypersurface $\hat{M}$ to obtain a K{\"a}hler form which we denote by $\hat{\omega}.$ Away from $E$ we have that \begin{align} \hat{\omega}= \pi^* \omega_M +i\epsilon_Z \partial \bar{\partial} \psi \end{align} for some potential $\psi.$

Following Section 3.2. of \cite{AAK}, we will modify this K{\"a}hler form to a K{\"a}hler form $\omega$ so that $\omega$ agrees with $\pi^* \omega_M$ outside of (the preimage of) a tubular neighborhood $U$ of $Z \times 0 \subset \bar{M} \times \mathbb{C}P^1$. Namely, we choose a cut-off function $\chi: M \to [0,1]$ which is supported in $U$ and which is $S^1$ invariant with respect to the rotation action on $\mathbb{C}P^1$. We require that $\chi=1$ in a smaller open set about $Z \times 0$. We will also assume that the support of $\chi$ is disjoint from the (preimage of the) exact Lagrangian torus $\bar{L}_0 \subset (\mathbb{C}^*)^2$ given by \begin{align} |x_1|=|x_2|=1 \end{align}  On the complement of $E$, we set \begin{align} \hat{\omega}= \pi^* \omega_M +i\epsilon_Z \partial \bar{\partial} (\chi \psi) \end{align}
This extends to a form on all of $\hat{M}$ which again K{\"a}hler (after possibly shrinking $\epsilon_Z$). 

Let $\bar{L}_1$ denote be the ``positive real Lefschetz thimble," that is the portion of  $(\mathbb{R}^{\geq 0})^2$ which fibers over the interval $[3,c]$, \begin{align} \bar{L}_1:= \lbrace (\mathbb{R}^{\geq 0})^2, f(\bar{x}) \in [3,c]) \rbrace. \end{align} 

 $\bar{L}_0$ and $\bar{L}_1$ intersect at exactly one point p(where $x_1=x_2=1$) and we can form surgery at this intersection point to obtain an exact Lagrangian $\Sigma_{1,1} \subset (\mathbb{C}^*)^2$ with boundary on the hypersurface $Z^0$.  We let $L_0, L_1, Q$ be the subspaces obtained using the constructions \eqref{eq: suspendingLs} from $\bar{L}_0, \bar{L}_1, \Sigma_{1,1}$. 

\begin{lem} For a suitable choice of primitive $\theta$, $L_0, L_1,Q$ are all exact Lagrangian submanifolds of $X$.  \end{lem} 
\begin{proof} It is obvious that $L_0$ is an embedded submanifold and the fact that $L_1$ and $Q$ are embedded submanifolds is a local calculation near the boundary of $\bar{L}_1$. Because $\chi$ is supported away from $L_0$, it is also obvious that it is a Lagrangian submanifold(diffeomorphic to $T^3$).  It is also not difficult to make the Lagrangian $L_0$ exact by chosing a primitive which agrees with a suitable product primitive on $\mathbb{C}^* \times (\mathbb{C}^*)^2$.  Let $X\setminus E/\!\!/S^1$ denote the symplectic reduction of $X \setminus E$ at moment level set $\mu=\epsilon_Z$. $X \setminus E /\!\!/S^1$ is holomorphically identified with $(\mathbb{C}^*)^2 \setminus Z^o$ by the projection map, and is equipped with a K{\"a}hler form $\omega_{red}$ on $(\mathbb{C}^*)^2 \setminus Z^o$.\footnote{The K\"{a}hler form does not extend smoothly to all of $(\mathbb{C}^*)^2$; see Section 4.1 of \cite{AAK}. However, this is not relevant for us.}

 As $\bar{L}_1$ is fixed by an anti-holomorphic involution, it defines a Lagrangian with respect to $\omega_{red}$. It is then an elementary fact in the theory of symplectic reduction that $L_1$ is a Lagrangian in the total space $X$(which is automatically exact because it is diffeomorphic to $S^3$). The same argument applies to $Q$ because it agrees with $L_1$ when $\chi \neq 0$. Finally, the fact that $Q$ is exact as well for this choice of primitive follows from the fact that we have an isomorphism $H_1(\bar{L}_0 \setminus p) \cong H_1(Q)$ and in particular we can choose representatives for the generators of $H_1(Q)$ which lie far away from the surgery locus (which is a local construction). Exactness of $Q$ therefore follows from exactness of $L_0.$ \end{proof}

We can construct examples with $n=k+1$ by passing to $k$-fold covers of this conic bundle.  

\begin{rem} Tropical geometry provides a general method for constructing Lagrangian surfaces in $(\mathbb{C}^*)^2$(see e.g. \cite{MatessiLag}). It should be possible to ``suspend" some of these tropical Lagrangians to 3-dimensional Lagrangians in conic bundles, potentially giving rise to a wide class of examples. \end{rem} 

\section{More on \texorpdfstring{$\Psi$}{Psi}-equivariant structures} \label{section:AppendixB}

In general, unlike Hamiltonian Floer cohomology, Lagrangian Floer cohomology is not commutative. However, classes in the image of the maps  \eqref{eq:zerooc} are central elements of Lagrangian Floer cohomology. To be precise, for any pair of exact Lagrangians $Q_0, Q_1$ (with brane structure) there is a canonically defined homotopy (Equation (2.12) of \cite{Seidel:2010uq}): \begin{align} \phi_1: CF^*(X \subset M ; H_{m}^{2\lambda})\otimes CF^*(Q_0,Q_1) \to CF^*(Q_0,Q_1)[-1] \end{align} such that for any $b \in CF^*(X \subset M ; H_{m}^{2\lambda})$ and $a \in CF^*(Q_0,Q_1),$ 
$$\mu_2(\phi_{Q_{1}}(b),a)- (-1)^{|a||b|} \mu_2(a,\phi_{Q_{0}}(b)) = \mu_1(\phi^1(b,a)) +\phi^1(db,a)+ (-1)^{|b|}\phi^1(b,\mu_1(a)). $$ 

Given $\underline{\Psi}-$ equivariant structures on  $Q_0,Q_1$ (recall Definition \ref{defn:sseqs}), we can, following Equation (4.4) of \cite{Seidel:2010uq}, use this chain homotopy to define a chain map: 
\begin{equation} 
\begin{aligned} 
\label{eq:operation} \Phi_{Q_0,Q_1}: CF^*(Q_0,Q_1) \to CF^*(Q_0,Q_1) \\ 
\Phi_{Q_0,Q_1}(a)= \phi_1(\underline{\Psi},a) -\mu_2(c_{Q_{1}},a)+ \mu_2(a,c_{Q_{0}}). 
\end{aligned} 
\end{equation}

If $Q_0=Q_1=Q$, it is possible to calculate the endomorphism induced by \eqref{eq:operation} on cohomology (which we will in a slight abuse of notation also denote by $\Phi_{Q,Q}$) in certain degrees. Namely, by Corollary 3.6 of \cite{Seidel:2014aa}, we have that $\Phi_{Q,Q}$ acts by 0 on $HF^0(Q,Q).$ More interesting is the action on $HF^n(Q,Q)$, where we have by Equation (4.16) of
\cite{Seidel:2010uq} (see also discussion following Lemma 18.1 of \cite{catdyn}) that for any class $a \in HF^n(Q,Q)$ \begin{align} \label{eq:actiondegn} \Phi_{Q,Q}(a)= \mu_2(\Phi^0_Q(\Delta \Psi), a). \end{align} 
Note that when $\tilde{Q}$ is an equivariant sphere, these two calculations determine $\Phi_{Q,Q}$ completely. In particular, by \eqref{eq:actiondegn}, we have that if $\tilde{Q}$ is a sphere \begin{align} \label{eq:zzzzzsphere} \tilde{Q} \cdot \tilde{Q}= (-1)^n \phi^0_Q(\Delta \Psi) \in \K \end{align}
where $\tilde{Q} \cdot \tilde{Q}$ is defined in \eqref{eq: lefschetztrace}.
%fakesection:bibliography
\bibliography{shbib}
\bibliographystyle{apalike} 
%\printbibliography

\end{document}